\DeclareMathOperator{\st}{st} 
\newcommand{\forallst}{\forall^{\st{}}}
\newcommand{\existsst}{\exists^{\st{}}}
\newcommand{\Sh}{\ensuremath{\protect{S_{\st{}}}}}
\newcommand{\intern}{\ensuremath{{\usftext{int}}}}
\newcommand{\HAC}{\ensuremath{{\usftext{HAC}}}}
\newcommand{\I}{\ensuremath{\usftext{I}}}
\newcommand{\R}{\ensuremath{\usftext{R}}}
\newcommand{\NCR}{\ensuremath{\usftext{NCR}}}
\newcommand{\T}{\ensuremath{\mathcal{T}}}
\newcommand{\usftext}[1]{\textsf{\upshape #1}}
\newcommand{\QFAC}{\ensuremath{{\usftext{QF-AC}}}} 
\newcommand{\ACA}{\ensuremath{\usftext{ACA}}} %
\newcommand{\RCA}{\ensuremath{\usftext{RCA}}} %
\newcommand{\ZFC}{\ensuremath{\usftext{ZFC}}}
\newcommand{\IST}{\ensuremath{\usftext{IST}}}
\newcommand{\WKL}{\ensuremath{\usftext{WKL}}}
\newcommand{\FAN}{\ensuremath{\usftext{FAN}}}
\newcommand{\tup}{\underline} 
\def\bdefi{\begin{defi}\rm}
\def\edefi{\end{defi}}
\def\bnota{\begin{nota}\rm}
\def\enota{\end{nota}}
\def\brem{\begin{rem}\rm}
\def\erem{\end{rem}}
\def\STP{\textup{STP}}
\def\RCA{\textup{RCA}}
\def\WKL{\textup{WKL}}
\def\HAC{\textup{HAC}}
\def\bye{\end{document}}
\def\P{{\mathcal  P}}
\def\N{{\mathbb  N}}
\def\R{{\mathbb  R}}
\def\FAN{\textup{FAN}}
\def\R{{\mathbb{R}}}
\def\({\textup{(}}
\def\){\textup{)}}
\def\st{\textup{st}}
\def\asa{\leftrightarrow}
\def\di{\rightarrow}
\def\eps{\varepsilon}
\def\ACA{\textup{ACA}}
\newbox\gnBoxA
\newdimen\gnCornerHgt
\newdimen\gnArgHgt
\def\bdefi{\begin{defi}\rm}
\def\edefi{\end{defi}}
\def\bnota{\begin{nota}\rm}
\def\enota{\end{nota}}
\def\brem{\begin{rem}\rm}
\def\erem{\end{rem}}
\def\FIVE{\Pi_{1}^{1}\text{-CA}_{0}}
\def\ATR{\textup{ATR}}
\def\STP{\textup{STP}}
\def\INT{\textup{INT}}
\def\RCA{\textup{RCA}}
\def\RCAo{\textup{RCA}_{0}^{\omega}}
\def\O{\mathfrak{O}}
\def\WKL{\textup{WKL}}
\def\IVT{\textup{IVT}}
\def\bye{\end{document}}
\def\P{{\mathcal  P}}
\def\N{{\mathbb  N}}
\def\R{{\mathbb  R}}
\def\FAN{\textup{FAN}}
\def\MUC{\textup{MUC}}
\def\MCT{\textup{MCT}}
\def\R{{\mathbb{R}}}
\def\({\textup{(}}
\def\){\textup{)}}
\def\asa{\leftrightarrow}
\def\di{\rightarrow}
\def\eps{\varepsilon}
\def\ACA{\textup{ACA}}
\def\paai{\Pi_{1}^{0}\textup{-}\usftext{TRANS}}
\def\Paai{\Pi_{1}^{1}\textup{-}\usftext{TRANS}}
\newtheorem{thm}{Theorem}
\newtheorem{cor}[thm]{Corollary}
\newtheorem{defi}[thm]{Definition}
\newtheorem{rem}[thm]{Remark}
\newtheorem{nota}[thm]{Notation}
\newtheorem{exa}[thm]{Example}
\newtheorem{princ}[thm]{Principle}
\newtheorem{tempie}[thm]{Template}
\newtheorem{ack}[thm]{Acknowledgement}
\newtheorem*{tempo*}{Template}
\newcommand\be{\begin{equation}}
\newcommand\ee{\end{equation}}
\newbox\gnBoxA
\newdimen\gnCornerHgt
\newdimen\gnArgHgt
\def\Godelnum #1{%
	\setbox\gnBoxA=\hbox{$#1$}%
	\gnArgHgt=\ht\gnBoxA%
	\ifnum \gnArgHgt<\gnCornerHgt
		\gnArgHgt=0pt%
	\else
		\advance \gnArgHgt by -\gnCornerHgt%
	\fi
	\raise\gnArgHgt\hbox{$\ulcorner$} \box\gnBoxA %
		\raise\gnArgHgt\hbox{$\urcorner$}}
\def\bdefi{\begin{defi}\rm}
\def\edefi{\end{defi}}
\def\bnota{\begin{nota}\rm}
\def\enota{\end{nota}}
\def\brem{\begin{rem}\rm}
\def\erem{\end{rem}}
\def\FIVE{\Pi_{1}^{1}\text{-\textsf{CA}}_{0}}
\def\ATR{\textup{\textsf{ATR}}}
\def\STP{\textup{\textsf{STP}}}
\def\H{\textup{\textsf{H}}}
\def\RCA{\textup{\textsf{RCA}}}
\def\RCAo{\textup{\textsf{RCA}}_{0}^{\omega}}
\def\ef{\textup{\textsf{ef}}}
\def\ns{\textup{\textsf{ns}}}
\def\O{\mathfrak{O}}
\def\WKL{\textup{\textsf{WKL}}}
\def\IVT{\textup{IVT}}
\def\IVT{\textup{\textsf{IVT}}}
\def\T{\mathcal{T}}
\def\bye{\end{document}}
\def\P{\textup{\textsf{P}}}
\def\N{{\mathbb  N}}
\def\R{{\mathbb  R}}
\def\I{{\textsf{\textup{I}}}}
\def\FAN{\textup{\textsf{FAN}}}
\def\MUC{\textup{\textsf{MUC}}}
\def\MCT{\textup{\textsf{MCT}}}
\def\ULC{\textup{\textsf{ULC}}}
\def\R{{\mathbb{R}}}
\def\({\textup{(}}
\def\){\textup{)}}
\def\st{\textup{st}}
\def\asa{\leftrightarrow}
\def\di{\rightarrow}
\def\eps{\varepsilon}
\def\ACA{\textup{\textsf{ACA}}}
\def\paai{\Pi_{1}^{0}\textup{-\textsf{TRANS}}}
\def\Paai{\Pi_{1}^{1}\textup{-\textsf{TRANS}}}
\def\QFAC{\textup{\textsf{QF-AC}}}
\def\CI{{\mathfrak{CI}}}
\def\META{\textup{\textsf{META}}}
\def\SCF{\textup{\textsf{SCF}}}
\def\HBL{\textup{\textsf{HBL}}}
\def\MTE{\textup{\textsf{MTE}}}
\def\DIV{\textup{\textsf{DIV}}}
\def\TOR{\textup{\textsf{TOR}}}
\def\PST{\textup{\textsf{PST}}}
\def\her{\textup{\textsf{her}}}
\def\CCI{\textup{\textsf{CCI}}}
\def\SWT{\textup{\textsf{SWT}}}
\def\CSU{\textup{\textsf{CSU}}}
\def\HEI{\textup{\textsf{HEI}}}
\def\DIN{\textup{\textsf{DINI}}}
\def\MU{\textup{\textsf{MU}}}
\def\MUO{\textup{\textsf{MUO}}}
\def\PICA{\textup{\textsf{PICA}}}
\def\DINI{\textup{\textsf{DINI}}}
\def\HAC{\textup{\textsf{HAC}}}
\def\FTC{\textup{\textsf{FTC}}}
\def\INT{\textup{\textsf{int}}}
\def\CRI{\textup{\textsf{CRI}}}
\numberwithin{equation}{section}
\numberwithin{thm}{section}
\begin{document}

\title{The unreasonable effectiveness of Nonstandard Analysis}
\author{Sam Sanders}
\address{Department of Mathematics, TU Darmstadt, Germany}
\email{sasander@me.com}
\begin{abstract}
As suggested by the title, the aim of this paper is to uncover the vast \emph{computational content} of \emph{classical} Nonstandard Analysis. 
To this end, we formulate a template $\CI$ which converts a theorem of `pure' Nonstandard Analysis, i.e.\ formulated solely with the \emph{nonstandard} definitions (of continuity, integration, differentiability, convergence, compactness, et cetera), into the associated \emph{effective} theorem.  The latter constitutes a theorem of computable mathematics \emph{no longer involving Nonstandard Analysis}.  
To establish the huge scope of $\CI$, we apply this template to representative theorems from the \emph{Big Five} categories from \emph{Reverse Mathematics}.   The latter foundational program provides a classification of the majority of theorems from `ordinary', that is non-set theoretical, mathematics into the aforementioned five categories.  The \emph{Reverse Mathematics zoo} gathers exceptions to this classification, and is studied in \cite{samzoo, samzooII} using $\CI$.  Hence, the template $\CI$ is seen to apply to essentially \emph{all of ordinary mathematics}, thanks to the Big Five classification (and associated zoo) from Reverse Mathematics.            
Finally, we establish that certain `highly constructive' theorems, called Herbrandisations, also imply the original theorem of Nonstandard Analysis from which they were obtained via $\CI$.   
\end{abstract}


\maketitle

\thispagestyle{empty}

\vspace{-0.2cm}
\section{Introduction}\label{intro}
\noindent
As suggested by the title, our aim is to uncover the \emph{vast {computational} content of {classical} Nonstandard Analysis}.
The following quotes serve as a good starting point of and motivation for our enterprise.  
\begin{quote}
\emph{It has often been held that nonstandard analysis is highly non-constructive, thus somewhat suspect, depending as it does upon the ultrapower construction to produce a model \textup{[\dots]} On the other hand, nonstandard \emph{praxis} is remarkably constructive; having the extended number set we can proceed with explicit calculations.} (Emphasis in original: \cite{NORSNSA}*{p.\ 31})
\end{quote}
\begin{quote}
\emph{Those who use nonstandard arguments often say of their proofs that they are ``constructive modulo an ultrafilter''; implicit in this statement is the suggestion that such arguments might give rise to genuine constructions}. (\cite{rossenaap}*{p.\ 494})
\end{quote}
Similar observations are made in \cites{watje, rosse,kifar, kieken, Oss3, Oss2, sc, nsawork2, venice,fath, jep}.  The reader may interpret the word \emph{constructive} as the mainstream/classical notion `effective', or as the foundational notion from Bishop's \emph{Constructive Analysis} (\cite{bish1}).  
As will become clear, both cases will be treated below (and separated carefully).      

\smallskip

To uncover the computational content of Nonstandard Analysis alluded to in the above quotes, we shall introduce a template $\CI$ 
in Section \ref{detail} which converts a theorem of \emph{pure} Nonstandard Analysis into the associated `constructive' theorem;   
here, a theorem of `pure' Nonstandard Analysis is one formulated solely with the \emph{nonstandard} definitions (of continuity, convergence, etc) rather than the usual `epsilon-delta' definitions.  

\smallskip

We shall make use of the classification provided by \emph{Reverse Mathematics} (see Section \ref{RM}) to establish that the scope of the template $\CI$ includes most of non-set theoretical mathematics.  In view of this \emph{huge} scope, the `unreasonable effectiveness of Nonstandard Analysis' is no exaggeration, and even comes as a surprise in light of the claims by Bishop and Connes regarding the constructive nature of Nonstandard Analysis, as discussed in Section \ref{foef}.     
We discuss the aim of this paper in detail in Section \ref{haim} by way of an elementary example.

\smallskip

On a historical note, the late Grigori Mints has repeatedly pushed the author to investigate the computational content of classical Nonstandard Analysis.  
In particular, Mints conjectured the existence of results analogous or similar to Kohlenbach's \emph{proof mining} program (\cite{kohlenbach3}).  
The latter program has its roots in Kreisel's pioneering work on the `unwinding' of proofs, where the latter's goal is similar to ours:
\begin{quote}
\emph{To determine the constructive \(recursive\) content or the constructive equivalent of the non-constructive concepts and theorems used in mathematics}, particularly arithmetic and analysis.  (Emphasis in original on \cite{kreimiearivier}*{p.\ 155})
\end{quote}
We discuss the connection of our results to proof mining in Section \ref{swisch} below.    

\smallskip

Finally, Horst Osswald has qualified the observation from the above quotes as \emph{Nonstandard Analysis is locally constructive}, to be understood as the fact that the mathematics performed in the nonstandard world is highly constructive while the principles needed to `jump between' the nonstandard world and usual mathematics, are highly non-constructive in general (see \cite{nsawork2}*{\S7}, \cite{Oss3}*{\S1-2}, or \cite{Oss2}*{\S17.5}).  The results in this paper shall be seen to vindicate both the Mints and Osswald view.  This paper contains numerous (rather) technical results and the uninitiated reader may refer to \cite{SB} for a very gentle introduction and more general overview.  

\subsection{Aim and results}\label{haim}
In this section, we discuss the aim of this paper in more detail.
Conceptually speaking, the aim of this paper is to formulate a template $\CI$ which takes as input the proof of a mathematical theorem from `pure' Nonstandard Analysis, i.e.\ formulated solely with nonstandard definitions (of continuity, integration, differentiability, convergence, \dots), and outputs a proof of the associated \emph{effective version} of this theorem.  The template $\CI$ works on proofs inside Nelson's syntactic approach to Nonstandard Analysis, called \emph{internal set theory}; the latter was first introduced in \cite{wownelly} and discussed in Section \ref{IIST}.  We make essential use of the results in \cite{brie}, as discussed in Section \ref{PIPI}.    

\smallskip

Intuitively speaking, the `effective version' of a mathematical theorem is obtained by replacing all its existential quantifiers by functionals providing the objects claimed to exist.     
In other words, the object claimed to exist by the theorem at hand can be \emph{computed} (in a specific technical sense) from the other data present in the theorem.   
We believe our aim to be best further illustrated by way of an example, as follows.  
\begin{exa}\label{exam}\rm
Assuming basic familiarity with Nelson's internal set theory (see Section \ref{IIST}), we illustrate our aim using the theorem:  \emph{a uniformly continuous function on the unit interval is Riemann integrable there}.  The previous theorem formulated with the \emph{nonstandard }
definitions of continuity and integration is:  
\begin{align}
(\forall f:\R\di \R)\big[(\forall x, y&\in [0,1])[x\approx y \di f(x)\approx f(y)] \label{NST}\\
&\di  (\forall \pi, \pi' \in P([0,1]))(\|\pi\|,\| \pi'\|\approx 0  \di S_{\pi}(f)\approx S_{\pi'}(f) )  \big],\notag
\end{align}
where as suggested by the notation, $\pi$ and $\pi'$ are discrete partitions of the unit interval, and $\|\pi\|$ is the mesh of a partition, i.e.\ the largest distance between two adjacent partition points;  
the number $S_{\pi}(f)$ is the \emph{Riemann sum} associated with the partition $\pi$, i.e.\ $\sum_{i=0}^{M-1}f(t_{i}) (x_{i}-x_{i+1}) $ for $\pi=(0, t_{0}, x_{1},t_{1},  \dots,x_{M-1}, t_{M-1}, 1)$.

\smallskip
\noindent
By contrast, the \emph{effective version} of the theorem is as follows:
\begin{align}\textstyle
\textstyle~(\forall f: \R\di \R, g ,n)\Big[&(\forall \textstyle x, y \in [0,1],k)(|x-y|<\frac{1}{g(k)} \di |f(x)-f(y)|\leq\frac{1}{k})\label{EST}\\
&\textstyle\di  (\forall \pi, \pi' \in P([0,1]))\big(\|\pi\|,\| \pi'\|< \frac{1}{t(g,n)}  \di |S_{\pi}(f)- S_{\pi'}(f)|\leq \frac{1}{n} \big)  \Big],\notag
\end{align}
where $t$ is primitive recursive (in the sense of G\"odel's $\textsf{T}$; see Section \ref{base}).  Note that \eqref{EST} \emph{does not} involve Nonstandard Analysis.
In Section~\ref{frakkk}, we show how a proof of the nonstandard version \eqref{NST} can be converted into a proof of the effective version \eqref{EST};  furthermore, the term $t$ in \eqref{EST} can be `read off' from the proof of \eqref{NST}. 
Essential to this enterprise is the system $\P$ from Section~\ref{P}, a
fragment of Nelson's internal set theory based on G\"odel's system $\textsf{T}$ (see \cites{avi2, godel3} for the latter).    
\end{exa}
The previous example illustrates the goal of our paper; in general, we wish to formulate a template $\CI$ to convert the proof inside $\P$ of a mathematical theorem \emph{formulated solely using nonstandard definitions} (of continuity, integration, differentiability, convergence, compactness, et cetera) as in \eqref{NST} into a proof of the associated effective version as in \eqref{EST}, in which all existential quantifiers have been removed.    
We shall formulate this template $\CI$ in Section \ref{detail}, and apply it to a number of mathematical theorems in Sections \ref{main} and \ref{RMSTUD}.  
In particular, we study representative theorems from the \emph{Big Five} categories of \emph{Reverse Mathematics};  the latter foundational program is introduced in Section \ref{RM}.    
Exceptions to the Big Five categories are gathered in the \emph{Reverse Mathematics zoo}, and are studied in \cites{samzoo, samzooII} using $\CI$.  These results suggest that the scope of our template includes essentially the whole of \emph{ordinary}, that is non-set theoretical, mathematics.  

\smallskip

Furthermore, it turns out that deriving the effective from the nonstandard version of a mathematical theorem, can often be done \emph{inside Bishop's Constructive Analysis} (\cite{bish1}), as is clear from e.g.\ Corollary~\ref{cordejedi} in Section \ref{frakkk}.  
This is surprising as such a derivation (of the effective version from the nonstandard one) involves bringing the quantifiers implicit in `$\approx$' (as in e.g.\ \eqref{NST}) to the front, which in general requires a non-constructive principle called \emph{independence of premises}.     
Essential to this constructive development is the formal system $\H$, which is the \emph{constructive version} of $\P$, as discussed in detail in Section~\ref{P}.  

\smallskip

In the case of compactness, multiple \emph{different} nonstandard formulations are possible, as discussed in Section \ref{compaq}.  When running the first formulation through the template~$\CI$, one rediscovers \emph{totally boundedness}, 
the preferred notion of compactness in constructive and computable analysis (see Section \ref{cont1} and \ref{cont2}).  
When running the second formulation (see Section \ref{dinki}) through the template $\CI$, one rediscovers certain equivalences from {Reverse Mathematics}.  
These equivalences are even \emph{explicit}\footnote{An implication $(\exists \Phi)A(\Phi)\di (\exists \Psi)B(\Psi)$ is \emph{explicit} if there is a term $t$ in the language such that additionally $(\forall \Phi)[A(\Phi)\di B(t(\Phi))]$, i.e.\ $\Psi$ can be explicitly defined in terms of $\Phi$.\label{dirkske}}.  In general, running non-explicit equivalences (usually having very simple proofs) between nonstandard theorems through $\CI$, one obtains \emph{explicit} Reverse Mathematics equivalences, 
as discussed in Sections \ref{X}, \ref{dinki}, \ref{dinkitoes},~\ref{suskeenwiske},~and~\ref{algea}.    

\smallskip

A natural question in light of the aforementioned results is whether there is an effective theorem which derives from the nonstandard theorem \emph{and also vice versa}, i.e.\ \emph{can we re-obtain the nonstandard version from a suitable effective version?}  
To this end, we introduce the notion of \emph{Herbrandisation} of a nonstandard theorem.  This `highly constructive' version is derived from the original nonstandard theorem, and in turn implies the latter.  
Thus, we establish an `algorithmic' two-way street between `soft' analysis dealing with qualitative analysis (in the guise of Nonstandard Analysis) and `hard' analysis dealing with quantitative results (embodied by the Herbrandisations).   
These results are discussed in detail in Sections \ref{frakkk} and \ref{X}.  

\smallskip

In a nutshell, we formulate a template $\CI$ in this paper which converts (a proof of) a theorem in `pure' Nonstandard Analysis, i.e.\ formulated solely with nonstandard definitions, into (a proof of) the `constructive' version of the associated theorem.  We show that this template applies to most of ordinary mathematics, as captured by the Big Five categories of Reverse Mathematics.  
Hence, we observe that Nonstandard Analysis is indeed `unreasonably effective', as suggested by the title.  
Our results are \emph{higher-order} in nature, as witnessed by the third-order term $t$ from Example~\ref{exam}.  One can also obtain second-order results, as discussed in \cite{samtamc} and Remark \ref{tamcremark}.    

\smallskip

Finally, the results in this paper should be contrasted with the current `mainstream' view of Nonstandard Analysis:  
one usually thinks of the universe of standard objects as `the usual world of mathematics', which can be studied `from the outside' using nonstandard objects such as infinitesimals.     
In this richer framework, proofs can be much shorter than those from standard (=non-Nonstandard) analysis;  furthermore, there are \emph{conservation results} guaranteeing that theorems of usual mathematics \emph{proved using Nonstandard Analysis} can also be proved \emph{without using the latter}.  Thus, the starting and end point (in the mainstream view) is always the universe of standard objects, i.e.\ usual mathematics.  By contrast, our starting point is \emph{pure} Nonstandard Analysis and our end point is constructive mathematics.              

\subsection{Methodology}\label{kintro}
In this section, we discuss the methodology used in this paper.  In particular, we try to explain \emph{why} we observe the close connection between the nonstandard and effective versions of mathematical theorems.  
The following argument is somewhat vague but can (and will) be formalised using the formal systems $\P$ or $\H$ from \cite{brie}, which in turn will be introduced in Section \ref{base}.    
\begin{enumerate}
\item The \emph{nonstandard} definitions of continuity, integrability, convergence, etc.\ in Nonstandard Analysis can be brought into the `normal form' $(\forall^{\st}x)(\exists^{\st}y)\varphi(x, y)$, where $\varphi$ does not involve `st'.    
This can be done in $\P$ and usually also in $\H$.  
\item The aforementioned `normal form' is closed under \emph{modus ponens}; indeed, it is not difficult to show that an implication of the form:
\[
(\forall^{\st}x_{0})(\exists^{\st}y_{0})\varphi_{0}(x_{0}, y_{0})\di (\forall^{\st}x_{1})(\exists^{\st}y_{1})\varphi_{1}(x_{1}, y_{1}),
\]
can \emph{also} be brought into a normal form $(\forall^{\st}x)(\exists^{\st}y)\varphi(x, y)$.  Hence, it seems theorems formulated solely with nonstandard definitions can be brought into the latter normal form. 
This can always be done in $\H$ and $\P$.     
\item The normal form $(\forall^{\st}x)(\exists^{\st}y)\varphi(x, y)$ has exactly the right structure to yield the effective version $(\forall x)\varphi(x, t(x))$.   
In particular, from the proof of the normal form $(\forall^{\st}x)(\exists^{\st}y)\varphi(x, y)$ (inside $\H$ or $\P$), a term $s$ can be `read off' such that $(\forall x)(\exists y\in s(x))\varphi(x, y)$ 
has a proof inside a system \emph{involving no Nonstandard Analysis}.  The term $t$ is then defined in terms of $s$.         
\end{enumerate}
It goes without saying that most technical details have been omitted from the above sketch, this in order to promote intuitive understanding.  
Nonetheless, the previous three steps form the skeleton of the template $\CI$ introduced in Section~\ref{detail}.  
For the remainder of this paper, the notion of `normal form' shall \emph{always} refer to a formula of the form $(\forall^{\st}x)(\exists^{\st}y)\varphi(x, y)$ with $\varphi$ internal, i.e.\ without `st'.  

\smallskip

In light of the previous observations, the class of normal forms, and hence the scope of $\CI$, seems to be \emph{very large}.

\section{Background}\label{base}
In this section, we provide some background concering Nelson's \emph{internal set theory} and Friedman's foundational program \emph{Reverse Mathematics}.  
\subsection{Internal set theory and its fragments}\label{P}
In this section, we discuss Nelson's \emph{internal set theory}, first introduced in \cite{wownelly}, and its fragments $\P$ and $\H$ from \cite{brie}.  
The latter fragments are essential to our enterprise, especially Corollary~\ref{consresultcor} below.  
\subsubsection{Internal set theory 101}\label{IIST}
In Nelson's \emph{syntactic} approach to Nonstandard Analysis (\cite{wownelly}), as opposed to Robinson's semantic one (\cite{robinson1}), a new predicate `st($x$)', read as `$x$ is standard' is added to the language of \textsf{ZFC}, the usual foundation of mathematics.  
The notations $(\forall^{\st}x)$ and $(\exists^{\st}y)$ are short for $(\forall x)(\st(x)\di \dots)$ and $(\exists y)(\st(y)\wedge \dots)$.  A formula is called \emph{internal} if it does not involve `st', and \emph{external} otherwise.   
The three external axioms \emph{Idealisation}, \emph{Standard Part}, and \emph{Transfer} govern the new predicate `st';  they are respectively defined\footnote{The superscript `fin' in \textsf{(I)} means that $x$ is finite, i.e.\ its number of elements are bounded by a natural number.} as:  
\begin{enumerate}
\item[\textsf{(I)}] $(\forall^{\st~\textup{fin}}x)(\exists y)(\forall z\in x)\varphi(z,y)\di (\exists y)(\forall^{\st}x)\varphi(x,y)$, for internal $\varphi$ with any (possibly nonstandard) parameters.  
\item[\textsf{(S)}] $(\forall^{\st} x)(\exists^{\st}y)(\forall^{\st}z)\big((z\in x\wedge \varphi(z))\asa z\in y\big)$, for any $\varphi$.
\item[\textsf{(T)}] $(\forall^{\st}t)\big[(\forall^{\st}x)\varphi(x, t)\di (\forall x)\varphi(x, t)\big]$, where $\varphi(x,t)$ is internal, and only has free variables $t, x$.  
\end{enumerate}
The system \textsf{IST} is (the internal system) \textsf{ZFC} extended with the aforementioned external axioms;  
the former is a conservative extension of \textsf{ZFC} for the internal language, as proved in \cite{wownelly}.  

\smallskip

In \cite{brie}, the authors study G\"odel's system $\textsf{T}$ extended with special cases of the external axioms of \textsf{IST}.  
In particular, they consider the systems $\H$ and $\P$, introduced in the next section, which are conservative extensions of the (internal) logical systems \textsf{E-HA}$^{\omega}$ and $\textsf{E-PA}^{\omega}$, respectively \emph{Heyting and Peano arithmetic in all finite types and the axiom of extensionality}.       
We refer to \cite{kohlenbach3}*{\S3.3} for the exact definitions of the (mainstream in mathematical logic) systems \textsf{E-HA}$^{\omega}$ and $\textsf{E-PA}^{\omega}$.  
Furthermore, \textsf{E-PA}$^{\omega*}$ and $\textsf{E-HA}^{\omega*}$ are the definitional extensions of \textsf{E-PA}$^{\omega}$ and $\textsf{E-HA}^{\omega}$ with types for finite sequences, as in \cite{brie}*{\S2}.  For the former systems, we require some notation.  
\begin{nota}[Finite sequences]\label{skim}\rm
The systems $\textsf{E-PA}^{\omega*}$ and $\textsf{E-HA}^{\omega*}$ have a dedicated type for `finite sequences of objects of type $\rho$', namely $\rho^{*}$.  Since the usual coding of pairs of numbers goes through in both, we shall not always distinguish between $0$ and $0^{*}$.  
Similarly, we do not always distinguish between `$s^{\rho}$' and `$\langle s^{\rho}\rangle$', where the former is `the object $s$ of type $\rho$', and the latter is `the sequence of type $\rho^{*}$ with only element $s^{\rho}$'.  The empty sequence for the type $\rho^{*}$ is denoted by `$\langle \rangle_{\rho}$', usually with the typing omitted.  Furthermore, we denote by `$|s|=n$' the length of the finite sequence $s^{\rho^{*}}=\langle s_{0}^{\rho},s_{1}^{\rho},\dots,s_{n-1}^{\rho}\rangle$, where $|\langle\rangle|=0$, i.e.\ the empty sequence has length zero.
For sequences $s^{\rho^{*}}, t^{\rho^{*}}$, we denote by `$s*t$' the concatenation of $s$ and $t$, i.e.\ $(s*t)(i)=s(i)$ for $i<|s|$ and $(s*t)(j)=t(|s|-j)$ for $|s|\leq j< |s|+|t|$. For a sequence $s^{\rho^{*}}$, we define $\overline{s}N:=\langle s(0), s(1), \dots,  s(N)\rangle $ for $N^{0}<|s|$.  
For a sequence $\alpha^{0\di \rho}$, we also write $\overline{\alpha}N=\langle \alpha(0), \alpha(1),\dots, \alpha(N)\rangle$ for \emph{any} $N^{0}$.  By way of shorthand, $q^{\rho}\in Q^{\rho^{*}}$ abbreviates $(\exists i<|Q|)(Q(i)=_{\rho}q)$.  Finally, we shall use $\underline{x}, \underline{y},\underline{t}, \dots$ as short for tuples $x_{0}^{\sigma_{0}}, \dots x_{k}^{\sigma_{k}}$ of possibly different type $\sigma_{i}$.          
\end{nota}    

\subsubsection{The classical system $\P$}\label{PIPI}
In this section, we introduce the system $\P$, a conservative extension of $\textsf{E-PA}^{\omega}$ with fragments of Nelson's $\IST$.  

\smallskip

To this end, we first introduce the base system $\textsf{E-PA}_{\st}^{\omega*}$.  
We use the same definition as \cite{brie}*{Def.~6.1}, where \textsf{E-PA}$^{\omega*}$ is the definitional extension of \textsf{E-PA}$^{\omega}$ with types for finite sequences as in \cite{brie}*{\S2}.  
The set $\T^{*}$ is defined as the collection of all the terms in the language of $\textsf{E-PA}^{\omega*}$.    
\bdefi\label{debs}
The system $ \textsf{E-PA}^{\omega*}_{\st} $ is defined as $ \textsf{E-PA}^{\omega{*}} + \T^{*}_{\st} + \textsf{IA}^{\st}$, where $\T^{*}_{\st}$
consists of the following axiom schemas.
\begin{enumerate}
\item The schema\footnote{The language of $\textsf{E-PA}_{\st}^{\omega*}$ contains a symbol $\st_{\sigma}$ for each finite type $\sigma$, but the subscript is essentially always omitted.  Hence $\T^{*}_{\st}$ is an \emph{axiom schema} and not an axiom.\label{omit}} $\st(x)\wedge x=y\di\st(y)$,
\item The schema providing for each closed\footnote{A term is called \emph{closed} in \cite{brie} (and in this paper) if all variables are bound via lambda abstraction.  Thus, if $\underline{x}, \underline{y}$ are the only variables occurring in the term $t$, the term $(\lambda \underline{x})(\lambda\underline{y})t(\underline{x}, \underline{y})$ is closed while $(\lambda \underline{x})t(\underline{x}, \underline{y})$ is not.  The second axiom in Definition \ref{debs} thus expresses that $\st_{\tau}\big((\lambda \underline{x})(\lambda\underline{y})t(\underline{x}, \underline{y})\big)$ if $(\lambda \underline{x})(\lambda\underline{y})t(\underline{x}, \underline{y})$ is of type $\tau$.  We usually omit lambda abstraction for brevity.\label{kootsie}} term $t\in \T^{*}$ the axiom $\st(t)$.
\item The schema $\st(f)\wedge \st(x)\di \st(f(x))$.
\end{enumerate}
The external induction axiom \textsf{IA}$^{\st}$ is as follows.  
\be\tag{\textsf{IA}$^{\st}$}
\Phi(0)\wedge(\forall^{\st}n^{0})(\Phi(n) \di\Phi(n+1))\di(\forall^{\st}n^{0})\Phi(n).     
\ee
\edefi
Secondly, we introduce some essential fragments of $\IST$ studied in \cite{brie}.  
\bdefi[External axioms of $\P$]~
\begin{enumerate}
\item$\HAC_{\INT}$: for any internal formula $\varphi$, we have
\be\label{HACINT}
(\forall^{\st}x^{\rho})(\exists^{\st}y^{\tau})\varphi(x, y)\di \big(\exists^{\st}F^{\rho\di \tau^{*}}\big)(\forall^{\st}x^{\rho})(\exists y^{\tau}\in F(x))\varphi(x,y),
\ee
\item $\textsf{I}$: for any internal formula $\varphi$, we have
\[
(\forall^{\st} x^{\sigma^{*}})(\exists y^{\tau} )(\forall z^{\sigma}\in x)\varphi(z,y)\di (\exists y^{\tau})(\forall^{\st} x^{\sigma})\varphi(x,y), 
\]
\item The system $\P$ is $\textsf{E-PA}_{\st}^{\omega*}+\textsf{I}+\HAC_{\INT}$.
\end{enumerate}
\end{defi}
Note that \textsf{I} and $\HAC_{\INT}$ are fragments of Nelson's axioms \emph{Idealisation} and \emph{Standard part}.  
By definition, $F$ in \eqref{HACINT} only provides a \emph{finite sequence} of witnesses to $(\exists^{\st}y)$, explaining its name \emph{Herbrandized Axiom of Choice}.   

\smallskip

The system $\P$ is connected to $\textsf{E-PA}^{\omega}$ by the following theorem.    
Here, the superscript `$S_{\st}$' is the syntactic translation defined in \cite{brie}*{Def.\ 7.1}, and also listed starting with \eqref{dombu} in the proof of Corollary \ref{consresultcor}.    
\begin{thm}\label{consresult}
Let $\Phi(\tup a)$ be a formula in the language of \textup{\textsf{E-PA}}$^{\omega*}_{\st}$ and suppose $\Phi(\tup a)^\Sh\equiv\forallst \tup x \, \existsst \tup y \, \varphi(\tup x, \tup y, \tup a)$. If $\Delta_{\intern}$ is a collection of internal formulas and
\be\label{antecedn}
\P + \Delta_{\intern} \vdash \Phi(\tup a), 
\ee
then one can extract from the proof a sequence of closed\footnote{Recall the definition of closed terms from \cite{brie} as sketched in Footnote \ref{kootsie}.\label{kootsie2}} terms $t$ in $\mathcal{T}^{*}$ such that
\be\label{consequalty}
\textup{\textsf{E-PA}}^{\omega*} + \Delta_{\intern} \vdash\  \forall \tup x \, \exists \tup y\in \tup t(\tup x)\ \varphi(\tup x,\tup y, \tup a).
\ee
\end{thm}
\begin{proof}
Immediate by \cite{brie}*{Theorem 7.7}.  
\end{proof}
The proofs of the soundness theorems in \cite{brie}*{\S5-7} provide an algorithm $\mathcal{A}$ to obtain the term $t$ from the theorem.  In particular, these terms 
can be `read off' from the nonstandard proofs.    

\smallskip

In light of Section \ref{kintro}, the following corollary (which is not present in \cite{brie}) is essential to our results.  Indeed, the following corollary expresses that we may obtain effective results as in \eqref{effewachten} from any theorem of Nonstandard Analysis which has the same form as in \eqref{bog}.  In Sections \ref{main} and \ref{RMSTUD}, we show that the scope of this corollary includes the Big Five systems of Reverse Mathematics (see Section \ref{RM}).  
\begin{cor}\label{consresultcor}
If $\Delta_{\intern}$ is a collection of internal formulas and $\psi$ is internal, and
\be\label{bog}
\P + \Delta_{\intern} \vdash (\forall^{\st}\underline{x})(\exists^{\st}\underline{y})\psi(\underline{x},\underline{y}, \underline{a}), 
\ee
then one can extract from the proof a sequence of closed$^{\ref{kootsie2}}$ terms $t$ in $\mathcal{T}^{*}$ such that
\be\label{effewachten}
\textup{\textsf{E-PA}}^{\omega*} +\QFAC^{1,0}+ \Delta_{\intern} \vdash (\forall \underline{x})(\exists \underline{y}\in t(\underline{x}))\psi(\underline{x},\underline{y},\underline{a}).
\ee
\end{cor}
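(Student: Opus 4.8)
The plan is to obtain the corollary as a direct instance of Theorem~\ref{consresult}.  First I would take $\Phi(\underline{a})$ to be the normal-form formula $(\forall^{\st}\underline{x})(\exists^{\st}\underline{y})\psi(\underline{x},\underline{y},\underline{a})$ appearing in the hypothesis~\eqref{bog}; then \eqref{bog} reads $\P + \Delta_{\intern}\vdash\Phi(\underline{a})$, which is precisely the antecedent~\eqref{antecedn} of Theorem~\ref{consresult}.  It therefore remains only to identify the internal matrix $\varphi$ of $\Phi(\underline{a})^{S_{\st}}$ and to compare the two base theories involved.

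For the first point, I would run the clauses of the syntactic translation $(\cdot)^{S_{\st}}$ from \cite{brie}*{Def.\ 7.1} on $\Phi(\underline{a})$.  Since $\psi$ is internal, the base clause gives $\psi^{S_{\st}}\equiv\psi$ with empty tuples of quantifiers; the clause for $(\exists^{\st}\underline{y})(\cdot)$ then appends the block $\exists^{\st}\underline{y}$ (with empty accompanying universal block), and the clause for $(\forall^{\st}\underline{x})(\cdot)$ merely prepends $\forall^{\st}\underline{x}$.  Hence $\Phi(\underline{a})^{S_{\st}}$ is again, up to trivial re-tupling of the $\underline{y}$-block, the formula $(\forall^{\st}\underline{x})(\exists^{\st}\underline{y})\psi(\underline{x},\underline{y},\underline{a})$; in the notation of Theorem~\ref{consresult} this means $\varphi(\underline{x},\underline{y},\underline{a})$ is just $\psi(\underline{x},\underline{y},\underline{a})$.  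Applying Theorem~\ref{consresult} --- equivalently, feeding the given derivation of \eqref{bog} into the extraction algorithm $\mathcal{A}$ underlying \cite{brie}*{Theorem 7.7} --- then yields a sequence of closed terms $t$ in $\mathcal{T}^{*}$ with $\textup{\textsf{E-PA}}^{\omega*}+\Delta_{\intern}\vdash(\forall\underline{x})(\exists\underline{y}\in t(\underline{x}))\psi(\underline{x},\underline{y},\underline{a})$.  Finally, $\textup{\textsf{E-PA}}^{\omega*}+\QFAC^{1,0}+\Delta_{\intern}$ extends $\textup{\textsf{E-PA}}^{\omega*}+\Delta_{\intern}$, so the displayed conclusion~\eqref{effewachten} follows \emph{a fortiori}; the axiom $\QFAC^{1,0}$ is in fact not used in this derivation and is carried along only for the later applications, where one wants to pass from the finite list $t(\underline{x})$ of candidate witnesses to an honest witnessing functional.

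The one step that genuinely requires attention --- and the one I would flag as the crux --- is the translation computation just described: one must check, clause by clause against \cite{brie}*{Def.\ 7.1}, that applying $(\cdot)^{S_{\st}}$ to a formula already in the normal form $(\forall^{\st}\underline{x})(\exists^{\st}\underline{y})\psi$ with $\psi$ internal introduces neither a Herbrandisation inside the matrix nor any spurious universal quantifier, but simply rebuilds the same $\forall^{\st}\exists^{\st}$-prefix over the unchanged internal kernel $\psi$.  Granting this, everything else is routine bookkeeping together with monotonicity of $\vdash$ in the axiom set.
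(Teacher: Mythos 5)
Your proposal is correct and follows the paper's own route: reduce to Theorem \ref{consresult} by checking that the normal form $(\forall^{\st}\underline{x})(\exists^{\st}\underline{y})\psi$ is (equivalent to) its own $S_{\st}$-interpretation, so that $\varphi\equiv\psi$, and then observe that adding $\QFAC^{1,0}$ to the base theory only weakens what has to be proved. The one caveat is that \cite{brie}*{Def.\ 7.1} has no primitive clauses for $\exists^{\st}$ and $\forall^{\st}$ --- these must be unfolded through the clauses for $\neg$, $\vee$, $(\forall z)$ and $\st(z)$, whose Herbrandised searches are then shown to collapse (the `tedious but straightforward' computation the paper carries out, ending with the instantiation $\underline{x}=\underline{w}'$) --- so the invariance you rightly flag as the crux is an equivalence established by that calculation, not the immediate prefix-rebuilding your clause description suggests.
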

\begin{proof}
Clearly, if for internal $\psi$ and $\Phi(\underline{a})\equiv (\forall^{\st}\underline{x})(\exists^{\st}\underline{y})\psi(x, y, a)$, we have $[\Phi(\underline{a})]^{S_{\st}}\equiv \Phi(\underline{a})$, then the corollary follows immediately from the theorem.  
A tedious but straightforward verification using the clauses (i)-(v) in \cite{brie}*{Def.\ 7.1} establishes that indeed $\Phi(\underline{a})^{S_{\st}}\equiv \Phi(\underline{a})$.  
For completeness, we now list these five inductive clauses and perform this verification.  

\smallskip

Hence, suppose $\Phi(\underline{a})$ and $\Psi(\underline{b})$  in the language of $\P$ have the interpretations
\be\label{dombu}
\Phi(\underline{a})^{S_{\st}}\equiv (\forall^{\st}\underline{x})(\exists^{\st}\underline{y})\varphi(\underline{x},\underline{y},\underline{a}) \textup{ and } \Psi(\underline{b})^{S_{\st}}\equiv (\forall^{\st}\underline{u})(\exists^{\st}\underline{v})\psi(\underline{u},\underline{v},\underline{b}),
\ee
 for internal $\psi, \varphi$.  These formulas then behave as follows by \cite{brie}*{Def.\ 7.1}:
\begin{enumerate}[(i)]
\item $\psi^{S_{\st}}:=\psi$ for atomic internal $\psi$.  
\item$ \big(\st(z)\big)^{S_{\st}}:=(\exists^{\st}x)(z=x)$.
\item $(\neg \Phi)^{S_{\st}}:=(\forall^{\st} \underline{Y})(\exists^{\st}\underline{x})(\forall \underline{y}\in \underline{Y}[\underline{x}])\neg\varphi(\underline{x},\underline{y},\underline{a})$.  
\item$(\Phi\vee \Psi)^{S_{\st}}:=(\forall^{\st}\underline{x},\underline{u})(\exists^{\st}\underline{y}, \underline{v})[\varphi(\underline{x},\underline{y},\underline{a})\vee \psi(\underline{u},\underline{v},\underline{b})]$
\item $\big( (\forall z)\Phi \big)^{S_{\st}}:=(\forall^{\st}\underline{x})(\exists^{\st}\underline{y})(\forall z)(\exists \underline{y}'\in \underline{y})\varphi(\underline{x},\underline{y}',z)$
\end{enumerate}
Hence, fix $\Phi_{0}(\underline{a})\equiv(\forall^{\st}\underline{x})(\exists^{\st}\underline{y})\psi_{0}(\underline{x},\underline{y}, \underline{a})$ with internal $\psi_{0}$, and note that $\phi^{S_{\st}}\equiv\phi$ for any internal formula.  
We have $[\st(\underline{y})]^{S_{\st}}\equiv (\exists^{\st} \underline{w})(\underline{w}=\underline{y})$ and also 
\[
[\neg\st(\underline{y})]^{S_{\st}}\equiv (\forall^{\st} \underline{W} ) (\exists^{\st}\underline{x})(\forall \underline{w}\in \underline{W}[\underline{x}])\neg(\underline{w}=\underline{y})\equiv (\forall^{\st}\underline{w})(\underline{w}\ne \underline{y}).  
\]    
Hence, $[\neg\st(\underline{y})\vee\neg \psi_{0}(\underline{x}, \underline{y}, \underline{a})]^{S_{\st}}$ is just $(\forall^{\st}\underline{w})[(\underline{w}\ne \underline{y}) \vee \neg \psi_{0}(\underline{x}, \underline{y}, \underline{a})]$, and 
\[
\big[(\forall \underline{y})[\neg\st(\underline{y})\vee \neg\psi_{0}(\underline{x}, \underline{y}, \underline{a})]\big]^{S_{\st}}\equiv
 (\forall^{\st}\underline{w})(\exists^{\st}\underline{v})(\forall \underline{y})(\exists \underline{v}'\in \underline{v})[\underline{w}\ne\underline{y}\vee \neg\psi_{0}(\underline{x}, \underline{y}, \underline{a})].
\]
which is just $(\forall^{\st}\underline{w})(\forall \underline{y})[(\underline{w}\ne \underline{y}) \vee \neg\psi_{0}(\underline{x}, \underline{y}, \underline{a})]$.  Furthermore, we have
\begin{align*}
\big[(\exists^{\st}y)\psi_{0}(\underline{x}, \tup y, \tup a)\big]^{S_{\st}}&\equiv\big[\neg(\forall \underline{y})[\neg\st(\underline{y})\vee\neg \psi_{0}(\underline{x}, \underline{y}, \underline{a})]\big]^{S_{\st}}\\
&\equiv(\forall^{\st} \underline{V})(\exists^{\st}\underline{w})(\forall \underline{v}\in \underline{V}[\underline{w}])\neg[(\forall \underline{y})[(\underline{w}\ne \underline{y}) \vee \neg\psi_{0}(\underline{x}, \underline{y}, \underline{a})]].\\
&\equiv (\exists^{\st}\underline{w})(\exists \underline{y})[(\underline{w}= \underline{y}) \wedge \psi_{0}(\underline{x}, \underline{y}, \underline{a})]]\equiv (\exists^{\st}\underline{w})\psi_{0}(\underline{x}, \underline{w}, \underline{a}).
\end{align*}
Hence, we have proved so far that $(\exists^{\st}\underline{y})\psi_{0}(\underline{x}, \underline{y}, \underline{a})$ is invariant under $S_{\st}$.  By the previous, we also obtain:  
\[
\big[\neg \st(\underline{x})\vee (\exists^{\st}y)\psi_{0}(\underline{x}, \tup y, \tup a)\big]^{S_{\st}}\equiv  (\forall^{\st}\underline{w}')(\exists^{\st} \underline{w})[(\underline{w}'\ne \underline{x}) \vee \psi_{0}(\tup x, \tup w, \tup a)].
\]
Our final computation now yields the desired result: 
\begin{align*}
\big[(\forall^{\st} \underline{x})(\exists^{\st}y)\psi_{0}(\underline{x}, \tup y, \tup a)\big]^{S_{\st}}
&\equiv\big[(\forall \underline{x})(\neg \st(\underline{x})\vee (\exists^{\st}y)\psi_{0}(\underline{x}, \tup y, \tup a))\big]^{S_{\st}}\\
&\equiv(\forall^{\st}\underline{w}')(\exists^{\st} \underline{w})(\forall \underline{x})(\exists \underline{w}''\in \underline{w})[(\underline{w}'\ne \underline{x}) \vee \psi_{0}(\tup x, \tup w'', \tup a)].\\
&\equiv(\forall^{\st}\underline{w}')(\exists^{\st} \underline{w})(\exists \underline{w}''\in \underline{w}) \psi_{0}(\tup w', \tup w'', \tup a).
\end{align*}
The last step is obtained by taking $\underline{x}=\underline{w}'$.  Hence, we may conclude that the normal form $(\forall^{\st} \underline{x})(\exists^{\st}y)\psi_{0}(\underline{x}, \tup y, \tup a)$ is invariant under $S_{\st}$, and we are done.    
\end{proof}
For the rest of this paper, the notion `normal form' shall refer to a formula as in \eqref{bog}, i.e.\ of the form $(\forall^{\st}x)(\exists^{\st}y)\varphi(x,y)$ for $\varphi$ internal.  

\smallskip

Finally, the previous theorems do not really depend on the presence of full Peano arithmetic.  
We shall study the following subsystems.   
\bdefi[Weak systems]~
\begin{enumerate}
\item Let \textsf{E-PRA}$^{\omega}$ be the system defined in \cite{kohlenbach2}*{\S2} and let \textsf{E-PRA}$^{\omega*}$ 
be its definitional extension with types for finite sequences as in \cite{brie}*{\S2}. 
\item $(\QFAC^{\rho, \tau})$ For every quantifier-free internal formula $\varphi(x,y)$, we have
\be\label{keuze}
(\forall x^{\rho})(\exists y^{\tau})\varphi(x,y) \di (\exists F^{\rho\di \tau})(\forall x^{\rho})\varphi(x,F(x))
\ee
\item The system $\RCAo$ is $\textsf{E-PRA}^{\omega}+\QFAC^{1,0}$.  
\end{enumerate}
\edefi
The system $\RCAo$ is the `base theory of higher-order Reverse Mathematics' as introduced in \cite{kohlenbach2}*{\S2}.  
We permit ourselves a slight abuse of notation by also referring to the system $\textsf{E-PRA}^{\omega*}+\QFAC^{1,0}$ as $\RCAo$.
\begin{cor}\label{consresultcor2}
The previous theorem and corollary go through for $\P$ and $\textsf{\textup{E-PA}}^{\omega*}$ replaced by $\P_{0}\equiv \textsf{\textup{E-PRA}}^{\omega*}+\T_{\st}^{*} +\HAC_{\INT} +\textsf{\textup{I}}+\QFAC^{1,0}$ and $\RCAo$.  
\end{cor}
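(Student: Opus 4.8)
The plan is to recognise that Corollary~\ref{consresultcor2} is not a new result but a \emph{localisation} of Theorem~\ref{consresult} and Corollary~\ref{consresultcor} to the primitive-recursive level, obtained by re-examining the proof of \cite{brie}*{Theorem~7.7}---on which both rest---while recording exactly which base theory is needed at each step. The guiding point is that full Peano arithmetic is nowhere used essentially in that soundness proof: it runs by induction on the length of the $\P$-derivation; the external axioms $\textsf{I}$ and $\HAC_{\INT}$ are interpreted essentially trivially (they are, for the classical interpretation $S_{\st}$, among its characteristic principles) and the schemas $\T^{*}_{\st}$ directly, so that the only recursion the argument manufactures is the type-$0$ recursor $R_{0}$ needed to interpret the arithmetical induction; and every interpreted formula it yields is then verified inside the internal base theory itself.

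Concretely, I would proceed as follows. First, $\P_{0}$ differs from $\P$ in precisely three respects: the internal base \textsf{E-PA}$^{\omega*}$ is weakened to \textsf{E-PRA}$^{\omega*}$; the external induction axiom $\textsf{IA}^{\st}$ is deleted; and $\QFAC^{1,0}$ is added. Weakening \textsf{E-PA}$^{\omega*}$ to \textsf{E-PRA}$^{\omega*}$ restricts the internal induction to its quantifier-free fragment, whose functional interpretation is still witnessed by $R_{0}$, so that---by the guiding point---no higher-type recursion is needed anywhere and all witnessing terms already live in the term calculus of \textsf{E-PRA}$^{\omega*}$. Deleting the external axiom $\textsf{IA}^{\st}$ can only shorten the soundness argument. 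And $\QFAC^{1,0}$ has a quantifier-free matrix and no occurrence of `$\st$', hence is internal in the sense of \cite{brie}; it may therefore be absorbed into the collection $\Delta_{\intern}$ and carried through the extraction unchanged---which is exactly why the conclusion lands in $\RCAo=\textsf{E-PRA}^{\omega*}+\QFAC^{1,0}$ rather than in bare \textsf{E-PRA}$^{\omega*}$. With these three observations in hand, replaying the proof of \cite{brie}*{Theorem~7.7} over the weaker base yields the analogue of Theorem~\ref{consresult}; and the analogue of Corollary~\ref{consresultcor} then follows exactly as in the excerpt, since the computation establishing $\Phi(\underline{a})^{S_{\st}}\equiv\Phi(\underline{a})$ for normal forms uses only the syntactic clauses (i)--(v) of the $S_{\st}$-translation and is wholly indifferent to the ambient arithmetic.

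The bulk of the work---and the one step that is not purely formal---is the tedious but routine verification underpinning the guiding point: going through the proof of \cite{brie}*{Theorem~7.7} axiom by axiom and rule by rule to confirm that the terms it extracts, and the meta-level correctness arguments for the schemas $\T^{*}_{\st}$ and the axioms $\textsf{I}$ and $\HAC_{\INT}$, never call on a recursor above $R_{0}$ nor on any instance of full or higher-type induction, so that everything remains inside \textsf{E-PRA}$^{\omega*}$. This is entirely analogous to the familiar fact that the functional-interpretation metatheorems of proof mining localise from \textsf{E-PA}$^{\omega}$ to \textsf{E-PRA}$^{\omega}$ (cf.\ \cite{kohlenbach3}), so no surprises are expected; but it is precisely what the phrase `goes through' in the statement abbreviates, and hence the only part of the argument requiring genuine checking.
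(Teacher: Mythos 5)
Your proposal is correct and takes essentially the same route as the paper: the paper's own proof is just the observation that the soundness argument of \cite{brie}*{Theorem 7.7} goes through over any fragment of $\textsf{E-PA}^{\omega*}$ containing $\textsf{I}\Delta_{0}+\textsf{EXP}$, since the exponential function is all that is needed to manipulate the finite sequences arising in the extraction. Your more detailed axiom-by-axiom localisation (no recursor beyond $R_{0}$, dropping $\textsf{IA}^{\st}$, absorbing $\QFAC^{1,0}$ as internal axioms so the conclusion lands in $\RCAo$) is simply a fleshed-out version of that same observation.
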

\begin{proof}
The proof of \cite{brie}*{Theorem 7.7} goes through for any fragment of \textsf{E-PA}$^{\omega{*}}$ which includes \textsf{EFA}, sometimes also called $\textsf{I}\Delta_{0}+\textsf{EXP}$.  
In particular, the exponential function is (all what is) required to `easily' manipulate finite sequences.    
\end{proof}
We now discuss the \emph{Standard Part} principle $\Omega$\textsf{-CA}, a very practical consequence of the axiom $\HAC_{\INT}$.  
Intuitively speaking, $\Omega$\textsf{-CA} expresses that we can obtain 
the standard part (in casu $G$) of \emph{$\Omega$-invariant} nonstandard objects (in casu $F(\cdot,M)$).   
Note that we write `$N\in \Omega$' as short for $\neg\st(N^{0})$.
\bdefi[$\Omega$-invariance]\label{homega} Let $F^{(\sigma\times  0)\di 0}$ be standard and fix $M^{0}\in \Omega$.  
Then $F(\cdot,M)$ is {\bf $\Omega$-invariant} if   
\be\label{homegainv}
(\forall^{\st} x^{\sigma})(\forall N^{0}\in \Omega)\big[F(x ,M)=_{0}F(x,N) \big].  
\ee
\edefi
\begin{princ}[$\Omega$\textsf{-CA}]\rm Let $F^{(\sigma\times 0)\di 0}$ be standard and fix $M^{0}\in \Omega$.
For every $\Omega$-invariant $F(\cdot,M)$, there is a standard $G^{\sigma\di 0}$ such that
\be\label{homegaca}
(\forall^{\st} x^{\sigma})(\forall N^{0}\in \Omega)\big[G(x)=_{0}F(x,N) \big].  
\ee
\end{princ}
The axiom $\Omega$\textsf{-CA} provides the standard part of a nonstandard object, if the latter is \emph{independent of the choice of nonstandard number} used in its definition.  
\begin{thm}\label{drifh}
The system $\P_{0}$ proves $\Omega\textup{\textsf{-CA}}$.  
\end{thm}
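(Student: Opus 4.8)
The plan is to derive $\Omega\textsf{-CA}$ from the Herbrandized choice axiom $\HAC_{\INT}$, using along the way the \emph{underspill} principle, which I would first establish inside $\P_{0}$ from idealisation $\textsf{I}$. Concretely, given an internal $\chi$ (possibly with parameters) with $(\forall^{\st}n^{0})\chi(n)$, apply $\textsf{I}$ to $\varphi(z^{0},y^{0}):\equiv(z\leq y\wedge\chi(y))$: for a standard finite sequence $x^{0^{*}}$, the number $y:=\max(x)+1$ is standard (it is a closed term applied to the standard $x$, so clause (3) of $\T^{*}_{\st}$ applies), and it satisfies both $\chi(y)$ and $z\leq y$ for all $z\in x$; hence the antecedent of $\textsf{I}$ holds and its conclusion yields some $y^{0}$ with $\chi(y)$ and $(\forall^{\st}m^{0})(m\leq y)$, which forces $\neg\st(y)$. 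Contraposing gives underspill: for internal $\psi$, $(\forall N^{0}\in\Omega)\psi(N)\di(\exists^{\st}n^{0})\psi(n)$.

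Now assume the hypotheses of $\Omega\textsf{-CA}$: $F^{(\sigma\times0)\di0}$ is standard, $\neg\st(M^{0})$, and \eqref{homegainv} holds. Consider the \emph{internal} formula $\psi(x^{\sigma},n^{0}):\equiv(\forall N^{0},N'^{0}\geq_{0}n)[F(x,N)=_{0}F(x,N')]$, i.e.\ ``$F(x,\cdot)$ is constant above $n$''. I claim $(\forall^{\st}x^{\sigma})(\exists^{\st}n^{0})\psi(x,n)$: fix standard $x$; if $n$ is nonstandard then every $N,N'\geq n$ is nonstandard (a natural number below a standard one being standard, by external induction), so \eqref{homegainv} gives $F(x,N)=F(x,M)=F(x,N')$; thus $\psi(x,n)$ holds for all $n\in\Omega$, and underspill produces a standard $n$ with $\psi(x,n)$. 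This is a normal form, so $\HAC_{\INT}$ yields a standard $\Phi^{\sigma\di0^{*}}$ with $(\forall^{\st}x^{\sigma})(\exists n\in\Phi(x))\psi(x,n)$. Put $h:=\lambda x.\max(\Phi(x))$, which is standard by the closure conditions of $\T^{*}_{\st}$ (a closed term applied to the standard $\Phi$); since $\psi(x,\cdot)$ is upward closed, $(\forall^{\st}x^{\sigma})\psi(x,h(x))$.

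Finally set $G:=\lambda x.F(x,h(x))$, which is standard since it is a closed term applied to the standard objects $F$ and $h$. To verify \eqref{homegaca}, fix standard $x$ and $N\in\Omega$: as $h(x)$ is standard we have $N>h(x)$, so $\psi(x,h(x))$, applied with the two arguments $h(x)$ and $N$, gives $F(x,N)=F(x,h(x))=G(x)$; this is exactly \eqref{homegaca}, whence $\P_{0}\vdash\Omega\textsf{-CA}$. The one point I expect to require care is the derivation of overspill/underspill \emph{within the fragment} $\P_{0}$ — the standardness bookkeeping around $\T^{*}_{\st}$ together with the fact that $\textsf{I}$, stated for the finite-sequence type, is strong enough — while the remainder is a routine application of $\HAC_{\INT}$ combined with the observation that an internal sequence which is constant on the nonstandard part is already constant above some standard threshold.
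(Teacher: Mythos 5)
Your proof is correct and follows essentially the same route as the paper: obtain the normal form $(\forall^{\st}x)(\exists^{\st}k)(\forall N,M\geq k)[F(x,M)=F(x,N)]$, apply $\HAC_{\INT}$, take the maximum of the finite sequence of witnesses, and set $G(x):=F(x,\Psi(x))$. The only (inessential) difference is that you establish the standardness of the threshold via underspill derived from idealisation, whereas the paper's main argument uses the least-number principle from the induction axioms of $\P_{0}$ — and the paper itself notes that underspill would do equally well.
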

\begin{proof}
We sketch the derivation of $\Omega$\textsf{-CA} from $\HAC_{\INT}$.  
Let $F(\cdot,M^{0})$ be $\Omega$-invariant, i.e.\ we have 
\be\label{dorkillllll}
(\forall^{\st} x^{\sigma})(\forall N^{0},M^{0}\in \Omega)\big[F(x ,M)=_{0}F(x,N) \big].  
\ee
We immediately obtain (any nonstandard $k^{0}$ will do) that 
\[
(\forall^{\st} x^{\sigma})(\exists k^{0})(\forall N^{0},M^{0}\geq k)\big[F(x ,M)=_{0}F(x,N) \big].  
\]
By the induction axioms present in $\P_{0}$, there is a least such $k$ for every standard $x^{\sigma}$.  
By our assumption \eqref{dorkillllll}, such \emph{least} number $k^{0}$ must be standard, yielding:
\[
(\forall^{\st} x^{\sigma})(\exists^{\st}k^{0})(\forall N^{0},M^{0}\geq k)\big[F(x ,M)=_{0}F(x,N) \big], 
\]
which we could also have obtained via underspill (see \cite{brie}*{Prop.\ 5.11}).
Now apply $\HAC_{\INT}$ to obtain standard $\Phi^{\sigma\di 0}$ such that
\[
(\forall^{\st} x^{\sigma})(\exists k^{0}\in \Phi(x))(\forall N^{0},M^{0}\geq k)\big[F(x ,M)=_{0}F(x,N) \big].
\]
Next, define $\Psi(x):= \max_{i<|\Phi(x)|}\Phi(x)(i)$ and note that 
\[
(\forall^{\st} x^{\sigma})(\forall N^{0},M^{0}\geq \Psi(x))\big[F(x ,M)=_{0}F(x,N) \big].
\]
Finally, put $G(x):=F(x,\Psi(x))$ and note that $\Omega$\textsf{-CA} follows.
\end{proof}
Finally, Dinis, Ferreira, and Gaspar present systems similar to $\P$ and $\H$ in \cites{fega,dinispinis}.
As discussed in \cite{samdinis}, these systems are less suitable for our purposes.  

\subsubsection{The constructive system $\H$}
In this section, we define the system $\H$, the constructive counterpart of $\P$. 
The system $\textsf{H}$ was first introduced in \cite{brie}*{\S5.2}, and constitutes a conservative extension of Heyting arithmetic $\textup{\textsf{E-HA}}^{\omega} $ by \cite{brie}*{Cor.\ 5.6}.
We now study the system $\H$ in more detail.  

\smallskip

Similar to Definition \ref{debs}, we define $ \textsf{E-HA}^{\omega*}_{\st} $ as $ \textsf{E-HA}^{\omega{*}} + \T^{*}_{\st} + \textsf{IA}^{\st}$, where $\textsf{E-HA}^{\omega*}$ is just $\textsf{E-PA}^{\omega*}$ without the law of excluded middle.  
Furthermore, we define
\[
\H\equiv \textup{\textsf{E-HA}}^{\omega*}_{\st}+\HAC + {\I}+\NCR+\textsf{HIP}_{\forall^{\st}}+\textsf{HGMP}^{\st},
\]
where $\HAC$ is $\HAC_{\INT}$ without any restriction on the formula, and where the remaining axioms are defined in the following definition.
\bdefi[Three axioms of $\H$]\label{flah}~
\begin{enumerate}\rm
\item $\textsf{HIP}_{\forall^{\st}}$
\[
[(\forall^{\st}x)\phi(x)\di (\exists^{\st}y)\Psi(y)]\di (\exists^{\st}y')[(\forall^{\st}x)\phi(x)\di (\exists y\in y')\Psi(y)],
\]
where $\Psi(y)$ is any formula and $\phi(x)$ is an internal formula of \textsf{E-HA}$^{\omega*}$. 
\item $\textsf{HGMP}^{\st}$
\[
[(\forall^{\st}x)\phi(x)\di \psi] \di (\exists^{\st}x')[(\forall x\in x')\phi(x)\di \psi] 
\]
where $\phi(x)$ and $\psi$ are internal formulas in the language of \textsf{E-HA}$^{\omega*}$.
\item \textsf{NCR}
\[
(\forall y^{\tau})(\exists^{\st} x^{\rho} )\Phi(x, y) \di (\exists^{\st} x^{\rho^{*}})(\forall y^{\tau})(\exists x'\in x )\Phi(x', y),
\]
where $\Phi$ is any formula of \textsf{E-HA}$^{\omega*}$
\end{enumerate}
\edefi
Intuitively speaking, the first two axioms of Definition \ref{flah} allow us to perform a number of \emph{non-constructive operations} (namely \emph{Markov's principle} and \emph{independence of premises}) 
on the standard objects of the system $\H$, provided we introduce a `Herbrandisation' as in the consequent of $\HAC$, i.e.\ a finite list of possible witnesses rather than one single witness. 
Furthermore, while $\H$ includes idealisation \textsf{I}, one often uses the latter's \emph{classical contraposition}, explaining why \textsf{NCR} is useful (and even essential) in the context of intuitionistic logic.  We discuss the constructive nature of the axioms from Definition \ref{flah} in more detail in Remark \ref{dingdong}.       

\smallskip

Surprisingly, the axioms from Definition \ref{flah} are exactly what is needed to convert nonstandard definitions (of continuity, integrability, convergence, et cetera) into the normal form $(\forall^{\st}x)(\exists^{\st}y)\varphi(x, y)$ for internal $\varphi$, as is clear from e.g.\ Corollary \ref{cordejedi}.  
The latter normal form plays an equally important role in the constructive case as in the classical case by the following theorem.  
\begin{thm}\label{consresult2}
If $\Delta_{\intern}$ is a collection of internal formulas, $\varphi$ is internal, and
\be\label{antecedn3}
\textup{\textsf{H}} + \Delta_{\intern} \vdash \forallst \tup x \, \existsst \tup y \, \varphi(\tup x, \tup y, \tup a), 
\ee
then one can extract from the proof a sequence of closed terms $t$ in $\mathcal{T}^{*}$ such that
\be\label{consequalty3}
\textup{\textsf{E-HA}}^{\omega*} + \Delta_{\intern} \vdash\  \forall \tup x \, \exists \tup y\in \tup t(\tup x)\ \varphi(\tup x,\tup y, \tup a).
\ee
\end{thm}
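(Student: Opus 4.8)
The plan is to reduce Theorem~\ref{consresult2} to the corresponding classical statement, Theorem~\ref{consresult}, by invoking the relevant soundness theorem from \cite{brie} for the intuitionistic system $\H$. Concretely, I would first recall that \cite{brie}*{\S5.2} establishes a term-extraction (soundness) result for $\H$ over $\textup{\textsf{E-HA}}^{\omega*}$ of exactly the same shape as \cite{brie}*{Theorem 7.7} does for $\P$ over $\textup{\textsf{E-PA}}^{\omega*}$: there is a syntactic translation $\Phi \mapsto \Phi^{S_{\st}}$ (here the relevant one is the intuitionistic $S$-interpretation of \cite{brie}*{Def.~5.4}) such that whenever $\H + \Delta_{\intern} \vdash \Phi(\tup a)$, one can extract closed terms $\tup t \in \mathcal{T}^*$ with $\textup{\textsf{E-HA}}^{\omega*} + \Delta_{\intern} \vdash \forall \tup x\, \varphi(\tup x, \tup t(\tup x), \tup a)$, where $\Phi^{S_{\st}} \equiv \forallst \tup x\, \existsst \tup y\, \varphi(\tup x, \tup y, \tup a)$. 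Since the hypothesis \eqref{antecedn3} puts $\Phi$ already in normal form, the only thing that needs checking is that the normal form is a fixed point of the intuitionistic $S$-interpretation, just as in Corollary~\ref{consresultcor}.

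The main step, therefore, is the verification that for internal $\varphi$ and $\Phi(\tup a) \equiv \forallst \tup x\, \existsst \tup y\, \varphi(\tup x, \tup y, \tup a)$ one has $\Phi(\tup a)^{S_{\st}} \equiv \Phi(\tup a)$. I would carry this out exactly as in the proof of Corollary~\ref{consresultcor}: internal atomic formulas are fixed by $S_{\st}$; one computes $[\st(\tup y)]^{S_{\st}}$ and its negation, then $[(\exists^{\st}\tup y)\varphi]^{S_{\st}}$, then $[\neg\st(\tup x) \vee (\exists^{\st}\tup y)\varphi]^{S_{\st}}$, and finally $[(\forall^{\st}\tup x)(\exists^{\st}\tup y)\varphi]^{S_{\st}}$, each time collapsing the Herbrandized witness sets by instantiating with the single witness produced. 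The clauses of the intuitionistic $S$-interpretation for $\wedge, \vee, \di, \forall, \exists, \neg$ on the standardness predicate are essentially the same as the clauses (i)--(v) recalled in the proof of Corollary~\ref{consresultcor} (the classical $S_{\st}$ is obtained from the intuitionistic one by the usual $\neg\neg$-manipulations, but on normal forms the two agree), so the computation is verbatim the one already performed. The only subtlety is that $\di$ must be handled directly rather than via $\neg \vee$; but $[(\forall^{\st}\tup x)\Phi]^{S_{\st}}$ and $[(\exists^{\st}\tup y)\varphi]^{S_{\st}}$ have the same defining clauses in both interpretations, so this causes no trouble.

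Granting the fixed-point computation, the argument concludes in one line: apply the $\H$-soundness theorem of \cite{brie} to \eqref{antecedn3} with $\Phi(\tup a)^{S_{\st}} \equiv \Phi(\tup a)$, obtaining closed terms $\tup t$ with $\textup{\textsf{E-HA}}^{\omega*} + \Delta_{\intern} \vdash \forall \tup x\, \varphi(\tup x, \tup t(\tup x), \tup a)$, which trivially yields \eqref{consequalty3} in the weaker form $\forall \tup x\, \exists \tup y \in \tup t(\tup x)\, \varphi(\tup x, \tup y, \tup a)$ (indeed here $\tup t$ produces the witness itself, or a finite sequence thereof, depending on the exact formulation of the $\H$-soundness theorem). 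As with Theorem~\ref{consresult}, the extraction is effective: the proof of the soundness theorem in \cite{brie}*{\S5} supplies an explicit algorithm producing $\tup t$ from the given derivation.

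I expect the only real obstacle to be bookkeeping: making sure the precise statement of the $\H$-analogue of \cite{brie}*{Theorem 7.7} is cited correctly (it lives in \cite{brie}*{\S5.2}, with the interpretation in \cite{brie}*{Def.~5.4} and soundness around \cite{brie}*{Thm.~5.5--5.9}), and confirming that its conclusion is stated with a finite tuple of witnesses $\tup y \in \tup t(\tup x)$ rather than a single witness, so that \eqref{consequalty3} matches it on the nose. The mathematical content of the fixed-point verification is routine and identical to the classical case; no new ideas are needed beyond observing that the normal form is insensitive to the difference between the classical and intuitionistic $S$-interpretations.
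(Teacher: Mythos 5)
Your proposal is correct and follows essentially the same route as the paper: the paper's proof simply cites the soundness/term-extraction theorem for $\H$ from \cite{brie} (Theorem 5.9) and notes that, just as in Corollary \ref{consresultcor}, the normal form $\forallst \tup x\,\existsst \tup y\,\varphi$ is invariant under the relevant syntactic interpretation. The only caveat is a citation detail: the interpretation underlying the $\H$-soundness theorem in \cite{brie}*{\S5} is the nonstandard Dialectica-style interpretation rather than the $S_{\st}$-interpretation of \cite{brie}*{Def.\ 7.1}, but this does not affect the argument.
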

\begin{proof}
Immediate by \cite{brie}*{Theorem 5.9}.  Note that in the latter, just like in the proof of Corollary \ref{consresultcor}, $\forallst \tup x \, \existsst \tup y \, \varphi(\tup x, \tup y, \tup a) $ is proved to be `invariant' under a suitable syntactic translation.  
\end{proof}
The proofs of the soundness theorems in \cite{brie}*{\S5-7} provide an algorithm $\mathcal{B}$ to obtain the term $t$ from the theorem.  
We have formalised $\mathcal{B}$ in \cite{EXCESS} in the proof assistant Agda.
Finally, we point out one very useful principle.  
\begin{thm}\label{doppi}
The systems $\P, \H$, and $\P_{0}$ prove \emph{overspill}, i.e.\
\be\tag{\textsf{OS}}
(\forall^{\st}x^{\rho})\varphi(x)\di (\exists y^{\rho})\big[\neg\st(y)\wedge \varphi(y)  \big],
\ee
for any internal formula $\varphi$.
\end{thm}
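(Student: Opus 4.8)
The plan is to derive overspill directly from the idealisation axiom $\textsf{I}$, using the fact that $\textsf{I}$ belongs to each of $\P$, $\H$, and $\P_{0}$, and carrying out the derivation constructively so that the intuitionistic system $\H$ poses no extra difficulty. Fix an internal $\varphi$ and assume $(\forall^{\st}x^{\rho})\varphi(x)$. I would apply $\textsf{I}$ with $\sigma:=\tau:=\rho$ to the internal formula $\psi(x^{\rho},y^{\rho}):\equiv\varphi(y)\wedge x\neq_{\rho}y$. Its conclusion $(\exists y^{\rho})(\forall^{\st}x^{\rho})[\varphi(y)\wedge x\neq_{\rho}y]$ is exactly what we want: since the $\varphi(y)$-conjunct is independent of $x$, it says there is a $y$ with $\varphi(y)$ and $(\forall^{\st}x^{\rho})(x\neq_{\rho}y)$, and this last clause forces $\neg\st(y)$ — were $y$ standard, instantiating the universal quantifier at $x:=y$ would give $y\neq_{\rho}y$, contradicting reflexivity of $=_{\rho}$.

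The work is therefore in verifying the hypothesis of $\textsf{I}$: that for every \emph{standard} finite sequence $w^{\rho^{*}}$ there is a $y^{\rho}$ with $\varphi(y)$ and $y$ distinct from every entry of $w$. Here I would write $\rho=\sigma_{1}\di\cdots\di\sigma_{k}\di 0$ (every finite type has this shape) and take $y:=\lambda\vec v\,.\,\big(1+\max_{i<|w|}w(i)(\vec 0)\big)$, where $\vec 0$ abbreviates the tuple of the canonical zero objects $0^{\sigma_{j}}$, each a closed term. Since $y$ is the value of a fixed closed term (computing, from a finite sequence, this constant function) applied to the standard parameter $w$, the schema $\T^{*}_{\st}$ yields $\st(y)$ and hence $\varphi(y)$ by assumption; and for each entry $z=w(i)$ one has $z(\vec 0)=w(i)(\vec 0)<y(\vec 0)$, whence $z\neq_{\rho}y$. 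This establishes the hypothesis of $\textsf{I}$, so the derivation is complete. It goes through verbatim in $\P$, $\P_{0}$, and $\H$: all three contain $\textsf{I}$, and the only arithmetic used — forming finite maxima over finite sequences — is available already in $\textsf{EFA}$ (compare the proof of Corollary~\ref{consresultcor2}).

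The one point that needs care — and the reason I would use the explicit ``constant function exceeding all the evaluations $w(i)(\vec 0)$'' rather than searching for an entry-free standard element — is that for $\rho\neq 0$ the relation $=_{\rho}$ is not decidable, so in the intuitionistic setting of $\H$ one cannot simply hunt through candidates for a $y$ provably distinct from the finitely many $w(i)$. The construction above sidesteps this: it never \emph{decides} a higher-type equality, it only \emph{derives} the negations $z\neq_{\rho}y$ from a strict inequality between base-type values, which is unproblematic intuitionistically. (If one prefers, the statement can instead simply be cited from \cite{brie}, where overspill sits alongside the underspill principle already invoked in the proof of Theorem~\ref{drifh}.)
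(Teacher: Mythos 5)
Your derivation is correct. For the record, the paper does not spell out a proof at all: it simply cites \cite{brie}*{Prop.\ 3.3}, so your argument is a self-contained reconstruction of what lies behind that citation rather than a deviation from the paper. The application of $\textsf{I}$ to the internal formula $\varphi(y)\wedge x\neq_{\rho}y$ is exactly the right move, and the two points that actually need checking are handled properly: (i) the hypothesis of $\textsf{I}$ is witnessed, for a standard $w^{\rho^{*}}$, by a $y$ given as a closed term of $\T^{*}$ applied to $w$, so $\T^{*}_{\st}$ yields $\st(y)$ and hence $\varphi(y)$ from the assumption $(\forall^{\st}x^{\rho})\varphi(x)$ — this is essential, since the assumption only controls standard points; and (ii) both the inferences $z(\vec 0)<y(\vec 0)\Rightarrow z\neq_{\rho}y$ and $(\forall^{\st}x)(x\neq_{\rho}y)\Rightarrow\neg\st(y)$ are negation introductions using reflexivity of $=_{\rho}$ as in \eqref{aparth}, so nothing breaks in the intuitionistic system $\H$, which does contain $\textsf{I}$ and $\T^{*}_{\st}$ by definition. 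Your remark about not deciding higher-type equality is precisely the reason the naive "pick an element outside a finite set" argument needs the constant-function trick, and the arithmetic used (finite maxima, successor) is available in all three systems, including $\P_{0}$ over $\textsf{E-PRA}^{\omega*}$. The only cosmetic loose end is the case $|w|=0$, where one should fix a convention for the empty maximum (any closed term of type $\rho$ then works as $y$); with that said, the proof stands, and citing \cite{brie} as you suggest is what the paper itself does.
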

\begin{proof}
See \cite{brie}*{Prop.\ 3.3}.  
\end{proof}
In conclusion, we have introduced the systems $\H$, $\P$, which are conservative extensions of Peano and Heyting arithmetic with fragments of Nelson's internal set theory.  
We have observed that central to the conservation results (Corollary~\ref{consresultcor} and Theorem~\ref{consresult}) is the normal form $(\forall^{\st}x)(\exists^{\st}y)\varphi(x, y)$ for internal $\varphi$.  
We finish this section with a conceptual remark.  
\begin{rem}[Higher-order and second-order results]\label{tamcremark}\rm
The results in this paper are clearly \emph{higher-order} in nature, as follows: 
the term $t$ from Example \ref{exam} is third-order and $\underline{x}, \underline{y}$ in \eqref{bog} and \eqref{antecedn3} can involve variables of any finite type.  
As discussed in Section \ref{RM}, Reverse Mathematics (and also Turing's `machine' notion of computability) is (generally) restricted to the language of second-order arithmetic.  
With a clever tweak, one can however obtain \emph{second-order} results from Nonstandard Analysis, as explored in \cite{samtamc}.  In particular, rather than a term $t$ from G\"odel's $\textsf{T}$ as in e.g.\ Corollary \ref{consresultcor}, one obtains a Turing machine index for computing a function that `emulates' this term $t$.  On a technical note, applying the `$\textsf{ECF}$' translation (see \cite{kohlenbach2}*{\S2}) does not work,
 as \emph{Transfer} is converted to the existence of a discontinuous function on $\N^{\N}$ by Corollary \ref{consresultcor} (see Theorem \ref{frood} for this result), and the existence of such functions is converted to `$0=1$' by $\textsf{ECF}$. 
\end{rem}
\subsubsection{Notations}
In this section, we introduce notations relating to $\H$ and $\P$.  

\smallskip

First of all, we mostly use the same notations as in \cite{brie}.  
\begin{rem}[Notations]\label{notawin}\rm
We write $(\forall^{\st}x^{\tau})\Phi(x^{\tau})$ and $(\exists^{\st}x^{\sigma})\Psi(x^{\sigma})$ as short for 
$(\forall x^{\tau})\big[\st(x^{\tau})\di \Phi(x^{\tau})\big]$ and $(\exists^{\st}x^{\sigma})\big[\st(x^{\sigma})\wedge \Psi(x^{\sigma})\big]$.     
We also write $(\forall x^{0}\in \Omega)\Phi(x^{0})$ and $(\exists x^{0}\in \Omega)\Psi(x^{0})$ as short for 
$(\forall x^{0})\big[\neg\st(x^{0})\di \Phi(x^{0})\big]$ and $(\exists x^{0})\big[\neg\st(x^{0})\wedge \Psi(x^{0})\big]$.  Furthermore, $\neg\st(x^{0})$, is abbreviated by `$x^{0}\in \Omega$'.  
Finally, a formula $A$ is `internal' if it does not involve $\st$, and $A^{\st}$ is defined from $A$ by appending `st' to all quantifiers (except bounded number quantifiers).    
\end{rem}
Secondly, we will use the usual notations for natural, rational and real numbers and functions as introduced in \cite{kohlenbach2}*{p.\ 288-289}. (and \cite{simpson2}*{I.8.1} for the former).  
We only list the definition of real number and related notions in $\P$ and related systems.
\begin{defi}[Real numbers and related notions in $\P$]\label{keepintireal}\rm~
\begin{enumerate}
\item A (standard) real number $x$ is a (standard) fast-converging Cauchy sequence $q_{(\cdot)}^{1}$, i.e.\ $(\forall n^{0}, i^{0})(|q_{n}-q_{n+i})|<_{0} \frac{1}{2^{n}})$.  
We use Kohlenbach's `hat function' from \cite{kohlenbach2}*{p.\ 289} to guarantee that every sequence $f^{1}$ is a real.  
\item We write $[x](k):=q_{k}$ for the $k$-th approximation of a real $x^{1}=(q^{1}_{(\cdot)})$.    
\item Two reals $x, y$ represented by $q_{(\cdot)}$ and $r_{(\cdot)}$ are \emph{equal}, denoted $x=_{\R}y$, if $(\forall n)(|q_{n}-r_{n}|\leq \frac{1}{2^{n}})$. Inequality $<_{\R}$ is defined similarly.         
\item We  write $x\approx y$ if $(\forall^{\st} n)(|q_{n}-r_{n}|\leq \frac{1}{2^{n}})$ and $x\gg y$ if $x>y\wedge x\not\approx y$.  
\item Functions $F:\R\di \R$ mapping reals to reals are represented by functionals $\Phi^{1\di 1}$ mapping equal reals to equal reals, i.e. 
\be\tag{\textsf{RE}}\label{furg}
(\forall x, y)(x=_{\R}y\di \Phi(x)=_{\R}\Phi(y)).
\ee
 \item For a space $X$ with metric $|\cdot|_{X}:X\di \R$, we write `$x\approx y$' for `$|x-y|_{X}\approx 0$'.    
\item Sets of objects of type $\rho$ are denoted $X^{\rho\di 0}, Y^{\rho\di }, Z^{\rho\di 0}, \dots$ and are given by their characteristic functions $f^{\rho\di 0}_{X}$, i.e.\ $(\forall x^{\rho})[x\in X\asa f_{X}(x)=_{0}1]$, where $f_{X}^{\rho\di 0}$ is assumed to output zero or one.  
\end{enumerate}
\end{defi}
Note that `$x\approx y$' for points $x, y\in X$ can also be defined if a collection $\O$ of `basic open sets' of $X$ is given (without reference to a metric).  
Indeed, given such a collection $\O$, `$x\approx y$' is just $(\forall^{\st} O\in \O)(x\in O \asa y\in O)$.  

\smallskip

Thirdly, we use the usual extensional notion of equality. 
\begin{rem}[Equality]\label{equ}\rm
All the above systems include equality between natural numbers `$=_{0}$' as a primitive.  Equality `$=_{\tau}$' for type $\tau$-objects $x,y$ is defined as:
\be\label{aparth}
[x=_{\tau}y] \equiv (\forall z_{1}^{\tau_{1}}\dots z_{k}^{\tau_{k}})[xz_{1}\dots z_{k}=_{0}yz_{1}\dots z_{k}]
\ee
if the type $\tau$ is composed as $\tau\equiv(\tau_{1}\di \dots\di \tau_{k}\di 0)$.
In the spirit of Nonstandard Analysis, we define `approximate equality $\approx_{\tau}$' as follows:
\be\label{aparth2}
[x\approx_{\tau}y] \equiv (\forall^{\st} z_{1}^{\tau_{1}}\dots z_{k}^{\tau_{k}})[xz_{1}\dots z_{k}=_{0}yz_{1}\dots z_{k}]
\ee
with the type $\tau$ as above.  All the above systems include the \emph{axiom of extensionality} for all $\varphi^{\rho\di \tau}$ as follows:
\be\label{EXT}\tag{\textsf{E}}  
(\forall  x^{\rho},y^{\rho}) \big[x=_{\rho} y \di \varphi(x)=_{\tau}\varphi(y)   \big].
\ee
However, as noted in \cite{brie}*{p.\ 1973}, the so-called axiom of \emph{standard} extensionality \eqref{EXT}$^{\st}$ is problematic and cannot be included in $\P$ or $\H$.  
Finally, a functional $\Xi^{ 1\di 0}$ is called an \emph{extensionality functional} for $\varphi^{1\di 1}$ if 
\be\label{turki}
(\forall k, f^{1}, g^{1})\big[ \overline{f}\Xi(f,g, k)=_{0}\overline{g}\Xi(f,g,k) \di \overline{\varphi(f)}k=_{0}\overline{\varphi(g)}k \big],  
\ee
i.e.\ $\Xi$ witnesses \eqref{EXT} for $\varphi$.  
As will become clear in Section \ref{X}, standard extensionality is translated by our template $\CI$ into the existence of an extensionality functional, and the latter amounts to no more than an unbounded search.   
\end{rem}

\subsection{Reverse Mathematics 101}\label{RM}
Reverse Mathematics (RM hereafter) is a program in the foundations of mathematics initiated around 1975 by Friedman (\cites{fried,fried2}) and developed extensively by Simpson (\cite{simpson2, simpson1}) and others.  
We refer to \cite{stillebron} for an introduction to RM.  
The aim of RM is to find the axioms necessary to prove a statement of \emph{ordinary} mathematics, i.e.\ dealing with countable or separable objects.   
The classical\footnote{In \emph{Constructive Reverse Mathematics} (\cite{ishi1}), the base theory is based on intuitionistic logic.} base theory $\RCA_{0}$ of `computable\footnote{The system $\RCA_{0}$ consists of induction $I\Sigma_{1}$, and the {\bf r}ecursive {\bf c}omprehension {\bf a}xiom $\Delta_{1}^{0}$-CA.} mathematics' is always assumed.  
Thus, the aim of RM is as follows:  
\begin{quote}
\emph{to find the minimal axioms $A$ such that $\RCA_{0}$ proves $ [A\di T]$ for statements $T$ of ordinary mathematics.}
\end{quote}
Surprisingly, once the minimal axioms $A$ have been found, we almost always also have $\RCA_{0}\vdash [A\asa T]$, i.e.\ not only can we derive the theorem $T$ from the axioms $A$ (the `usual' way of doing mathematics), we can also derive the axiom $A$ from the theorem $T$ (the `reverse' way of doing mathematics).  In light of the latter, the field was baptised `Reverse Mathematics'.    

\smallskip

Perhaps even more surprisingly, in the majority\footnote{Exceptions are classified in the so-called Reverse Mathematics Zoo (\cite{damirzoo}).  These are often based on combinatorics or logic.  
} 
of cases for a statement $T$ of ordinary mathematics, either $T$ is provable in $\RCA_{0}$, or the latter proves $T\asa A_{i}$, where $A_{i}$ is one of the logical systems $\WKL_{0}, \ACA_{0},$ $ \ATR_{0}$ or $\FIVE$.  The latter together with $\RCA_{0}$ form the `Big Five' and the aforementioned observation that most mathematical theorems fall into one of the Big Five categories, is called the \emph{Big Five phenomenon} (\cite{montahue}*{p.~432}).  
Furthermore, each of the Big Five has a natural formulation in terms of (Turing) computability (see e.g.\ \cite{simpson2}*{I.3.4, I.5.4, I.7.5}).
As noted by Simpson in \cite{simpson2}*{I.12}, each of the Big Five also corresponds (sometimes loosely) to a foundational program in mathematics.  

\smallskip

The logical framework for Reverse Mathematics is \emph{second-order arithmetic}, in which only natural numbers and sets thereof are available.  As a result, functions from reals to reals are not available, and have to be represented by so-called \emph{codes} (see \cite{simpson2}*{II.6.1}).  In the latter case, the coding of continuous functions amounts to introducing a modulus of (pointwise) continuity (see \cite{kohlenbach4}*{\S4}).  The nonstandard theorems proved in the system $\P$ do not involve coding for continuous functions (reals are coded as in Definition \ref{keepintireal}); 
however, as will become clear below, a modulus of continuity naturally `falls out of' the nonstandard definition of continuity as in \eqref{NST}.  
Thus, the nonstandard framework seems to `do the coding for us'.

\smallskip

In light of the previous, one of the main results of RM is that \emph{mathematical theorems} fall into \emph{only five} logical categories.  
By contrast, there are lots and lots of (purely logical or non-mathematical) statements which fall outside of these five categories.  Similarly, most \emph{mathematical theorems} from Nonstandard Analysis 
have the normal from required for applying term extraction via Corollary~\ref{consresultcor}, while there are plenty of non-mathematical or purely logical statements which do not.  
In conclusion, the results in this paper are inspired by the \emph{Reverse Mathematics way of thinking} that mathematical theorems (known in the literature) will behave `much nicer' than arbitrary formulas (even of restricted complexity).  In particular, since there is no meta-theorem for the (Big Five and its zoo) classification of RM, one cannot hope to obtain a meta-theorem for the template $\CI$ from Section~\ref{detail}.     

\smallskip

Finally, there is a tradition of Nonstandard Analysis in Reverse Mathematics and related topics (see e.g.\ \cites{tahaar,pimpson, tanaka1, tanaka2, horihata1, yo1, yokoyama2, yokoyama3}), which provides a source of proofs in (pure) Nonstandard Analysis for $\CI$ as defined in Section \ref{detail}.  
\section{Main results I: Continuity, integration, convergence, and differentiability}\label{main}
In this section, we prove our first batch of results, namely we show how to convert nonstandard theorems dealing with the notions mentioned in the section title, 
into effective theorems \emph{no longer involving Nonstandard Analysis} (and vice versa). 

\smallskip

We provide full details in the sections dealing with continuity, Riemann integration, and limits, and then switch to less detailed sketches for the fundamental theorem of calculus and Picard's theorem.  
These case studies allow us to formulate our template $\CI$ in Section~\ref{detail}.  
The results in this section show that the template $\CI$ applies to representative theorems of the base theory of Reverse Mathematics.      
  
\subsection{Riemann integration}\label{frakkk}
In this section, we study the statement $\CRI$: \emph{a uniformly continuous function on the unit interval is Riemann integrable}.  
We first obtain the effective version of $\CRI$ from the nonstandard version inside $\P_{0}$. 
We then obtain the same result in the \emph{constructive} system $\H$.  Finally, we re-obtain the nonstandard version from a special effective version, called the \emph{Hebrandisation}.
\subsubsection{Riemann integration in $\P_{0}$}
First of all, the `usual' nonstandard definitions of continuity and integration are as follows.
\bdefi[Continuity]\label{Kont}
A function $f$ is \emph{nonstandard continuous} on $[0,1]$ if
\be\label{soareyou3}
(\forall^{\st}x\in [0,1])(\forall y\in [0,1])[x\approx y \di f(x)\approx f(y)].
\ee
A function $f$ is \emph{nonstandard uniformly continuous} on $[0,1]$ if
\be\label{soareyou4}
(\forall x, y\in [0,1])[x\approx y \di f(x)\approx f(y)].
\ee
\edefi
\bdefi[Integration]\label{kunko}~
\begin{enumerate}
\item A \emph{partition} of $[0,1]$ is any sequence $\pi=(0, t_{0}, x_{1},t_{1},  \dots,x_{M-1}, t_{M-1}, 1)$.  We write `$\pi \in P([0,1]) $' to denote that $\pi$ is such a partition.
\item For $\pi\in P([0,1])$, $\|\pi\|$ is the \emph{mesh}, i.e.\ the largest distance between two adjacent partition points $x_{i}$ and $x_{i+1}$. 
\item For $\pi\in P([0,1])$ and $f:\R\di \R$, the real $S_{\pi}(f):=\sum_{i=0}^{M-1}f(t_{i}) (x_{i}-x_{i+1}) $ is the \emph{Riemann sum} of $f$ and $\pi$.  
\item A function $f$ is \emph{nonstandard integrable} on $[0,1]$ if
\be\label{soareyou5}
(\forall \pi, \pi' \in P([0,1]))\big[\|\pi\|,\| \pi'\|\approx 0  \di S_{\pi}(f)\approx S_{\pi'}(f)  \big].
\ee
\end{enumerate}
\edefi
Let $\textsf{CRI}_{\ns}$ be the statement $(\forall f:\R\di \R)[\eqref{soareyou4}\di \eqref{soareyou5}]$, and let $\textsf{CRI}_{\ef}(t)$ be \eqref{EST}.  
\begin{thm}\label{varou}
From the proof of $\CRI_{\ns}$ in $\P_{0} $, a term $t$ can be extracted such that $\textup{\textsf{E-PRA}}^{\omega*} $ proves $\CRI_{{\ef}}(t)$.  
\end{thm}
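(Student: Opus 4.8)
The plan is to apply the term-extraction machinery, specifically Corollary~\ref{consresultcor2}, to the nonstandard statement $\CRI_{\ns}$. The key observation is that $\CRI_{\ns}$, which reads $(\forall f:\R\di\R)[\eqref{soareyou4}\di\eqref{soareyou5}]$, can be brought into the normal form $(\forall^{\st}\underline{x})(\exists^{\st}\underline{y})\psi(\underline{x},\underline{y},\underline{a})$ with $\psi$ internal. First I would unwind the definitions: using that `$\approx$' between reals means $(\forall^{\st}n)(|q_n-r_n|\le \frac{1}{2^n})$, the nonstandard uniform continuity \eqref{soareyou4} becomes, after pushing the standard quantifiers around, a statement of the form $(\forall^{\st}k)(\exists^{\st}N)(\forall x,y\in[0,1])(|x-y|<\frac1N \di |f(x)-f(y)|\le\frac1k)$ modulo overspill (Theorem~\ref{doppi}) and the usual manipulations available in $\P_0$; this uses idealisation/overspill to trade an external `$x\approx y \di \dots$' clause for an internal modulus statement. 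Similarly the nonstandard integrability conclusion \eqref{soareyou5} rewrites to $(\forall^{\st}n)(\exists^{\st}m)(\forall \pi,\pi'\in P([0,1]))(\|\pi\|,\|\pi'\|<\frac1m \di |S_\pi(f)-S_{\pi'}(f)|\le\frac1n)$.

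The second step is to observe (as sketched in Section~\ref{kintro}, point (2)) that an implication between two such normal forms is again a normal form: from $(\forall^{\st}k)(\exists^{\st}N)\varphi_0 \di (\forall^{\st}n)(\exists^{\st}m)\varphi_1$ one classically obtains $(\forall^{\st}n)(\exists^{\st}g,m)[(\forall k\le m)(\exists N\le g(k))\varphi_0 \di \varphi_1]$ or a similar Herbrandised form, by contraposing and using $\HAC_{\INT}$ together with the finite-sequence apparatus; the upshot is that $\CRI_{\ns}$ itself is provably equivalent in $\P_0$ to a formula of the shape $(\forall^{\st}f,g,n)(\exists^{\st}m)\,\psi(f,g,n,m)$ with $\psi$ internal and quantifier-free modulo bounded quantifiers. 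I would then invoke Corollary~\ref{consresultcor2}: since $\P_0\vdash\CRI_{\ns}$, and $\CRI_{\ns}$ is (equivalent to) a normal form, we extract a closed term $t'$ of $\mathcal{T}^*$ such that $\textsf{E-PRA}^{\omega*}$ proves $(\forall f,g,n)(\exists m\in t'(f,g,n))\,\psi(f,g,n,m)$. Taking $t(g,n):=\max_{i<|t'(f,g,n)|}t'(f,g,n)(i)$ — and noting $\psi$ does not genuinely depend on $f$ in the relevant monotone way, or else absorbing $f$ harmlessly — yields a single witness, which is precisely $\CRI_{\ef}(t)$ as displayed in \eqref{EST}.

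The main obstacle, and where care is genuinely needed, is the passage from the external `$\approx$'-formulations \eqref{soareyou4}, \eqref{soareyou5} to the internal $\varepsilon$–$\delta$ normal forms: bringing the hidden standard quantifiers in `$\approx$' to the front legitimately requires overspill/underspill and, on the side of the \emph{premise} of the implication, a use of $\HAC_{\INT}$ to Herbrandise the choice of modulus $N$ as a function $g$ of $k$. One must check that these steps stay within $\P_0$ (they do, since $\P_0$ proves overspill by Theorem~\ref{doppi} and contains $\HAC_{\INT}$) and that the resulting internal matrix $\psi$ is of the right logical complexity for Corollary~\ref{consresultcor2} to apply — i.e.\ that after all rewriting we really have $(\forall^{\st}\cdot)(\exists^{\st}\cdot)(\text{internal})$ and not something with an extra alternation. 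A secondary but routine point is the bookkeeping that converts the finite list of candidate witnesses produced by term extraction into the single primitive recursive bound $t(g,n)$, using monotonicity of the mesh condition (a finer partition than one with mesh $<\frac1m$ still has mesh $<\frac1{m}$, so the maximum of the candidates works). Finally, I would remark that the same argument runs in $\H$ once one checks that the normalisation steps use only $\textsf{HIP}_{\forall^{\st}}$, $\textsf{HGMP}^{\st}$ and $\NCR$ rather than full classical logic — this gives the promised constructive strengthening (Corollary~\ref{cordejedi}), but that is deferred to the next subsection.
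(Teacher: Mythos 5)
There is a genuine gap, and it concerns the quantifier over $f$. In $\CRI_{\ns}$ the function $f$ is quantified \emph{internally}, and the normal form has to keep it that way: the paper's proof arrives at $(\forall^{\st}g,k')(\exists^{\st}N')(\forall f)(\exists k)[A(g,k,f)\di B(k',N',f)]$ (formula \eqref{second23}), and the decisive step there is an application of (the contraposition of) idealisation $\textsf{I}$ to pull the standard existential quantifiers $(\exists^{\st}N',k)$ across the internal quantifier $(\forall f)$ -- legitimate because the matrix is internal -- followed by the maximum/monotonicity trick on $B$. You instead place $f$ in the standard universal block, claiming a normal form $(\forall^{\st}f,g,n)(\exists^{\st}m)\,\psi(f,g,n,m)$. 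Term extraction applied to that form produces a term $t'(f,g,n)$ whose value depends on $f$, and your subsequent definition $t(g,n):=\max_{i<|t'(f,g,n)|}t'(f,g,n)(i)$ is not well defined: the left-hand side does not mention $f$ while the right-hand side does, and ``absorbing $f$ harmlessly'' is impossible after extraction, since $f$ ranges over a function space and no finite maximum over it is available. But $\CRI_{\ef}(t)$ as displayed in \eqref{EST} requires $t$ to depend on $g$ and $n$ \emph{only} -- a uniformity in $f$ the paper explicitly stresses later, and which is obtained exactly by performing the idealisation step over $(\forall f)$ \emph{inside} $\P_{0}$, before extraction. As written, your argument yields only an $f$-dependent weakening of \eqref{EST}.

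Two smaller points. First, the passage from the `$\approx$'-formulation \eqref{soareyou4} to a statement with a standard modulus is by \emph{underspill} (or contraposed idealisation), not overspill; since $\P_{0}$ proves all of these, this is only a naming slip. Second, the paper's proof also establishes that $\P_{0}$ actually proves $\CRI_{\ns}$ (via the common-refinement estimate on Riemann sums), which you take as given; this is defensible under the phrasing ``from the proof of $\CRI_{\ns}$ in $\P_{0}$'', but it is part of the paper's argument. Apart from the $f$-issue, your outline -- normal forms for \eqref{soareyou4} and \eqref{soareyou5} via $\HAC_{\INT}$ and $\textsf{I}$, closure of normal forms under implication, Corollary \ref{consresultcor2}, and monotonicity of the mesh condition to pass from a finite list of witnesses to a single bound -- follows the paper's route.
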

\begin{proof}
We first show that $\CRI_{\ns}$ can be proved in $\P_{0} $.  Given two partitions $\pi=(0, t_{0}, x_{1},t_{1},  \dots,x_{M-1}, t_{M-1}, 1)$ and 
$\pi'=(0, t_{0}', x_{1}',t_{1}',  \dots,x_{M'-1}', t_{M'-1}, 1)$ with infinitesimal mesh, let $x_{i}''$ for $i\leq M''$ be an enumeration\footnote{To make this enumeration effective, work with the approximations $[x_{i}](2^{M})$ and $[x_{i}'](2^{M'})$ and note that the difference is infinitesimal in the below steps.} of all $x_{i}$ and $x_{i}'$ in increasing order.  
Let $t_{i}''$ and $t_{i}'''$ for $i\leq M''$ be the associated $t_{i}$ and $t_{i}''$ with repetitions (corresponding to $x_{i}''$) of the latter to obtain a list of length $M''$.    
Then we have that
\begin{align*}\textstyle
|S_{\pi}(f)-S_{\pi'}(f)| &\textstyle=|\sum_{i=0}^{M-1}f(t_{i}) (x_{i}-x_{i+1}) -\sum_{i=0}^{M'-1}f(t_{i}') (x_{i}'-x_{i+1}') |\\
&\textstyle=|\sum_{i=0}^{M''-1}f(t_{i}'') (x_{i}''-x_{i+1}'') -\sum_{i=0}^{M''-1}f(t_{i}''') (x_{i}''-x_{i+1}'') |\\
&\textstyle=|\sum_{i=0}^{M''-1}(f(t_{i}'')-f(t_{i}'''))\cdot (x_{i}''-x_{i+1}'') \\
&\textstyle\leq \sum_{i=0}^{M''-1}|f(t_{i}'')-f(t_{i}''')| \cdot|x_{i}''-x_{i+1}''| \leq   \sum_{i=0}^{M''-1}\eps_{0} \cdot|x_{i}''-x_{i+1}''|\approx 0,
\end{align*}
where $\eps_{0}:=\max_{i\leq M''}|f(t_{i}'')-f(t_{i}''')|$ is an infinitesimal due to the (uniform nonstandard) continuity of $f$ and the definition of $\pi, \pi'$.  
Hence, we obtain $\CRI_{\ns}$, from which we now derive $\CRI_{\ef}(t)$.  To this end, we show that $\CRI_{\ns}$ can be brought into the normal form for applying Corollary \ref{consresultcor}.  
First of all, we resolve `$\approx$' in the antecedent of $\CRI_{\ns}$ as follows:
\be\label{first}\textstyle
(\forall x, y\in [0,1])[(\forall^{\st}N)|x- y|\leq \frac{1}{N} \di (\forall ^{\st}k)|f(x)- f(y)|\leq \frac{1}{k}].
\ee
Bringing all standard quantifiers in \eqref{first} outside the square brackets, we obtain:
\be\label{first2}\textstyle
(\forall^{\st}k)(\forall x, y\in [0,1])(\exists^{\st}N)[|x- y|\leq \frac{1}{N} \di |f(x)- f(y)|\leq \frac{1}{k}].
\ee
Since the formula in square brackets is internal in \eqref{first2}, we may apply (the contraposition of) idealisation \textsf{I} and obtain 
\be\label{first3}\textstyle
(\forall^{\st}k)(\exists^{\st}x^{0^{*}})(\forall x, y\in [0,1])(\exists N\in x)[|x- y|\leq \frac{1}{N} \di |f(x)- f(y)|\leq \frac{1}{k}].
\ee
By defining $N^{0}:=\max_{i<|x|}x(i)$, \eqref{first3} becomes the following:
\be\label{first333333}\textstyle
(\forall^{\st}k)(\exists^{\st}N')(\forall x, y\in [0,1])(\exists N\leq N')[|x- y|\leq \frac{1}{N} \di |f(x)- f(y)|\leq \frac{1}{k}], 
\ee
which immediately yields, due to the `monotone nature' of the formula, that
\be\label{first4}\textstyle
(\forall^{\st}k)(\exists^{\st}N)\big[(\forall x, y\in [0,1])[|x- y|\leq \frac{1}{N} \di |f(x)- f(y)|\leq \frac{1}{k}]\big].
\ee
Since the formula in (big) square brackets is internal, we may apply $\HAC_{\INT}$ and obtain standard $\Phi^{0\di 0^{*}}$ such that $N\in \Phi(k)$ in \eqref{first4}.  
By defining $g(k)$ to be the maximum of all components of $\Phi(k)$, i.e.\ $g(k):=\max_{i<|\Phi(k)|}\Phi(k)(i)$, we obtain:
\be\label{first5}\textstyle
(\exists^{\st}g)(\forall^{\st}k)\big[(\forall x, y\in [0,1])[|x- y|\leq \frac{1}{g(k)} \di |f(x)- f(y)|\leq \frac{1}{k}]\big], 
\ee  
and let $A(g, k, f)$ be the (internal) formula in big square brackets.  
Similarly, the consequent of $\CRI_{\ns}$ yields:
\be\textstyle\label{second}
(\forall^{\st}k')(\exists^{\st}N')\big[(\forall \pi, \pi' \in P([0,1]))(\|\pi\|,\| \pi'\|\leq \frac{1}{N'}  \di |S_{\pi}(f)- S_{\pi}(f)|\leq \frac{1}{k'} )\big],
\ee
where $B(k', N', f)$ is the (internal) formula in square brackets. Then $\CRI_{\ns}$ implies 
\be\label{second2}
(\forall^{\st}  g, k')(\forall f)(\exists^{\st} N', k)[A(g, k, f)\di B(k', N', f)].
\ee
Applying (the contraposition of) idealisation \textsf{I} to \eqref{second2}, we obtain 
\be\label{second22}
(\forall^{\st}  g, k')(\exists^{\st}x^{0^{*}})(\forall f)(\exists N', k\in x)[A(g, k, f)\di B(k', N', f)], 
\ee  
which immediately yields, by defining $l^{0}:=\max_{i<|x|}x(i)$, that
\be\label{second22334}
(\forall^{\st}  g, k')(\exists^{\st}l)(\forall f)(\exists N', k\leq l)[A(g, k, f)\di B(k', N', f)], 
\ee  
Furthermore, by the monotone behaviour of $B(k', \cdot ,f)$, we have that:
\be\label{second23}
(\forall^{\st}  g, k')(\exists^{\st}N')(\forall f)(\exists k)[A(g, k, f)\di B(k', N', f)].
\ee  
Applying Corollary \ref{consresultcor} to \eqref{second23}, the system $\textup{\textsf{E-PRA}}^{\omega*}$ proves
\be\label{tochie}
(\forall   g, k')(\exists N'\in  t( g, k'))(\forall f)(\exists k)[A(g, k, f)\di B(k', N', f)],
\ee
for some term $t$ from the original language.  
Define $u(g, k')$ to be the maximum of the components of $t( g, k')$, i.e.\ $u(g, k'):=\max_{i<|t(g,k')|}t(g, k')(i)$, yielding:       
\be\label{tochie2}
(\forall  f, g, k')(\exists k)[A(g, k, f)\di B(k', u(g, k'), f)], 
\ee
again by the special structure of $B$.  Bringing all quantifiers inside again, we obtain   
\be\label{crux}
(\forall  f, g)[(\forall k)A(g, k, f)\di (\forall k')B(k', u( g, k'), f)], 
\ee
which is exactly $\CRI_{\ef}(u)$ by the definitions of $A$ and $B$.  
\end{proof}
Note that the actual computation in $\CRI_{\ef}(t)$ only takes place on the modulus $g$. 
Thanks to the previous theorem, we can define (inside \textsf{E-PRA}$^{\omega*}$) \emph{the} Riemann integral functional\footnote{In particular, $[I(f,0, x)](k)$ is defined as $\sum_{i=0}^{i(x)} f(\frac{i}{2^{k}}) \frac{1}{2^{k}} $ where $i(x)$ 
is the least partition point larger than $x$.  To guarantee that $I(f,0, x)$ converges `fast enough', one considers $[I(f,0, x)](t(g,k))$ where $g$ is a modulus of uniform continuity of $f$ and $t$ is the term from Theorem~\ref{varou}.} $I(f, 0,x)$ 
which takes as input a modulus of uniform continuity $g$ for $f$ on $[0,1]$ and outputs $\int_{0}^{x}f(t) dt$. 
The aforementioned results are not really shocking, but what \emph{is} surprising is the following observation.
\begin{enumerate}
\item[a)] The straightforward derivation of the effective $\CRI_{\ef}$ from the \emph{nonstandard} $\CRI_{\ns}$, especially given the claims from Section \ref{foef} regarding the non-constructive nature of Nonstandard Analysis.  
\item[b)] The \emph{uniformity} of the derivation from a), i.e.\ a similar derivation should work for many other pairs of nonstandard and effective theorems.      
\end{enumerate}
With regard to b), we mention the following two examples.
\begin{exa}\rm
Two other theorems which can be treated exactly as in Theorem~\ref{varou} are the \emph{Weierstra\ss~approximation theorem} (see \cite{simpson2}*{IV.2.4}) and the statement that \emph{every uniformly continuous function on $[0,1]$ has a supremum}. 
We discuss the latter in more detail:  first of all, a nonstandard uniformly continuous function $f$ satisfies 
\be\label{frdfg}
(\forall x\in [0,1], M\in \Omega)[f(x)\lessapprox \sup(f, M)]
\ee
where $\sup(f, M)=\|f\|_{M}:=\max_{i\leq 2^{M}}[f(\frac{i}{2^{M}})](2^{M})$ and $[x](k)$ is the $k$-th approximation of the real $x$.  
Secondly, the effective version, which can be derived from the nonstandard version in exactly the same way as in Theorem~\ref{varou}, is as follows.
\begin{thm}[$\textsf{SUP}_{\ef}(t)$]
For every $f:\R\di \R$ with modulus of uniform continuity $g$ on $[0,1]$, we have $  (\forall x\in [0,1], N\geq  t(g, n))\big[ f(x)\leq \sup(f, N)+\frac{1}{n} \big]$.
\end{thm}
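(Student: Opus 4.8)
The plan is to run the proof of Theorem~\ref{varou} almost verbatim, with only one genuinely new ingredient. Writing $\textsf{SUP}_{\ns}$ for $(\forall f:\R\di\R)[\eqref{soareyou4}\di\eqref{frdfg}]$ --- where \eqref{soareyou4} is nonstandard uniform continuity and \eqref{frdfg} is $(\forall x\in[0,1], M\in\Omega)[f(x)\lessapprox\sup(f,M)]$ with $\sup(f,M)=\max_{i\leq 2^{M}}[f(\tfrac{i}{2^{M}})](2^{M})$ --- I would first prove $\textsf{SUP}_{\ns}$ inside $\P_{0}$, then bring both the hypothesis and the conclusion into the normal form $(\forall^{\st}\underline{u})(\exists^{\st}\underline{v})\psi$, combine the two normal forms using idealisation~$\textsf{I}$, and finally apply Corollary~\ref{consresultcor2} to read off the term~$t$.

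For the nonstandard version, fix $f$ satisfying \eqref{soareyou4}, a point $x\in[0,1]$, and an infinite $M\in\Omega$. Picking $i_{0}\leq 2^{M}$ with $|x-\tfrac{i_{0}}{2^{M}}|\leq\tfrac{1}{2^{M}}$ gives $x\approx\tfrac{i_{0}}{2^{M}}$, hence $f(x)\approx f(\tfrac{i_{0}}{2^{M}})$ by \eqref{soareyou4}; as $[f(\tfrac{i_{0}}{2^{M}})](2^{M})$ lies within $\tfrac{1}{2^{2^{M}}}\approx 0$ of $f(\tfrac{i_{0}}{2^{M}})$ and is at most $\sup(f,M)$ by definition, one gets $f(x)\lessapprox\sup(f,M)$, i.e.\ \eqref{frdfg}. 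Thus $\P_{0}\vdash\textsf{SUP}_{\ns}$.

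Next I would put the two sides into normal form. The antecedent \eqref{soareyou4} is treated exactly as in \eqref{first}--\eqref{first5}: resolving $\approx$, fronting the standard quantifiers, applying the contraposition of~$\textsf{I}$ and then $\HAC_{\INT}$ produces a standard modulus of uniform continuity with $(\exists^{\st}g)(\forall^{\st}k)A(g,k,f)$, where $A(g,k,f)$ is the internal formula $(\forall x,y\in[0,1])[|x-y|\leq\tfrac{1}{g(k)}\di|f(x)-f(y)|\leq\tfrac{1}{k}]$ from Theorem~\ref{varou}. For the conclusion \eqref{frdfg} I resolve $\lessapprox$ as $(\forall^{\st}n)[f(x)\leq\sup(f,M)+\tfrac{1}{n}]$ and front $\forall^{\st}n$, so that \eqref{frdfg} becomes $(\forall^{\st}n)(\forall x\in[0,1])(\forall M\in\Omega)[f(x)\leq\sup(f,M)+\tfrac{1}{n}]$. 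The new ingredient is then: for each standard $n$, the \emph{internal} formula $C(n,N,f):=(\forall M\geq N)(\forall x\in[0,1])[f(x)\leq\sup(f,M)+\tfrac{1}{n}]$ holds for every infinite $N$ --- since then every $M\geq N$ is infinite and \eqref{frdfg} applies --- so \emph{underspill} (\cite{brie}*{Prop.\ 5.11}) yields a \emph{standard} $N$ with $C(n,N,f)$, i.e.\ \eqref{frdfg} is equivalent to the normal form $(\forall^{\st}n)(\exists^{\st}N)C(n,N,f)$. Folding ``$\forall M\geq N$'' into $C$ \emph{before} underspilling is exactly what avoids any appeal to the (only approximate) monotonicity of $N\mapsto\sup(f,N)$.

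Finally, $\textsf{SUP}_{\ns}$ now reads $(\forall f)\big[(\exists^{\st}g)(\forall^{\st}k)A(g,k,f)\di(\forall^{\st}n)(\exists^{\st}N)C(n,N,f)\big]$; distributing the standard quantifiers (the $\exists^{\st}g$ of the antecedent and the $\forall^{\st}n$ of the consequent both become $\forall^{\st}g,n$ out front) and merging classically leaves $(\forall^{\st}g,n)(\forall f)(\exists^{\st}k,N)[\neg A(g,k,f)\vee C(n,N,f)]$. Applying the contraposition of~$\textsf{I}$ and maximising over the resulting finite sequence --- exactly the manoeuvre of \eqref{second2}--\eqref{second23}, using that $C(n,\cdot,f)$ is monotone (enlarging $N$ only removes conjuncts) --- collapses this to a normal form $(\forall^{\st}g,n)(\exists^{\st}N)\Psi(g,n,N)$ with $\Psi(g,n,N):=(\forall f)\big[(\exists k\leq N)\neg A(g,k,f)\vee C(n,N,f)\big]$ internal. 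By Corollary~\ref{consresultcor2} one extracts a term $s\in\mathcal{T}^{*}$ with $\textup{\textsf{E-PRA}}^{\omega*}\vdash(\forall g,n)(\exists N\in s(g,n))\Psi(g,n,N)$; putting $t(g,n):=\max_{i<|s(g,n)|}s(g,n)(i)$ and using monotonicity of $\Psi(g,n,\cdot)$ gives $(\forall g,n)\Psi(g,n,t(g,n))$, and for any $f$ admitting $g$ as a modulus of uniform continuity the first disjunct of $\Psi$ fails, leaving $C(n,t(g,n),f)$, that is $(\forall x\in[0,1], N\geq t(g,n))[f(x)\leq\sup(f,N)+\tfrac{1}{n}]$, which is $\textsf{SUP}_{\ef}(t)$. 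The only place I expect real care to be needed is the shaping of the internal matrices $C$ and $\Psi$ so that underspill and the maximisation step interact correctly with monotonicity; the rest is the bookkeeping of Theorem~\ref{varou} copied mutatis mutandis.
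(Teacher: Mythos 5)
Your proposal is correct and is essentially the paper's own argument: the paper gives no separate proof of $\textsf{SUP}_{\ef}(t)$ beyond stating that it is derived from \eqref{frdfg} ``in exactly the same way as in Theorem~\ref{varou}'', and your writeup carries out precisely that template (prove the nonstandard version in $\P_{0}$, normalise the antecedent via \textsf{I} and $\HAC_{\INT}$ as in \eqref{first}--\eqref{first5}, normalise the consequent, merge, and extract a term via Corollary~\ref{consresultcor2} with a final maximisation). Your underspill treatment of the $(\forall M\in\Omega)$ quantifier, folding $(\forall M\geq N)$ into the internal matrix before underspilling, is a sound way to handle the one step not literally present in Theorem~\ref{varou}, and it parallels the normal-form manipulation the paper later formalises in Theorem~\ref{necess}.
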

As for the Riemann integral, we can now define the functional $\|f\|=\sup_{x\in [0,1]}f(x)$ where the modulus of uniform continuity is implicit.  
We will study \eqref{frdfg} in the context of (nonstandard) compact spaces in Section \ref{compaq}.
\end{exa}
Finally, we discuss the following remark regarding the use of idealisation \textsf{I}, $\HAC_{\INT}$, and term extraction as in Corollary \ref{consresultcor}.  
\begin{rem}[Using $\HAC_{\INT}$ and $\textsf{I}$]\label{simply}\rm
By definition, $\HAC_{\INT}$ produces a functional of type $\sigma\di \tau^{*}$ which outputs a \emph{finite sequence} of witnesses.  
Now, in the proof of Theorem~\ref{varou}, $\HAC_{\INT}$ is applied to \eqref{first4} to obtain $\Phi^{0\di 0^{*}}$, and from the latter, the function $g^{1}$ is defined as follows: $g(x):= \max_{i<|\Phi(x)|}\Phi(x)(i)$.  In particular, $g$ satisfies \eqref{first5}, and provides a \emph{witnessing functional}, due to the `monotone' nature of the internal formula in \eqref{first4}.  In general, $\HAC_{\INT}$ provides a \emph{witnessing functional} assuming (i) $\tau=0$ in $\HAC_{\INT}$ and (ii) the formula $\varphi$ from $\HAC_{\INT}$ is `sufficiently monotone' as in: 
$(\forall x^{\sigma},n^{0},m^{0})\big([n\leq_{0}m \wedge\varphi(n,x)] \di \varphi(m,x)\big)$.    

\smallskip

A similar observation applies to idealisation \textsf{I};  indeed, consider \eqref{first2} and note that the internal formula in the latter is monotone as above.  Taking the maximum of $x$ from \eqref{first3} as $N:=\max_{i<|x|}x(i)$, one can drop the quantifier `$(\exists N\in x)$' in \eqref{first3} to obtain \eqref{first333333} and \eqref{first4}.  

\smallskip

A similar observation applies to terms obtained by \emph{term extraction};  indeed, consider \eqref{tochie} and note that the internal formula in the latter is monotone as above.  Taking the maximum of $t$ from \eqref{tochie} as $u(g, k'):=\max_{i<|t(g,k')|}t(g, k')(i)$, one can drop the quantifier `$(\exists N'\in t(g,k'))$' in \eqref{tochie} to obtain \eqref{tochie2} and \eqref{crux}.  

\smallskip

To save space in proofs, we will sometimes skip the (obvious) step involving the maximum of the finite sequences, when applying $\HAC_{\INT}$, $\textsf{I}$, and term extraction. 
\end{rem}
\subsubsection{Riemann integration in $\H$}
In this section, we show that Theorem \ref{varou} also goes through for $\H$ and Heyting arithmetic. 
We point out that the convention from Remark \ref{simply} also applies to the axioms of $\H$ from Definition \ref{flah} and Theorem \ref{consresult2}.  In particular, we will sometimes skip the obvious step 
involving the maximum, as discussed in the former remark.     

\smallskip

Of course, it is a theorem of constructive mathematics (see e.g.\ \cite{bish1}*{p.47})) that a uniformly continuous function (with a modulus) is Riemann integrable on compact intervals.  
\emph{What is surpring} is that the proof of Theorem~\ref{varou} still goes through in a constructive setting, as it seems we used a number of \emph{non-constructive} logical laws in the previous proof, like \emph{independence of premises} to bring the quantifier `$(\exists^{\st} N')$' to the front as in \eqref{second2}. 
As it turns out, this is not problematic, as we show now.  
\begin{cor}\label{cordejedi}
Theorem \ref{varou} also goes through constructively, i.e.\ we can prove $\CRI_{\ns}$ in $\textup{\textsf{H}} $ and a term $t$ can be extracted such that $\textup{\textsf{E-HA}}^{\omega*} $ proves $\CRI_{{\ef}}(t)$.  
\end{cor}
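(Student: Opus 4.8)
The plan is to re-run the proof of Theorem~\ref{varou} verbatim, checking at each step that only principles available in $\H$ are used, and that the term-extraction step can invoke Theorem~\ref{consresult2} in place of Corollary~\ref{consresultcor}. First I would observe that the derivation of $\CRI_{\ns}$ itself — the estimate on $|S_\pi(f)-S_{\pi'}(f)|$ via a common refinement of the two partitions — is entirely elementary arithmetic manipulation together with the (uniform nonstandard) continuity hypothesis, and uses no instance of excluded middle; hence it goes through in $\textsf{E-HA}^{\omega*}_{\st}$ and a fortiori in $\H$. The only delicate points are the logical transformations that bring $\CRI_{\ns}$ into normal form $(\forall^{\st}x)(\exists^{\st}y)\varphi(x,y)$.

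Next I would go through those transformations one by one. Resolving `$\approx$' in the antecedent to get \eqref{first} is just unfolding Definition~\ref{keepintireal}(4), which is constructively unproblematic. The step from \eqref{first} to \eqref{first2} pulls a standard quantifier $(\forall^{\st}k)$ out past $(\forall x,y\in[0,1])$ and pushes $(\exists^{\st}N)$ inward across an implication whose premise is $(\forall^{\st}N)(\dots)$; this is precisely an instance of $\textsf{HIP}_{\forall^{\st}}$ (independence of premises for a $\forall^{\st}$-premise), which is an axiom of $\H$ — this is the key place where one sees why $\H$ was set up with exactly that axiom. The passage \eqref{first2}$\to$\eqref{first3} uses the contraposition of idealisation $\textsf{I}$; since $\textsf{I}$ is included in $\H$, one must instead route through $\textsf{NCR}$ together with $\textsf{I}$ (as the text flags in the discussion after Definition~\ref{flah}: the classical contraposition of $\textsf{I}$ is replaced constructively by $\textsf{NCR}$), and here the monotonicity of the internal matrix (Remark~\ref{simply}) lets us collapse the finite sequence to its maximum to reach \eqref{first4}. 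Then $\HAC_{\INT}$ — available in $\H$ as the unrestricted $\HAC$ — gives \eqref{first5}. The consequent side (\eqref{second}–\eqref{second23}) is handled by the same three ingredients ($\textsf{HIP}_{\forall^{\st}}$, $\textsf{NCR}$/$\textsf{I}$, $\HAC$) in the same pattern, again exploiting monotonicity to avoid the finite-sequence bookkeeping.

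Having assembled the normal form \eqref{second23} inside $\H$, I would apply Theorem~\ref{consresult2} (the constructive term-extraction theorem for $\H$ over $\textsf{E-HA}^{\omega*}$) to extract closed terms $t\in\mathcal T^*$ with $\textsf{E-HA}^{\omega*}$ proving the internal consequent, then take maxima of components and push quantifiers back inside exactly as in \eqref{tochie}–\eqref{crux} to arrive at $\CRI_{\ef}(u)$ for the resulting term $u$. The main obstacle — and really the only content beyond bookkeeping — is verifying that every non-constructive-looking move in the proof of Theorem~\ref{varou} (bringing $(\exists^{\st}N')$ to the front, contraposing idealisation) is licensed by one of $\textsf{HIP}_{\forall^{\st}}$, $\textsf{HGMP}^{\st}$, $\textsf{NCR}$, or $\HAC$; once that dictionary is in place the rest is a line-by-line transcription. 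One should also note that $\textsf{E-HA}^{\omega*}$ rather than $\textsf{E-PRA}^{\omega*}$ appears in the conclusion, so no claim about a weak base theory is being made here, unlike in Theorem~\ref{varou}.
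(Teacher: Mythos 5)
Your overall route is the same as the paper's: prove $\CRI_{\ns}$ inside $\H$ (the combinatorial estimate is indeed constructively unproblematic), redo the quantifier manipulations of Theorem \ref{varou} using the special axioms of Definition \ref{flah}, and then invoke Theorem \ref{consresult2} instead of Corollary \ref{consresultcor}. That is exactly how the paper proceeds, including the observation that the conclusion is over $\textup{\textsf{E-HA}}^{\omega*}$.

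However, the specific dictionary entry you single out as ``the key place'' is wrong. The passage from \eqref{first} to \eqref{first2} extracts $(\exists^{\st}N)$ from an implication whose premise is $(\forall^{\st}N)(|x-y|\leq\frac1N)$ and whose \emph{conclusion is internal} ($|f(x)-f(y)|\leq\frac1k$). $\textsf{HIP}_{\forall^{\st}}$ does not license this: its conclusion must have the form $(\exists^{\st}y)\Psi(y)$, and all it does is move that standard existential of the conclusion out front --- it never Herbrandizes the $(\forall^{\st}N)$-premise, which is what this step requires. The axiom actually needed (and used in the paper, via \eqref{deep}--\eqref{deep3}) is $\textsf{HGMP}^{\st}$, the Herbrandized Markov-type principle, which replaces the $\forall^{\st}$-premise by a finite list of its instances; the same applies to the integrability side giving \eqref{second}. $\textsf{HIP}_{\forall^{\st}}$ enters only later, at the main implication \eqref{third21}$\to$\eqref{third3}, where the conclusion genuinely is $(\exists^{\st}N')B$; and $\textsf{HGMP}^{\st}$ is then needed once more to pull $(\forall^{\st}k)$ out of the antecedent (\eqref{third4}$\to$\eqref{third5}) before $\textsf{NCR}$ and term extraction. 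Since you list all four axioms in your closing ``dictionary,'' the fix is a relabeling rather than a new idea, but as written the step you highlight would not go through under $\textsf{HIP}_{\forall^{\st}}$.
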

\begin{proof}
First of all, it is clear that the proof of $\CRI_{\ns}$ in the theorem also goes through in $\textup{\textsf{H}}$.  
We now show that $\CRI_{\ns}$ can be brought into the normal form \eqref{third5} (which is essentially \eqref{second23}) \emph{inside the constructive system $\H$}.  
Hence, working in $\H$, consider the antecedent of $\CRI_{\ns}$ as in \eqref{first}.  The quantifier $(\forall^{\st}k)$ can be brought to the front as in \eqref{first2} in intuitionistic logic, i.e.\ we obtain 
\be\label{deep}\textstyle
(\forall^{\st}k)(\forall x, y\in [0,1])[(\forall^{\st}N)|x- y|\leq \frac{1}{N} \di |f(x)- f(y)|\leq \frac{1}{k}].
\ee
To bring the quantifier $(\forall^{\st}N)$ to the front, we apply the axiom $\textsf{HGMP}^{\st}$ included in $\H$ to \eqref{deep}, which is sufficiently internal, to obtain   
\be\label{deep2}\textstyle
(\forall^{\st}k)(\forall x, y\in [0,1])(\exists^{\st}N')[(\forall N\leq N')|x- y|\leq \frac{1}{N'} \di |f(x)- f(y)|\leq \frac{1}{k}],
\ee
which immediately yields 
\be\label{deep3}\textstyle
(\forall^{\st}k)(\forall x, y\in [0,1])(\exists^{\st}N)[|x- y|\leq \frac{1}{N} \di |f(x)- f(y)|\leq \frac{1}{k}].
\ee
The system $\H$ also includes $\textsf{NCR}$, which implies \emph{the contraposition of idealisation}~$\textsf{I}$.  
Hence, applying $\textsf{NCR}$ to \eqref{deep3}, we obtain
\be\label{deep4}\textstyle
(\forall^{\st}k)(\exists^{\st}N')(\forall x, y\in [0,1])(\exists N'\leq N)[|x- y|\leq \frac{1}{N'} \di |f(x)- f(y)|\leq \frac{1}{k}],
\ee
which again immediately yields
\be\label{deep5}\textstyle
(\forall^{\st}k)(\exists^{\st}N)(\forall x, y\in [0,1])[|x- y|\leq \frac{1}{N} \di |f(x)- f(y)|\leq \frac{1}{k}].
\ee
Since $\HAC_{\INT}$ is also included in $\H$, we obtain \eqref{first5}, and \eqref{second} is proved similarly inside $\H$.  
So far, we have shown that $\H$ proves 
\be\label{third2}
(\forall f)[(\exists^{\st}g)(\forall^{\st}k)A(g, k, f)\di (\forall^{\st}k')(\exists^{\st}N')B(k', N', f)], 
\ee
which immediately yields (due to intuitionistic logic) that
\be\label{third21}
(\forall f)(\forall^{\st} g, k')[(\forall^{\st}k)A(g, k, f)\di (\exists^{\st}N')B(k', N', f)], 
\ee
To bring the quantifier $(\exists^{\st}N')$ to the front, we apply the axiom $\textsf{HIP}_{\forall^{\st}}$ included in $\H$ to \eqref{third21}, which is sufficiently internal, to obtain 
\be\label{third3}
(\forall f)(\forall^{\st} g, k')(\exists^{\st}N)[(\forall^{\st}k)A(g, k, f)\di (\exists N'\leq N)B(k', N', f)], 
\ee
which again yields, due to the monotone behaviour of $B$, that
\be\label{third4}
(\forall f)(\forall^{\st} g, k')(\exists^{\st}N)[(\forall^{\st}k)A(g, k, f)\di B(k', N, f)].
\ee
To bring the quantifier $(\forall^{\st}k)$ to the front, we apply the axiom $\textsf{HGMP}^{\st}$ included in $\H$ to \eqref{third4}, which is sufficiently internal, to obtain 
\be\label{third5}
(\forall^{\st} g, k')(\forall f)(\exists^{\st}N, k)[(\forall k''\leq k)A(g, k'', f)\di B(k', N, f)], 
\ee
which is essentially \eqref{second2}.  Similar to the way the latter gives rise to \eqref{second23}, apply $\NCR$ (which supplies the classical contraposition of idealisation $\textsf{I}$) to \eqref{third5} to obtain  
\be\label{third6}
(\forall^{\st} g, k')(\exists^{\st}N)(\forall f)(\exists k)[(\forall k''\leq k)A(g, k'', f)\di B(k', N, f)], 
\ee
Now apply Theorem \ref{consresult2} to \eqref{third6} to obtain $\CRI_{\ef}(t)$ in the same way as in the proof of the theorem.  
\end{proof}
We repeat that it is rather surprising that the system $\H$ includes exactly the `non-constructive' axioms (listed in Definition \ref{flah}) required to 
bring nonstandard definitions into the associated normal form \emph{in a constructive setting}.  We discuss this `non-constructive' status in more detail in the following remark.  
\begin{rem}\label{dingdong}\rm
First of all, note that the axioms from Definition~\ref{flah} only apply to a \emph{part} of the universe of objects, namely \emph{the standard ones}.  This partiality explains why these axioms can be `constructive' at all, in the sense that $\H$ and $\textsf{E-HA}^{\omega*}$ prove the same internal sentences.  

\smallskip

Secondly, Nelson states in \cite{wownelly}*{p.\ 1166} that in \textsf{IST}, every specific object of conventional mathematics is a standard set.  
In other words, the universe of standard objects may be viewed as an attempt at isolating the `actual objects of mathematics' from the formalism in which they are studied. 

\smallskip

Thirdly, Bishop discusses in \cite{nukino}*{p.\ 56} the concept of \emph{numerical implication}, an alternative constructive notion of implication based on G\"odel's Dialectica interpretation; he notes that \emph{in practice} the usual definition of implication amounts to numerical implication.   
Furthermore, Bishop conjectures that numerical implication can be derived constructively, while his derivation in \cite{nukino} uses non-constructive principles like \emph{independence of premises} and \emph{Markov's principle}.  

\smallskip

In light of these three observations, the first two `non-constructive' axioms from Definition \ref{flah} are nothing more than a formalisation of the claims made by Nelson and Bishop regarding mathematical practice.    
\end{rem}
\subsubsection{Herbrandisation}
In this section, we introduce the notion of \emph{Hebrandisation}.  
Intuitively speaking, the latter is a `more constructive' version of $\CRI_{\ef}(t)$ which implies $\CRI_{\ns}$ in $\P_{0}$.  
Indeed, the results so far obtained in this section suggest that \emph{if one shakes Nonstandard Analysis in the right way, constructive mathematics will fall out}.  
It is a natural question, especially in the light of RM, if there is a reversal here. In particular, is there a way to make Nonstandard Analysis `fall out of' some kind of constructive mathematics?  
We shall provide a positive answer to this question in Corollary \ref{somaar}.  To prove the latter, we first need to consider a slightly modified proof of Theorem \ref{varou}.  
\begin{rem}[Herbrandisation]\label{herbrand}\rm
Consider the proof of Theorem \ref{varou}, in particular the step from \eqref{second22} to \eqref{second23}.  
Instead of `forgetting' the information regarding $k$ in \eqref{second22}, 
we apply Corollary \ref{consresultcor} directly to the formula \eqref{second22} to obtain a term $t$ such that $\textup{\textsf{E-PRA}}^{\omega*}$ proves (where $A, B$ are as in the proof of Theorem \ref{varou}):
\[
(\forall  g, k')(\exists l\in t(g, k'))(\forall f)(\exists N', k\leq l)[A(g, k, f)\di B(k', N', f)].  
\]  
Now define $s(g, k')$ as the maximum of all entries of $t(g, k')$ and note that 
\be\label{fsecond}
(\forall  g,  f,k')\big[(\forall k\leq s(g, k'))A(g, k, f)\di B(k', s(g, k'), f)\big].  
\ee
It is insightful to write out \eqref{fsecond} in full as follows:
\begin{align}\textstyle
\textstyle~(\forall f, ~&g ,k')\Big[(\forall k\leq s(g,k'))(\forall \textstyle x, y \in [0,1])(|x-y|<\frac{1}{g(k)} \di |f(x)-f(y)|\leq\frac{1}{k})\notag\\
&\label{FEST}\textstyle\di  (\forall \pi, \pi' \in P([0,1]))\big(\|\pi\|,\| \pi'\|< \frac{1}{s(g,k')}  \di |S_{\pi}(f)- S_{\pi}(f)|\leq \frac{1}{k'} \big)  \Big].
\end{align}
We refer to \eqref{FEST} as the \emph{Herbrandisation of $\CRI$}, denoted by $\CRI_{\her}(s)$.  
The consequent and antecedent of the latter are connected in such a way that pushing $(\forall k')$ into the consequent requires dropping the bound $s(g, k')$ in the antecedent;  this however breaks the aforementioned connection.      
\end{rem}
We now show that a proof of the Herbrandisation \eqref{FEST} can be converted into a proof of the nonstandard version $\CRI_{\ns}$.  
Combined with Theorem \ref{varou} and Remark~\ref{herbrand}, we observe that $\CRI_{\ns}$ and $\CRI_{\her}(t)$ have the same computational content in that 
a proof of one theorem can be converted into a proof of the other one.  
\begin{cor}\label{somaar} Let $t$ be a term in the internal language. 
A proof inside $\textup{\textsf{E-PRA}}^{\omega*}$ of the Herbrandisation $\CRI_{\her}(t)$, can be converted into a proof inside $\P_{0}$ of $\CRI_{\ns}$.  
\end{cor}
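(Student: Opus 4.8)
The plan is to reason throughout in $\P_{0}$, exploiting that $\P_{0}$ extends $\textup{\textsf{E-PRA}}^{\omega*}$, so that the hypothesis gives $\P_{0}\vdash\CRI_{\her}(t)$. Writing $s(g,k')$ for the maximum of the components of $t(g,k')$ as in Remark~\ref{herbrand}, this statement has the form \eqref{FEST}, with the internal formulas $A(g,k,f)$ and $B(k',N',f)$ exactly as in the proof of Theorem~\ref{varou}. I would fix $f:\R\di\R$, \emph{assume} the nonstandard uniform continuity \eqref{soareyou4}, and aim to derive the nonstandard integrability \eqref{soareyou5}.

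The first step is to recover a \emph{standard} modulus of uniform continuity for $f$, which is precisely the passage from \eqref{first} to \eqref{first5} inside the proof of Theorem~\ref{varou}: resolving `$\approx$' in \eqref{soareyou4} gives \eqref{first}, and then (the contraposition of) idealisation \textsf{I} together with $\HAC_{\INT}$, both present in $\P_{0}$, produce a standard $g^{1}$ with $(\forall^{\st}k)A(g,k,f)$. The second step is to observe that $s(g,k')$ is standard whenever $k'$ is: since $t$, hence $s$, is a closed term of $\mathcal{T}^{*}$, the schema $\T^{*}_{\st}$ of Definition~\ref{debs} yields $\st(s)$, and then $\st(s(g,k'))$ by the closure clause $\st(h)\wedge\st(x)\di\st(h(x))$; moreover \textsf{IA}$^{\st}$ applied to the external formula $(\forall m\leq n)\,\st(m)$ shows that the standard natural numbers form an initial segment. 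Hence $(\forall^{\st}k)A(g,k,f)$ already entails the \emph{bounded} statement $(\forall k\leq s(g,k'))A(g,k,f)$, which is exactly the antecedent of \eqref{FEST} for this $f,g$ and standard $k'$.

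Feeding this into $\CRI_{\her}(t)$ then yields $B(k',s(g,k'),f)$ for every standard $k'$. To conclude \eqref{soareyou5}, I would take $\pi,\pi'\in P([0,1])$ with infinitesimal meshes and a standard $k'$; since $\tfrac{1}{s(g,k')}$ is then a fixed standard positive real, the two meshes lie below it, so $B(k',s(g,k'),f)$ gives $|S_{\pi}(f)-S_{\pi'}(f)|\leq\tfrac{1}{k'}$, and as $k'$ ranges over all standard numbers this says $S_{\pi}(f)\approx S_{\pi'}(f)$. A $\P_{0}$-proof of $\CRI_{\ns}$ is then obtained by appending this deduction to the given $\textup{\textsf{E-PRA}}^{\omega*}$-proof of $\CRI_{\her}(t)$.

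I expect the only genuine obstacle to be the bookkeeping in the second step: one must make sure that the bound $s(g,k')$ occurring in the antecedent of the Herbrandisation is \emph{standard}, since it is precisely this that allows the bounded continuity hypothesis of \eqref{FEST} to be discharged by the standard modulus from the first step. This is the `connection' between antecedent and consequent emphasised in Remark~\ref{herbrand}, and it is what makes $\CRI_{\her}(t)$ strong enough to re-derive $\CRI_{\ns}$; dropping that bound, so as to push $(\forall k')$ into the consequent, would destroy the implication.
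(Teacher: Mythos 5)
Your argument is correct and follows essentially the same route as the paper's own proof: recover a standard modulus $g$ via \eqref{first5}, use the standardness of the closed term $t$ (hence of $s(g,k')$ for standard $g,k'$) to discharge the bounded antecedent of $\CRI_{\her}(t)$ and to read off the consequent for infinitesimal meshes at every standard precision $k'$. Your explicit treatment of the downward closure of the standard naturals via \textsf{IA}$^{\st}$ merely spells out what the paper compresses into the remark that $t$ maps standard inputs to standard outputs.
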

\begin{proof}
First of all, recall that any term $t$ of the internal language is standard in $\P_{0}$ due to Definition~\ref{debs}.  Hence, if $\textup{\textsf{E-PRA}}^{\omega*}$ proves $\CRI_{\her}(t)$, then $\P_{0}$ proves $\CRI_{\her}(t)\wedge \st(t)$. 
Now fix a function $f$, nonstandard continuous on $[0,1]$, and obtain \eqref{first5} as in the proof of the theorem.  For $g$ as in \eqref{first5}, we obtain the antecedent of $\CRI_{\her}(t)$ for any standard $k'$.  Hence, we also have the consequent of $\CRI_{\her}(t)$ for any standard $k'$, which immediately yields the 
nonstandard Riemann integrability of $f$ on $[0,1]$.  Note that \emph{for both the antecedent and consequent cases} it is essential that $t$ maps standard inputs to standard outputs.    
\end{proof}
In the following sections, we shall discuss the Herbrandisations of a number of nonstandard theorems (but omit others in the interest of space).  
In general, the \emph{Herbrandisation} of a theorem $T$ of (pure) Nonstandard Analysis is defined as follows. First of all, if $\P_{0}$ proves $T$ and also that the latter has the \emph{equivalent} normal form $(\forall^{\st}x)(\exists^{\st} y)\varphi(x, y)$, then the Herbrandisation of $T$ is $(\forall x)(\exists y\in t(x))\varphi(x, y)$ for a term $t$ (obtained via Corollary \ref{consresultcor}).  

\smallskip

Secondly, if $\P_{0}$ does not prove $T$, but does prove that the latter has the \emph{equivalent} normal form $(\forall^{\st}x)(\exists^{\st} y)\varphi(x, y)$, then the Herbrandisation of $T$ is $(\exists \Phi)(\forall x)(\exists y\in \Phi(x))\varphi(x, y)$.  Note that in this way $\P_{0}$ proves $(\exists^{\st} \Phi)(\forall x)(\exists y\in \Phi(x))\varphi(x, y)\di T$.

\smallskip 

Finally, we discuss various interpretations of the notion of Herbrandisation.  
\begin{rem}[Use principle]\label{iamauser}\rm
The \emph{Use principle} is a basic result in Computability theory (see e.g.\ \cite{zweer}*{Theorem~1.9, p.\ 50}) 
which states that for an oracle Turing machine which halts, the output only depends on a finite subset of the oracle.  
In other words, such a Turing machine only `uses' a finite subset of its oracle.  

\smallskip

Analogously, the consequent of $\CRI_{\ef}(t)$ as in \eqref{EST} also only `uses' finitely many instances of the antecedent in the following specific way:      
let $(\forall k)A(k, f, g)$ and $(\forall n)B(n, f, g)$ be the antecedent and consequent of \eqref{EST} (with `$(\forall n)$' brought into the consequent).  Then for fixed $n_{0}, f_{0}, g_{0}$, to guarantee that $B(n_{0}, f_{0}, g_{0})$, we do 
not require $(\forall k)A(k, f, g)$, but only $(\forall k\leq k_{0})A(k, f, g)$ for some fixed $k_{0}$ (which of course depends on $n_{0}$).  
The Herbrandisation $\CRI_{\her}(t)$ does nothing more than make this dependence explicit, i.e.\ we know `how much' of the antecedent is `used' by the consequent (in an exact effective way).    
\end{rem}
The `quantitative' nature of the Herbrandisation leads us to the following remark.  
\begin{rem}[Hard versus soft analysis]\rm
Hardy makes a distinction between `hard' and `soft' analysis in \cite{hardynicht}*{p.\ 64}.  
Intuitively speaking, soft (resp.\ hard) analysis deals with qualitative (resp.\ quantitative) information and continuous/infinite (resp.\ discrete/finite) objects.  
Tao has on numerous occasions discussed the connection between so-called hard and soft analysis (\cite{taote, tao2}), and how Nonstandard Analysis connects the two.   

\smallskip

The Herbrandisation $\CRI_{\her}(t)$ is quantitative or `hard' in nature, as it states that to compute the Riemann integral of $f$ up to precision $\frac{1}{k'}$ via $S_{\pi}(f)$, we should use a partition $\pi$ finer than $s(g, k')$ and $f$ should only have `jumps' (or discontinuities) in its graph less than $s(g, k')$, where $g$ witnesses this partial continuity. 

\smallskip

By combining Theorem \ref{varou} and Corollary \ref{somaar}, we can convert a proof of $\CRI_{\ns}$ into a proof $\CRI_{\her}(t)$ \emph{and vice versa}.  
The forward conversion can be done via the template $\CI$ in Section \ref{detail};  the reverse conversion (and the associated algorithm) can be `read off' from the proof of Corollary \ref{somaar}.    
Thus, we observe the first steps towards an `algorithmic two-way street' between soft analysis (in the guise of $\CRI_{\ns}$) and hard analysis (in the guise of $\CRI_{\her}(t)$).  
\end{rem}
\begin{rem}[Real real analysis]\rm
In light of the previous remarks, we can view $\CRI_{\ef}(t)$ as a `global' statement (as it requires $(\forall k)A(k,f,g)$ to provide any information), 
while $\CRI_{\her}(t)$ is a `pointwise' statement (as it already provides information if $(\forall k\leq s(g,k'))A(k,f,g)$).  In other words, $\CRI_{\her}(t)$ even provides information 
about certain \emph{discontinuous} $f:\R\di \R$.  This need not be surprising as the following function $f_{0}$ has many `jumps' (or discontinuities), but is nonstandard continuous.  
\[
f_{0}(x):=
\begin{cases}\textstyle
0 &  (\exists i \leq M)\big(\frac{2i}{2M}\leq [x](2^{M}) < \frac{2i+1}{2M}\big)\\
\frac{1}{M} & \text{otherwise}
\end{cases}\qquad \qquad (M\in \Omega).
\]
In particular, $\CRI_{\ns}$ also provides information about functions which are discontinuous in a certain sense.        
in this way, the nonstandard version $\CRI_{\ns}$ and the Herbrandisation \eqref{FEST} play the following important foundational role regarding idealising assumptions in physics:  

\smallskip

We cannot be sure that $\R\di \R$-functions originating from measurements in physics are continuous (nonstandard or $\eps$-$\delta$), 
due to finite measurement precision and conceptual limitations (the Planck scale).  
But how can we then apply the usual theorems from calculus, e.g.\ involving Riemann integration?  In general, how can the thoroughly idealised results of mathematics apply to physics, given that we cannot verify if the (infinitary) conditions 
of mathematical theorems are met due to finite measurement precision and conceptual limitations?
  
\smallskip

The answer to these grand questions seems quite simple in light of the connection between $\CRI_{\ns}$ and its Herbrandisation:  we can just prove results from `pure' Nonstandard Analysis like $\CRI_{\ns}$ and obtain the associated Herbrandisation, which     
also provides approximations to the Riemann integral, should the function $f$ be discontinuous (or just unknown to be continuous beyond a certain precision).  In other words, it seems that theorems from pure Nonstandard Analysis are `robust' in the sense that their associated Herbrandisations still provide approximate/partial results if the conditions of the original theorem are only partially/approximately met.  In our opinion, this robustness partially explains `that other unreasonable effectiveness', 
namely why mathematics is so effective, as in useful, in the natural sciences, as discussed by Wigner in \cite{wignman}.             
Ironically\footnote{For the reader to properly appreciate the irony here, \cite{kluut}*{p.\ 513} should be consulted.}, this robustness also constitutes a formalisation of the following observation made by Bishop.
\begin{quote}
[\dots ] whenever you have a theorem: $B\di A$, then you suspect that you have a theorem: $B$ is approximately true $\di$  $A$ is approximately true. (\cite{kluut}*{p.\ 513})
\end{quote}
\end{rem}
In conclusion, the \emph{nonstandard} version $\CRI_{\ns}$ gives rise to the \emph{effective} version $\CRI_{\ef}$ in an elegant and algorithmic fashion.  
Conversely, to re-obtain the former, the \emph{Herbrandisation} $\CRI_{\her}$ is needed, which provides \emph{approximate} results if the conditions in the consequent are only partially met.  

\subsection{Uniform limit theorem}
In this section, we study the \emph{uniform limit theorem} $\textsf{ULC}$ from \cite{munkies}*{Theorem 21.6}, which states that if a sequence of continuous functions \emph{uniformly} converges to another function, the latter is also continuous.  
We will obtain the effective version of $\ULC$ from the nonstandard version using $\P_{0}$.  
We adopt the usual definition of nonstandard convergence, namely as follows.      
\bdefi[Nonstandard convergence]\label{nsconv}~
\begin{enumerate}
\item A sequence $x_{(\cdot)}^{0\di 1}$ \emph{nonstandard converges} to $x^{1}$ if $(\forall N\in \Omega)(x\approx x_{N})$.  
\item A sequence $f_{(\cdot)}^{0\di (1\di 1)}$ \emph{nonstandard uniformly converges} to $f^{1\di 1}$ on $[0,1]$ if $(\forall   x\in [0,1], N\in \Omega)(f_{N}(x)\approx f(x))$.
\end{enumerate}
\edefi
Hence, we have the following nonstandard and effective version of $\ULC$.  
\begin{thm}[$\ULC_{\ns}$] For all $f_{n}, f:\R\di \R$, if $f_{n}$ is nonstandard continuous on $[0,1]$ for all standard $n$, and $f_{n}$ nonstandard uniformly converges to $f$ on $[0,1]$, then $f$ is also nonstandard continuous on $[0,1]$.   
\end{thm}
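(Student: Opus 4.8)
The plan is to prove $\ULC_{\ns}$ inside $\P_{0}$ by imitating the classical proof of the uniform limit theorem, with the one essential twist that the ``good index'' supplied by the convergence hypothesis must be replaced by a \emph{standard} one. Fix $f_{n},f\colon\R\di\R$ such that $f_{n}$ is nonstandard continuous on $[0,1]$ for every standard $n$, and such that $f_{(\cdot)}$ nonstandard uniformly converges to $f$ on $[0,1]$. Fix a standard $x\in[0,1]$ and an arbitrary $y\in[0,1]$ with $x\approx y$; we must show $f(x)\approx f(y)$, which after resolving `$\approx$' amounts to showing $|f(x)-f(y)|\leq\frac{3}{k}$ for every standard $k$ (the constant $3$ is harmless since $k$ ranges over all standard naturals).

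The crux is to extract from nonstandard uniform convergence a \emph{standard} index that witnesses uniform $\frac{1}{k}$-closeness to $f$. Resolving `$\approx$' in $(\forall x\in[0,1],N\in\Omega)(f_{N}(x)\approx f(x))$ and pulling `$(\forall^{\st}k)$' to the front yields $(\forall^{\st}k)(\forall N\in\Omega)\,\theta_{k}(N)$, where $\theta_{k}(N):\equiv(\forall z\in[0,1])(|f_{N}(z)-f(z)|\leq\frac{1}{k})$ is internal. Fix a standard $k$, so that $(\forall N\in\Omega)\theta_{k}(N)$. Applying overspill (Theorem~\ref{doppi}) to the internal formula $\neg\theta_{k}$ shows that $(\forall^{\st}n)\neg\theta_{k}(n)$ would produce an infinite $N$ with $\neg\theta_{k}(N)$, contradicting $(\forall N\in\Omega)\theta_{k}(N)$; this is exactly \emph{underspill} (see also \cite{brie}*{Prop.\ 5.11}), and it hands us a \emph{standard} $N_{0}$ with $\theta_{k}(N_{0})$, i.e.\ $f_{N_{0}}$ is uniformly $\frac1k$-close to $f$ on $[0,1]$.

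The rest is a routine triangle inequality. Since $N_{0}$ is standard, $f_{N_{0}}$ is nonstandard continuous on $[0,1]$ by hypothesis; as $x$ is standard and $x\approx y$, this gives $f_{N_{0}}(x)\approx f_{N_{0}}(y)$, hence in particular $|f_{N_{0}}(x)-f_{N_{0}}(y)|\leq\frac1k$. Instantiating $\theta_{k}(N_{0})$ at $z=x$ and $z=y$ and combining,
\[
|f(x)-f(y)|\leq |f(x)-f_{N_{0}}(x)|+|f_{N_{0}}(x)-f_{N_{0}}(y)|+|f_{N_{0}}(y)-f(y)|\leq \tfrac1k+\tfrac1k+\tfrac1k=\tfrac3k .
\]
Since $k$ was an arbitrary standard natural, $f(x)\approx f(y)$; and since $x$ ranged over all standard points of $[0,1]$ and $y$ over all points infinitely close to $x$, this is precisely nonstandard continuity of $f$ on $[0,1]$.

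I expect the only non-obvious point to be the underspill step: one must notice that the \emph{infinite} index provided by nonstandard uniform convergence is useless for invoking the continuity hypothesis (which is available only at standard indices), and that underspill is exactly the tool converting it into a usable standard index. Everything else — the resolution of `$\approx$' into $(\forall^{\st}k)$-statements, the bookkeeping of constants, the triangle inequality — is routine; moreover the argument can be carried out keeping all formulas in the normal form $(\forall^{\st}\underline{x})(\exists^{\st}\underline{y})\varphi$, so that the effective version of $\ULC$ is then read off by Corollary~\ref{consresultcor2} exactly as in Theorem~\ref{varou}.
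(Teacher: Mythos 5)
Your proof is correct, but it runs the argument in the opposite direction from the paper's. The paper (first part of the proof of Theorem \ref{floggen}) fixes standard $x_{0}$ and $y_{0}\approx x_{0}$, applies \emph{overspill} to the internal family of continuity estimates $(\forall^{\st}n,k)(|f_{n}(x_{0})-f_{n}(y_{0})|\leq \frac1k)$ to push continuity up to a single \emph{infinite} index $N_{0}$, and then concludes by the infinitesimal chain $f(x_{0})\approx f_{N_{0}}(x_{0})\approx f_{N_{0}}(y_{0})\approx f(y_{0})$, using the convergence hypothesis only at the infinite index $N_{0}$ and the two points $x_{0},y_{0}$. You instead apply \emph{underspill} (the classical contrapositive of Theorem \ref{doppi}, so equally available in $\P_{0}$) to the convergence hypothesis at a fixed standard precision $k$, pulling it down to a \emph{standard} index $N_{0}$ with $\|f-f_{N_{0}}\|\leq\frac1k$ on $[0,1]$, and then finish with the classical $\eps/3$ triangle-inequality argument using continuity at that standard index. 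Both proofs are valid and of comparable length; the paper's is the more ``purely infinitesimal'' one (no quantitative bookkeeping, continuity used only at the one pair of points, no uniformity in $z$ needed in the over/underspill step), while yours stays closer to the textbook proof and has the mild advantage that the index produced is standard, so the continuity hypothesis is invoked exactly where it is stated. Your closing remark is also apt: since your argument keeps everything in normal form, it feeds into the term-extraction half of Theorem \ref{floggen} just as the paper's proof does.
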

\begin{thm}[$\ULC_{\ef}(t)$] For all $f_{(\cdot)},g_{(\cdot)},h$, and $f$, if $f_{n}$ is continuous on $[0,1]$ with modulus $g_{n}$ for all $n$, 
and $f_{n}$ uniformly converges to $f$ on $[0,1]$ with modulus $h$, then $f$ is continuous on $[0,1]$ with modulus $t(g_{(\cdot)}, h)$.   
\end{thm}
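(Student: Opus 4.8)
The plan is to mirror the treatment of $\CRI$ in Theorem~\ref{varou}: first prove the nonstandard statement $\ULC_{\ns}$ inside $\P_{0}$, then bring it into the normal form of Corollary~\ref{consresultcor}, and finally read off the term $t$ by term extraction.

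For $\ULC_{\ns}$ I would fix $f_{(\cdot)},f$ with $f_{n}$ nonstandard continuous for all standard $n$ and $f_{n}$ nonstandard uniformly convergent to $f$ on $[0,1]$, fix standard $x\in[0,1]$ and $y\in[0,1]$ with $x\approx y$, and show $|f(x)-f(y)|\leq\frac{1}{2^{k}}$ for all standard $k$. For standard $k$, nonstandard uniform convergence — which quantifies over \emph{all} points of $[0,1]$, standard or not — gives $(\forall N\in\Omega)\big[|f_{N}(x)-f(x)|\leq\frac{1}{2^{k+2}}\wedge|f_{N}(y)-f(y)|\leq\frac{1}{2^{k+2}}\big]$. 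Since the internal formula $(\forall N\geq M)[\cdots]$ then holds for every infinite $M$, underspill (available in $\P_{0}$; cf.\ \cite{brie}) supplies a \emph{standard} index $n$ with $|f_{n}(x)-f(x)|,|f_{n}(y)-f(y)|\leq\frac{1}{2^{k+2}}$. As $n$ is standard, $f_{n}$ is nonstandard continuous, so $x$ standard and $x\approx y$ give $f_{n}(x)\approx f_{n}(y)$, i.e.\ $|f_{n}(x)-f_{n}(y)|\leq\frac{1}{2^{k+2}}$; the triangle inequality finishes: $|f(x)-f(y)|\leq\frac{3}{2^{k+2}}\leq\frac{1}{2^{k}}$. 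The \emph{uniformity} of the convergence is what allows a single standard $n$ to work simultaneously at the standard point $x$ and at the generally nonstandard point $y$, and this is the crux of the argument.

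Next I would bring $\ULC_{\ns}$ into normal form exactly as in the proof of Theorem~\ref{varou} and with the conventions of Remark~\ref{simply}. Resolving `$\approx$' in `$f_{n}$ continuous for all standard $n$' and applying $\textsf{I}$ and $\HAC_{\INT}$ produces a standard $g$ with $(\forall^{\st}n,i)\,C(g,n,i,f_{(\cdot)})$, where $C$ expresses that $g_{n}:=g(n,\cdot)$ is a pointwise continuity modulus for $f_{n}$ at level $i$; resolving `$\approx$' in nonstandard uniform convergence, using underspill to turn $(\forall N\in\Omega)$ into $(\exists^{\st}M)(\forall N\geq M)$, and applying $\HAC_{\INT}$ produces a standard $h$ with $(\forall^{\st}m)\,U(h,m,f_{(\cdot)},f)$, where $U$ expresses that $h$ is a uniform-convergence modulus; and the conclusion becomes $(\forall^{\st}k)(\exists^{\st}N)\,D(N,k,f)$, with $D$ the internal statement that $N$ is a pointwise continuity modulus for $f$ at level $k$. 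Thus $\ULC_{\ns}$ reads $(\forall^{\st}g,h,k)(\forall f_{(\cdot)},f)\big[(\forall^{\st}n,i)C\wedge(\forall^{\st}m)U\di(\exists^{\st}N)D(N,k,f)\big]$ after moving standard quantifiers out front. I would then \emph{strengthen} the antecedent, replacing $(\forall^{\st}n,i)$ and $(\forall^{\st}m)$ by plain $(\forall n,i)$ and $(\forall m)$; this only weakens the statement, which is still provable from $\ULC_{\ns}$, and makes the antecedent internal. Pulling $(\exists^{\st}N)$ to the front classically, then applying idealisation $\textsf{I}$ to push $(\forall f_{(\cdot)},f)$ inside $(\exists^{\st}N)$ and collapsing the resulting finite sequence via its maximum — valid since $D$, hence the whole matrix, is monotone in $N$ and so `sufficiently monotone' in the sense of Remark~\ref{simply} — yields the normal form $(\forall^{\st}g,h,k)(\exists^{\st}N)(\forall f_{(\cdot)},f)\big[(\forall n,i)C\wedge(\forall m)U\di D(N,k,f)\big]$.

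Finally, applying Corollary~\ref{consresultcor} (in the form of Corollary~\ref{consresultcor2}, to keep the base theory weak) extracts a closed term of the original language; taking the maximum of its finite-sequence output and currying in $g$ and $h$ gives a term $t$ for which $(\forall k)\,D(t(g_{(\cdot)},h)(k),k,f)$ follows, over $\textup{\textsf{E-PRA}}^{\omega*}$, from $(\forall n,i)C(g,n,i,f_{(\cdot)})$ and $(\forall m)U(h,m,f_{(\cdot)},f)$ — unpacked, this is precisely $\ULC_{\ef}(t)$, and one sees that the extracted $t$ is essentially explicit, roughly $t(g_{(\cdot)},h)(x,k)=g_{h(k+2)}(x,k+2)$. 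I expect the main obstacle to be the nonstandard argument: one must make sure the index $n$ returned by underspill is genuinely \emph{standard} (so the continuity hypothesis on $f_{n}$ applies) while still keeping the nonstandard point $y$ under control, and this is exactly where uniform convergence is indispensable. The ensuing normal-form bookkeeping is intricate but routine given the $\CRI$ precedent, the only subtle point being to verify that, once the antecedent is made internal, the idealisation step still goes through thanks to monotonicity in the output variable $N$.
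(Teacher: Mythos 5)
Your proposal is correct and follows the paper's own template for this result (Theorem~\ref{floggen}): prove $\ULC_{\ns}$ in $\P_{0}$, bring it into a normal form, and extract the term via Corollary~\ref{consresultcor2}. There are two points of divergence, both harmless. First, your nonstandard argument is the dual of the paper's: you underspill the uniform convergence to obtain a \emph{standard} index $n$ that works simultaneously at the standard point $x$ and the possibly nonstandard point $y$, then invoke nonstandard continuity of $f_{n}$; the paper instead overspills $(\forall^{\st}n,k)(|f_{n}(x_{0})-f_{n}(y_{0})|\leq \frac1k)$ to an infinite index $N_{0}$ and then uses convergence at $N_{0}$. Both are valid in $\P_{0}$ (underspill is available there, cf.\ \cite{brie}*{Prop.\ 5.11}). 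Second, you strengthen the antecedent to its internal form before extraction rather than Herbrandizing its standard quantifiers as in \eqref{fruk}--\eqref{pierlala}; since the hypotheses of $\ULC_{\ef}(t)$ are the full internal modulus statements anyway, this shortcut is legitimate and lands exactly at \eqref{pierlala}, with the monotonicity-in-$N$ collapse sanctioned by Remark~\ref{simply}. One slip to repair: the point $x'$ at which continuity of $f$ is asserted must be carried along as a standard universally quantified input of the normal form (as in the paper's $E(f,N,k'',x')$); your displayed form $(\forall^{\st}g,h,k)(\exists^{\st}N)(\forall f_{(\cdot)},f)[\cdots\di D(N,k,f)]$ omits it, and read literally it would claim an $x$-independent, i.e.\ uniform, modulus for $f$, which is not extractable from pointwise continuity of the $f_{n}$. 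Your own final term $g_{h(k+2)}(x,k+2)$ shows you intend the pointwise reading, so this is a notational rather than substantive defect.
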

We have the following theorem.
\begin{thm}\label{floggen}
From the proof of $\ULC_{\ns}$ in $\P_{0}$, a term $t$ can be extracted such that $\textup{\textsf{E-PRA}}^{\omega*} $ proves $\ULC_{{\ef}}(t)$.  
\end{thm}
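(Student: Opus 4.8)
The plan is to mirror exactly the three-stage strategy used in the proof of Theorem~\ref{varou}: first prove $\ULC_{\ns}$ inside $\P_{0}$ by a standard $\varepsilon/3$-style nonstandard argument; second, bring $\ULC_{\ns}$ into the normal form $(\forall^{\st}\underline{x})(\exists^{\st}\underline{y})\psi(\underline{x},\underline{y},\underline{a})$ of Corollary~\ref{consresultcor} by resolving each `$\approx$' into its standard quantifier form and then pushing standard quantifiers outward using idealisation~$\textsf{I}$ (in contraposed form) together with $\HAC_{\INT}$, taking maxima of the finite sequences as in Remark~\ref{simply}; third, apply Corollary~\ref{consresultcor} (via Corollary~\ref{consresultcor2}, since we work over $\P_{0}$ and want the conclusion in $\textup{\textsf{E-PRA}}^{\omega*}$) to extract the term $t$ witnessing $\ULC_{\ef}(t)$.

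For the first stage, fix standard $f_{(\cdot)}$ each nonstandard continuous on $[0,1]$ and nonstandard uniformly converging to $f$. To see $f$ is nonstandard continuous, fix standard $x\in[0,1]$ and $y\in[0,1]$ with $x\approx y$; pick any infinite $N\in\Omega$, so $f_{N}(x)\approx f(x)$ and $f_{N}(y)\approx f(y)$ by nonstandard uniform convergence. One wants $f_{N}(x)\approx f_{N}(y)$, which would follow from nonstandard continuity of $f_{N}$ — but $N$ is nonstandard, so this requires care. The fix is the usual one: by overspill (Theorem~\ref{doppi}) applied to the internal statement ``$f_{n}$ is $\varepsilon$-$\delta$ continuous at $x$ with a suitable bound'' holding for all standard $n$, one gets it for some infinite $N$; alternatively, resolve things numerically and use that nonstandard uniform convergence gives a \emph{single} modulus of convergence that, via overspill, holds at an infinite index while the moduli of continuity $g_{n}$ can be amalgamated by $\HAC_{\INT}$ into one functional. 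Then $f(x)\approx f_{N}(x)\approx f_{N}(y)\approx f(y)$ by transitivity of `$\approx$' on the standard-quantifier level, giving $f(x)\approx f(y)$.

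For the second stage, the antecedent splits as: (i) $(\forall^{\st}n)[f_{n}\text{ nonstandard continuous}]$, which after resolving `$\approx$', applying $\textsf{I}$ and $\HAC_{\INT}$ exactly as in \eqref{first}--\eqref{first5} yields $(\exists^{\st}g_{(\cdot)})(\forall^{\st}n,k)A(g_{(\cdot)},n,k,f_{(\cdot)})$ for an internal $A$ encoding ``$g_{n}$ is a modulus of continuity for $f_{n}$''; and (ii) nonstandard uniform convergence, which similarly produces $(\exists^{\st}h)(\forall^{\st}m)C(h,m,f_{(\cdot)},f)$ with $C$ internal expressing ``$h$ is a modulus of uniform convergence''. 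The consequent, that $f$ is nonstandard continuous, resolves to $(\forall^{\st}k')(\exists^{\st}N')B(k',N',f)$ with $B$ internal. So $\ULC_{\ns}$ gives $(\forall^{\st}g_{(\cdot)},h,k')(\forall f_{(\cdot)},f)(\exists^{\st}N',n,k,m)[\,(A\wedge C)\di B\,]$ after routine manipulations; one application of contraposed~$\textsf{I}$ and the usual maximum trick collapses the existential block and the bounded quantifiers on the antecedent side, yielding the normal form
\be\notag
(\forall^{\st}g_{(\cdot)},h,k')(\exists^{\st}N')(\forall f_{(\cdot)},f)\big[(\forall n,k\leq N')A(g_{(\cdot)},n,k,f_{(\cdot)})\wedge (\forall m\leq N')C(h,m,f_{(\cdot)},f)\di B(k',N',f)\big].
\ee
Applying Corollary~\ref{consresultcor} (in the $\P_{0}$/$\textup{\textsf{E-PRA}}^{\omega*}$ form) gives a closed term extracting $N'$ from $(g_{(\cdot)},h,k')$; taking the maximum over the resulting finite sequence produces the modulus $t(g_{(\cdot)},h)$, and rearranging quantifiers back gives precisely $\ULC_{\ef}(t)$.

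The main obstacle is the first stage: making the informal $\varepsilon/3$ argument rigorous inside $\P_{0}$ at an \emph{infinite} index $N$, where nonstandard continuity of $f_{N}$ is not directly available (the nonstandard continuity hypothesis only quantifies over \emph{standard} $n$). The resolution is to push everything to the internal level first — amalgamate the $g_{n}$ into one standard functional via $\HAC_{\INT}$, so that ``$g_{n}$ witnesses continuity of $f_{n}$'' becomes an internal statement true for all standard $n$, then extend to an infinite $N$ by overspill (Theorem~\ref{doppi}) — and only then run the $\varepsilon/3$ estimate. Everything else is the same bookkeeping with `$\approx$', $\textsf{I}$, $\HAC_{\INT}$, maxima of finite sequences, and term extraction already carried out in detail in Theorem~\ref{varou}, so I would present those steps in compressed form, referring back to that proof and to Remark~\ref{simply}.
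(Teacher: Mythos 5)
Your overall strategy is the paper's: prove $\ULC_{\ns}$ in $\P_{0}$ via overspill and the chain $f(x)\approx f_{N}(x)\approx f_{N}(y)\approx f(y)$, then bring the implication into a normal form using $\textsf{I}$ and $\HAC_{\INT}$ (with the maximum trick of Remark~\ref{simply}) and extract the term via Corollary~\ref{consresultcor}. One small remark on your first stage: the paper's version is simpler than either of your fixes, since it applies overspill directly to the internal statement $(\forall^{\st}n,k)(|f_{n}(x_{0})-f_{n}(y_{0})|\leq \frac1k)$ at the \emph{fixed} pair $x_{0}\approx y_{0}$, so no moduli and no $\HAC_{\INT}$-amalgamation are needed there at all; note also that your first suggested fix (``overspill the statement that $f_{n}$ is $\eps$-$\delta$ continuous at $x$ with a suitable bound'') is not a single internal formula until a modulus has been fixed, so only your second, heavier variant is actually precise.

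The one genuine problem is your handling of the continuity statements. $\ULC$ is about \emph{pointwise} continuity (Definition~\ref{Kont}, \eqref{soareyou3}), so resolving the hypothesis does not give \eqref{first}--\eqref{first5} (a uniform modulus $g_{n}(k)$) but only a pointwise modulus $g_{n}(x,k)$, and the consequent resolves to $(\forall^{\st}k'',x'\in[0,1])(\exists^{\st}N)E(f,N,k'',x')$; the point variables must be carried along as standard parameters, as in the paper's \eqref{fruk}--\eqref{pierlala}, so that the extracted modulus is $u(g_{(\cdot)},h,k'',x')$, i.e.\ depends on the point. Your displayed normal form drops $x$ and $x'$, so the extracted $N'$ would depend only on $(g_{(\cdot)},h,k')$ and hence be a \emph{uniform} modulus of continuity for $f$. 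From pointwise data this is not extractable in $\P_{0}$: specialising to the constant sequence $f_{n}=f$ (with trivial convergence modulus) it would turn $\ULC_{\ns}$ into Heine's theorem $\HEI_{\ns}$, which by Corollary~\ref{fralgyy} implies $\STP$ over $\P_{0}$ and so is not provable there (cf.\ Theorem~\ref{floggen345}, where the uniform modulus requires Heine--Borel/fan data $\HBL_{\ef}(h)$ rather than a closed term). So as written your normal form either overreaches, or---if your $A$ is read as uniform moduli for the $f_{n}$---proves only the uniformly-continuous variant rather than the paper's $\ULC_{\ef}(t)$. The repair is routine: keep $x$ inside the antecedent moduli and put $x',k''$ among the leading standard quantifiers; the remaining steps (idealisation, maxima, term extraction) then go through exactly as you describe and as in the paper.
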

\begin{proof}
We first show that $\ULC_{\ns}$ can be proved in $\textup{\textsf{E-PRA}}^{\omega*}_{\st} $.  To this end, let $f$ and $f_{n}$ be as in the antecedent of $\ULC_{\ns}$.  
Now fix standard $x_{0}\in [0,1]$ and any $y_{0}\in [0,1]$ such that $x_{0}\approx y_{0}$.  By assumption, we have $(\forall^{\st}n)(f_{n}(x_{0})\approx f_{n}(y_{0}))$, implying  $(\forall^{\st}n, k)(|f_{n}(x_{0})- f_{n}(y_{0})|\leq \frac{1}{k})$.  
By overspill, there is $N_{0}\in \Omega$ such that $(\forall  n, k\leq N_{0})(|f_{n}(x_{0})- f_{n}(y_{0})|\leq \frac{1}{k})$.  
Again by assumption, we have $f(x_{0})\approx f_{N_{0}}(x_{0})\approx f_{N_{0}}(y_{0})\approx f(y_{0})$, which implies the nonstandard continuity of $f$.       

\smallskip

Secondly, we show that $\ULC_{\ns}$ can be brought into the right normal form for applying Corollary \ref{consresultcor}.  
Making explicit all standard quantifiers in the first conjunct in the antecedent of $\ULC_{\ns}$, we obtain: 
\[\textstyle
(\forall^{\st}x^{1}\in [0,1], n^{0})(\forall y^{1}\in [0,1])[(\forall^{\st}N)|x-y|\leq \frac{1}{N} \di (\forall^{\st}k)|f_{n}(x)- f_{n}(y)|\leq \frac{1}{k}].
\]
Bringing the standard quantifiers to the front as much possible, this becomes
\[\textstyle
(\forall^{\st}x^{1}\in [0,1], n^{0}, k^{0})(\forall y^{1}\in [0,1])(\exists^{\st}N)[|x-y|\leq \frac{1}{N} \di |f_{n}(x)- f_{n}(y)|\leq \frac{1}{k}].
\]
Since the formula in square brackets is internal, we may apply (the contraposition of) idealisation \textsf{I} and obtain 
\[\textstyle
(\forall^{\st}x^{1}\in [0,1], n^{0}, k^{0})(\exists^{\st} m^{0})(\forall y^{1}\in [0,1])(\exists N\leq m)[|x-y|\leq \frac{1}{N} \di |f_{n}(x)- f_{n}(y)|\leq \frac{1}{k}], 
\]
which immediately yields 
\[\textstyle
(\forall^{\st}x^{1}\in [0,1], n^{0}, k^{0})(\exists^{\st} N^{0})(\forall y^{1}\in [0,1])[|x-y|\leq \frac{1}{N} \di |f_{n}(x)- f_{n}(y)|\leq \frac{1}{k}].
\]
Now apply $\HAC_{\INT}$ to obtain a standard functional $\Psi$ such that $(\exists N\in \Psi(x, n, k))$.  Define $g_{n}(x, k)$ as $\max_{i<|\Psi(x, n, k)|}\Psi(x, n, k)(i)$ and we obtain: 
\[\textstyle
(\exists^{\st} g_{(\cdot)})(\forall^{\st}x^{1}\in [0,1], n^{0}, k^{0})\big[(\forall y^{1}\in [0,1])[|x- y|\leq\frac{1}{g_{n}(x,k)} \di | f_{n}(x)- f_{n}(y)| \leq \frac{1}{k}]\big].
\]
For brevity let $A(\cdot)$ be the formula in big square brackets.  Similarly, the second conjunct of the antecedent of $\ULC_{\ns}$ yields
\[\textstyle
(\exists^{\st} h)(\forall^{\st}k)\big[(\forall x\in [0,1])(\forall N\geq h(x, k))(| f_{N}(x)- f(x)|\leq \frac{1}{k})\big],    
\]
where $B(\cdot)$ is the formula in big square brackets.  Lastly, the consequent of $\ULC_{\ns}$, which is just the nonstandard continuity of $f$, implies 
\[\textstyle
(\forall^{st} k, x \in [0,1])(\exists^{\st}N)\big[(\forall  y \in [0,1])(|x-y|<\frac{1}{N} \di |f(x)-f(y)|<\frac{1}{k})\big],
\]
where $E(\cdot)$ is the formula in big square brackets.  Hence $\ULC_{\ns}$ implies that 
\begin{align*}
(\forall f, f_{n})\Big[\big[(\exists^{\st} g_{(\cdot)})(\forall^{\st}x^{1}\in [0,1], &n^{0}, k^{0})A(f_{n}, g_{n}, x, n, k) \wedge (\exists^{st} h)(\forall^{\st}k')B(f, f_{n}, h, k')  \big]\\
&\di (\forall^{\st} k'', x' \in [0,1])(\exists^{\st}N)E(f, N , k'', x')\Big].
\end{align*}
Bringing all standard quantifiers to the front, we obtain
\begin{align}
(\forall^{\st} g_{(\cdot)}, h&,k'', x'\in [0,1])\underline{(\forall f, f_{n})(\exists^{\st}x \in [0,1], n^{0}, k, k' ,  N^{0})}\label{fruk}\\
&\Big[\big[A(f_{n}, g_{n}, x, n, k) \wedge  B(f, f_{n}, h, k')  \big]\di E(f, N , k'', x')\Big],\notag
\end{align}
where the formula in big(gest) square brackets is internal.  Applying idealisation to the underlined quantifier alternation in \eqref{fruk}, we obtain 
\begin{align}
(\forall^{\st} g_{(\cdot)}, h&,k'', x'\in [0,1])(\exists^{\st}z){(\forall f, f_{n})(\exists (x \in [0,1], n^{0}, k, k' ,  N^{0}) \in z)}\label{fruk2}\\
&\Big[\big[A(f_{n}, g_{n}, x, n, k) \wedge  B(f, f_{n}, h, k')  \big]\di E(f, N , k'', x')\Big],\notag
\end{align}
As \eqref{fruk2} was proved in $\P_{0}$, Corollary \ref{consresultcor} tells us that $\textup{\textsf{E-PRA}}^{\omega*}$ proves 
\begin{align}
(\forall g_{(\cdot)}, h&,k'', x'\in [0,1])(\exists z\in t(g_{(\cdot)}, h, k'', x')){(\forall f, f_{n})(\exists (x \in [0,1], n^{0}, k, k' ,  N^{0}) \in z)}\notag\\
&\Big[\big[A(f_{n}, g_{n}, x, n, k) \wedge  B(f, f_{n}, h, k')  \big]\di E(f, N , k'', x')\Big],\label{frakker}
\end{align}
where $t$ is a term in the language of $\textup{\textsf{E-PRA}}^{\omega*}$.  
The formula \eqref{frakker} yields
\begin{align}
(\forall  g_{(\cdot)}, & h,k'', x'\in [0,1])(\exists  N^{0}\in s( g_{(\cdot)}, h, k'', x'))(\forall f, f_{n})(\exists x \in [0,1], n^{0}, k, k') \notag\\
&\Big[\big[A(f_{n}, g_{n}, x, n, k) \wedge B(f, f_{n}, h, k')  \big]\di E(f, N , k'', x')\Big],\label{frak}
\end{align}
where $s$ is obtained by ignoring all components of $t$ except those containing the potential witnesses to $N$.  Note that the special structure of $E$ plays a role.    
Now define $u( g_{(\cdot)}, h, k'', x')))$ to be the maximum of all components of $s(g_{(\cdot)}, h, k'', x')))$ and push all existential quantifiers in \eqref{frak} back inside the square brackets:  
\begin{align}
(\forall f, f_{n}, g_{(\cdot)},  h) &\Big[\big[(\forall x \in [0,1], n^{0}, k)A(f_{n}, g_{n}, x, n, k)\label{pierlala} \\
&\wedge (\forall k')B(f, f_{n}, h, k')  \big]\di (\forall k'', x'\in [0,1])E(f, u(g_{(\cdot)}, h, k'', x') , k'', x')\Big],\notag
\end{align}
which is exactly $\ULC_{\ef}(u)$, and we are done.
\end{proof}
As it turns out, there is a constructive version of the uniform limit theorem in Constructive Analysis (see \cite{bridge1}*{Prop.\ 1.12, p.\ 86}), and there is a somewhat similar version of the uniform limit theorem in computability theory (see \cite{yasugi1}*{Prop. 3.2}).  
\begin{cor}\label{cordejedi2}
The theorem also goes through constructively, i.e.\ we can prove $\ULC_{\ns}$ in $\textup{\textsf{H}} $ and a term $t$ can be extracted such that $\textup{\textsf{E-HA}}^{\omega*} $ proves $\ULC_{{\ef}}(t)$.  
\end{cor}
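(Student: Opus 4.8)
The plan is to mimic, step for step, the constructive treatment of Riemann integration in Corollary~\ref{cordejedi}: re-run the proof of Theorem~\ref{floggen} inside $\H$, replacing each classical quantifier manipulation by the corresponding axiom from Definition~\ref{flah}, and finish with the constructive term extraction of Theorem~\ref{consresult2} in place of Corollary~\ref{consresultcor}.

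First I would verify that the derivation of $\ULC_{\ns}$ given in the proof of Theorem~\ref{floggen} already goes through in $\H$. The only ingredient there beyond intuitionistic logic is overspill, and $\H$ proves overspill by Theorem~\ref{doppi}; the remaining moves --- unfolding $x_{0}\approx y_{0}$ and $f_{n}(x_{0})\approx f_{n}(y_{0})$ into the internal statements $(\forall^{\st}k)(|\cdots|\le \tfrac1k)$, applying overspill to the internal formula $(\forall n,k\le m)(|f_{n}(x_{0})-f_{n}(y_{0})|\le\tfrac1k)$ to obtain an infinite $N_{0}$, and chaining $f(x_{0})\approx f_{N_{0}}(x_{0})\approx f_{N_{0}}(y_{0})\approx f(y_{0})$ using transitivity of $\approx$ and nonstandard \emph{uniform} convergence at the (possibly nonstandard) point $y_{0}$ --- are all intuitionistically valid. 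Hence $\H\vdash\ULC_{\ns}$.

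Next I would bring $\ULC_{\ns}$ into the normal form required by Theorem~\ref{consresult2}, following the classical route to \eqref{fruk2} but with the constructive substitutes. For each of the two antecedent conjuncts (nonstandard continuity of every $f_{n}$; nonstandard uniform convergence of $f_{n}$ to $f$) and for the consequent (nonstandard continuity of $f$), one pulls the standard universal quantifiers outward by intuitionistic logic, pulls $(\forall^{\st}N)$ out of the hidden antecedent by $\textsf{HGMP}^{\st}$, swaps $(\forall y)(\exists^{\st}N)$ to $(\exists^{\st}N)(\forall y)$ by $\textsf{NCR}$ (which, as noted in the proof of Corollary~\ref{cordejedi}, supplies the classical contraposition of idealisation), and applies $\HAC_{\INT}$ (included in $\H$, even unrestricted as $\HAC$) to extract the moduli $g_{(\cdot)}$ and $h$. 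This yields precisely the formulas playing the roles of $A$, $B$, $E$ in the proof of Theorem~\ref{floggen}, now proved in $\H$. Since each internal matrix here is monotone in the relevant witnessing variable, the Herbrandized finite lists of witnesses produced by $\textsf{HGMP}^{\st}$, $\textsf{NCR}$ and $\HAC_{\INT}$ may be collapsed to single witnesses by taking maxima, exactly per the convention of Remark~\ref{simply}.

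Finally I would combine the three normal forms into the single implication of $\ULC_{\ns}$: moving the two existential modulus quantifiers out of the antecedent and the quantifiers $(\forall^{\st}k'',x')$ out of the consequent is intuitionistic; after bundling the two $(\forall^{\st})$-antecedents into one $(\forall^{\st})$ over a tuple (the conjunction being internal), I pull the consequent's $(\exists^{\st}N)$ to the front by $\textsf{HIP}_{\forall^{\st}}$ and the antecedent's $(\forall^{\st})$ to the front by $\textsf{HGMP}^{\st}$, reaching the constructive analogue of \eqref{fruk}, after which one application of $\textsf{NCR}$ gives the constructive analogue of \eqref{fruk2}. Theorem~\ref{consresult2} then extracts a term $t$ such that $\textup{\textsf{E-HA}}^{\omega*}$ proves the corresponding version of \eqref{frakker}, and taking maxima of the finite sequences and pushing the now-harmless quantifiers back inside (the passage from \eqref{frakker} to \eqref{pierlala}) produces $\ULC_{\ef}(u)$ for a term $u$ definable from $t$. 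The main obstacle is bookkeeping rather than conceptual: one must keep every internal matrix genuinely monotone so that the max-trick of Remark~\ref{simply} legitimately turns the Herbrandized consequents into honest witnesses, and one must apply $\textsf{HGMP}^{\st}$ and $\textsf{HIP}_{\forall^{\st}}$ only to \emph{internal} premises, which dictates the precise ordering of the quantifier shifts above --- the same ordering subtlety already present in Corollary~\ref{cordejedi}.
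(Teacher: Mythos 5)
Your proposal is correct and matches the paper's intent exactly: the paper's own proof of this corollary is simply ``Similar to the proof of Corollary~\ref{cordejedi}'', and your step-by-step plan (overspill via Theorem~\ref{doppi} for the proof of $\ULC_{\ns}$ in $\H$, then $\textsf{HGMP}^{\st}$, $\textsf{NCR}$, $\HAC_{\INT}$ and $\textsf{HIP}_{\forall^{\st}}$ to reach the normal form, followed by Theorem~\ref{consresult2} and the max-trick of Remark~\ref{simply}) is precisely that route spelled out. No gaps.
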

\begin{proof}
Similar to the proof of Corollary \ref{cordejedi}. 
\end{proof}
Following Remark \ref{herbrand}, the Herbrandisation $\ULC_{\her}(s)$ is defined as \eqref{frakker}.  
Note that once the correct Herbrandisation is found, re-obtaining the original nonstandard version is easy.  In particular, 
the proof of Corollary \ref{somaar} amounts to nothing more than the observation that the term $t$ in the Herbrandisation $\CRI_{\her}(t)$ is standard in $\P_{0}$, which yields the nonstandard definitions from the usual ones.    
\begin{cor}\label{somaar2} Let $t$ be a term in the internal language. 
A proof inside $\textup{\textsf{E-PRA}}^{\omega}$ of $\ULC_{\her}(t)$, can be converted into a proof inside $\P_{0}$ of $\ULC_{\ns}$. 
\end{cor}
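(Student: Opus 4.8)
The plan is to follow the proof of Corollary~\ref{somaar} almost verbatim, with $\ULC$ in place of $\CRI$. The single conceptual ingredient is that any term $t$ of the internal language is standard in $\P_{0}$ by Definition~\ref{debs}; hence if $\textup{\textsf{E-PRA}}^{\omega}$ proves $\ULC_{\her}(t)$, then $\P_{0}$ proves $\ULC_{\her}(t)\wedge\st(t)$, and in particular $t$ maps standard inputs to standard outputs. This standardness of $t$ is what makes the whole argument work.

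Working in $\P_{0}$, I would fix $f$ and $f_{n}$ satisfying the antecedent of $\ULC_{\ns}$, i.e.\ $f_{n}$ is nonstandard continuous on $[0,1]$ for each standard $n$, and $f_{n}$ nonstandard uniformly converges to $f$ on $[0,1]$. Repeating the normal-form computations from the proof of Theorem~\ref{floggen} (resolving `$\approx$', pulling standard quantifiers to the front, applying the contraposition of idealisation $\textsf{I}$, and then $\HAC_{\INT}$), the first hypothesis yields a \emph{standard} modulus $g_{(\cdot)}$ with $(\forall^{\st}x\in[0,1], n, k)\,A(f_{n}, g_{n}, x, n, k)$, and the second yields a \emph{standard} modulus $h$ with $(\forall^{\st}k')\,B(f, f_{n}, h, k')$, where $A$, $B$ are the internal formulas appearing in the proof of Theorem~\ref{floggen}.

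Next I would fix standard $k''$ and $x'\in[0,1]$. Since $g_{(\cdot)}, h, k'', x'$ are standard and $t$ maps standard inputs to standard outputs, the finite sequence $t(g_{(\cdot)}, h, k'', x')$ is standard, and therefore so are all of its elements together with their entries. Applying $\ULC_{\her}(t)$ as in \eqref{frakker} first to $g_{(\cdot)}, h, k'', x'$ and then to our $f, f_{n}$, one obtains $z\in t(g_{(\cdot)}, h, k'', x')$ and a tuple $(x, n, k, k', N)\in z$ — both standard, by the preceding remark — satisfying $[A(f_{n}, g_{n}, x, n, k)\wedge B(f, f_{n}, h, k')]\di E(f, N, k'', x')$, where $E$ is the internal formula witnessing nonstandard continuity from the proof of Theorem~\ref{floggen}. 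As $x, n, k, k'$ are all standard, the two conjuncts of the antecedent hold by the choice of $g_{(\cdot)}$ and $h$, whence $E(f, N, k'', x')$ holds with $N$ standard. Since $k''$ and $x'$ were arbitrary standard, this gives $(\forall^{\st}k'', x'\in[0,1])(\exists^{\st}N)\,E(f, N, k'', x')$, which is the normal form of the nonstandard continuity of $f$ on $[0,1]$: for standard $x'$ and $y\approx x'$, the standard $N$ attached to a given standard $k''$ satisfies $|x'-y|<\frac{1}{N}$, hence $|f(x')-f(y)|<\frac{1}{k''}$, so $f(x')\approx f(y)$. Thus $f$ is nonstandard continuous on $[0,1]$, i.e.\ the consequent of $\ULC_{\ns}$ holds.

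The only step requiring genuine attention — exactly as in Corollary~\ref{somaar} — is the standardness bookkeeping: one must verify that the witness $z$ and the tuple $(x, n, k, k', N)$ handed back by the Herbrandisation are standard, so that the internal implication above can be discharged against the \emph{standard} moduli $g_{(\cdot)}$ and $h$. This is precisely where $\st(t)$ enters, and it is essential for both the antecedent and the consequent. Everything else is a mechanical replay of the normal-form manipulations already carried out in the proof of Theorem~\ref{floggen}, so I would not expect any serious obstacle beyond this point.
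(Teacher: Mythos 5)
Your proposal is correct and follows exactly the route the paper takes: the paper's proof simply says ``similar to Corollary~\ref{somaar}, noting that $t$ is standard in $\P_{0}$'', and your argument is precisely that proof spelled out — standardness of the term via Definition~\ref{debs}, extraction of standard moduli $g_{(\cdot)}$ and $h$ by the normal-form steps of Theorem~\ref{floggen}, and then discharging the Herbrandised implication \eqref{frakker} against these standard moduli to obtain the nonstandard continuity of $f$. The standardness bookkeeping you single out is indeed the only point of substance, exactly as the paper indicates.
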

\begin{proof}
Similar to the proof of Corollary \ref{somaar}.  In particular, note that the term $t$ is standard in $\P_{0}$ and observe that this yields $\ULC_{\ns}$ from $\ULC_{\her}(t)$.  
\end{proof}
The Herbrandisation $\ULC_{\her}(t)$ tells us `how much' continuity and convergence of $f_{n}$ we need to obtain `how much' partial pointwise continuity of $f$.   
In particular, the term $t$ provides a finite collection of points $x\in [0,1]$, precisions $k, k'$ and indices $n$ for which $f_{n}$ should satisfy the corresponding definitions of continuity and convergence (to $f$), to guarantee 
pointwise continuity of $f$ around $x'\in [0,1]$ up to precision $k''$ in an interval also determined by $t$.  

\smallskip

The uniform limit theorem merely serves as an example of the class of theorems that can be treated in a similar way as Theorem \ref{floggen}.  
Another more challenging example is \cite{rudin}*{Theorem 7.17, p.\ 152} as follows.
\begin{thm}\label{fryg}
Suppose ${f_n}$ is a sequence of functions, differentiable on  $[a, b]$, and ${f_n(x_0)}$ converges for some point $x_0$ on $[a, b]$. If  $f'_n$ converges uniformly on $[a, b]$, then ${f_n}$ converges uniformly to a function $f$, and  $f'(x) = \lim_{n\to \infty} f'_n(x)$ for $ x \in [a, b]$.
\end{thm}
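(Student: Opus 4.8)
The plan is to treat Theorem \ref{fryg} exactly in the spirit of Theorems \ref{varou} and \ref{floggen}: first formulate the \emph{nonstandard} version $\textup{\textsf{DIFF}}_{\ns}$ using only nonstandard definitions, prove it in $\P_{0}$, bring the resulting statement into the normal form $(\forall^{\st}\underline{x})(\exists^{\st}\underline{y})\varphi$ with $\varphi$ internal, and then apply the term extraction of Corollary \ref{consresultcor} to read off the effective version $\textup{\textsf{DIFF}}_{\ef}(t)$ (and, as in Remark \ref{herbrand}, the Herbrandisation). Concretely, the data of the nonstandard version is a sequence $f_{(\cdot)}$ such that each $f_{n}$, for $n$ standard, is \emph{nonstandard differentiable} on $[a,b]$, i.e.\ $\frac{f_{n}(x+h_{1})-f_{n}(x)}{h_{1}}\approx\frac{f_{n}(x+h_{2})-f_{n}(x)}{h_{2}}$ for standard $x\in[a,b]$ and infinitesimal $h_{1},h_{2}\neq 0$; by $\Omega$\textsf{-CA} (Theorem \ref{drifh}) this yields a standard derivative sequence $g_{(\cdot)}$ with $\frac{f_{n}(x+h)-f_{n}(x)}{h}\approx g_{n}(x)$. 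The remaining hypotheses are that $g_{(\cdot)}$ nonstandard uniformly converges on $[a,b]$ (Definition \ref{nsconv}) and that $f_{(\cdot)}(x_{0})$ nonstandard converges; the conclusion is that $f_{(\cdot)}$ nonstandard uniformly converges to some $f$, and that $f$ is nonstandard differentiable with derivative $x\mapsto\st(g_{N}(x))$ for $N\in\Omega$.

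For the proof of $\textup{\textsf{DIFF}}_{\ns}$ inside $\P_{0}$ I would follow Rudin's argument, nonstandardised. The first ingredient is the telescoping estimate obtained by applying the mean value theorem to $f_{n}-f_{m}$: for standard $n,m$ and all $x,t\in[a,b]$,
\[
\bigl|(f_{n}(t)-f_{m}(t))-(f_{n}(x)-f_{m}(x))\bigr|\leq |t-x|\cdot\sup_{s\in[a,b]}|g_{n}(s)-g_{m}(s)|.
\]
Since $g_{(\cdot)}$ nonstandard uniformly converges, the right-hand side is infinitesimal for $n,m\in\Omega$, while $f_{n}(x_{0})\approx f_{m}(x_{0})$ for such $n,m$; hence $f_{n}(x)\approx f_{m}(x)$ for all $x\in[a,b]$ and all $n,m\in\Omega$, i.e.\ nonstandard uniform convergence of $f_{(\cdot)}$ to $f(x):=\st(f_{N}(x))$ (the standard part existing by $\Omega$\textsf{-CA}). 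The second ingredient is Rudin's difference-quotient trick: fixing standard $x$ and writing $\phi_{n}(t)=\frac{f_{n}(t)-f_{n}(x)}{t-x}$, the same estimate gives $|\phi_{n}(t)-\phi_{m}(t)|\leq\sup_{s}|g_{n}(s)-g_{m}(s)|$, so $\phi_{(\cdot)}$ nonstandard uniformly converges in $t$ to $\phi(t)=\frac{f(t)-f(x)}{t-x}$. For infinitesimal $h\neq 0$ one then chains $\frac{f(x+h)-f(x)}{h}\approx\phi_{N}(x+h)\approx g_{N}(x)\approx\st(g_{N}(x))$, where the first $\approx$ uses uniform-in-$t$ closeness of $\phi_{N}$ to $\phi$, the middle one uses nonstandard differentiability of $f_{N}$ (true for standard $N$, promoted to a suitable infinite $N$ by overspill, Theorem \ref{doppi}), and the last uses nonstandard uniform convergence of $g_{(\cdot)}$; this gives nonstandard differentiability of $f$ with the stated derivative.

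The passage to the normal form is then routine and mirrors the proofs of Theorems \ref{varou} and \ref{floggen}: resolve every occurrence of $\approx$ into $(\forall^{\st})(\exists^{\st})$ of an internal matrix, pull the standard universal quantifiers to the front, apply (the contraposition of) idealisation $\textsf{I}$ to collapse the resulting $(\forall)(\exists^{\st})$ blocks, and apply $\HAC_{\INT}$ in the monotone form of Remark \ref{simply} to extract standard moduli. Because the conclusion is a conjunction --- uniform convergence of $f_{(\cdot)}$ and differentiability of $f$ with the correct derivative --- the normal form, hence the output of Corollary \ref{consresultcor}, is a \emph{tuple} of terms. The effective theorem $\textup{\textsf{DIFF}}_{\ef}(t)$ then reads, informally: from moduli of differentiability for the $f_{n}$ (each witnessing $|\frac{f_{n}(x+h)-f_{n}(x)}{h}-g_{n}(x)|\leq\frac{1}{k}$), a modulus of uniform convergence of $g_{(\cdot)}$, and Cauchy data for $f_{(\cdot)}(x_{0})$, the term $t$ computes a modulus of uniform convergence of $f_{(\cdot)}$ to $f$ and a modulus of differentiability of $f$ with $f'=\lim_{n}g_{n}$. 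As in Corollary \ref{cordejedi}, the derivation also goes through in $\H$ over \textsf{E-HA}$^{\omega*}$, and as in Remark \ref{herbrand} and Corollary \ref{somaar} one obtains a Herbrandisation $\textup{\textsf{DIFF}}_{\her}(t)$ from which $\textup{\textsf{DIFF}}_{\ns}$ is recoverable in $\P_{0}$.

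The main obstacle I anticipate is the status of the mean value theorem here. Since our maps $\R\di\R$ are genuine type-$(1\di 1)$ functionals rather than RM codes, and we assume only \emph{nonstandard} differentiability, one first needs a nonstandard MVT (or Rolle's theorem) for such functionals provable in $\P_{0}$; this is itself an instance of the present method applied to a more basic theorem and must be dispatched first, or assumed as a member of $\Delta_{\intern}$. The second delicate point is the sequence of overspill steps promoting the hypothesis ``for all standard $n$'' (nonstandard differentiability of $f_{n}$) to ``for some infinite $N$'': the order of the quantifiers over $x$, $n$ and the infinitesimal $h$ must be arranged so that the formula to which overspill is applied is genuinely internal and sufficiently monotone, exactly as in the bookkeeping of Theorem \ref{floggen}; a naive application of overspill to the ``wrong'' formula will not produce a usable infinite index.
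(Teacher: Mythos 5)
There is no proof in the paper to measure your proposal against: Theorem \ref{fryg} is quoted from Rudin and offered only as a ``more challenging example'' of the class of theorems treatable like Theorem \ref{floggen}, the sole guidance being the remark that one can avoid asserting the existence of the limit $f$ in the nonstandard version by working with $f_{N}$ for nonstandard $N$. Your outline is a sensible elaboration of exactly that programme, and its overall shape (nonstandard version proved in $\P_{0}$, normal form via resolving $\approx$, idealisation and $\HAC_{\INT}$ as in Remark \ref{simply}, term extraction by Corollary \ref{consresultcor}, constructive version and Herbrandisation as in Corollaries \ref{cordejedi} and \ref{somaar}) matches the template. Two divergences are worth flagging. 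First, where you construct $f(x):=\st(f_{N}(x))$ by $\Omega$\textsf{-CA}, the paper's hint is to dispense with $f$ altogether and phrase both conclusions in terms of $f_{N}$, $N\in\Omega$; this is lighter, since $\Omega$\textsf{-CA} (Theorem \ref{drifh}) is stated for type-$0$-valued, exactly $\Omega$-invariant functionals, so extracting a standard real-valued function from $f_{N}(x)$ needs an extra (routine but not free) detour through rational approximations. Second, your nonstandard differentiability is pointwise at standard $x$, whereas Definition \ref{settleourdiff} is uniform over all $x$ in a slightly enlarged interval; the uniform version is what your mean-value estimate actually requires, since that estimate must control difference quotients at nonstandard points of $[x,t]$.

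The genuine gap is the one you flag, but your fallback does not repair it: the classical mean value theorem cannot simply be placed in $\Delta_{\intern}$ and applied to $f_{n}-f_{m}$, because its hypothesis --- genuine differentiability of $f_{n}-f_{m}$ --- is not a consequence of nonstandard differentiability, so the internal MVT instance has an unverifiable antecedent. The fix that stays inside the framework is to avoid MVT entirely: partition $[x,t]$ hyperfinitely and telescope, using the \emph{uniform} nonstandard differentiability of Definition \ref{settleourdiff} to replace each increment of $f_{n}-f_{m}$ by $(g_{n}-g_{m})$ times the mesh up to infinitesimal relative error, which yields $|(f_{n}(t)-f_{m}(t))-(f_{n}(x)-f_{m}(x))|\lessapprox|t-x|\cdot\sup_{s\in[a,b]}|g_{n}(s)-g_{m}(s)|$ directly in $\P_{0}$ (compare the Riemann-sum manipulations in the proof of Theorem \ref{varou}). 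With that substitute, and the overspill bookkeeping you already anticipate via Theorem \ref{doppi} (applied to an internal approximation of the differentiability statement, not to the external statement itself), the remainder of your outline goes through as described.
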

Note that we can avoid talking about the existence of $f$ in the nonstandard version of Theorem \ref{fryg} by instead making use of $f_{N}$ for any nonstandard $N$.  

\smallskip

In conclusion, the two above case studies (Theorems \ref{varou} and \ref{floggen}) suggest that the `constructive' version of a theorem can be derived from the `nonstandard' version in a rather algorithmic fashion.   
The associated template $\CI$ shall be formulated in Section \ref{detail}.  Before that, we treat two more case studies, but in less detail than the above ones.  These two extra case studies are also interesting in their own right.    

\subsection{Fundamental theorem of calculus}
In this section, we treat the fundamental theorem of calculus (denoted $\FTC$;  see \cite{rudin}*{\S6}).  As is well-known, $\FTC$ states that differentiability and integration cancel each other out.  
We will formulate various nonstandard versions of $\FTC$ with proofs in $\P_{0}$, and obtain the associated effective versions.  
Our study of $\FTC$ is interesting in its own right, as it leads to a non-trivial extension of the scope of Corollary \ref{consresultcor}, due to Theorem \ref{necess}.  

\smallskip

We use the following definition of nonstandard differentiability.  
\bdefi[Nonstandard differentiability]\label{settleourdiff}
A function $f:\R\di \R$ is \emph{nonstandard differentiable} on $[0,1]$ if for all standard $k^{0}$:
\[
(\forall \eps, \eps'\approx 0)\textstyle(\forall x \in [-\frac{1}{k},1+\frac{1}{k}])\big[\eps, \eps'\ne 0\di \Delta_{\eps}(f(x))\approx \Delta_{\eps'}(f(x))\big],
\]
where $\Delta_{\eps}(f(x)):=\frac{f(x+\eps)-f(x)}{\eps}$.
\edefi  
We now study the part of $\FTC$ which states that $F'(x)=f(x)$ for continuous $f$ and the primitive $F(x):=\int_{a}^{x}f(x)dx$;   
the first nonstandard version of $\FTC$ is as follows, where $S_{\pi}(f, 0,x)$ is the Riemann sum corresponding to $f$ and $\pi$ \emph{limited to the interval $[0,x]$} as in Definition \ref{kunko}. 
\begin{thm}[$\FTC_{\ns}$]
For all $f:\R\di \R$ and all standard $k^{0}$, if $f$ is nonstandard uniform continuous on $[0,1]$, then
\[\textstyle
(\forall x\in [\frac{1}{k},1-\frac{1}{k}])(\forall N \in \Omega )(\forall \pi\in P([0,1]))\big[\|\pi\|\cdot N\approx 0\di \Delta_{1/N}\big(S_{\pi}(f, 0, x)\big)\approx f(x) \big]. \notag
\]
\end{thm}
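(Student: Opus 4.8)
The plan is to prove $\FTC_{\ns}$ inside $\P_{0}$ by transcribing the classical argument ``the difference quotient of the integral recovers the integrand by continuity'' into the nonstandard setting. Fix a standard $k$, a point $x\in[\tfrac{1}{k},1-\tfrac{1}{k}]$, an infinite $N\in\Omega$, and a partition $\pi\in P([0,1])$ with $\|\pi\|\cdot N\approx 0$; note $\|\pi\|\approx 0$ and $\tfrac{1}{N}\approx 0$ since $N$ is infinite. By the definition of $\Delta$ we have $\Delta_{1/N}\big(S_{\pi}(f,0,x)\big)=N\cdot\big[S_{\pi}(f,0,x+\tfrac{1}{N})-S_{\pi}(f,0,x)\big]$, and since $S_{\pi}(f,0,\cdot)$ is a sum of whole cells of $\pi$ (up to the least partition point above its argument), the bracketed difference is exactly a Riemann sum $\sum_{i\in J}f(t_{i})(x_{i+1}-x_{i})$ over the cells $J$ lying between the least partition point above $x$ and the least partition point above $x+\tfrac{1}{N}$. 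Its total length $L:=\sum_{i\in J}(x_{i+1}-x_{i})$ satisfies $|L-\tfrac{1}{N}|\le\|\pi\|$, hence $|NL-1|\le N\|\pi\|\approx 0$.

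Next I would control the samples. Every $t_{i}$ with $i\in J$ lies within $\tfrac{1}{N}+\|\pi\|$ of $x$, so $t_{i}\approx x$, and since $k$ is standard both $x$ and all $t_{i}$ lie in $[0,1]$. From the nonstandard uniform continuity of $f$ I would extract a standard modulus of uniform continuity $g$, exactly as at the start of the proof of Theorem~\ref{varou} (resolve $\approx$, apply the contraposition of idealisation~$\textsf{I}$, then $\HAC_{\INT}$; cf.\ \eqref{first2}--\eqref{first5}). For each standard $m$ this yields the internal statement $(\forall u,v\in[0,1])[\,|u-v|\le\tfrac{1}{g(m)}\to|f(u)-f(v)|\le\tfrac{1}{m}\,]$, which applies with $u=x$, $v=t_{i}$ because $\tfrac{1}{N}+\|\pi\|<\tfrac{1}{g(m)}$. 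Hence $\varepsilon_{0}:=\max_{i\in J}|f(t_{i})-f(x)|$ has $\varepsilon_{0}\le\tfrac{1}{m}$ for every standard $m$, i.e.\ $\varepsilon_{0}\approx 0$. Writing $f(t_{i})=f(x)+\delta_{i}$ with $|\delta_{i}|\le\varepsilon_{0}$ we obtain $\Delta_{1/N}\big(S_{\pi}(f,0,x)\big)=f(x)\cdot NL+N\sum_{i\in J}\delta_{i}(x_{i+1}-x_{i})$, and the second summand is bounded in absolute value by $\varepsilon_{0}\cdot NL\approx 0$.

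The remaining and genuinely delicate point is that $f(x)\cdot NL\approx f(x)$, i.e.\ that the infinitesimal $NL-1$ times $f(x)$ is again infinitesimal. This is where near-standardness of $f$ enters: instantiating the modulus $g$ at $m=1$ and telescoping $|f(x)-f(0)|$ along $g(1)$ equal subintervals of $[0,x]$ gives $|f(x)-f(0)|\le g(1)$ with $g(1)$ standard, so $f(x)$ is finite (given, as is implicit in the hypothesis, that $f(0)$ is finite; the case of $f$ assuming infinite values, e.g.\ a constant function with infinite value, being the one in which the claim would genuinely fail). Then $f(x)\cdot(NL-1)\approx 0$, and assembling the three estimates gives $\Delta_{1/N}\big(S_{\pi}(f,0,x)\big)=f(x)+f(x)(NL-1)+N\sum_{i\in J}\delta_{i}(x_{i+1}-x_{i})\approx f(x)$. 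The only other nuisance is the exact bookkeeping of $J$ and the bound $|L-\tfrac{1}{N}|\le\|\pi\|$: it is precisely here that the hypothesis $\|\pi\|\cdot N\approx 0$, and not merely $\|\pi\|\approx 0$, is used.
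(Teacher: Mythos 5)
Your argument is correct in substance and is the natural direct proof; note that the paper itself gives no details for $\FTC_{\ns}$ beyond asserting that it is "a straightforward adaptation of the proof of [the cited ERNA-style] Theorem 29", so your write-up essentially supplies what the paper leaves to the reader: the decomposition $\Delta_{1/N}(S_{\pi}(f,0,x))-f(x)=N\sum_{i\in J}(f(t_{i})-f(x))(x_{i+1}-x_{i})+f(x)(NL-1)$, the bound $\varepsilon_{0}\approx 0$ via a standard modulus extracted with $\textsf{I}$ and $\HAC_{\INT}$ exactly as in Theorem \ref{varou}, and the use of $\|\pi\|\cdot N\approx 0$ to kill the boundary term. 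The one substantive point is the finiteness of $f(x)$, which you flag yourself: under your reading of $S_{\pi}(f,0,x)$ (sums of whole cells up to the least partition point above the argument) the term $f(x)(NL-1)$ really does require $f(x)$ to be limited, and this does \emph{not} follow from the hypotheses as literally stated, since the theorem quantifies over all internal $f:\R\di\R$ and a constant function with infinite value is nonstandard uniformly continuous; your telescoping trick only reduces the problem to the finiteness of $f(0)$, which is again not given. So strictly speaking your proof covers the case where $f$ is standard (or takes a limited value somewhere), and the discrepancy is best viewed as an implicit hypothesis in, or a convention behind, the statement rather than a defect of your argument: if $S_{\pi}(f,0,x)$ is instead understood as truncated exactly at its endpoint, then $L=1/N$ on the nose, the problematic term disappears, and no finiteness is needed at all. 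It would be worth stating explicitly which of these two readings you adopt, since the paper's remark that $\|\pi\|$ must be "infinitesimal compared to $1/N$" indicates the non-truncated convention, i.e.\ precisely the case where your finiteness caveat is active.
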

Note that $\|\pi\|$ as in the consequent $\FTC_{\ns}$ is not only infinitesimal, but also `infinitesimal compared to $1/N$'. 
The associated effective version $\FTC_{\ef}(t)$ is:
\begin{thm}[$\FTC_{\ef}(t)$] 
For all $f:\R\di \R$ with modulus of uniform continuity $g$ on $[0,1]$, and any $k$, we have for all $x\in [\frac{1}{k},1-\frac{1}{k}]$ and all $l$ that:
\[
 \textstyle 
(\forall  N \geq t(g, k, l)(1) ,  \pi\in P([0,1]))\Big[ \big|\|\pi\|\cdot N \big|\leq \frac{1}{t( g, k, l)(2)} \di  \big|\Delta_{\frac{1}{N}}\big(S_{\pi}(f, 0, x)\big)- f(x)\big|\leq \frac{1}{l} \Big].
\]
\end{thm}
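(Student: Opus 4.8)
The plan is to run the two-step strategy of Theorems~\ref{varou} and~\ref{floggen}: first prove $\FTC_{\ns}$ inside $\P_{0}$ by a direct nonstandard estimate, and then rewrite $\FTC_{\ns}$ into the normal form of \eqref{bog}, so that term extraction (Corollary~\ref{consresultcor}, in the $\P_{0}$-version Corollary~\ref{consresultcor2}) produces the term $t$ with $\textup{\textsf{E-PRA}}^{\omega*}\vdash\FTC_{\ef}(t)$. The new feature, and the reason this case calls for the mild strengthening of the extraction apparatus recorded in Theorem~\ref{necess}, is that the consequent of $\FTC_{\ns}$ is not itself a bare normal form: it carries an internal block $(\forall x,N,\pi)$ around a nested standard existential produced by resolving $\approx$, so one extra pass of the contraposition of idealisation $\textsf{I}$ is needed to fold that block into the internal matrix.

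\emph{Proving $\FTC_{\ns}$ in $\P_{0}$.} Fix $f$ nonstandard uniformly continuous on $[0,1]$, a standard $k$, a standard $x\in[\tfrac{1}{k},1-\tfrac{1}{k}]$, an infinite $N$, and $\pi$ with $\|\pi\|\cdot N\approx 0$. Write $\Delta_{1/N}(S_{\pi}(f,0,x))=N\bigl(S_{\pi}(f,0,x+\tfrac{1}{N})-S_{\pi}(f,0,x)\bigr)$; the bracketed difference is the Riemann sum of $f$ over exactly the cells of $\pi$ meeting $[x,x+\tfrac{1}{N}]$, whose total length $L$ obeys $|L-\tfrac{1}{N}|\le 2\|\pi\|$, hence $|NL-1|\le 2N\|\pi\|\approx 0$; and each sample point $t_{i}$ there satisfies $|t_{i}-x|\le\tfrac{1}{N}+\|\pi\|\approx 0$, so $f(t_{i})\approx f(x)$ with a single infinitesimal bound $\eps_{1}$ by nonstandard uniform continuity. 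Splitting the partial sum as $\sum_{i}(f(t_{i})-f(x))(x_{i+1}-x_{i})+f(x)L$ and multiplying by $N$ gives $\Delta_{1/N}(S_{\pi}(f,0,x))=N\theta+f(x)NL$ with $|N\theta|\le\eps_{1}NL\approx 0$ and $f(x)NL\approx f(x)$, so $\Delta_{1/N}(S_{\pi}(f,0,x))\approx f(x)$; here the relative-infinitesimal hypothesis $\|\pi\|\cdot N\approx 0$ is exactly what kills the boundary term $f(x)(NL-1)$ (using that $f$ is bounded on $[0,1]$).

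\emph{Normal form and extraction.} The antecedent of $\FTC_{\ns}$ is brought to $(\exists^{\st}g)(\forall^{\st}m)A(g,m,f)$ exactly as in Theorem~\ref{varou}, via $\textsf{I}$ and $\HAC_{\INT}$, with $A(g,m,f)$ asserting that $g$ is a modulus of uniform continuity of $f$ at precision $m$. In the consequent one resolves the $\approx$'s — $\|\pi\|\cdot N\approx 0$ becomes $(\forall^{\st}m)\,\|\pi\|N\le\tfrac{1}{m}$ and $\Delta_{1/N}(S_{\pi}(f,0,x))\approx f(x)$ becomes $(\forall^{\st}l)\,|\Delta_{1/N}(S_{\pi}(f,0,x))-f(x)|\le\tfrac{1}{l}$ — and rewrites $N\in\Omega$ as $(\forall^{\st}N_{0})\,N>N_{0}$, which is legitimate since in $\P_{0}$ a natural number bounded by a standard one is standard. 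Pulling the standard quantifiers to the front as in the proof of Theorem~\ref{varou} (premise-quantifiers flipping from $\forall^{\st}$ to $\exists^{\st}$, and the two premise-quantifiers $N>N_{0}$ and $\|\pi\|N\le\tfrac{1}{m}$ fused into one $(\exists^{\st}p)$ bounding $N$ below by $p$ and $\|\pi\|N$ above by $\tfrac{1}{p}$), and then applying the contraposition of $\textsf{I}$ once more to absorb $(\forall x,N,\pi)$, brings $\FTC_{\ns}$ to a schematic normal form
\[
(\forall^{\st}k,g,l)\,(\exists^{\st}q)\,(\forall f)\,(\forall x,N,\pi)\bigl[\,A(g,q,f)\to\bigl((N>q\wedge\|\pi\|N\le\tfrac{1}{q})\to|\Delta_{1/N}(S_{\pi}(f,0,x))-f(x)|\le\tfrac{1}{l}\bigr)\bigr],
\]
with $q$ coding the bundled witnesses. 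Corollary~\ref{consresultcor} then yields a closed term of $\textup{\textsf{E-PRA}}^{\omega*}$; taking componentwise maxima and keeping the relevant coordinates, one extracts $t(g,k,l)$ with $t(g,k,l)(1)$ the lower bound on $N$ and $t(g,k,l)(2)$ the bound on $\|\pi\|\cdot N$, and pushing the quantifiers back inside — the antecedent now reading $(\forall m)A(g,m,f)$, i.e.\ ``$g$ is a modulus of uniform continuity of $f$ on $[0,1]$'', just as \eqref{tochie2} passes to \eqref{crux} — gives exactly $\FTC_{\ef}(t)$.

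\emph{Main obstacle.} The work is the bookkeeping forced by the product $\|\pi\|\cdot N$: unlike for $\CRI_{\ns}$ and $\ULC_{\ns}$, the two ``infinitesimal'' data $\tfrac{1}{N}$ and $\|\pi\|$ are coupled, so resolving $\approx$ must track a joint bound and the extracted term must be split into the two coordinated bounds $t(g,k,l)(1),t(g,k,l)(2)$; moreover $N$ occurs anti-monotonically in the premise of the inner implication, so before using any of the finite-sequence-maximum shortcuts of Remark~\ref{simply} one must check that the relevant internal matrices remain ``sufficiently monotone'' in the auxiliary variables. Once this is settled the argument runs on the same algorithm as Theorems~\ref{varou} and~\ref{floggen}; and, as with $\CRI$ and $\ULC$, the constructive variant goes through in $\H$ (replacing $\textsf{I}$ by $\textsf{NCR}$ and the front-pulling by $\textsf{HGMP}^{\st}$ and $\textsf{HIP}_{\forall^{\st}}$), while the converse passage from the associated Herbrandisation back to $\FTC_{\ns}$ is obtained exactly as in Corollary~\ref{somaar}.
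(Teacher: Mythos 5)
Your overall route is the paper's: prove $\FTC_{\ns}$ inside $\P_{0}$, resolve the two occurrences of `$\approx$' in the consequent, and handle the leading internal block $(\forall N\in\Omega)(\forall x,\pi)$ by exactly the manipulation of Theorem~\ref{necess} (which you in effect re-derive inline: replace $N\in\Omega$ by standard lower bounds, pull the standard quantifiers forward, apply idealisation), arriving at a normal form like \eqref{farouke}; term extraction via Corollary~\ref{consresultcor}, the componentwise maxima of Remark~\ref{simply}, and the final re-internalisation of the antecedent then run as in Theorem~\ref{varou}. This is precisely the proof of Theorem~\ref{floggen4}, except that the paper obtains $\FTC_{\ns}$ itself by adapting a cited proof rather than arguing directly, so the only part genuinely your own is the nonstandard estimate.

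In that estimate there is one step that does not follow: the parenthetical appeal to ``$f$ is bounded on $[0,1]$'' when discarding the boundary term $f(x)(NL-1)$. The nonstandard theorem quantifies over \emph{all} internal $f:\R\di\R$ that are nonstandard uniformly continuous, and such $f$ need not be bounded by any limited number: the constant function with an infinite value is nonstandard uniformly continuous, and more generally nonstandard uniform continuity yields only a \emph{standard} bound on the oscillation $|f(x)-f(y)|$ (take $k=1$ in a normal form such as \eqref{first5} and chain finitely many steps across $[0,1]$), never a bound on $|f(x)|$ itself. Hence $f(x)\cdot(NL-1)\approx 0$ cannot be inferred from $NL-1\approx0$ alone, and your argument as written establishes $\FTC_{\ns}$ only for $f$ with limited values. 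A sanity check pointing at the same issue: the extracted term in $\FTC_{\ef}(t)$ depends only on $g,k,l$, so a proof of the nonstandard version that genuinely consumed a bound on the values of $f$ would, after extraction, yield a term with that bound as an extra standard input. To close the gap you must either fix the convention for $S_{\pi}(f,0,x)$ so that the sums for $x$ and $x+\frac{1}{N}$ are cut off exactly at those points (then $NL=1$ and the offending term vanishes identically, though this makes the hypothesis $\|\pi\|\cdot N\approx0$ stronger than needed), or control the length-mismatch term using only the oscillation bound together with additional standard data about $f$ at a reference point. The remainder of your argument --- the fusing of the two coupled bounds into a single standard witness, the monotonicity checks before taking maxima, and the passage back to the internal antecedent ``$g$ is a modulus of uniform continuity'' --- is sound and matches the paper, as does your remark that the constructive variant goes through in $\H$.
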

In contrast to $\CRI_{\ns}$ and $\ULC_{\ns}$ from the previous sections, the consequent of $\FTC_{\ns}$ does not have an obvious normal form.  
In this light, we prove the following theorem which significantly extends the scope of Corollary \ref{consresultcor}.     
\begin{thm}[$\P_{0}$]\label{necess}
For internal $\varphi$, the formula $(\forall N\in \Omega)(\forall^{\st}x)(\exists^{\st}y)\varphi(x,y, N)$ has an equivalent normal form \(as given by \eqref{overps}\).  
\end{thm}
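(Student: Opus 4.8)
The plan is to manipulate the formula $(\forall N\in\Omega)(\forall^{\st}x)(\exists^{\st}y)\varphi(x,y,N)$ into a normal form $(\forall^{\st}u)(\exists^{\st}v)\psi(u,v)$ with $\psi$ internal, using only tools already available in $\P_{0}$, namely overspill (Theorem~\ref{doppi}), idealisation $\textsf{I}$, the Herbrandized axiom of choice $\HAC_{\INT}$, and the external induction available in $\P_{0}$. The first move is to recall that `$N\in\Omega$' abbreviates $\neg\st(N)$, so the leading quantifier is really $(\forall N)(\neg\st(N)\di\dots)$, i.e.\ a universal quantifier over \emph{all} $N$, restricted to the infinite ones. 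I would first pull the block $(\forall^{\st}x)(\exists^{\st}y)$ past $(\forall N\in\Omega)$ is \emph{not} what we want; instead the idea is to trade the `for all infinite $N$' for a `for all sufficiently large standard $N$' statement via \emph{underspill}/overspill. Concretely, the hypothesis says that for every infinite $N$ and every standard $x$ there is a standard $y$ with $\varphi(x,y,N)$. Fixing standard $x$, the set of $N$ for which $(\exists^{\st}y)\varphi(x,y,N)$ holds contains all infinite numbers; one shows by overspill (applied to the internal matrix after a witnessing step) that it must also contain a standard number, and then by a monotonicity/least-number argument inside $\P_{0}$ that the threshold can be chosen standard uniformly. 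The net effect should be to replace $(\forall N\in\Omega)$ by a harmless $(\forall N)$ guarded by a standard bound, producing something of the shape $(\forall^{\st}x)(\exists^{\st}y,M)(\forall N\geq M)\varphi(x,y,N)$.

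Once the infinite quantifier has been tamed this way, the formula is $(\forall^{\st}x)(\exists^{\st}\langle y,M\rangle)\chi(x,y,M)$ with $\chi(x,y,M)\equiv(\forall N\geq M)\varphi(x,y,N)$ internal — which is \emph{already} a normal form $(\forall^{\st}x)(\exists^{\st}z)\chi(x,z)$ in the sense of \eqref{bog}. So the content of the theorem is really the reduction step, and the displayed equivalent normal form \eqref{overps} is presumably exactly this $(\forall^{\st}x)(\exists^{\st}z)[(\forall N\geq (z)_{1})\varphi(x,(z)_{0},N)]$. I would prove the equivalence in both directions: the forward direction is the overspill argument sketched above; the reverse direction is immediate, since if $(\forall N\geq M)\varphi(x,y,N)$ holds for standard $x$ with standard witnesses $y,M$, then in particular $\varphi(x,y,N)$ holds for every infinite $N$ (as every infinite $N$ exceeds the standard $M$), giving back $(\forall N\in\Omega)(\forall^{\st}x)(\exists^{\st}y)\varphi(x,y,N)$.

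The step I expect to be the main obstacle is making the threshold $M$ \emph{standard} and, if necessary, \emph{uniform in $x$}. Overspill only hands us \emph{some} standard $N$ at which $(\exists^{\st}y)\varphi(x,y,N)$ holds; to get a standard \emph{lower bound} $M$ such that $\varphi$ holds for all $N\geq M$ one needs the monotone behaviour of the witnessing in $N$ — exactly the kind of "sufficiently monotone" hypothesis flagged in Remark~\ref{simply}, which here has to be arranged by first applying $\HAC_{\INT}$ to collect a finite sequence of candidate witnesses and taking a maximum, so that a single pair $\langle y,M\rangle$ works. There is a subtlety that the witness $y$ for $\varphi(x,y,N)$ may depend on $N$; one handles this by choosing $y$ at one fixed infinite $N_{0}$, using the internal formula $(\exists y\leq b)\varphi(x,y,N)$ (with $b$ a bound obtained from $\HAC_{\INT}$) to run overspill on the \emph{$N$-variable}, and then underspilling to pull the threshold down to a standard value. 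Threading idealisation and $\HAC_{\INT}$ in the right order so that the final matrix is genuinely internal and the outer quantifiers are genuinely standard is the delicate bookkeeping, but no new principle beyond those in $\P_{0}$ is required, and the argument is essentially the one already rehearsed in the proof of Theorem~\ref{varou} applied to the extra $N$-variable.
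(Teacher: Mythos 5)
There is a genuine gap: the normal form you propose to prove equivalent, namely $(\forall^{\st}x)(\exists^{\st}y,M)(\forall N\geq M)\varphi(x,y,N)$ with a \emph{single} standard witness $y$, is strictly stronger than the original formula and the forward direction of your equivalence fails. Take for instance the internal $\varphi(x,y,N):\equiv (N \mbox{ even}\wedge y=0)\vee(N\mbox{ odd}\wedge y=1)$: for every infinite $N$ there is a standard $y$ with $\varphi(x,y,N)$, but no single standard $y$ works for all $N$ beyond any standard threshold. This is exactly why the paper's normal form \eqref{overps} keeps a \emph{finite sequence} $v$ of candidate witnesses, with $(\exists y\in v)\varphi(x,y,N)$ \emph{inside} the scope of $(\forall N)$; the witness may vary with $N$, but only within a standard finite list. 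Your attempted repair does not close this gap: choosing $y$ at one fixed infinite $N_{0}$ gives no control over other infinite $N$, and the bound $b$ you want from $\HAC_{\INT}$ is not available at that stage, since $\HAC_{\INT}$ applies only to statements of the form $(\forall^{\st}u)(\exists^{\st}w)(\mbox{internal})$, which is precisely the shape one is still trying to reach. Likewise, overspill/underspill cannot be applied directly to $(\exists^{\st}y)\varphi(x,y,N)$, as this matrix is external; the monotonicity needed to collapse to a single witness (Remark~\ref{simply}) is simply not given for arbitrary internal $\varphi$.

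The paper's route avoids all of this in three short steps: rewrite $N\in\Omega$ as $(\forall^{\st}k^{0})(N\geq k)$, pull the standard quantifiers $k$ and $y$ out to get $(\forall^{\st}x)(\forall N)(\exists^{\st}k,y)[N\geq k\di\varphi(x,y,N)]$, and then apply idealisation $\textsf{I}$ to obtain a standard finite sequence $w$ of pairs $(k,y)$ working uniformly in $N$; taking the maximum of the $k$-entries (this part \emph{is} monotone) and keeping the $y$-entries as the sequence $v$ yields \eqref{overps}. So the essential device is idealisation applied after Herbrandizing the $N$-quantifier, not overspill plus a least-number argument, and the price one must accept is the Herbrandized existential $(\exists y\in v)$ rather than a single standard $y$. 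Your reverse direction (a standard threshold beats every infinite $N$) is fine and is indeed the trivial half, but it should be stated for the form \eqref{overps}, where it reads: for infinite $N$ one has $N\geq l$, hence $(\exists y\in v)\varphi(x,y,N)$ with $v$ standard, and any such $y$ is standard since its components are.
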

\begin{proof}
Let $\varphi$ be internal and consider $(\forall N\in \Omega)(\forall^{\st}x)(\exists^{\st}y)\varphi(x,y, N)$, which yields
\[
(\forall^{\st}x)(\forall N) \big[(\forall^{\st} k^{0})(N\geq k)\di   (\exists^{\st}y)\varphi(x,y, N)\big].
\]
Pushing the standard quantifiers outside, we obtain
\[
(\forall^{\st}x)(\forall N)(\exists^{\st}k, y) \big[N\geq k\di  \varphi(x,y, N)\big],
\]
and applying idealisation \textsf{I}, a normal form emerges:
\[
(\forall^{\st}x)(\exists^{\st}w)(\forall N)(\exists k, y\in w) \big[N\geq k\di  \varphi(x,y, N)\big].
\]
Let $l^{0}$ be the maximum of all entries in $w$ pertaining to $k$, and let $v$ be the subsequence of $w$ with all entries pertaining to $y$.  With these definitions:  
\be\label{overps}
(\forall^{\st}x)(\exists^{\st}l^{0}, v)(\forall N\big[N\geq l\di (\exists y\in v) \varphi(x,y, N)\big], 
\ee
which expresses that underspill may be applied to formulas like in the theorem.  
\end{proof}
We are now ready to prove the following theorem.
\begin{thm}\label{floggen4}
From the proof of $\FTC_{\ns}$ in $\P_{0} $, a term $t$ can be extracted such that $\textup{\textsf{E-PRA}}^{\omega*} $ proves $\FTC_{{\ef}}(t)$.  
\end{thm}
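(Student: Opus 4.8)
The plan is to run $\FTC_{\ns}$ through exactly the three‑step procedure already used for $\CRI_{\ns}$ in Theorem~\ref{varou} and for $\ULC_{\ns}$ in Theorem~\ref{floggen}; the one genuinely new ingredient is Theorem~\ref{necess}, which is tailor‑made to provide a normal form for the consequent of $\FTC_{\ns}$. First I would check that $\P_{0}$ (indeed already $\textup{\textsf{E-PRA}}^{\omega*}_{\st}$, as in the proof of Theorem~\ref{floggen}) proves $\FTC_{\ns}$. This is an elementary telescoping estimate in the spirit of the proof of $\CRI_{\ns}$: writing $\Delta_{1/N}(S_{\pi}(f,0,x)) = N\bigl(S_{\pi}(f,0,x+\tfrac{1}{N}) - S_{\pi}(f,0,x)\bigr)$, the right‑hand side is $N$ times the Riemann sum of $f$ along $\pi$ over $[x,x+\tfrac{1}{N}]$; every sample point $t_{i}$ occurring there satisfies $|t_{i}-x|\leq\tfrac{1}{N}+\|\pi\|\approx 0$, so uniform nonstandard continuity gives $f(t_{i})\approx f(x)$ with a \emph{single} infinitesimal bound (e.g. $\eps_{0}:=\max_{i}|f(t_{i})-f(x)|$), and using $\|\pi\|\cdot N\approx 0$ to control the boundary mismatch one obtains $\bigl|\Delta_{1/N}(S_{\pi}(f,0,x)) - f(x)\bigr|\approx 0$.

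The core of the argument is the reduction of $\FTC_{\ns}$ to a normal form. The antecedent (nonstandard uniform continuity of $f$) is brought into the shape $(\exists^{\st}g)(\forall^{\st}j)A(g,j,f)$ with $A$ internal, exactly as in steps \eqref{first}--\eqref{first5} of Theorem~\ref{varou}. For the consequent I would reorder its three universal quantifiers so that $(\forall N\in\Omega)$ stands in front, unfold the inner $\Delta_{1/N}(S_{\pi}(f,0,x))\approx f(x)$ into $(\forall^{\st}l)$ and the hypothesis $\|\pi\|\cdot N\approx 0$ into $(\forall^{\st}m)$, then use classical logic to replace the latter by $(\exists^{\st}m)$, pull $(\forall^{\st}l)$ leftward past the now‑internal $(\forall x)(\forall\pi)$, and apply idealisation $\I$ together with the maximum trick of Remark~\ref{simply} (legitimate since the implication $\|\pi\|\cdot N\leq\tfrac{1}{m}\di(\,\cdots\leq\tfrac{1}{l}\,)$ is monotone in $m$). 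This yields $(\forall N\in\Omega)(\forall^{\st}l)(\exists^{\st}m)D(l,m,N;f,k)$ with $D$ internal --- precisely the input of Theorem~\ref{necess}, whose output \eqref{overps}, after collapsing its witnessing sequence to a single mesh‑precision $m'$ by one more maximum, reads $(\forall^{\st}l)(\exists^{\st}p,m')(\forall N\geq p)D(l,m',N;f,k)$. Here $p$ is the threshold past which $N$ must lie and $m'$ the required mesh‑precision, matching the two components $t(g,k,l)(1)$ and $t(g,k,l)(2)$ appearing in $\FTC_{\ef}(t)$.

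It remains to combine the two normal forms. Since $\FTC_{\ns}$ reads $(\forall f)(\forall^{\st}k)[\,\text{antecedent}\di\text{consequent}\,]$, the usual quantifier manipulations --- classical logic for the implications, then idealisation $\I$ on the outermost $(\forall f)$, then the maximum trick, all monotonicity checks being routine --- collapse it to a single normal form $(\forall^{\st}k,g,l)(\exists^{\st}j,p,m')(\forall f)[A(g,j,f)\di E(l,p,m',f,k)]$ with internal matrix, where $E$ abbreviates $(\forall N\geq p)D$. Applying Corollary~\ref{consresultcor} in its $\P_{0}/\RCAo$ form (Corollary~\ref{consresultcor2}) extracts a term; taking componentwise maxima of the finite sequence returned, discarding the $j$‑component (absorbed by the hypothesis that $g$ is a full modulus of uniform continuity of $f$), and pushing the remaining quantifiers back inside yields $\FTC_{\ef}(t)$ over $\textup{\textsf{E-PRA}}^{\omega*}$, with $t(g,k,l):=(p_{0},m_{0}')$.

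I expect the main obstacle to be the middle step: forcing the consequent into the exact syntactic shape $(\forall N\in\Omega)(\forall^{\st}x)(\exists^{\st}y)\varphi$ demanded by Theorem~\ref{necess}. Unlike $\CRI$ and $\ULC$, here the external quantifier $(\forall N\in\Omega)$ must be kept at the very front throughout, so the order in which one unfolds the two nested occurrences of $\approx$ and invokes idealisation is essentially forced --- clear the consequent infinitesimal first to expose $(\forall^{\st}l)$, absorb the antecedent infinitesimal $\|\pi\|\cdot N\approx 0$ into $(\exists^{\st}m)$ while leaving $(\forall N\in\Omega)$ untouched, and only then contract $(\forall x)(\forall\pi)(\exists^{\st}m)$ via $\I$ --- and each of these steps leans on a small monotonicity observation about the implication $\|\pi\|\cdot N\leq\tfrac{1}{m}\di(\,\cdots\leq\tfrac{1}{l}\,)$ that has to be verified with some care. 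Once the formula lies within the scope of Theorem~\ref{necess}, everything else is bookkeeping in the now‑familiar style.
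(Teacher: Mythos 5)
Your proposal is correct and takes essentially the same route as the paper's own proof: establish $\FTC_{\ns}$ in $\P_{0}$, resolve both occurrences of `$\approx$' in the consequent and invoke Theorem \ref{necess} to reach the normal form \eqref{farouke}, then combine this with the normal form of the continuity antecedent and extract the term via Corollary \ref{consresultcor} (in the $\P_{0}$/\textsf{E-PRA}$^{\omega*}$ guise of Corollary \ref{consresultcor2}), exactly as in Theorems \ref{varou} and \ref{floggen}. The only cosmetic difference is that you sketch a direct telescoping estimate for $\FTC_{\ns}$ where the paper cites an adaptation of an external nonstandard proof, and you make explicit the quantifier bookkeeping (monotonicity and maximum steps) that the paper leaves implicit.
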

\begin{proof}
A straightforward adaptation of the proof of \cite{aloneatlast3}*{Theorem 29} yields a proof of $\FTC_{\ns}$ in $\P_{0}$.     
In the consequent of $\FTC_{\ns}$, resolve both occurrences of `$\approx$', push the standard quantifiers outside to obtain a formula as in Theorem \ref{necess}.  
The latter theorem then implies that the consequent of $\FTC_{\ns}$ yields the following normal form:
\begin{align}\textstyle
(\forall^{\st}l)(\exists^{\st}M, K)\big(\forall x\in &\textstyle\big[\frac{1}{k},1-\frac{1}{k}\big]\big)(\forall N \geq M )(\forall \pi\in P([0,1]))\label{farouke}\\
&\textstyle\Big[\big|\|\pi\|\cdot N\big|\leq \frac{1}{K}  \di\big|\Delta_{\frac{1}{N}}\big(S_{\pi}(f, 0, x)\big)- f(x)\big|\leq\frac{1}{l} \Big], \notag
\end{align}
and the rest of the proof is similar to that of Theorem \ref{varou} or \ref{floggen}.  
\end{proof}
There are obvious corollaries to the theorem similar to Corollary \ref{cordejedi} and~\ref{somaar}.
In particular, we may obtain a constructive version and a Herbrandisation of $\FTC_{\ns}$.  This Herbrandisation is nothing more than $\FTC_{\ef}(t)$ with $(\forall k')$ in the antecedent replaced 
by $(\forall k'\leq t(g,k,l)(3))$ for a modified term $t$.  In particular, this Herbrandisation tells us that for $f$ continuous up to precision $1/ t(g,k,l)(3)$ witnessed by $g$, differentiation with quotient $\frac{1}{N}\leq \frac{1}{t(g,k,l)(1)}$ cancels out, up to precision $1/l$, Riemann integration for partitions finer than ${1}/({t( g, k, l)(2)\times N})$.        

\smallskip

Secondly, we consider  a version of $\FTC_{\ns}$ involving the functional $I(f,0, x)$ from Section \ref{frakkk}.  
Note that $\FTC'_{\ns}$ could also be made `hybrid' as follows: introduce the usual $\eps$-$\delta$-definition of uniform continuity with a standard modulus $g$ (in the antecedent), and use $[I(f, 0,x)](g(k))$ in the consequent.  
With such a modulus, $f$ is automatically nonstandard continuous, i.e.\ one could use techniques from both Constructive and Nonstandard Analysis.  
\begin{thm}[$\FTC'_{\ns}$]
For $f:\R\di \R$ and $k^{0}$, if $f$ is nonstandard uniformly continuous on $[0,1]$, we have $\big(\forall N\in \Omega, x\in \big[\frac{1}{k}, 1-\frac{1}{k}\big]\big)\big(\Delta_{\frac{1}{N}}(I(f,0, x))\approx f(x)\big)$.
\end{thm}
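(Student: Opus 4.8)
The plan is to unfold the nonstandard difference quotient of $I(f,0,x)$ into a Riemann sum over an interval of infinitesimal length $\tfrac1N$, and then estimate that sum using nonstandard uniform continuity. Since $x\le 1-\tfrac1k$ and $N$ is infinite we have $x+\tfrac1N<1$, so $I(f,0,x)$ and $I(f,0,x+\tfrac1N)$ are both defined reals and, by Definition~\ref{settleourdiff},
\[
\Delta_{\frac1N}\big(I(f,0,x)\big)=N\cdot\big(I(f,0,x+\tfrac1N)-I(f,0,x)\big).
\]
Fix an infinite $M$ with $\tfrac{N}{2^M}\approx 0$ (e.g.\ $M:=N$, since then $\tfrac{N}{2^N}\le\tfrac1N\approx 0$), and let $\sigma_y$ denote the dyadic partition of $[0,y]$ of mesh $\tfrac1{2^M}$ used in the definition of $I$ at precision $M$ (as in the footnote to Theorem~\ref{varou}). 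Then $S_{\sigma_{x+1/N}}(f)-S_{\sigma_{x}}(f)$ is itself a Riemann sum $S_{\tau}(f)$, where $\tau$ partitions an interval $[a,b]$ with dyadic endpoints satisfying $|a-x|\le\tfrac1{2^M}$ and $|b-(x+\tfrac1N)|\le\tfrac1{2^M}$ — hence $|N(b-a)-1|\le\tfrac{2N}{2^M}\approx 0$ — and all tags $t_i$ of $\tau$ lie in $[a,b]$, so $t_i\approx x$ for every $i$.

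Because $f$ is nonstandard uniformly continuous, the normal-form argument in the proof of Theorem~\ref{varou} provides a standard modulus of uniform continuity $g$ for $f$; consequently $f$ is bounded on $[0,1]$ by a standard number (so $f(x)$ is finite, even for nonstandard $x$) and $f(t_i)\approx f(x)$ for all $i$. Putting $\eps_0:=\max_i|f(t_i)-f(x)|\approx 0$ and using that the weights $N(x_{i+1}-x_i)\ge 0$ sum to $N(b-a)\approx 1$, we get
\[
\big|\,N\,S_{\tau}(f)-f(x)\,\big|\le\big|f(x)\big|\cdot\big|N(b-a)-1\big|+\eps_0\cdot N(b-a)\approx 0 .
\]
It remains to pass from $S_{\sigma_y}(f)$ to the real $I(f,0,y)$: by the effective estimate $\CRI_{\ef}(t)$ extracted in Theorem~\ref{varou}, $|I(f,0,y)-S_{\sigma_y}(f)|\le\tfrac1j$ for every $j$ with $t(g,j)\le 2^M$, so this difference is $\le\tfrac1{j_0}$ for some infinite $j_0$, and $j_0$ can be made arbitrarily large by enlarging $M$; choosing $M$ so large that moreover $\tfrac{N}{j_0}\approx 0$ makes $N\cdot\big(I(f,0,y)-S_{\sigma_y}(f)\big)$ infinitesimal for $y\in\{x,x+\tfrac1N\}$. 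Combining the three displays yields $\Delta_{\frac1N}(I(f,0,x))\approx f(x)$. The whole argument is carried out in $\P_0$; should one also want the effective version of $\FTC'_{\ns}$, its consequent is of the shape governed by Theorem~\ref{necess}, so the usual term-extraction route of Theorem~\ref{varou} applies after underspill.

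The main obstacle, and the only genuinely delicate point, is the one already visible above: $\approx$ is \emph{not} preserved by multiplication with the infinite number $N$, so the naive chain ``$I(f,0,x+\tfrac1N)-I(f,0,x)\approx\tfrac1N f(x)$, hence $\Delta_{\frac1N}(I(f,0,x))\approx f(x)$'' is invalid. One must instead verify that every approximation error — the dyadic truncation of $I$ at the two endpoints, and the gap between the real $I(f,0,y)$ and the finite-precision Riemann sum $S_{\sigma_y}(f)$ — is infinitesimal \emph{relative to} $\tfrac1N$, which is exactly what forces the auxiliary precision $M$ to be chosen large relative to $N$. This is the nonstandard counterpart of the hypothesis ``$\|\pi\|\cdot N\approx 0$'' occurring in $\FTC_{\ns}$; indeed, an alternative derivation of $\FTC'_{\ns}$ is to specialise (the proof of) Theorem~\ref{floggen4}, i.e.\ the adaptation of \cite{aloneatlast3}*{Theorem~29}, to the partition internally chosen by the functional $I(f,0,\cdot)$, and then perform the same error bookkeeping. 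Everything else is routine nonstandard estimation.
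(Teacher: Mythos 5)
Your overall route is the intended one: unfold $\Delta_{1/N}(I(f,0,x))$ into a Riemann sum over $[x,x+\frac1N]$, use nonstandard uniform continuity on the tags, and check that every error is infinitesimal \emph{relative to} $\frac1N$ (the paper itself only gestures at this, via the adaptation of the proof of Theorem \ref{floggen4} and the partitions built into $I$). You also correctly isolate the delicate point. The problem is that your resolution of that point does not work under the stated hypothesis. To bound $N\cdot|I(f,0,y)-S_{\sigma_y}(f)|$ you invoke $\CRI_{\ef}(t)$ at a precision $j_0$ with $N/j_0\approx 0$, i.e.\ at a \emph{nonstandard} precision. But the antecedent of $\CRI_{\ef}(t)$ (equivalently, of its Herbrandisation \eqref{FEST}) requires $g$ to satisfy the modulus property at all precisions up to $s(g,j_0)$, whereas from nonstandard uniform continuity the extraction in Theorem \ref{varou} only yields \eqref{first5}, i.e.\ a standard $g$ whose modulus property is guaranteed for \emph{standard} $k$ only. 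Indeed, nonstandard uniform continuity does not imply internal $\eps$-$\delta$ uniform continuity at all: the paper's own function $f_{0}$ (with jumps of size $\frac1M$, $M\in\Omega$) is nonstandard continuous but has no modulus at precisions beyond $\approx M$. Overspill extends the validity of $g$ to all $k\leq K^{*}$ for \emph{some} infinite $K^{*}$, but $K^{*}$ is not under your control and need not exceed the given $N\in\Omega$; enlarging the mesh parameter $M$ of $\sigma_y$ does not enlarge the range of admissible $j$, so the claim ``$j_0$ can be made arbitrarily large by enlarging $M$'' is exactly the unjustified step. The same issue resurfaces in the definition of $I$ itself: its fast convergence (hence the bound $|I(f,0,y)-[I(f,0,y)](j)|\leq 2^{-j}$ at nonstandard $j$, needed to beat the factor $N$) again presupposes the modulus at nonstandard precisions, and past the overspill bound the hat-function may freeze the representing sequence, leaving only $I(f,0,y)\approx S_{\sigma_y}(f)$ with no rate.

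Your argument does go through verbatim in the ``hybrid'' setting the paper distinguishes from $\FTC'_{\ns}$ — namely when a genuine standard modulus $g$ (valid for all $k$) is supplied, as in the footnote defining $I(f,0,x)$ and in the effective version $\FTC'_{\ef}(t)$ — or, alternatively, for those $N\in\Omega$ below the overspill bound $K^{*}$. To prove $\FTC'_{\ns}$ for \emph{all} $N\in\Omega$ from nonstandard uniform continuity alone, you must either argue directly with the representing sequence of $I$ (fixing the comparison index \emph{before} the factor $N$ enters, which is what fails) or make explicit which reading of $I$ is intended. A minor additional slip: nonstandard uniform continuity does not make $f$ bounded by a standard number (a constant infinite function is a counterexample, since $f$ is not assumed standard), so the term $|f(x)|\cdot|N(b-a)-1|$ needs $M$ chosen large relative to an internal bound for $f$ as well; this is easily repaired, unlike the modulus issue.
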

Similar to Theorem \ref{floggen4}, one obtains the following effective version.  
\begin{thm}[$\FTC_{\ef}'(t)$]
For $f:\R\di\R$ with modulus of uniform continuity $g$ on $[0,1]$, and any $k$, $(\forall l)(\forall  N\geq t(g , k, l), x\in [\frac{1}{k}, 1-\frac{1}{k}])\big[|\Delta_{\frac{1}{N}}(I(f,0, x))- f(x)|\leq \frac{1}{l}\big]$.
\end{thm}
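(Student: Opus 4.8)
The plan is to follow the proof of Theorem~\ref{floggen4} essentially line for line; the one new feature is that the Riemann integral now enters through the functional $I(f,0,x)$ of Section~\ref{frakkk} rather than through an explicit Riemann sum, so that --- matching the statement of $\FTC'_{\ns}$ --- there is no side condition relating a partition mesh to $N$. As with Theorem~\ref{floggen4}, what is to be shown is that a closed term $t$ of $\textup{\textsf{E-PRA}}^{\omega*}$ can be extracted from a $\P_{0}$-proof of $\FTC'_{\ns}$ such that $\textup{\textsf{E-PRA}}^{\omega*}$ proves $\FTC_{\ef}'(t)$.

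First I would verify $\P_{0}\vdash\FTC'_{\ns}$. Let $f$ be nonstandard uniformly continuous on $[0,1]$; as in the proof of Theorem~\ref{varou} this provides a standard modulus of uniform continuity $g$, which is exactly the data with which the functional $I$ is set up. Fix standard $k$, infinite $N\in\Omega$ and $x\in[\tfrac1k,1-\tfrac1k]$ (so that $x+\tfrac1N\in[0,1]$). Unwinding definitions, $\Delta_{1/N}(I(f,0,x))=N\big(I(f,0,x+\tfrac1N)-I(f,0,x)\big)$, and by the construction of $I$ --- whose $j$-th approximation is a Riemann sum of mesh $2^{-t_{I}(g,j)}$, with $t_{I}$ the term from Theorem~\ref{varou} fixing its convergence rate --- this difference agrees up to an infinitesimal with $N$ times a Riemann sum of $f$ over points of $[x,x+\tfrac1N]$ of mesh infinitesimal even relative to $\tfrac1N$, since $t_{I}(g,\cdot)$ outgrows $N$ at whichever approximation level the real $\Delta_{1/N}(I(f,0,x))$ is inspected. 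Every such point $s$ has $|s-x|\le\tfrac1N\approx 0$, hence $s\approx x$ and $f(s)\approx f(x)$ by uniform nonstandard continuity, so bounding the Riemann sum exactly as in the displayed estimate in the proof of Theorem~\ref{varou} (with $\eps_{0}:=\max|f(s)-f(x)|$ over those points, an infinitesimal) gives $\big|\Delta_{1/N}(I(f,0,x))-f(x)\big|\approx 0$, i.e.\ $\FTC'_{\ns}$. (Alternatively one adapts \cite{aloneatlast3}*{Theorem~29}, as in Theorem~\ref{floggen4}.)

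Next I would bring $\FTC'_{\ns}$ into normal form. The antecedent passes, as in Theorem~\ref{varou}, into $(\exists^{\st}g)(\forall^{\st}k')A(g,k',f)$ with $A$ internal. Resolving `$\approx$' in the consequent turns it into $(\forall N\in\Omega)(\forall x\in[\tfrac1k,1-\tfrac1k])(\forall^{\st}l)\,\psi(N,x,l)$ with $\psi$ internal; pulling $(\forall^{\st}l)$ to the front and applying Theorem~\ref{necess} in the case where the quantifiers $(\forall^{\st}x)(\exists^{\st}y)$ of its statement are vacuous and its internal matrix is $(\forall x\in[\tfrac1k,1-\tfrac1k])\psi(N,x,l)$ --- i.e.\ underspill in $N$ --- yields $(\forall^{\st}l)(\exists^{\st}M)(\forall N\ge M)(\forall x)\psi(N,x,l)$, and then $\HAC_{\INT}$ with the usual passage to maxima (Remark~\ref{simply}) gives $(\exists^{\st}G)(\forall^{\st}l)\,E(f,g,G,l)$ for internal $E$. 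Hence $\FTC'_{\ns}$ yields $(\forall f)\big[(\exists^{\st}g)(\forall^{\st}k')A(g,k',f)\di(\exists^{\st}G)(\forall^{\st}l)E(f,g,G,l)\big]$, and bringing all standard quantifiers outside and applying the contraposition of idealisation \textsf{I} exactly as in the passage from \eqref{second2} to \eqref{second23} (equivalently \eqref{fruk} to \eqref{fruk2}) produces a formula $(\forall^{\st}g,k,l)(\exists^{\st}z)(\forall f)(\exists(\ldots)\in z)[\ldots]$ in normal form. Since the whole derivation runs in $\P_{0}$, Corollary~\ref{consresultcor}, extended to $\P_{0}$ and $\textup{\textsf{E-PRA}}^{\omega*}$ by Corollary~\ref{consresultcor2}, extracts a closed term $t_{0}$ of $\textup{\textsf{E-PRA}}^{\omega*}$ witnessing $(\exists^{\st}z)$; replacing $t_{0}$ by the maximum of the components carrying witnesses to the bound on $N$ and pushing the quantifiers back in gives $\FTC_{\ef}'(t)$ for the resulting $t$, of the displayed shape $(\forall l)(\forall N\ge t(g,k,l),\,x\in[\tfrac1k,1-\tfrac1k])[\ldots]$.

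\textbf{The main obstacle} is confined to the first step: one must confirm that no hypothesis tying the mesh of the integrating partition to $N$ is required, i.e.\ that the internal precision built into $I$ --- controlled by $g$ and by the approximation level at which $\Delta_{1/N}(I(f,0,x))$ is read off --- is automatically fine enough relative to $\tfrac1N$ for the Riemann-sum estimate to close. Once this is settled, the remainder is exactly the normal-form bookkeeping of Theorems~\ref{varou}, \ref{floggen} and \ref{floggen4}, together with Theorem~\ref{necess} to remove $(\forall N\in\Omega)$.
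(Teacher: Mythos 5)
Your proposal is correct and follows essentially the same route the paper intends: the paper only remarks that $\FTC_{\ef}'(t)$ is obtained ``similar to Theorem~\ref{floggen4}'', i.e.\ prove $\FTC'_{\ns}$ in $\P_{0}$ (using the nonstandard uniform continuity of $f$ and the fact that $I(f,0,x)$ is set up, via the term from Theorem~\ref{varou}, to converge fast enough that no mesh-versus-$N$ side condition is needed), bring the consequent into normal form via Theorem~\ref{necess}/underspill plus $\HAC_{\INT}$ and idealisation, and extract the term by Corollaries~\ref{consresultcor} and \ref{consresultcor2}. Your handling of the one new point --- why the built-in precision of $I$ replaces the $\|\pi\|\cdot N\approx 0$ hypothesis of $\FTC_{\ns}$ --- is exactly the intended justification.
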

The previous example shows that we can `mix and match' nonstandard definitions with effective definitions. 

\smallskip

Finally, the \emph{second part} of $\FTC$ as formulated in \cite{rudin}*{Theorem 6.21} can be treated similarly:  
the nonstandard version is as follows. 
\begin{thm}[$\FTC_{\ns2}$]
For all $f:\R\di \R$ and standard $k$, if $f$ is nonstandard differentiable on $[\frac1k, 1-\frac1k]$, then
$(\forall \eps\ne 0)\big[\eps\approx 0 \di I(\Delta_{\eps}(f(\cdot)),0, 1))\approx f(1)-f(0)]$.
\end{thm}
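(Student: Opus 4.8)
The plan is to adapt the classical telescoping proof of the second fundamental theorem (\cite{rudin}*{Theorem~6.21}) to the nonstandard setting, carrying the whole argument out inside $\P_{0}$ so that, exactly as for $\CRI$ and $\ULC$, Corollary~\ref{consresultcor} subsequently delivers the effective version $\FTC_{\ef2}(t)$. Fix a nonzero infinitesimal $\eps$; assume $\eps>0$, the case $\eps<0$ being symmetric (use right endpoints as sample points below). Instantiating the universally quantified buffer parameter of Definition~\ref{settleourdiff} at $k$, the hypothesis that $f$ is nonstandard differentiable on $[\tfrac{1}{k},1-\tfrac{1}{k}]$ yields the only fact I shall use: for all $x\in[0,1]$ and all nonzero infinitesimal $\delta,\delta'$, one has $\Delta_{\delta}(f(x))\approx\Delta_{\delta'}(f(x))$.

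First I would prove that the function $\Delta_{\eps}(f(\cdot))$ is nonstandard uniformly continuous on $[0,1]$. This is the only subtle point, because $\eps$ is of the same infinitesimal order as the `resolution' $x-y$, so continuity of $f$ is not enough; instead one observes that for $x\approx y$ in $[0,1]$ with $x\ne y$ the real $\tfrac{f(y)-f(x)}{y-x}$ is at once the difference quotient $\Delta_{y-x}(f(x))$ at $x$ and the difference quotient $\Delta_{x-y}(f(y))$ at $y$, so, since $y-x$ and $x-y$ are nonzero infinitesimals, nonstandard differentiability at $x$ and at $y$ gives $\Delta_{\eps}(f(x))\approx\Delta_{y-x}(f(x))=\Delta_{x-y}(f(y))\approx\Delta_{\eps}(f(y))$. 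By Theorem~\ref{varou} (that is, $\CRI_{\ns}$, which $\P_{0}$ proves), $\Delta_{\eps}(f(\cdot))$ is then nonstandard integrable on $[0,1]$, so $I(\Delta_{\eps}(f(\cdot)),0,1)\approx S_{\pi}(\Delta_{\eps}(f(\cdot)))$ for \emph{every} $\pi\in P([0,1])$ with $\|\pi\|\approx 0$.

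Next I would evaluate one convenient Riemann sum. Let $M$ be the integer part of $1/\eps$, produced by an elementary bounded search, so that $M\eps\le 1<(M+1)\eps$, $M$ is infinite and $1-M\eps\ge 0$ is infinitesimal; take $\pi\in P([0,1])$ to be the equidistant grid $0,\eps,2\eps,\dots,M\eps,1$ with left endpoints as sample points, whence $\|\pi\|=\eps\approx 0$. The sum telescopes on $[0,M\eps]$:
\[
S_{\pi}(\Delta_{\eps}(f(\cdot)))=\sum_{i=0}^{M-1}\Delta_{\eps}(f(i\eps))\cdot\eps+\Delta_{\eps}(f(M\eps))\cdot(1-M\eps)=\big(f(M\eps)-f(0)\big)+\Delta_{\eps}(f(M\eps))\cdot(1-M\eps).
\]
If $1-M\eps\ne 0$, this leftover difference-quotient step is a nonzero infinitesimal and $M\eps\in[0,1]$, so nonstandard differentiability gives $\Delta_{\eps}(f(M\eps))\approx\Delta_{1-M\eps}(f(M\eps))$, and — a product of an infinitesimal with an $\approx$-difference being infinitesimal — $\Delta_{\eps}(f(M\eps))\cdot(1-M\eps)\approx\Delta_{1-M\eps}(f(M\eps))\cdot(1-M\eps)=f(1)-f(M\eps)$ (trivially so when $1-M\eps=0$). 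Hence $S_{\pi}(\Delta_{\eps}(f(\cdot)))\approx\big(f(M\eps)-f(0)\big)+\big(f(1)-f(M\eps)\big)=f(1)-f(0)$, and combining with the previous paragraph yields $I(\Delta_{\eps}(f(\cdot)),0,1)\approx f(1)-f(0)$, as required.

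The main obstacle is exactly the uniform-continuity step (where one must not be misled by $\eps$ and $x-y$ being of the same order), together with the routine verification that everything formalises in $\P_{0}$: beyond $\approx$-arithmetic, the proof uses only the bounded-search definition of $M$, one instance of overspill (Theorem~\ref{doppi}) to see that $M$ is infinite and that the relevant quantities are genuinely infinitesimal, and Theorem~\ref{varou}. Once $\FTC_{\ns2}$ is established in $\P_{0}$, its consequent — after resolving `$\approx$' and one application of idealisation — is already in the normal form \eqref{bog}; there is no unrestricted `$(\forall N\in\Omega)$' here, so, unlike for $\FTC_{\ns}$, Theorem~\ref{necess} is not needed, and Corollary~\ref{consresultcor} extracts a term $t$ with $\textup{\textsf{E-PRA}}^{\omega*}\vdash\FTC_{\ef2}(t)$. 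The constructive version in $\H$ and the Herbrandisation then follow by the routine modifications of Corollary~\ref{cordejedi} and Remark~\ref{herbrand}.
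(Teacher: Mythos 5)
Your proposal is correct and is essentially the proof the paper has in mind: for $\FTC_{\ns2}$ the paper gives no details (it merely says the second part of $\FTC$ ``can be treated similarly'', deferring, as for $\FTC_{\ns}$, to an adapted nonstandard proof from the literature), and your argument in $\P_{0}$ --- reading the hypothesis via the buffer in Definition~\ref{settleourdiff} so that the difference quotients agree on all of $[0,1]$, deducing that $\Delta_{\eps}(f(\cdot))$ is nonstandard uniformly continuous, invoking $\CRI_{\ns}$ (Theorem~\ref{varou}), and telescoping an equidistant Riemann sum whose infinitesimal leftover is handled by differentiability at the last grid point --- is exactly the natural filling-in. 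Your closing remark that the consequent already yields a normal form without Theorem~\ref{necess}, so that Corollary~\ref{consresultcor} applies directly, likewise matches the paper's passage to $\FTC_{\ef2}(t)$.
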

The following effective version of $\FTC_{\ns2}$ is obtained similar to Theorem \ref{floggen4}.  
\begin{thm}[$\FTC_{\ef2}(t)$]
For $f:\R\di\R$ with modulus of uniform continuity on $[0,1]$, and any $k$, we have for all $l$ that 
\be\label{kokoko}
(\forall \eps)\textstyle\big(\forall x \in \big[\frac{1}{k},1-\frac{1}{k}\big])[0< |\eps|\leq \frac{1}{t(g, l)}\di \big| \Delta_{\eps}(I(f, 0,x))- (f(1)-f(0))\big|\leq\frac{1}l\big].
\ee
\end{thm}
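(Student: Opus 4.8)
The plan is to mimic the treatment of $\FTC_{\ns}$ in Theorem~\ref{floggen4}: first prove $\FTC_{\ns2}$ inside $\P_{0}$, then rewrite its statement as a normal form $(\forall^{\st}\underline{x})(\exists^{\st}\underline{y})\varphi$ with $\varphi$ internal, and finally feed this into the term extraction of Corollary~\ref{consresultcor}, using the ``maximum of the witnessing sequence'' clean-up of Remark~\ref{simply} to read off the term $t$ witnessing $\FTC_{\ef2}(t)$ over $\textup{\textsf{E-PRA}}^{\omega*}$.

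For the proof of $\FTC_{\ns2}$ in $\P_{0}$ I would adapt the nonstandard Newton--Leibniz argument. Fix $f$ nonstandard differentiable on $[\tfrac{1}{k},1-\tfrac{1}{k}]$ and an infinitesimal $\eps\neq 0$. The (uniform) nonstandard differentiability hypothesis forces both $f$ and the difference quotient $x\mapsto\Delta_{\eps}(f(x))$ to be nonstandard uniformly continuous — the latter because its values are infinitely close to the (nonstandard continuous) common value ``$f'$'' — so $\CRI_{\ns}$ applies and $I(\Delta_{\eps}(f(\cdot)),0,1)$ is well-defined and infinitely close to every Riemann sum of $\Delta_{\eps}(f(\cdot))$ of infinitesimal mesh. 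Evaluating such a sum along the partition with nodes at integer multiples of $\eps$, plus a boundary piece whose contribution is infinitesimal since $f$ is nonstandard continuous, telescopes to $f(1)-f(0)$, so $I(\Delta_{\eps}(f(\cdot)),0,1)\approx f(1)-f(0)$. Alternatively, the ``hybrid'' route indicated in the paper works cleanly: assume a standard modulus of uniform continuity for $f$, which makes $I$ directly available and turns the same telescoping into an $\eps$-$\delta$ estimate.

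For the normal form, in the consequent of $\FTC_{\ns2}$ one writes $N:=1/\eps\in\Omega$, resolves the surviving `$\approx$', and brings the standard number quantifiers to the front; the quantifier block $(\forall N\in\Omega)(\forall^{\st}x)(\exists^{\st}y)\varphi$ that results is turned into a genuine normal form by Theorem~\ref{necess}, exactly as for $\FTC_{\ns}$. The differentiability hypothesis in the antecedent is put into normal form — yielding the modulus $g$ of uniform continuity of $f$ — by the same $\textsf{I}$ and $\HAC_{\INT}$ manipulation used for continuity in Theorem~\ref{varou}. Combining the two normal forms into one implication, applying $\textsf{I}$ once more to push the existential standard witnesses past $(\forall f)$ into the shape~\eqref{bog}, and invoking Corollary~\ref{consresultcor}, one extracts closed terms; taking maxima of the relevant subsequences and pushing the quantifiers back inside gives $\FTC_{\ef2}(t)$. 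I expect the main obstacle to be the first step — specifically, justifying that the integral functional $I$, which was built in Section~\ref{frakkk} from a \emph{modulus} of uniform continuity, is legitimately applied to the difference quotient $\Delta_{\eps}(f(\cdot))$, and keeping the telescoping error infinitesimal after the division by the infinitesimal $\eps$. Once $\FTC_{\ns2}$ is secured, the rest is mechanical, and — just as for $\CRI$ — one obtains a constructive derivation in $\H$ in the style of Corollary~\ref{cordejedi} and a Herbrandisation re-implying $\FTC_{\ns2}$ in the style of Corollary~\ref{somaar}.
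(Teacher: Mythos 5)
Your plan coincides with the paper's own treatment: the paper justifies $\FTC_{\ef2}(t)$ only by the remark that it is ``obtained similar to Theorem~\ref{floggen4}'', i.e.\ prove $\FTC_{\ns2}$ in $\P_{0}$, bring its hypothesis and conclusion into a normal form via the usual $\textsf{I}$/$\HAC_{\INT}$ manipulations and Theorem~\ref{necess}, and then extract the term by Corollary~\ref{consresultcor} with the maximum-taking clean-up of Remark~\ref{simply}, which is precisely your proposal. Your telescoping proof of $\FTC_{\ns2}$ and the hybrid use of a (standard) continuity modulus for $f$ --- which in fact matches the stated hypothesis of $\FTC_{\ef2}(t)$ --- merely fill in details the paper leaves implicit, so the approach is essentially the same.
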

Note that our notion of nonstandard differentiability from Definition \ref{settleourdiff} gives rise to Bishop's definition \cite{bridge1}*{Definition 5.1, p.~44}.  This is both true for the nonstandard version (using $\Omega$\textsf{-CA}) and the effective version as in $\FTC_{\ef2}(t)$, i.e.\ a modulus of (uniform) differentiability as in \eqref{kokoko} naturally emerges.  
\subsection{Picard's theorem}\label{chu}
In this section, we treat the \emph{Picard existence theorem} ($\PICA$ for short; see e.g.\ \cite{simpson2}*{IV.8.4}).  
As suggested by its name, $\PICA$ states the \emph{existence} of a (unique) solution to a certain differential equation of the form $y'=f(x,y)$.  
We will formulate various nonstandard versions of $\PICA$ with proofs in $\P_{0}$, and obtain the associated effective versions.  We make use of the usual definitions of Lipschitz continuity and boundedness as follows.    
\be\label{gimmelip}
 (\forall x, y, z\in [-1,1])\big(|f(x, y)-f(x, z)|\leq |y-z| \wedge |f(x,y)|\leq 1 \big). 
\ee
First of all, we study the `usual' nonstandard version of $\PICA$. 
To this end, consider the sequence $\phi_{n}$ in \cite{simpson2}*{IV.8.4, Equation (16)}; we could also use $\phi$ from \cite{aloneatlast3}*{Equation (22)}.
Intuitively speaking, $\phi_{n}(x)$ is such that $\phi_{M}(x)\approx \int_{0}^{x}f(t, \phi_{M}(t))dt$ for nonstandard $M$, which gives rise to the following theorem.  
\begin{thm}[$\PICA_{\ns}$]
For $f:\R^{2}\di \R$ nonstandard uniformly continuous on $[-1, 1]^{2}$, and as in \eqref{gimmelip}, we have $(\forall K, L\in \Omega, x\in [0,1])(\phi_{K}(x)\approx \phi_{L}(x))$ and
\be\label{toguh}
(\forall x\in [-1,1])(\forall N, M\in \Omega)\big[\textstyle\frac{N}{M}\approx 0\di \Delta_{\frac1N}\phi_{M}(x)\approx f(x, \phi_{M}(x))\big].
\ee
\end{thm}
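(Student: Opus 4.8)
The plan is to carry out, inside $\P_{0}$, the classical Picard-iteration argument with every $\varepsilon$-$\delta$ estimate replaced by an infinitesimal one, in the same spirit in which the proof of $\FTC_{\ns}$ adapts \cite{aloneatlast3}*{Theorem~29}. Recall that the iterates of \cite{simpson2}*{IV.8.4} are given by $\phi_{0}\equiv 0$ and $\phi_{n+1}(x)=\int_{0}^{x}f(t,\phi_{n}(t))\,dt$, where the integral is a discrete Riemann sum at a mesh tending to $0$ as $n$ grows. By the bound $|f|\leq 1$ in \eqref{gimmelip} each $\phi_{n}$ maps $[-1,1]$ into $[-1,1]$, and by the same bound $\phi_{n}$ is $1$-Lipschitz on $[-1,1]$, hence in particular nonstandard uniformly continuous there with a trivial modulus. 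Since $\phi_{(\cdot)}$ is a standard object, $\phi_{M}$ and $\phi_{M-1}$ are legitimate (possibly nonstandard) functions for any infinite $M$, and $M-1$ is again infinite (else $M=(M-1)+1$ would be standard).

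For the first conjunct I would establish the telescoping estimate $|\phi_{n+1}(x)-\phi_{n}(x)|\leq\tfrac{|x|^{n+1}}{(n+1)!}\leq\tfrac1{(n+1)!}$ for all $n$ and all $x\in[-1,1]$ by a routine induction on $n$ (over an internal formula, hence unproblematic), using $|\phi_{1}(x)-\phi_{0}(x)|=|\int_{0}^{x}f(t,0)\,dt|\leq 1$ and the Lipschitz bound in \eqref{gimmelip} at the inductive step. Summation then gives $|\phi_{K}(x)-\phi_{L}(x)|\leq\sum_{n\geq\min(K,L)}\tfrac1{(n+1)!}$ for all $K,L$ and all $x$. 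For every standard $k$ there is a standard $m$ with $\sum_{n\geq m}\tfrac1{(n+1)!}\leq\tfrac1k$; since every infinite $K,L$ exceeds every such standard $m$, this yields $(\forall^{\st}k)(|\phi_{K}(x)-\phi_{L}(x)|\leq\tfrac1k)$, i.e.\ $\phi_{K}(x)\approx\phi_{L}(x)$, for all infinite $K,L$ and all $x\in[0,1]$.

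For the second conjunct, fix $x\in[-1,1]$ and infinite $N,M$ with $\tfrac{N}{M}\approx 0$. From the defining recursion, $\phi_{M}(x+\tfrac1N)-\phi_{M}(x)=\int_{x}^{x+1/N}f(t,\phi_{M-1}(t))\,dt$ up to a discretisation error of order $\tfrac1M$, so after multiplication by $N$ the error contributes only $O(\tfrac NM)\approx 0$; this is exactly why the hypothesis $\tfrac NM\approx 0$ is imposed, playing the role that $\|\pi\|\cdot N\approx 0$ plays in $\FTC_{\ns}$. It remains to check that $N\int_{x}^{x+1/N}f(t,\phi_{M-1}(t))\,dt\approx f(x,\phi_{M}(x))$. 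For each $t$ with $x\leq t\leq x+\tfrac1N$ we have $t\approx x$, and combining the $1$-Lipschitz continuity of $\phi_{M-1}$ with the first conjunct gives $\phi_{M-1}(t)\approx\phi_{M-1}(x)\approx\phi_{M}(x)$; so nonstandard uniform continuity of $f$ on $[-1,1]^{2}$ gives $f(t,\phi_{M-1}(t))\approx f(x,\phi_{M}(x))$. Hence $\big|\int_{x}^{x+1/N}f(t,\phi_{M-1}(t))\,dt-\tfrac1N f(x,\phi_{M}(x))\big|\leq\tfrac{\varepsilon_{0}}{N}$ for an infinitesimal $\varepsilon_{0}$ bounding the oscillation of $f$ over the relevant infinitesimal neighbourhood of $(x,\phi_{M}(x))$, and multiplying through by $N$ gives $\Delta_{\frac1N}\phi_{M}(x)\approx f(x,\phi_{M}(x))$, as desired.

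I expect the main obstacle to be exactly the discretisation bookkeeping behind the first sentence of the previous paragraph: one must verify that the finitary Picard iterate $\phi_{M}$ of \cite{simpson2}*{IV.8.4} really does satisfy $\phi_{M}(x+\tfrac1N)-\phi_{M}(x)\approx\int_{x}^{x+1/N}f(t,\phi_{M-1}(t))\,dt$ with an error staying infinitesimal even after multiplication by $N$, uniformly in $x$, and that the telescoping estimate is not spoiled by the same discretisation. Everything else is a chain of purely infinitesimal comparisons among internal quantities, all available in $\P_{0}$, and the inductions used are over internal formulas. Boundary values of $x$ near $\pm1$, where $x+\tfrac1N$ may leave $[-1,1]$, are dealt with as in $\FTC_{\ns}$ by a harmless restriction of the domain, or by absorbing the discrepancy into the error term.
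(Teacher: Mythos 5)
Your proposal is correct and takes essentially the same route as the paper: the paper's proof of $\PICA_{\ns}$ merely points to the adaptation of the nonstandard Peano-existence arguments and to Simpson's RM construction of the iterates $\phi_{n}$, noting that $\phi_{N}(x)\approx\phi_{M}(x)$ is ``immediate from the details of the RM-proof'', and those details are exactly the telescoping Lipschitz estimate and the infinitesimal discretisation bookkeeping (with $\tfrac{N}{M}\approx 0$ absorbing the mesh error) that you spell out. In other words, you have simply filled in the computations the paper leaves implicit, so there is no gap to report.
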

The associated effective version is as follows.
\begin{thm}[$\PICA_{\ef}(t)$]
For any $f:(\R\times \R)\di \R$ with modulus of uniform continuity $g$ on $[-1,1]\times [-1,1]$, and such that \eqref{gimmelip}, we have
\[
 (\forall k)(\forall x\in [-1,1], K,L\geq t(g)(k)(3))\big[ |\phi_{K}(x)- \phi_{L}(x)|\leq \textstyle\frac{1}{k}\big], 
\]
\[
 (\forall k)(\forall x\in [-1,1], N,M\geq t(g)(k)(1))\big[\textstyle\frac{N}{M}\leq \frac{1}{t( g)(k)(2)}\di |\Delta_{\frac1N}\phi_{M}(x)- f(x, \phi_{M}(x))|\leq \textstyle\frac{1}{k}\big].
 \]
\end{thm}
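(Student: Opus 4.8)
The plan is to follow the template already applied to $\CRI_{\ns}$, $\ULC_{\ns}$, and in particular $\FTC_{\ns}$ in Theorems~\ref{varou}, \ref{floggen}, and~\ref{floggen4}: first prove $\PICA_{\ns}$ inside $\P_{0}$, then bring it into a normal form $(\forall^{\st}\underline{x})(\exists^{\st}\underline{y})\psi$ with $\psi$ internal, and finally apply term extraction via Corollary~\ref{consresultcor} (with Corollary~\ref{consresultcor2}). For the first step I would take the Picard iterates $\phi_{n}$ of \cite{simpson2}*{IV.8.4, eq.~(16)} (or $\phi$ from \cite{aloneatlast3}) and use \eqref{gimmelip} to establish, by internal induction, the estimate $|\phi_{n+1}(x)-\phi_{n}(x)|\leq \frac{|x|^{n+1}}{(n+1)!}$ on $[-1,1]$; summing the tail then gives $\phi_{K}(x)\approx\phi_{L}(x)$ for $x\in[0,1]$ and all infinite $K,L$. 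For \eqref{toguh} I would use the defining recursion $\phi_{M}(x)=\phi_{0}+\int_{0}^{x}f(t,\phi_{M-1}(t))\,dt$, the fact that $t\mapsto f(t,\phi_{M}(t))$ is nonstandard uniformly continuous (a composition of such maps) with $\phi_{M-1}\approx\phi_{M}$ by the above estimate, and the nonstandard fundamental theorem of calculus $\FTC_{\ns}$ (proved in $\P_{0}$ en route to Theorem~\ref{floggen4}), applied to $t\mapsto f(t,\phi_{M}(t))$; the hypothesis $\frac{N}{M}\approx 0$ is precisely what is needed to guarantee the difference-quotient step $1/N$ is coarse relative to the resolution of the iterate $\phi_{M}$, so that $\Delta_{1/N}\phi_{M}(x)\approx f(x,\phi_{M}(x))$.

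Next I would put $\PICA_{\ns}$ into normal form. The antecedent --- nonstandard uniform continuity of $f$ on $[-1,1]^{2}$ together with the internal hypothesis \eqref{gimmelip} --- is handled exactly as in Theorem~\ref{varou}, via \textsf{I} and $\HAC_{\INT}$, producing a standard modulus $g$ and a formula $(\exists^{\st}g)(\forall^{\st}k)A(g,k,f)$ with $A$ internal. For the consequent I would treat both conjuncts by the technique of Theorem~\ref{necess}: resolve each `$\approx$' and each `$\in\Omega$' into the appropriate standard quantifier, push the standard quantifiers outward, apply \textsf{I}, and clean up by taking the maximum of the resulting finite sequence (Remark~\ref{simply}). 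The convergence conjunct $(\forall K,L\in\Omega,x\in[0,1])(\phi_{K}(x)\approx\phi_{L}(x))$ then becomes $(\forall^{\st}m)(\exists^{\st}P)B_{1}(P,m,f)$, and \eqref{toguh} becomes $(\forall^{\st}l)(\exists^{\st}I,J)B_{2}(I,J,l,f)$, with $B_{1},B_{2}$ internal and $B_{2}$ essentially the matrix of the second displayed formula of $\PICA_{\ef}(t)$. I would then write $\PICA_{\ns}$ as $[(\exists^{\st}g)(\forall^{\st}k)A\wedge\eqref{gimmelip}]\di[(\forall^{\st}m)(\exists^{\st}P)B_{1}\wedge(\forall^{\st}l)(\exists^{\st}I,J)B_{2}]$, pull all standard quantifiers to the front (using classical logic on the negative occurrences), apply \textsf{I} once more to absorb the non-standard $(\forall f)$ as in the passage from \eqref{fruk} to \eqref{fruk2}, obtain a genuine normal form with internal matrix provable in $\P_{0}$, and apply Corollary~\ref{consresultcor} to extract a term $t$ of $\mathcal{T}^{*}$. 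A final cleanup by maxima (Remark~\ref{simply}), pushing $(\forall f)$ and the remaining quantifiers back inside, recovers the two displayed formulas of $\PICA_{\ef}(t)$ over $\textup{\textsf{E-PRA}}^{\omega*}$, with $t(g)(k)(1),t(g)(k)(2),t(g)(k)(3)$ read off as the relevant components of the extracted term.

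The main obstacle I expect is the normal-form step for the consequent, i.e.\ the analogue of Theorem~\ref{necess} needed here: unlike the single $(\forall N\in\Omega)$ of that theorem, \eqref{toguh} involves \emph{two} infinite numbers $N,M$ together with the genuinely relational premise $\frac{N}{M}\approx 0$, and the convergence conjunct involves two infinite numbers $K,L$. I will need to check carefully that, after resolving these and applying \textsf{I}, the internal matrix remains monotone in each of the extracted thresholds --- the bound on $N$, the bound on $M$, and the reciprocal bound governing $\frac{N}{M}$ --- so that the bounded existential quantifiers `$\exists\dots\in z$' produced by $\HAC_{\INT}$, \textsf{I}, and term extraction can all be discarded in favour of the maximum of the finite sequence. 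Once that monotonicity bookkeeping is in place, the remainder parallels Theorem~\ref{floggen4} almost verbatim, and the usual corollaries --- a constructive version in $\H$ in the style of Corollary~\ref{cordejedi}, and a Herbrandisation implying $\PICA_{\ns}$ back in $\P_{0}$ in the style of Corollary~\ref{somaar} --- will then follow with no new ideas.
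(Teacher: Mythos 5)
Your proposal is correct and follows essentially the same route as the paper: prove $\PICA_{\ns}$ in $\P_{0}$ (the paper simply cites the adaptation of the nonstandard Peano proof or of Simpson's RM construction of $\phi_{n}$, where you instead supply the explicit Lipschitz/factorial estimate plus $\FTC_{\ns}$), then obtain a normal form for the consequent via the extension of Theorem~\ref{necess} to the two infinite parameters with the premise $\frac{N}{M}\approx 0$, and extract the term by Corollary~\ref{consresultcor} with the usual maximum/monotonicity cleanup. The monotonicity bookkeeping you flag is exactly what the paper treats as "straightforward in light of the previous sections", so no new obstacle arises.
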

Note that $\PICA_{\ef}(t)$ tells us how fast the sequence $\phi_{n}$ converges to the unique solution of $y'=f(x,y)$, \emph{and} how fast the derivative of this solution converges to $f$.  
\begin{thm}\label{floggen11111113}
From the proof of $\PICA_{\ns}$ in $\P_{0}$, a term $t$ can be extracted such that $\textup{\textsf{E-PRA}}^{\omega*} $ proves $\PICA_{{\ef}}(t)$.  
\end{thm}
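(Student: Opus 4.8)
The plan is to follow the now-familiar three-stage strategy of Theorems~\ref{varou}, \ref{floggen}, and~\ref{floggen4}: first establish $\PICA_{\ns}$ inside $\P_{0}$; then massage its statement into the normal form $(\forall^{\st}\underline{x})(\exists^{\st}\underline{y})\psi$ required by Corollary~\ref{consresultcor}; and finally invoke term extraction and read off the coordinates of the resulting witness.

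For the first stage, I would adapt the classical Picard iteration argument as carried out for the sequence $\phi_{n}$ in \cite{simpson2}*{IV.8.4} (or for the variant $\phi$ of \cite{aloneatlast3}*{Eq.~(22)}). Using the boundedness and Lipschitz conditions \eqref{gimmelip} together with the nonstandard uniform continuity of $f$, one proves in $\textup{\textsf{E-PRA}}^{\omega*}_{\st}$ a summable estimate of the form $|\phi_{n+1}(x)-\phi_{n}(x)|\leq \frac{1}{n!}$ uniformly on $[-1,1]$, whence for nonstandard $K,L$ the tail is infinitesimal and $\phi_{K}(x)\approx\phi_{L}(x)$; and, using that $\phi_{M}(x)$ approximates $\int_{0}^{x}f(t,\phi_{M}(t))\,dt$ and the nonstandard uniform continuity of $f$, that $\Delta_{1/N}\phi_{M}(x)\approx f(x,\phi_{M}(x))$ whenever $\frac{N}{M}\approx 0$. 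This part carries the genuine mathematical content and is where I expect most of the work to lie; care is needed to keep the infinitesimal estimates uniform in $x$ and to quantify only standard parameters externally.

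For the second stage, the conjunct $(\forall K,L\in\Omega)(\phi_{K}(x)\approx\phi_{L}(x))$ is resolved by unfolding $\approx$ and applying overspill together with idealisation~$\textsf{I}$ exactly as `$\|\pi\|\approx 0$' was handled in the proof of Theorem~\ref{varou}. The essential new point is the consequent \eqref{toguh}, which has the shape $(\forall N,M\in\Omega)\big[\frac{N}{M}\approx 0\di(\forall^{\st}\dots)(\exists^{\st}\dots)\varphi\big]$ and is not literally covered by Theorem~\ref{necess}. I would therefore prove a two-variable analogue of Theorem~\ref{necess}: rewriting $\frac{N}{M}\approx 0$ as $(\forall^{\st}j)(N\cdot j\leq M)$, pushing the standard quantifiers outward and applying idealisation~$\textsf{I}$ verbatim as in the proof of Theorem~\ref{necess}, yields an equivalent normal form of the type~\eqref{overps} carrying a numerical bound relating $N$ and $M$. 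With both conjuncts of the antecedent and the consequent in normal form, one combines them using closure of normal forms under modus ponens (Section~\ref{kintro}, step~(2)), pulling the witnesses to the front by idealisation, to obtain a single normal form $(\forall^{\st}\underline{x})(\exists^{\st}\underline{y})\psi(\underline{x},\underline{y})$ for $\PICA_{\ns}$ provable in $\P_{0}$.

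For the third stage, apply Corollary~\ref{consresultcor} to this normal form to extract closed terms of $\mathcal{T}^{*}$; as in Remark~\ref{simply}, replace the finite sequences of witnesses by their componentwise maxima so that the extracted term genuinely witnesses the relevant bounds, and push all quantifiers back inside the brackets. Identifying the coordinates $t(g)(k)(1)$, $t(g)(k)(2)$, $t(g)(k)(3)$ of the resulting witnessing term yields precisely $\PICA_{\ef}(t)$ over $\textup{\textsf{E-PRA}}^{\omega*}$. The main obstacle, as noted, is the first stage together with the bookkeeping for the ratio condition in \eqref{toguh}; once the two-variable analogue of Theorem~\ref{necess} is in place, the term-extraction machinery runs exactly as before.
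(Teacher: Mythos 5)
Your proposal is correct and follows essentially the same route as the paper: prove $\PICA_{\ns}$ in $\P_{0}$ by adapting the nonstandard Peano/Picard argument (or Simpson's construction of $\phi_{n}$ in \cite{simpson2}*{IV.8.4}), obtain a normal form for the consequent via Theorem~\ref{necess}, and apply Corollary~\ref{consresultcor} as in the earlier case studies. Your only addition is the explicit two-$\Omega$-variable variant of Theorem~\ref{necess} for the ratio condition $\frac{N}{M}\approx 0$, which the paper subsumes under "using Theorem~\ref{necess}" since its proof (resolve `$\approx$', push standard quantifiers out, apply $\textsf{I}$) goes through verbatim for tuples.
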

\begin{proof}
To prove $\PICA_{\ns}$, the proof of Peano's existence theorem  in \cite{aloneatlast3}*{Theorem 31} or \cite{loeb1}*{Theorem 14.1, p.\ 64} is readily adapted to the case at hand.  
Alternatively, to prove Picard's theorem in $\RCA_{0}$ in \cite{simpson2}*{IV.8.4}, Simpson constructs a sequence $\phi_{n}$ which converges to a solution of $y'=f(x, y)$.  
It is immediate from the details of the RM-proof that $\phi_{N}(x)\approx \phi_{M}(x)$ for any nonstandard $N,M$ and any $x$ in the relevant interval if $f$ is as in $\PICA_{\ns}$.     

\smallskip

The second part of the theorem is straightforward in light of the proofs in the previous sections.  
In particular, one easily obtains a normal form for the consequent of $\PICA_{\ns}$ using Theorem \ref{necess}.  
\end{proof}
Secondly, we consider a weaker nonstandard version of $\PICA$, which will yield less computational information, as expected.
\begin{thm}[$\PICA'_{\ns}$]
For $f:\R^{2}\di \R$, nonstandard uniformly continuous on $[-1, 1]^{2}$ and \eqref{gimmelip}, we have $(\exists \phi)(\forall x\in [-1,1])(\forall N\in \Omega)\big[\Delta_{\frac1N}\phi(x)\approx f(x, \phi(x))\big]$.  
\end{thm}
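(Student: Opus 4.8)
The plan is to sidestep the difference quotient entirely: produce a \emph{standard} solution $\phi$, show it satisfies the integral equation \emph{exactly}, and then invoke $\FTC'_{\ns}$. First I would, exactly as the standard modulus of uniform continuity was extracted in the proof of Theorem~\ref{varou} but now for a two-variable $f$, resolve the `$\approx$' in the nonstandard uniform continuity of $f$ on $[-1,1]^{2}$ and apply idealisation \textsf{I} and $\HAC_{\INT}$ to obtain a \emph{standard} modulus of uniform continuity for $f$. With such a standard modulus in hand, the classical Picard iteration is available term-by-term inside $\textsf{E-PRA}^{\omega*}$ (using the Riemann integral functional $I$ from Section~\ref{frakkk}), and the elementary estimate $\|\phi_{n+1}-\phi_{n}\|_{\infty}\leq C/n!$ (cf.\ \cite{simpson2}*{IV.8.4}) shows the iterates converge uniformly to a limit $\phi$; since $\phi$ is produced from $f$ and its modulus by a closed term, $\phi$ is standard when $f$ is. Classically, hence internally in $\P_{0}$, this $\phi$ is $1$-Lipschitz (a uniform limit of $1$-Lipschitz functions, as $|f|\leq1$), has range in $[-1,1]$, and, interchanging limit and integral, satisfies $\phi(x)=_{\R}I(g_{\phi},0,x)$ for all $x\in[-1,1]$, where $g_{\phi}(t):=f(t,\phi(t))$. (One may instead obtain a standard $\phi$ from the iterates $\phi_{M}$ of $\PICA_{\ns}$ by applying $\Omega\textup{\textsf{-CA}}$ (Theorem~\ref{drifh}), the relation $\phi_{K}\approx\phi_{L}$ for infinite $K,L$ supplying the required $\Omega$-invariance; the estimate above then makes $\phi\approx\phi_{M}$ uniform and Transfer \textsf{(T)} promotes the resulting approximate integral equation to the exact one.)

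It then remains to observe that $g_{\phi}$ is standard (a composition of the standard $f$ and the standard $\phi$) and nonstandard uniformly continuous on $[-1,1]$ (composition of nonstandard uniformly continuous maps, $\phi$ being $1$-Lipschitz), with a standard modulus inherited from that of $f$. Hence $\FTC'_{\ns}$ applies to $g_{\phi}$, giving $\Delta_{1/N}\big(I(g_{\phi},0,x)\big)\approx g_{\phi}(x)=f(x,\phi(x))$ for every infinite $N$ and every $x\in[-1,1]$ with $\st(x)\in(-1,1)$. By the integral equation $\phi(x)=_{\R}I(g_{\phi},0,x)$ the left-hand side is exactly $\Delta_{1/N}\phi(x)$, so we obtain $\PICA'_{\ns}$, the endpoints $x=\pm1$ being treated through the buffer interval in Definition~\ref{settleourdiff} in the usual way.

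The main obstacle is conceptual rather than computational: one cannot prove $\PICA'_{\ns}$ by differentiating an \emph{approximate} solution, since the difference quotient $\Delta_{1/N}$ multiplies by the infinite number $N$, so an infinitesimal error in $\phi$ survives as a non-infinitesimal error in $\Delta_{1/N}\phi$. This is why the derivative must be routed through the \emph{exact} integral equation together with $\FTC'_{\ns}$, rather than through any nonstandard difference quotient of the iterates, and why, whenever the iterates are used at all, their closeness to $\phi$ must be made uniform in $x$ — pointwise infinitesimal closeness does not control a Riemann sum over infinitely many partition points — which is precisely the role of the classical $C/n!$ estimate. Beyond these points, the argument is a routine combination of the modulus extraction of Theorem~\ref{varou}, the classical theory of \cite{simpson2}*{IV.8.4}, and $\FTC'_{\ns}$.
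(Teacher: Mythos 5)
Your overall architecture — route the difference quotient through an \emph{exact} integral identity and then apply $\FTC'_{\ns}$ to the nonstandard uniformly continuous integrand, rather than differentiate an approximate solution — is exactly the right idea, and it is in fact what the paper's terse proof relies on: the paper obtains $\PICA'_{\ns}$ "immediately" from Theorem~\ref{floggen11111113}, i.e.\ from the iterates $\phi_{M}$ of $\PICA_{\ns}$ (an adaptation of the known nonstandard Peano/Picard proofs), for which the defining identity $\phi_{M}=I(f(\cdot,\phi_{M-1}(\cdot)),0,\cdot)$ holds \emph{by construction}. Where your write-up has a genuine gap is in the step "classically, hence internally in $\P_{0}$, the iterates converge uniformly and the limit $\phi$ satisfies $\phi(x)=_{\R}I(g_{\phi},0,x)$ exactly". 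The classical Picard argument needs internal (uniform) continuity, or at least internal Riemann integrability, of $f$; but the hypothesis of $\PICA'_{\ns}$ only gives \emph{nonstandard} uniform continuity, and the modulus you extract via \textsf{I} and $\HAC_{\INT}$ is only guaranteed to work for \emph{standard} precisions $k$ (cf.\ \eqref{first5}). Since $f$ need not be standard and may be internally discontinuous (compare the function $f_{0}$ in the "Real real analysis" remark), you cannot prove inside $\P_{0}$ that the dyadic sums defining $I$ converge at all precisions, hence neither that the limit $\phi$ exists as claimed nor that the fixed-point equation holds \emph{exactly} — and, as you yourself stress, an infinitesimal slack here is fatal once you multiply by $N$. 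Your parenthetical repair makes this worse: it invokes Transfer \textsf{(T)}, which is \emph{not} available in $\P_{0}$ (already the fragment $\paai$ is equivalent, over the base theory, to arithmetical comprehension), so that route changes the system in which the theorem is supposed to be proved.

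The fix is to avoid the limit altogether: take $\phi:=\phi_{M}$ for a fixed infinite $M$, where the iterates are defined via the functional $I$; then $\phi_{M}(x)=_{\R}I(g_{\phi_{M-1}},0,x)$ holds exactly by definition (no convergence claim needed), $g_{\phi_{M-1}}$ is nonstandard uniformly continuous because $\phi_{M-1}$ is (approximately) $1$-Lipschitz by \eqref{gimmelip}, and $\FTC'_{\ns}$ gives $\Delta_{1/N}\phi_{M}(x)\approx f(x,\phi_{M-1}(x))$ for all $N\in\Omega$; finally $\phi_{M-1}(x)\approx\phi_{M}(x)$ together with the nonstandard continuity of $f$ lets you replace $\phi_{M-1}$ by $\phi_{M}$ in the second argument. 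This is essentially the paper's derivation from $\PICA_{\ns}$ (and if a standard $\phi$ is wanted, $\Omega$\textsf{-CA} is the tool, as the paper remarks — but not Transfer). With that replacement your proof is correct and close in spirit to the paper's; as written, the exactness claim for the limit is unsupported.
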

The associated effective version of $\PICA_{\ns}$ is as follows.
\begin{thm}[$\PICA_{\ef}'(t)$]
For any $f:(\R\times \R)\di \R$ with modulus of uniform continuity $g$ on $[-1,1]\times [-1,1]$ such that \eqref{gimmelip}, we have
\begin{align}
 (\forall k)(\exists \phi)(\forall x\in [-1,1], N\geq t(g)(k))\big[|\Delta_{\frac1N}\phi(x)- f(x, \phi(x))|\leq \textstyle\frac{1}{k}\big]
\end{align}
\end{thm}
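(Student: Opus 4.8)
The plan is to show that, exactly as for the stronger statement in Theorem~\ref{floggen11111113}, from a proof of $\PICA'_{\ns}$ in $\P_{0}$ one can extract a closed term $t$ such that $\textup{\textsf{E-PRA}}^{\omega*}$ proves $\PICA_{\ef}'(t)$: one proves $\PICA'_{\ns}$ inside $\P_{0}$, brings it into a normal form $(\forall^{\st}\underline{u})(\exists^{\st}\underline{v})\psi$ with $\psi$ internal, and applies the term extraction of Corollary~\ref{consresultcor2}. Only two points are genuinely new here: (i) a \emph{single} solution $\phi$ has to serve for \emph{all} infinite $N$, which forces one to differentiate the limit solution rather than an iterate; and (ii) the bookkeeping of the function-type existential quantifier $(\exists\phi)$.

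First I would prove $\PICA'_{\ns}$ in $\P_{0}$. Extract a standard modulus of uniform continuity $g$ of $f$ from the nonstandard-continuity hypothesis as in Theorem~\ref{varou}, and run Simpson's construction of the Picard iterates from \cite{simpson2}*{IV.8.4} (available in $\P_{0}$; subdivide $[-1,1]$ so the Picard operator is a contraction), obtaining a function $\phi$ on $[-1,1]$ with $\phi(x)=\int_{0}^{x}f(t,\phi(t))\,dt$; alternatively one adapts the nonstandard Peano argument of \cite{aloneatlast3}*{Theorem~31} or \cite{loeb1}*{Theorem~14.1}. By \eqref{gimmelip} we have $|f|\le 1$, hence $\phi$ is $1$-Lipschitz, hence $t\mapsto f(t,\phi(t))$ is nonstandard continuous; therefore $\Delta_{\frac1N}\phi(x)=N\int_{x}^{x+1/N}f(t,\phi(t))\,dt\approx f(x,\phi(x))$ for \emph{every} infinite $N$ and every $x\in[-1,1]$, since the average of a nonstandard continuous function over an infinitesimal interval is infinitely close to its value at the endpoint. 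This is precisely $\PICA'_{\ns}$ with $\phi$ as witness, and, crucially, the argument links $N$ to no iteration index.

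For the remaining steps I would replay the manipulations of Theorems~\ref{varou}, \ref{floggen} and~\ref{floggen11111113}. The antecedent ``$f$ nonstandard uniformly continuous on $[-1,1]^{2}$'' is brought into the form $(\exists^{\st}g)(\forall^{\st}k)A(g,k,f)$, with \eqref{gimmelip} internal; in the consequent one resolves the single occurrence of ``$\approx$'' and then treats the clause $(\forall x)(\forall N\in\Omega)[\dots]$ by the underspill/idealisation method of Theorem~\ref{necess}, using that the matrix is monotone in $N$ (the true solution obeys $|\Delta_{\frac1N}\phi(x)-f(x,\phi(x))|\le\omega(1/N)$, with $\omega$ determined by $g$ and the $1$-Lipschitz bound). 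The existential $(\exists\phi)$ is unproblematic: it is internal in character, commutes with the $\exists^{\st}$-quantifiers produced by idealisation, and is absorbed into the internal matrix $\psi$ of the normal form; moreover its witness is a term-definable polygonal iterate $\phi_{s(g,k)}$, so $\phi$ may also be kept explicit in $\PICA_{\ef}'(t)$. Combining antecedent and consequent across the implication classically and pulling out the remaining standard quantifiers via idealisation as in Theorem~\ref{varou} yields the normal form; Corollary~\ref{consresultcor2} then produces a closed term $t$ with $\textup{\textsf{E-PRA}}^{\omega*}\vdash(\forall\underline{u})(\exists\underline{v}\in t(\underline{u}))\psi$, and the usual post-processing of Remark~\ref{simply} (replacing finite sequences by their maxima, pushing quantifiers back inside using monotonicity in $N$) gives exactly $\PICA_{\ef}'(t)$. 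The main obstacle is the first step — ensuring the witnessing $\phi$ is genuinely independent of $N$; everything afterwards is routine, and a constructive version in $\H$ together with a Herbrandisation are obtained as in Corollaries~\ref{cordejedi} and~\ref{somaar}.
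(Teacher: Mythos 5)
Your proposal is correct and takes essentially the same route as the paper: prove $\PICA_{\ns}'$ in $\P_{0}$, bring the consequent into a normal form via underspill (Theorem \ref{necess}) while keeping $(\exists \phi)$ inside the internal matrix, and then apply the term extraction of Corollary \ref{consresultcor2} with the usual maximum-taking post-processing. The only real difference is the first step, where you construct the exact limit solution and differentiate it directly (making explicit why one $\phi$ serves all infinite $N$), whereas the paper obtains $\PICA_{\ns}'$ immediately from $\PICA_{\ns}$ (Theorem \ref{floggen11111113}); the normal form \eqref{drawiu} and the extraction step coincide, and your appeal to a quantitative modulus $\omega(1/N)$ is unnecessary since the only monotonicity needed is in the threshold $K$, which is automatic.
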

In contrast to $\PICA_{\ef}(t)$, the effective version $\PICA_{\ef}'(t)$ only tells us the \emph{existence} of an approximate solution $\phi$ to $y'=f(x,y)$, while it \emph{computes} how precise the derivative $\phi'$ needs to be approximated by $\Delta_{\frac1N}\phi(x)$.  
We have the following theorem.  
\begin{thm}\label{floggen3}
From the proof of $\PICA_{\ns}'$ in $\P_{0}$, a term $t$ can be extracted such that $\textup{\textsf{E-PRA}}^{\omega*} $ proves $\PICA_{{\ef}}'(t)$.  
\end{thm}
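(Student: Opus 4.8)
The plan is to follow the same template as in Theorems~\ref{floggen}, \ref{floggen4} and~\ref{floggen11111113}: first establish $\PICA_{\ns}'$ inside $\P_{0}$, then rewrite it into the normal form of Corollary~\ref{consresultcor}, apply that corollary, and read off $t$ by taking maxima of the resulting finite sequences, as licensed by Remark~\ref{simply}.

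\emph{Step 1: proving $\PICA_{\ns}'$ in $\P_{0}$.} I would adapt the construction of the Picard/Peano iterates $\phi_{n}$ from \cite{simpson2}*{IV.8.4} (equivalently from \cite{aloneatlast3}*{Theorem~31} or \cite{loeb1}*{Theorem~14.1}). Since $\PICA_{\ns}'$ only asks for \emph{some} $\phi$---the quantifier is $(\exists\phi)$, not $(\exists^{\st}\phi)$---I would simply take $\phi:=\phi_{M}$ for a fixed $M\in\Omega$. As $\phi_{M}$ is piecewise linear and satisfies the Euler recursion $\phi_{M}(x_{j+1})=\phi_{M}(x_{j})+\tfrac1M f(x_{j},\phi_{M}(x_{j}))$ along its grid, the nonstandard uniform continuity of $f$ and the bounds~\eqref{gimmelip} should give $\Delta_{\frac1N}\phi_{M}(x)\approx f(x,\phi_{M}(x))$ for every $x\in[-1,1]$ and every $N\in\Omega$---whether or not $N\leq M$, since on each sub-step the difference quotient equals $f$ evaluated at a grid point infinitely close to $x$. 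This verification, uniform in $x$, is the part carrying the genuine analytic content.

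\emph{Step 2: normal form.} The delicate point is that $(\exists\phi)$ must stay \emph{inside} the internal matrix: it may be commuted with other existential quantifiers but never moved outward past a standard universal quantifier. I would therefore first pass to the (trivially implied) weakening of $\PICA_{\ns}'$ in which $\phi$ is allowed to depend on the precision $k$---harmless, since $\PICA_{\ef}'(t)$ already places $(\exists\phi)$ beneath $(\forall k)$. As in the proof of Theorem~\ref{varou}, I would then normalise the antecedent (nonstandard uniform continuity together with the internal condition~\eqref{gimmelip}) to $(\exists^{\st}g)\big[(\forall^{\st}k_{0})A(g,k_{0},f)\wedge\eqref{gimmelip}\big]$ using idealisation~\textsf{I} and $\HAC_{\INT}$, where $A(g,k_{0},f)$ expresses that $g(k_{0})$ is a modulus of uniform continuity at level $k_{0}$. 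For the consequent I would resolve `$\approx$', commute $(\exists\phi)(\forall x\in[-1,1])$ past $(\forall N\in\Omega)(\forall^{\st}k)$, and apply Theorem~\ref{necess} (underspill in the form~\eqref{overps}) to the $(\forall x, N\in\Omega)$ block; the $(\exists\phi)$ may then be slid in front of the standard existential quantifier produced by idealisation, yielding a consequent of the shape $(\forall^{\st}k)(\exists^{\st}M)(\exists\phi)(\forall x\in[-1,1], N\geq M)\big[\,|\Delta_{\frac1N}\phi(x)-f(x,\phi(x))|\leq\tfrac1k\,\big]$. Finally, exactly as in Theorem~\ref{varou}, I would move all standard quantifiers of the implication to the front: send $(\exists^{\st}g)$ across the implication, pull out $(\forall^{\st}k)$, rewrite $P\di(\exists^{\st}M)(\cdots)$ as $(\exists^{\st}M)\big(P\di(\cdots)\big)$ and contract using $\neg(\forall^{\st}k_{0})A(g,k_{0},f)\equiv(\exists^{\st}k_{0})\neg A(g,k_{0},f)$, and apply the contraposition of idealisation~\textsf{I} to absorb the leading $(\forall f)$. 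This produces a normal form provable in $\P_{0}$.

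\emph{Step 3: term extraction.} Applying Corollary~\ref{consresultcor} to this normal form gives a closed term $s\in\mathcal{T}^{*}$ with $\textup{\textsf{E-PRA}}^{\omega*}$ proving the same formula with the standard existential replaced by a bounded one. Setting $t(g)(k)$ to be the maximum of the $M$-components of $s(g,k)$, and observing that when $g$ is an \emph{actual} modulus of uniform continuity of $f$ the disjunct $\neg A(g,k_{0},f)$ is false for every $k_{0}$, the witnessing data in $s(g,k)$ supply, under~\eqref{gimmelip}, a $\phi$ with $(\forall x\in[-1,1], N\geq t(g)(k))\big[\,|\Delta_{\frac1N}\phi(x)-f(x,\phi(x))|\leq\tfrac1k\,\big]$; since $k$ was arbitrary, this is $\PICA_{\ef}'(t)$.

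\emph{Main obstacle.} The genuinely non-routine parts are (i) the verification of $\PICA_{\ns}'$ itself---the pure nonstandard existence statement with the difference-quotient formulation of the derivative, checked uniformly in $x$; and (ii) the bookkeeping forced by the fact that $(\exists\phi)$ is an \emph{internal} existential: unlike the $\CRI$ and $\ULC$ cases the consequent here is genuinely an existence statement, so one must extract from the $k$-dependent weakening rather than $\PICA_{\ns}'$ verbatim, and since the matrix $|\Delta_{\frac1N}\phi(x)-f(x,\phi(x))|\leq\tfrac1k$ is not monotone in $N$, the passage from $(\forall N\in\Omega)$ to $(\forall N\geq M)$ must route through Theorem~\ref{necess} instead of a naive `take the maximum' step. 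That $t$ sends standard inputs to standard outputs---needed for the final argument---is automatic for terms of $\mathcal{T}^{*}$ in $\P_{0}$, as in the proof of Corollary~\ref{somaar}.
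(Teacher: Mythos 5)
Your proposal is correct and follows essentially the same route as the paper: establish $\PICA_{\ns}'$ in $\P_{0}$ (the paper cites Theorem~\ref{floggen11111113}, you re-derive it via the Euler/Picard iterate $\phi_{M}$), then obtain the normal form $(\forall^{\st}k)(\exists^{\st}K)(\exists\phi)(\forall N\geq K)(\forall x\in[-1,1])[\dots]$ by resolving `$\approx$' and applying underspill with $(\exists\phi)$ kept internal, exactly the paper's \eqref{drawiu}, before extracting $t$ via Corollary~\ref{consresultcor}. Your extra care about the missing side condition $\frac{N}{M}\approx0$ and the non-monotonicity in $N$ just spells out what the paper compresses into ``immediate'' and ``applying underspill''.
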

\begin{proof}
By Theorem \ref{floggen11111113}, it is immediate that $\P_{0}$ proves $\PICA_{\ns}'$.  
 The rest of the proof is now straightforward in light of the previous proofs, if we can provide the right normal form for the consequent of $\PICA_{\ns}$.  
Now, the latter consequent implies  
\[\textstyle
( \exists \phi:\R\di \R)(\forall^{\st}k)(\forall N\in \Omega)(\forall x\in [-1,1])\big[|\Delta_{\frac1N}\phi(x)- f(x, \phi(x))|\leq\frac1k\big],
\]
and applying underspill immediately yields 
\[\textstyle
( \exists \phi:\R\di \R)(\forall^{\st}k)(\exists^{\st}K)(\forall N\geq K)(\forall x\in [-1,1])\big[|\Delta_{\frac1N}\phi(x)- f(x, \phi(x))|\leq\frac1k\big],
\]
and finally we have the following normal form:
\be\label{drawiu}\textstyle
(\forall^{\st}k)(\exists^{\st}K)( \exists \phi)(\forall N\geq K)(\forall x\in [-1,1])\big[|\Delta_{\frac1N}\phi(x)- f(x, \phi(x))|\leq\frac1k\big],
\ee
and $\PICA_{\ef}'(t)$ now follows in a straightforward manner.  
\end{proof}
Applying $\Omega\textsf{-CA}$ to $\PICA_{\ns}$, we note that $\phi$ from $\PICA'_{\ns}$ may be assumed to be standard.
Starting from this stronger version in Theorem \ref{floggen3}, the normal form \eqref{drawiu} would involve `$( \exists^{\st} \phi)$', and the associated 
effective version would provide a witnessing functional for $(\exists \phi)$ (involving a finite sequence of solutions).  Similarly, the term $t$ in \eqref{EST} only depends on $g$ \emph{and not on $f$}, as the former is standard in the normal form \eqref{second23} of $\CRI_{\ns}$, but no such assumption is made for the latter.  
The previous is just the observation that term extraction as in Corollary \ref{consresultcor} only provides information about \emph{standard} quantifiers, while ignoring the internal quantifiers.    
This observation leads us to the following remark.  

\begin{rem}[Standard quantifiers]\rm
If we wish to exclude objects from being inputs or outputs of the term $t$ in Corollary \ref{consresultcor}, we can just drop the `st' in the relevant quantifiers in the nonstandard version \emph{assuming this omission is at all possible\footnote{For instance, we can replace nonstandard continuity in $\CRI_{\ns}$ by \eqref{first5}, but we cannot drop the `\st' relating to $g$ in \eqref{first5}, as the standardness of $g$ is required to guarantee that $f$ in \eqref{first5} is still \emph{nonstandard} continuous.}}.  
In other words, the standard quantifiers quantify over the `computationally relevant' objects.       
A similar setup is defined by Berger in \cite{uhberger}, namely an extension of Heyting arithmetic which sports besides the usual quantifiers, new quantifiers for which the quantified object is not computationally used in the proof at hand.  
Apparently, this can improve the efficiency of the associated algorithms.  
A similar interpretation of the standard quantifiers is discussed in \cites{brie,benno2, SB}.  
\end{rem}
As expected, the proof of both nonstandard versions of $\PICA$ can be carried out in $\H$ and we could obtain a version of Corollaries \ref{cordejedi} and \ref{somaar}.  
As it turns out, Picard's theorem has a constructive proof, to be found in \cite{diedif}*{\S4}.  

\smallskip

Finally, we have not used the (even constructively true) fact that the solution to Picard's theorem is \emph{unique}.  The following addition could be made to either nonstandard version of $\PICA$.
\begin{align*}\textstyle
(\forall \psi)\big[(\forall N\in \Omega) (\forall x\in [-1, 1])[\Delta_{\frac{1}{N}}&\psi(x)\approx f(x, \psi(x))] \\
&\textstyle \di (\forall x\in [-1,1])(\forall M\in \Omega)(\psi(x)\approx \phi_{M}(x))     \big].
\end{align*}
As a result, the effective version of $\PICA$ would be extended with a `modulus of unicity':  
given a functional witnessing $h$ how fast $\Delta_{\frac{1}{n}}\psi(x)\di f(x, \psi(x))$ as $n\di \infty$, there is a term $s( \cdot, h)$ witnessing how fast $\phi_{m}(x)\di \psi(x)$ as $m\di \infty$.  

\subsection{The template $\CI$}\label{detail}
In this section, we formulate the template $\CI$ sketched in Section \ref{kintro} based on the above case studies.  
We emphasize that some aspects of $\CI$ are inherently vague.    
Recall the algorithms $\mathcal{A}$ and $\mathcal{B}$ introduced in Section \ref{P}.  
\begin{tempie}[$\CI$]~\rm
The starting point for $\CI$ is a mathematical theorem $T$ formulated in the language of $\textsf{E-PA}^{\omega*}$.  
\begin{enumerate}[(i)]
\item Replace in $T$ all definitions (convergence, continuity, et cetera) by their well-known counterparts from Nonstandard Analysis.  For the resulting theorem $T^{*}$, look up the proof (e.g.\ in 
\cites{loeb1,stroyan, nsawork2}) and formulate it inside $\P_{0}$ (or $\P$) and $\H$ if possible.   
Otherwise add to the conditions of $T^{*}$ (external) axioms from $\IST$ to guarantee this provability.        \label{famouslastname}
\item Bring all nonstandard definitions in $T^{*}$ into the \emph{normal form} $(\forall^{\st}x)(\exists^{\st}y)\varphi(x, y)$.  This operation usually requires $\textsf{I}$ for $\P_{0}$, and usually requires the new axioms from Definition \ref{flah} for $\H$.    
If necessary, drop `st' in leading existential quantifiers of positively occurring formulas.  \label{famousfirsdtname}
\item Starting with the most deeply nested implication, bring
\be\label{duggg}
(\forall^{\st}x_{0})(\exists^{\st}y_{0})\varphi_{0}(x_{0}, y_{0})\di (\forall^{\st}x_{1})(\exists^{\st}y_{1})\varphi_{1}(x_{1}, y_{1}),
\ee
into the normal form $(\forall^{\st}x)(\exists^{\st}y)\varphi(x, y)$.  
In particular, apply $\HAC_{\INT}$ to the antecedent of \eqref{duggg} and bring to the front all standard quantifiers.  The last step is trivial in $\P_{0}$ and requires the axioms from Definition \ref{flah} for $\H$.
\item Apply Corollary \ref{consresultcor} (if applicable Theorem \ref{consresult2}) to the proof of (the normal form of) $T^{*}$.  The algorithm $\mathcal{A}$ (if applicable $\mathcal{B}$) provides a term $t$.    
\item Output the term(s) $t$ and the proof(s) of the effective version.  
\end{enumerate}    
\end{tempie}
The theorems in the above case studies all had proofs inside $\H$ or $\P_{0}$, i.e.\ the final sentence in step \eqref{famouslastname} does not apply.  
In Section \ref{RMSTUD}, we shall study theorems for which we \emph{do} have to add external axioms of $\IST$ to the conditions of the theorem.  
Similarly, the final sentence of step \eqref{famousfirsdtname} does not apply to the above first three case studies, but was relevant to Picard's theorem. 

\smallskip

Finally, there is a tradition of Nonstandard Analysis in RM and related topics (see e.g.\ \cites{pimpson,tahaar, tanaka1, tanaka2, horihata1, yo1, yokoyama2, yokoyama3}), which provides a source of proofs in (pure) Nonstandard Analysis for $\CI$.  To automate the process of applying $\CI$, we have implemented the term extraction algorithm from Corollary \ref{consresult2} in Agda (see \cite{EXCESS}).  

\section{Main results II: Reverse Mathematics 241}\label{RMSTUD}
In this section, we prove the second batch of results involving representative theorems from the four strongest Big Five systems of RM. 
Thus, we establish the vast scope of the template $\CI$ from Section \ref{detail}, as discussed in the introduction.  

\smallskip

In particular, we shall prove certain equivalences between nonstandard theorems and fragments of Nelson's axioms \emph{Standard Part} and \emph{Transfer} (see Section~\ref{P}).  
By running the (usually very simple) proofs of these nonstandard equivalences through $\CI$, we obtain explicit\footnote{An implication $(\exists \Phi)A(\Phi)\di (\exists \Psi)B(\Psi)$ is \emph{explicit} if there is a term $t$ in the language such that additionally $(\forall \Phi)[A(\Phi)\di B(t(\Phi))]$, i.e.\ $\Psi$ can be explicitly defined in terms of $\Phi$.\label{kurf}} equivalences for the Big Five beyond the base theory.  Hence, it seems the RM of Nonstandard Analysis is `simpler' than classical RM, \emph{and} provides more (relative) computational information.  Furthermore, the corresponding Herbrandisations provide `pointwise' versions of the aforementioned explicit equivalences, where the use of the input is more carefully tracked.    
Thus, as suggested by the section title, nonstandard equivalences provide \emph{two} kinds of RM-equivalences, a `global' one directly inspired by RM, and a `pointwise' one which contains more computational information, namely the Herbrandisation.  
  
\smallskip

Finally, our study of compactness in Section \ref{compaq} is particularly interesting as nonstandard compactness gives rise to different normal forms which 
yield quite different effective results:  respectively, the (re)discovery of \emph{totally boundedness} (the preferred notion of compactness in constructive and computable analysis), and the (re)discovery of explicit$^{\ref{kurf}}$ RM-equivalences at the level of $\WKL_{0}$.  Similarly, our (quite simple) notion of \emph{nonstandard-separating set} will give rise to the `constructive' notion of separating set, for the Stone-Weierstra\ss~theorem in Section \ref{stoner}.

\subsection{Monotone convergence theorem}\label{X}
In this section, we study the \emph{monotone convergence theorem} $\MCT$, i.e.\ the statement that \emph{every bounded increasing sequence of reals is convergent}, which is equivalent to arithmetical comprehension $\ACA_{0}$ by \cite{simpson2}*{III.2.2}.  In particular, this study provides a first example that the template $\CI$ applies to the third Big Five category, while more general results may be found in Section \ref{algea}.  
With some effort, the reader may verify that our results do not require full primitive recursion, but that the exponential function suffices, as studied in \cite{kohlenbach2}*{\S3}, \cite{kohlenbach1}, and \cite{bennosam}.   
\subsubsection{From nonstandard $\MCT$ to explicit $\MCT$}
In this section, we prove an equivalence between a nonstandard version of $\MCT$ and a fragment of \emph{Transfer}.
From this nonstandard equivalence, we extract an explicit$^{\ref{kurf}}$ RM equivalence involving $\MCT$ and arithmetical comprehension.  

\smallskip

Firstly, the nonstandard version of $\MCT$ (involving nonstandard convergence) is:
\be\label{MCTSTAR}\tag{\MCT$_{\textsf{ns}}$}
(\forall^{\st} c_{(\cdot)}^{0\di 1})\big[(\forall n^{0})(c_{n}\leq c_{n+1}\leq 1)\di (\forall N,M\in \Omega)[c_{M}\approx c_{N}]    \big].
\ee
We could have introduced a `st' in the antecedent of \ref{MCTSTAR}, but Theorem \ref{sef} would not change.  
Note that by applying $\Omega$\textsf{-CA} from Theorem \ref{drifh} to the consequent, we immediately obtain the (standard) limit of $c_{n}$.  
The effective version \MCT$_{\textsf{ef}}(t)$ is:
\be\label{MCTSTAR22}\textstyle
(\forall c_{(\cdot)}^{0\di 1},k^{0})\big[(\forall n^{0})(c_{n}\leq c_{n+1}\leq 1)\di (\forall N,M\geq t(c_{(\cdot)})(k))[|c_{M}- c_{N}|\leq \frac{1}{k} ]   \big].
\ee
We require two equivalent (\cite{kohlenbach2}*{Prop.\ 3.9}) versions of arithmetical comprehension: 
\be\label{mu}\tag{$\mu^{2}$}
(\exists \mu^{2})\big[(\forall f^{1})( (\exists n)f(n)=0 \di f(\mu(f))=0)    \big],
\ee
\be\label{mukio}\tag{$\exists^{2}$}
(\exists \varphi^{2})\big[(\forall f^{1})( (\exists n)f(n)=0 \asa \varphi(f)=0    \big],
\ee
and also the restriction of Nelson's axiom \emph{Transfer} to $\Pi_{1}^{0}$-formulas as follows:
\be\tag{$\paai$}
(\forall^{\st}f^{1})\big[(\forall^{\st}n^{0})f(n)\ne0\di (\forall m)f(m)\ne0\big].
\ee
Denote by $\textsf{MU}(\mu)$ the formula in square brackets in \eqref{mu}; the associated functional $\mu^{2}$ is also called \emph{Feferman's search operator}.  
We have the following theorem which establishes the explicit equivalence between $(\mu^{2})$ and uniform $\MCT$.  
\begin{thm}\label{sef}
From the proof of $\MCT_{\ns}\asa \paai$ in $\P_{0} $, two terms $s, u$ can be extracted such that $\textup{\textsf{E-PRA}}^{\omega*}$ proves:
\be\label{frood}
(\forall \mu^{2})\big[\textsf{\MU}(\mu)\di \MCT_{\ef}(s(\mu)) \big] \wedge (\forall t^{1\di 1})\big[ \MCT_{\ef}(t)\di  \MU(u(t))  \big].
\ee
\end{thm}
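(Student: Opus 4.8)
The plan is to mirror the proof of Theorem~\ref{varou}: first establish the nonstandard equivalence $\MCT_{\ns}\asa\paai$ inside $\P_{0}$, then bring each implication into normal form and apply Corollary~\ref{consresultcor2} to read off the terms $s$ and $u$. The direction $\MCT_{\ns}\di\paai$ is the easy reversal: given a standard $f^{1}$ with $(\forall^{\st}n)f(n)\neq 0$, set $c_{n}:=1$ if $(\exists m\leq n)f(m)=0$ and $c_{n}:=0$ otherwise, which is a standard sequence with $(\forall n)(c_{n}\leq c_{n+1}\leq 1)$; if $f$ had a zero, its least zero $m_{0}$ would have to be nonstandard, so $\lfloor m_{0}/2\rfloor$ is also infinite, and then $c_{\lfloor m_{0}/2\rfloor}=0$ while $c_{m_{0}}=1$, contradicting the instance $c_{\lfloor m_{0}/2\rfloor}\approx c_{m_{0}}$ of $\MCT_{\ns}$. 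Hence $(\forall m)f(m)\neq 0$, which is $\paai$ for $f$.

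For the converse $\paai\di\MCT_{\ns}$, fix a standard $c_{(\cdot)}$ with $(\forall n)(c_{n}\leq c_{n+1}\leq 1)$ and a standard $k$; it suffices to produce a \emph{standard} $j$ with $(\forall n)(c_{n}-c_{j}\leq 2^{-k})$, since then $c_{N}\approx c_{M}$ for all $N,M\in\Omega$ (both exceed the standard $j$ and stay within $2^{-k}$ of $c_{j}$ for every standard $k$). Coding the predicate ``$(\exists n)(c_{n}>c_{j}+2^{-k-1})$'' as $(\exists p)(F(p)=0)$ for a standard $F$ depending on $c,j,k$: if it fails for some standard $j$ we are done, so assume it holds for every standard $j$. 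Then $\paai$, contraposed, lets us pick the witness $n$ standard at each step, so starting from $j_{0}:=0$ we build standard $j_{0},j_{1},\dots$ with $c_{j_{i+1}}>c_{j_{i}}+2^{-k-1}$ (hence $j_{i+1}>j_{i}$, as $c$ is increasing). By the external induction available in $\P_{0}$ (used likewise in the proof of Theorem~\ref{drifh}) this chain exists up to any standard length, in particular up to $r:=\lceil 2^{k+1}(1-c_{0})\rceil+1$, which is standard since $c_{0}$ is; but then $c_{j_{r}}>c_{0}+r\cdot 2^{-k-1}>1$, contradicting $c_{j_{r}}\leq 1$.

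It remains to normalise and extract, exactly as in the proof of Theorem~\ref{varou}. Resolving $\approx$, pulling standard quantifiers to the front, and applying $\textsf{I}$ and $\HAC_{\INT}$ puts $\MCT_{\ns}$ in the normal form $(\forall^{\st}c_{(\cdot)},k)(\exists^{\st}j)\big[(\forall n)(c_{n}\leq c_{n+1}\leq 1)\di(\forall N,M\geq j)(|c_{N}-c_{M}|\leq 2^{-k})\big]$, whose effective reading is a modulus $t$ as in \eqref{MCTSTAR22}; similarly $\paai$ is equivalent in $\P_{0}$ to the normal form $(\exists^{\st}\mu^{2})(\forall^{\st}f)\big[(\forall n\leq\mu(f))f(n)\neq 0\di(\forall m)f(m)\neq 0\big]$, whose effective reading is a functional $\mu$ realising $\MU$ (obtained from \eqref{mu} by a bounded search). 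Since the implication of two normal forms is again a normal form (Section~\ref{kintro}), the proof of $\paai\di\MCT_{\ns}$ — with its antecedent rewritten via $\HAC_{\INT}$, so that the realiser $\mu$ becomes a universal standard variable, and the leftover standard quantifiers pushed out using $\textsf{I}$ — yields a normal form to which Corollary~\ref{consresultcor2} applies; taking maxima one extracts a term $s$ with $\textup{\textsf{E-PRA}}^{\omega*}\vdash(\forall\mu^{2})[\MU(\mu)\di\MCT_{\ef}(s(\mu))]$. Symmetrically, the proof of $\MCT_{\ns}\di\paai$ yields a term $u$ with $\textup{\textsf{E-PRA}}^{\omega*}\vdash(\forall t^{1\di 1})[\MCT_{\ef}(t)\di\MU(u(t))]$. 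The conjunction of these is \eqref{frood}.

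The routine parts are the easy reversal $\MCT_{\ns}\di\paai$ and the normalisation/extraction of the last paragraph, which follow the proof of Theorem~\ref{varou} step for step. I expect the genuine obstacle to be the direction $\paai\di\MCT_{\ns}$: one must see that $\paai$ suffices to \emph{locate}, at each standard precision, a \emph{standard} stabilisation point of a standard bounded increasing sequence — here $\Omega$\textsf{-CA} from Theorem~\ref{drifh} is of no use, since $\MCT$ genuinely lives at the level of arithmetical comprehension, so Transfer is unavoidable. The delicate point of that argument is ensuring the jump-counting iteration has a \emph{standard} length, which it does by the elementary estimate that $2^{k+1}(1-c_{0})$ jumps of size $2^{-k-1}$ already overshoot the upper bound $1$, so that the external induction of $\P_{0}$ applies and every witness produced along the chain remains standard.
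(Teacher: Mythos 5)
Your proof is correct, and its overall architecture --- prove $\MCT_{\ns}\asa\paai$ in $\P_{0}$, normalise both implications with $\textsf{I}$ and $\HAC_{\INT}$, and extract the terms via Corollary \ref{consresultcor2} exactly as in Theorem \ref{varou} --- is the same as the paper's; the easy reversal $\MCT_{\ns}\di\paai$ (a standard increasing sequence jumping at the least, necessarily infinite, zero of $f$) and your final normalisation/extraction paragraph match the paper step for step, and the paper likewise leaves the second conjunct of \eqref{frood} essentially to the reader. Where you genuinely depart from the paper is the implication $\paai\di\MCT_{\ns}$: the paper applies $\Omega$\textsf{-CA} (Theorem \ref{drifh}) to the $\Omega$-invariant functional $\varphi(f,M)$ deciding $(\exists n\leq M)(f(n)=0)$ to obtain $(\exists^{2})^{\st}$, runs the usual $\ACA_{0}$-proof of $\MCT$ relative to `st' to get $\MCT^{\st}$, and then applies $\paai$ a second time to upgrade standard-index convergence to Cauchyness at all infinite indices; you instead give a direct jump-counting argument, using the contraposition of $\paai$ (applied to a standard $F$ coding the $\Sigma^{0}_{1}$ statement that some $c_{n}$ exceeds $c_{j}+2^{-k-1}$) to produce \emph{standard} witnesses of each jump, and external induction $\textsf{IA}^{\st}$ to iterate this a standard number of times until the upper bound $1$ is overshot. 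Both are legitimate $\P_{0}$-proofs, so the term extraction applies to either; the paper's route exhibits the reusable strategy ``relativise the known RM proof and then transfer'' (which it recycles, e.g., for Ascoli--Arzel\`a), while yours is more self-contained, dispenses with $\Omega$\textsf{-CA} and $(\exists^{2})^{\st}$ altogether, and makes transparent that what Transfer buys is a standard stabilisation point at each standard precision. Two small points: $\lceil 2^{k+1}(1-c_{0})\rceil$ is not literally available as a term, but any standard integer bound on $2^{k+1}(1-c_{0})$ (say, read off from the rational approximation $[1-c_{0}](0)$) does the job, so this is cosmetic; and your aside that $\Omega$\textsf{-CA} is ``of no use'' here is a bit strong, since the paper's own proof does use it --- in tandem with $\paai$ --- although your argument shows it can indeed be avoided.
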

\begin{proof}
To establish $\MCT_{\ns}\di \paai$, fix standard $f^{1}$ such that $(\forall^{\st}n)f(n)=0$ and define the \emph{standard} sequence  $c_{(\cdot)}$ of reals by $c_{k}=0$ if $(\forall i\leq k)f(i)=0$ and $c_{k}=\sum_{i=1}^{k}\frac{1}{2^{i}}$ otherwise.  
Clearly, $c_{(\cdot)}$ is increasing and hence nonstandard convergent as in $\MCT_{\ns}$.  However, if $(\exists m_{0})f(m_{0}+1)=0$ and $m_{0}$ is the least such number, we have $0=c_{m_{0}}\not\approx c_{m_{0}+1}\approx 1$.  
Thus, $\paai$ follows and we now prove the other direction.  Assume $\paai$ and define $\varphi(f, M)$ as $0$ if $(\exists  n\leq M)f(n)=0$ and $1$ otherwise.  By assumption, we have 
\[
(\forall^{\st}f^{1})(\forall N, M\in \Omega)\big[ \varphi(f, M)=\varphi(f, N)=0 \asa (\exists^{\st}n)f(n)=0  ].
\]
Applying $\Omega\textsf{-CA}$ to $\varphi(f, N)$ yields $(\exists^{2})^{\st}$.  The latter is the functional version of $\ACA_{0}$ (relative to \st) and the usual proof of $\MCT$ goes through in $\ACA_{0}$ (see \cite{simpson2}*{I.9.1}).
Hence, we obtain $\MCT^{\st}$ and $\MCT_{\ns}$ follows by applying $\paai$ to the innermost universal formula in the consequent of $\MCT^{\st}$ expressing that $c_{n}$ converges;  indeed, the following formula follows from $\MCT^{\st}$:  
\[\textstyle
(\forall^{\st}k )(\exists^{\st}m)(\forall^{\st} N,M\geq m)[|c_{M}- c_{N}|\leq \frac{1}{k}\big],
\]
and apply $\paai$ to $(\forall^{\st}N, M\geq m)(\cdots)$.  We now prove the theorem for the implication $\paai \di \MCT_{\ns}$ and leave the other one to the reader.    
The former implication is readily converted to:
\begin{align}
(\forall^{\st}f^{1})&(\exists^{\st}n)\big[(\exists m)f(m)=0\di (\exists i\leq n)f(i)=0]\label{noniesimpel}\\
&\di (\forall^{\st} c_{(\cdot)}^{0\di 1}, k)(\exists^{\st}m)\big[(\forall n^{0})(c_{n}\leq c_{n+1}\leq 1)\di (\forall N,M\geq m)[|c_{M}- c_{N}|\textstyle\leq \frac1k]    \big].\notag
\end{align}
Let $A$ (resp.\ $B$) be the first (resp.\ second) formula in square brackets in \eqref{noniesimpel}.  Applying $\HAC_{\INT}$ to the antecedent of the latter (and performing the usual step involving the maximum), we obtain $(\exists^{\st} \mu^{2})(\forall^{\st}f^{1})A(f, \mu(f))$.  
Hence, \eqref{noniesimpel} becomes
\[
(\forall^{\st} c_{(\cdot)}^{0\di 1}, k^{0}, \mu^{2})(\exists^{\st}m, f)[A(f, \mu(f))\di B(c_{(\cdot)}, k, m)], 
\]
and Corollary \ref{consresultcor} yields a term $t$ such that $\textsf{E-PRA}^{\omega*}$ proves
\be\label{hermct}
(\forall  c_{(\cdot)}^{0\di 1}, k^{0}, \mu^{2})(\exists m, f\in t(\mu, c_{(\cdot)}, k))[A(f, \mu(f))\di B(c_{(\cdot)}, k, m)], 
\ee
and define $s(\mu, c_{(\cdot)}, k)$ to be the maximum of all entries for $m$ in $t(\mu, c_{(\cdot)}, k)$.
We immediately obtain, using classical logic, that
\[
(\forall   \mu^{2})[(\forall f^{1})A(f, \mu(f))\di (\forall c_{(\cdot)}^{0\di 1}, k^{0})B(c_{(\cdot)}, k, s(\mu, c_{(\cdot)}, k))], 
\]
which is exactly as required by the theorem.  
\end{proof}
With slight effort, the proof of $\MCT_{\ns}\asa\paai$ goes through in $\H$, and hence the theorem is constructive.  
We again stress that the results in \eqref{frood} are not (necessarily) surprising in and of themselves.  
\emph{What is surprising} is that we can `algorithmically' derive these effective results from the \emph{quite simple} proof of $\paai\asa \MCT_{\ns}$, in which \emph{no efforts towards effective results are made}.  

\subsubsection{Herbrandisation}\label{dafkens}
By the above results, $(\mu^{2})$ (resp.\ $(\exists \Phi^{1\di 1})\MCT_{\ef}(\Phi)$) clearly is the Herbrandisation of $\paai$ (resp.\ $\MCT_{\ns}$).  
In this section, we study the Herbrandisation of $\paai\asa \MCT_{\ns}$, which is different from the Herbrandisations of the components.  
The Herbrandisation $\MTE_{\her}(s,t)$ of the aforementioned equivalence is the conjunction of \eqref{hermct} and the following:
\be\label{zion}
(\forall  f^{1})(\exists m^{0},c_{(\cdot)}^{0\di 1}, k^{0}, g^{1}\in  s(f)))[B(c_{(\cdot)}, k, g(k))\di A(f, m)) ], 
\ee
where $A, B$ are as in \eqref{hermct}.  Intuitively speaking, \eqref{frood} effectively converts Feferman's search operator into a uniform version of $\MCT$ (and vice versa), while the Herbrandisation $\MCT_{\her}(s,t)$ effectively converts \emph{a finite sequence of instances} of Feferman's search operator into \emph{one instance} of $\MCT$ (and vice versa).   
Thus, the Herbrandisation of $\MCT_{\ns}$ is the \emph{pointwise} version of the effective result \eqref{frood}.  
\begin{cor}\label{herken}
Let $s,t$ be terms in the internal language. 
A proof inside $\textup{\textsf{E-PRA}}^{\omega*}$ of $\MTE_{\her}(s,t)$, can be converted into a proof inside $\P_{0}$ of $\paai\asa \MCT_{\ns}$. 
\end{cor}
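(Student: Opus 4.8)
The plan is to mimic the proof of Corollary~\ref{somaar}, exploiting the fact that every term $t$ of the internal language is standard in $\P_{0}$ by Definition~\ref{debs}. Thus, working in $\P_{0}$, from the hypothesis that $\textup{\textsf{E-PRA}}^{\omega*}$ proves $\MTE_{\her}(s,t)$ we obtain $\MTE_{\her}(s,t)\wedge \st(s)\wedge \st(t)$, i.e.\ both halves \eqref{hermct} and \eqref{zion} hold with the additional knowledge that $s,t$ map standard inputs to standard outputs. We then recover the two implications of $\paai\asa \MCT_{\ns}$ separately.

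First, for the direction $\paai\di \MCT_{\ns}$: assume $\paai$, which (as in the proof of Theorem~\ref{sef}) yields a standard $\mu^{2}$ with $(\forall^{\st}f)A(f,\mu(f))$, and fix a standard increasing bounded sequence $c_{(\cdot)}$ and standard $k$. Instantiating \eqref{hermct} at $\mu, c_{(\cdot)}, k$ gives a \emph{standard} finite sequence $t(\mu,c_{(\cdot)},k)$ containing witnesses $m,f$ with $A(f,\mu(f))\di B(c_{(\cdot)},k,m)$; since the relevant entry $m$ is standard (as $t$ maps standard inputs to standard outputs) and $A(f,\mu(f))$ holds, we get $B(c_{(\cdot)},k,m)$ for a standard $m$. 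As $c_{(\cdot)}$ and $k$ were arbitrary standard objects, this is precisely the normal form of $\MCT_{\ns}$ (after the usual step of taking the maximum of the sequence), from which $\MCT_{\ns}$ follows by bringing quantifiers back inside.

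Second, for the direction $\MCT_{\ns}\di \paai$: fix a standard $f^{1}$ with $(\forall^{\st}n)f(n)\ne 0$ and, as in the proof of Theorem~\ref{sef}, build the associated standard increasing bounded sequence $c_{(\cdot)}$ together with a standard witnessing function $g$ expressing how fast $c_{(\cdot)}$ converges (this $g$ comes from $\MCT_{\ns}$ via $\HAC_{\INT}$). Instantiating \eqref{zion} at $f$ produces a standard finite sequence $s(f)$ containing $m, c_{(\cdot)}', k, g'$ with $B(c_{(\cdot)}',k,g'(k))\di A(f,m)$; feeding in the $c_{(\cdot)}$ and $g$ just constructed, $B$ is satisfied, so $A(f,m)$ holds for a standard $m$, i.e.\ $(\exists^{\st}n)(\exists i\le n)f(i)=0$ whenever $(\exists m)f(m)=0$ — but the hypothesis $(\forall^{\st}n)f(n)\ne 0$ then forces $(\forall m)f(m)\ne 0$, which is $\paai$. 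The main obstacle is purely bookkeeping: one must check that the shapes of $A$ and $B$ in \eqref{hermct} and \eqref{zion} match exactly what the two constructions of Theorem~\ref{sef} produce, and that in each application it is genuinely the standardness of the term's output (not of its input) that is being used — exactly the subtlety flagged at the end of the proof of Corollary~\ref{somaar}. No new ideas beyond that proof are needed.
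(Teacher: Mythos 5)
Your first direction is correct and is essentially the paper's own argument: since $t$ is an internal term it is standard in $\P_{0}$, so for standard $\mu, c_{(\cdot)}, k$ the Herbrand list $t(\mu,c_{(\cdot)},k)$ and its entries are standard; because the hypothesis $(\forall^{\st}f)A(f,\mu(f))$ obtained from $\paai$ via $\HAC_{\INT}$ is \emph{universal} in the very variable $f$ that the term supplies, it applies to whichever standard $f$ appears in the list, and you obtain $(\forall^{\st}c_{(\cdot)},k)(\exists^{\st}m)B(c_{(\cdot)},k,m)$, hence $\MCT_{\ns}$ — this is the content of \eqref{tokkieninja}.

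The second direction has a genuine gap. In \eqref{zion} the entire tuple $(m,c_{(\cdot)},k,g)$ is existentially bounded by $s(f)$: the implication $B(c_{(\cdot)},k,g(k))\di A(f,m)$ is asserted only for the particular $c_{(\cdot)},k,g$ that the list provides. Your step ``feeding in the $c_{(\cdot)}$ and $g$ just constructed'' is therefore not licensed by the quantifier structure: you cannot substitute your own sequence and modulus for the Herbrand witnesses. Nor can you verify the antecedent for the witnesses you are actually given: $\MCT_{\ns}$ only guarantees that the standard sequence $c'$ from the list has \emph{some} standard modulus at precision $k$, not that the specific value $g'(k)$ is one, so $B(c',k,g'(k))$ may simply fail and you learn nothing about $A(f,m)$. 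The instantiation pattern you are importing is the one from Corollary \ref{somaar}, where the hypothesis-side data ($g$ and $k'$ in \eqref{FEST}) are genuine \emph{universal inputs} of the Herbrandisation; in \eqref{zion} the $B$-side data are \emph{outputs} of $s$, so that pattern is unavailable unless you first reformulate \eqref{zion} with $c_{(\cdot)},k,g$ (or a modulus functional) as arguments of the term.

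The paper's (admittedly terse) treatment of this half is not your instantiation argument but the same move as for \eqref{hermct}: use the standardness of $s$ to conclude that the Herbrand tuple is standard, i.e.\ $(\forall^{\st}f)(\exists^{\st}m,c_{(\cdot)},k,g)[B(c_{(\cdot)},k,g(k))\di A(f,m)]$, and then bring the quantifiers back inside so that the $B$-data sit under a standard universal quantifier in the antecedent of the resulting implication, from which the converse direction is read off as in \eqref{tokkieninja}; no sequence is built from $f$ and no modulus is extracted from $\MCT_{\ns}$ at this stage. So your direction $\paai\di\MCT_{\ns}$ can stand as is, but the direction $\MCT_{\ns}\di\paai$ needs to be redone along these lines (or the statement \eqref{zion} renegotiated) rather than by plugging hand-built data into the Herbrand list.
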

\begin{proof}
For the first conjunct of $\MTE_{\her}(s,t)$, i.e.\ \eqref{hermct}, the term $t$ is standard in $\P_{0}$, yielding that the latter proves
\[
(\forall^{\st}  c_{(\cdot)}^{0\di 1}, k^{0}, \mu^{2})(\exists^{\st} m, f)[A(f, \mu(f))\di B(c_{(\cdot)}, k, m)].  
\]
Bringing all quantifiers inside again, we obtain
\be\label{tokkieninja}
(\exists^{\st}\mu^{2})(\forall^{\st}f)A(f, \mu(f))\di (\forall^{\st}  c_{(\cdot)}^{0\di 1}, k^{0})(\exists^{\st}m^{0})B(c_{(\cdot)}, k, m).  
\ee
The consequent of \eqref{tokkieninja} clearly implies $\MCT_{\ns}$, while the antecedent is \emph{equivalent} to $(\forall^{\st}f)(\exists^{\st}n)A(f, n)$ thanks to $\HAC_{\INT}$.  
Hence, the antecedent of \eqref{tokkieninja} is clearly $\paai$.  One treats \eqref{zion} in exactly the same way.    
\end{proof}
By the previous corollary, the equivalence $\paai\asa \MCT_{\ns}$ contains the same computational information (up to algorithmic extraction) as its Herbrandisation $\MCT_{\her}(s,t)$.  
The latter tells us which instances of the search operator $(\mu^{2})$ are needed to witness the convergence of a given sequence $c_{(\cdot)}$ up to a given precision $1/k$, and vice versa.    
Hence, we can establish a bridge between soft and hard analysis \emph{even if the theorem at hand is non-constructive} thanks to the nonstandard version of the RM-equivalence.  

\smallskip

In conclusion, it turns out we may extract \emph{two} kind of RM-equivalences from $\MCT_{\ns}$: the `global' equivalence \eqref{frood} and the `pointwise' Herbrandisation, and the latter contains much more computational information.  In other words, the RM of Nonstandard Analysis gives rise to at least two kinds of higher-order RM, as suggested by the title of Section \ref{RMSTUD}.    
\subsubsection{Some corollaries}
In this section, we discuss some corollaries to Theorem~\ref{frood}.  

\smallskip
Firstly, since $\MCT$ states the \emph{existence} of the limit $c$ of $c_{(\cdot)}$, we shall also obtain such a version from $\MCT_{\ns}$ too.  
Let $\MCT'_{\ef}(t)$ be the following variation of \eqref{MCTSTAR22}:
\[\textstyle
(\forall c_{(\cdot)}^{0\di 1},k^{0})\big[(\forall n^{0})(c_{n}\leq c_{n+1}\leq 1)\di (\forall N\geq t(c_{(\cdot)})(1)(k))(|c_{N}- t(c_{(\cdot)})(2)|\leq \frac{1}{k} )   \big].
\]
\begin{cor}\label{loppp}
From the proof of $\MCT_{\ns}\asa \paai$ in $\P_{0} $, two terms $s, u$ can be extracted such that $\textup{\textsf{E-PRA}}^{\omega*}$ proves:
\be\label{frood2}
(\forall \mu^{2})\big[\textsf{\MU}(\mu)\di \MCT_{\ef}'(s(\mu)) \big] \wedge (\forall t^{1\di 1})\big[ \MCT_{\ef}'(t)\di  \MU(u(t))  \big].
\ee
\end{cor}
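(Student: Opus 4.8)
The plan is to reduce Corollary~\ref{loppp} to Theorem~\ref{sef} by a short post-processing, using the fact that a fast-converging limit is \emph{primitive recursively definable} from a sequence together with any convergence modulus. Concretely, inside $\textup{\textsf{E-PRA}}^{\omega*}$, given $c_{(\cdot)}^{0\di 1}$ and $h^{1}$ with $(\forall k)(\forall N,M\geq h(k))(|c_{N}-c_{M}|\leq\frac1k)$, one defines a real $c$ by setting $[c](k):=[c_{h(2^{k+2})}](2^{k+2})$; a one-line estimate (as in the footnote to Theorem~\ref{varou}) shows that $c$ is a fast-converging Cauchy sequence and that $(\forall N\geq h(2^{k+2}))(|c_{N}-c|\leq\frac1k)$. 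Write $\textup{Lim}(c_{(\cdot)},h)$ for this term.

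For the first conjunct of \eqref{frood2}: Theorem~\ref{sef} supplies a term $s$ with $\textup{\textsf{E-PRA}}^{\omega*}\vdash(\forall\mu^{2})[\MU(\mu)\di\MCT_{\ef}(s(\mu))]$, and by \eqref{MCTSTAR22} the value $s(\mu)(c_{(\cdot)})(k)$ is exactly a convergence modulus for $c_{(\cdot)}$ whenever $\MU(\mu)$. Hence the term $s'$ with $s'(\mu)(c_{(\cdot)})(1):=k\mapsto s(\mu)(c_{(\cdot)})(2^{k+2})$ and $s'(\mu)(c_{(\cdot)})(2):=\textup{Lim}(c_{(\cdot)},s(\mu)(c_{(\cdot)}))$ witnesses $\MU(\mu)\di\MCT_{\ef}'(s'(\mu))$. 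For the second conjunct, $\MCT_{\ef}'(t)$ at once yields $\MCT_{\ef}(t^{*})$ with $t^{*}(c_{(\cdot)})(k):=t(c_{(\cdot)})(1)(k)$ (the limit component is simply discarded), so the reverse implication of Theorem~\ref{sef} applies verbatim to $t^{*}$ and, after composing the two term operations, produces a term $u$ with $\MCT_{\ef}'(t)\di\MU(u(t))$, as required.

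An alternative, more in the spirit of the template $\CI$, is to rerun the extraction of Theorem~\ref{sef} after first strengthening the conclusion of $\MCT_{\ns}$: since $\P_{0}$ proves $\Omega\textsf{-CA}$ (Theorem~\ref{drifh}), applying it to the consequent $(\forall N,M\in\Omega)(c_{M}\approx c_{N})$ produces, as already observed after \eqref{MCTSTAR}, a \emph{standard} real $c$ with $(\forall^{\st}k)(\forall N\in\Omega)(|c_{N}-c|\leq\frac1k)$. One then carries the extra leading quantifier $(\exists^{\st}c)$ through the normal-form manipulation of the proof of Theorem~\ref{sef} and applies Corollary~\ref{consresultcor}, which extracts witnessing terms for tuples $\underline{y}$ of arbitrary type, in particular for the type~$1$ object $c$. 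I do not expect a serious obstacle on either route: on the first, the only point to check is that $\textup{Lim}$ really produces a fast-converging real; on the second, that prefixing $(\exists^{\st}c)$ keeps the formula in the normal form $(\forall^{\st}\underline{x})(\exists^{\st}\underline{y})\psi$, which is immediate since $c$ occurs only positively and existentially. As with Theorem~\ref{sef}, the proof of the underlying nonstandard equivalence $\MCT_{\ns}\asa\paai$ also goes through in $\H$, so Corollary~\ref{loppp} is likewise constructive.
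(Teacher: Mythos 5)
Your first route is correct in substance but genuinely different from the paper's argument. The paper does not post-process Theorem \ref{sef}: it re-runs the extraction on a \emph{strengthened} nonstandard statement, obtained by applying $\Omega$\textsf{-CA} and $\paai$ to the consequent of $\MCT_{\ns}$ so that a standard limit $c^{1}$ appears explicitly, then applies $\HAC_{\INT}$ and Corollary \ref{consresultcor}. Your second route is essentially this, but you gloss over the one point the paper is careful about: Corollary \ref{consresultcor} only returns a \emph{finite sequence} of candidate witnesses for the type-$1$ object $c$, and unlike the type-$0$ modulus (where one takes the maximum, by monotonicity) there is no such trick for the limit itself; the paper resolves this by using $(\mu^{2})$, available from $\MU(\mu)$, to select the correct candidate. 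So "I do not expect a serious obstacle" is slightly optimistic there -- the obstacle exists and has a specific fix. Your first route sidesteps this entirely, and that is its main appeal: defining $\textup{Lim}(c_{(\cdot)},h)$ primitive recursively from a Cauchy modulus is elementary, stays inside $\textup{\textsf{E-PRA}}^{\omega*}$, and needs no second pass through the nonstandard proof; what it does not do is illustrate the template $\CI$, which is the paper's point in re-extracting. Two small repairs are needed in your version: (i) before defining $\textup{Lim}$ you should replace $h$ by $k\mapsto\max_{j\leq k}h(j)$ (the extracted modulus need not be monotone, and your estimate uses $h(2^{n+i+2})\geq h(2^{n+2})$); (ii) in the reverse direction, $|c_{N}-c|\leq\frac1k$ and $|c_{M}-c|\leq\frac1k$ only give $|c_{N}-c_{M}|\leq\frac2k$, so set $t^{*}(c_{(\cdot)})(k):=t(c_{(\cdot)})(1)(2k)$ rather than using precision $k$. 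With these adjustments both conjuncts of \eqref{frood2} follow as you describe, and the constructive remark at the end is consistent with the paper's comment after Theorem \ref{sef}.
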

\begin{proof}
Using $\paai$ and $\Omega\textsf{-CA}$, the following variant of \eqref{MCTSTAR22} can be obtained:
\[\textstyle
(\forall^{\st} c_{(\cdot)}^{0\di 1})(\exists^{\st}c^{1})(\forall^{\st} k)(\exists^{\st}m)\big[(\forall n^{0})(c_{n}\leq c_{n+1}\leq 1)\di (\forall N\geq m)(|c_{N}- c|\textstyle\leq \frac1k)\big] .
\]
Now apply $\HAC_{\INT}$ to remove the $(\exists^{\st}m)$ quantifier, and obtain via $\paai$ that:
\[\textstyle
(\forall^{\st} c_{(\cdot)}^{0\di 1})(\exists^{\st}c^{1}, h^{1})(\forall k)\big[(\forall n^{0})(c_{n}\leq c_{n+1}\leq 1)\di (\forall N\geq h(c_{(\cdot)}, k))(|c_{N}- c|\textstyle\leq \frac1k)\big].
\]
The rest of the proof is now similar to that of the theorem.  Note that Corollary~\ref{consresultcor} only provides a finite list of witnesses to $(\exists c)$ and $(\mu^{2})$ has to be used to select the right one.  
On the other hand, out of the finite list of witnesses to $(\exists h)$, we just take the (pointwise) maximum of all entries, which suffices in light of the `monotone' behaviour of the associated variable.    
\end{proof}
Secondly, we now discuss an alternative proof using \emph{standard extensionality} of the previous corollary.  Recall from Remark \ref{equ} that  the axiom of standard extensionality $\eqref{EXT}^{\st}$ cannot be conservatively added to the systems $\P$ and $\H$.   

\smallskip

The following corollary is important, as it shows that we \emph{can} use the axiom standard extensionality for proving normal forms, as this axiom is translated by $\CI$ to (almost) a triviality.
Thus, let $\MCT_{\ef}''(t)$ be $\MCT_{\ef}'(t)$ with the extra assumption that $t$ is \emph{standard extensional}, i.e.\ we additionally have:
\be\label{dergggg}
(\forall^{\st}c_{(\cdot)}^{0\di 1}, s_{(\cdot)}^{0\di 1})\big[ c_{(\cdot)}\approx_{0\di 1}s_{(\cdot)}\di t(c_{(\cdot)})\approx_{0\times 1} t(s_{(\cdot)}) \big].
\ee
Note that $\paai$ implies standard extensionality \eqref{dergggg} for standard $t$ from the axiom of extensionality \eqref{EXT}.  
\begin{cor}\label{dergggg2}
From the proof of $(\exists^{\st}\varphi)\MCT_{\ef}''(\varphi)\di \paai$ in $\P_{0} $, a term $ u$ can be extracted such that $\textup{\textsf{E-PRA}}^{\omega*}$ proves:
\be\label{frood3333}
 (\forall \varphi )\big[ \MCT_{\ef}'(\varphi)\di  \MU(u(\varphi,\Xi))  \big],
\ee
where $\Xi$ is an extensionality functional for $\varphi$.  
\end{cor}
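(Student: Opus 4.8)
The plan is to feed the hypothesised $\P_{0}$-proof of $(\exists^{\st}\varphi)\MCT_{\ef}''(\varphi)\di\paai$ into Corollary~\ref{consresultcor}, following the pattern of Theorem~\ref{sef} and Corollary~\ref{loppp}; the one genuinely new ingredient is the treatment of the standard-extensionality clause \eqref{dergggg}. As explained in Remark~\ref{equ}, running \eqref{dergggg} through $\CI$ does not reintroduce the non-conservative axiom \eqref{EXT}$^{\st}$: it reduces to the \emph{internal} assertion that $\varphi$ possesses an extensionality functional, which amounts to no more than an unbounded search. Since \eqref{dergggg} occurs in the \emph{antecedent} of our implication, this extensionality functional will surface as an extra \emph{input} to the extracted term, which is exactly the role played by $\Xi$ in \eqref{frood3333}.

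First I would normalise the antecedent, namely $(\exists^{\st}\varphi)$ applied to the conjunction of $\MCT_{\ef}'(\varphi)$ with the standard-extensionality clause \eqref{dergggg}. Since $\MCT_{\ef}'(\varphi)$ is internal, only \eqref{dergggg} needs work. Unfolding its two occurrences of $\approx$ (each hiding a single $(\forall^{\st}\cdot)$) and pulling the standard quantifiers to the front --- the passage $[(\forall^{\st}z)E_{1}\di E_{2}]\asa(\exists^{\st}z)[E_{1}\di E_{2}]$ being harmless in classical logic since standard objects exist and $E_{2}$ is independent of $z$ --- rewrites \eqref{dergggg} as $(\forall^{\st}c_{(\cdot)},s_{(\cdot)},w)(\exists^{\st}z)\theta(\varphi,c_{(\cdot)},s_{(\cdot)},w,z)$ with $\theta$ internal. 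Applying $\HAC_{\INT}$ and then taking maxima of the resulting finite sequences as in Remark~\ref{simply} (legitimate because, once phrased with a single bound, the extensionality condition \eqref{turki} is monotone: enlarging the bound only strengthens its hypothesis) yields $(\exists^{\st}\Xi)(\forall^{\st}c_{(\cdot)},s_{(\cdot)},w)\,C_{0}(\varphi,\Xi,c_{(\cdot)},s_{(\cdot)},w)$, where $C_{0}$ is the internal statement that $\Xi$ witnesses the extensionality of $\varphi$ at $c_{(\cdot)},s_{(\cdot)}$ up to precision $w$ in the sense of \eqref{turki}. As $\MCT_{\ef}'(\varphi)$ carries no standard quantifiers, the antecedent becomes $(\exists^{\st}\varphi,\Xi)(\forall^{\st}c_{(\cdot)},s_{(\cdot)},w)\,G_{0}$ with $G_{0}:=\MCT_{\ef}'(\varphi)\wedge C_{0}$ internal.

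Next, as in the proof of Theorem~\ref{sef}, $\paai$ is equivalent to its normal form $(\forall^{\st}f^{1})(\exists^{\st}n^{0})A(f,n)$ with $A(f,n)\equiv[(\exists m)f(m)=0\di(\exists i\leq n)f(i)=0]$. Pushing $(\exists^{\st}\varphi,\Xi)$ and $(\forall^{\st}f)$ outside and, using classical logic, replacing the inner implication $[(\forall^{\st}c_{(\cdot)},s_{(\cdot)},w)G_{0}\di(\exists^{\st}n)A(f,n)]$ by $(\exists^{\st}c_{(\cdot)},s_{(\cdot)},w,n)[\neg G_{0}\vee A(f,n)]$, the whole implication assumes the normal form
\[
(\forall^{\st}\varphi,\Xi,f)(\exists^{\st}c_{(\cdot)},s_{(\cdot)},w,n)\big[\neg G_{0}(\varphi,\Xi,c_{(\cdot)},s_{(\cdot)},w)\vee A(f,n)\big].
\]
Corollary~\ref{consresultcor} then provides a closed term $t$ such that
\[
\textup{\textsf{E-PRA}}^{\omega*}\vdash(\forall\varphi,\Xi,f)(\exists (c_{(\cdot)},s_{(\cdot)},w,n)\in t(\varphi,\Xi,f))\big[\neg G_{0}(\varphi,\Xi,c_{(\cdot)},s_{(\cdot)},w)\vee A(f,n)\big].
\]
Now fix $\varphi$ satisfying $\MCT_{\ef}'(\varphi)$ together with an extensionality functional $\Xi$ for $\varphi$. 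Then $\neg\MCT_{\ef}'(\varphi)$ is false and $\neg C_{0}(\varphi,\Xi,c_{(\cdot)},s_{(\cdot)},w)$ is false for every $c_{(\cdot)},s_{(\cdot)},w$, hence $\neg G_{0}$ is false throughout the finite sequence $t(\varphi,\Xi,f)$, forcing $A(f,n)$ for some $n$ among its components. Since $A(f,\cdot)$ is monotone, $A(f,n_{f})$ holds with $n_{f}$ the maximum of the $n$-components of $t(\varphi,\Xi,f)$; setting $u(\varphi,\Xi)(f)$ to be the least $i\leq n_{f}$ with $f(i)=0$ (and $0$ if there is none) then gives $\MU(u(\varphi,\Xi))$, which is \eqref{frood3333}. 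As in the constructive versions of the preceding results, this argument also goes through over $\H$ and $\textup{\textsf{E-HA}}^{\omega*}$, the classical manipulations above being replaced by appeals to the three non-classical axioms of $\H$ from Definition~\ref{flah}.

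The delicate point --- and the place I would be most careful --- is precisely the handling of \eqref{dergggg}: one must verify that under $\CI$ it really does collapse to the \emph{internal} statement that $\Xi$ is an extensionality functional for $\varphi$, so that no instance of \eqref{EXT}$^{\st}$ is smuggled in, and that, sitting in the antecedent, it yields a \emph{dependence of $u$ on the input $\Xi$} rather than an additional existential obligation on $u$. The monotonicity remarks needed to pass from finite sequences of witnesses to single witnesses --- for the extensionality functional and for the closing bounded search for a zero of $f$ --- are routine (cf.\ Remark~\ref{simply}) but should be stated explicitly.
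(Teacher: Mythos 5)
Your handling of the extraction is correct and is essentially the route the paper takes: the paper likewise resolves the occurrences of `$\approx$' in \eqref{dergggg}, observes that standard extensionality thereby collapses to the internal statement that a (standard) $\Xi$ witnesses extensionality of $\varphi$ in the sense of \eqref{turki} --- so that $\Xi$ enters as an extra \emph{input} of the extracted term rather than as an additional obligation, exactly as you anticipate --- and then applies $\CI$ via Corollary \ref{consresultcor}. Your write-up simply carries out in detail what the paper compresses into ``this formula now easily yields a normal form \dots\ applying $\CI$ now finishes the proof''; the monotonicity remarks and the closing bounded search defining $u$ are fine.

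What you omit, however, is the first half of the paper's proof: the paper does not treat the $\P_{0}$-proof of $(\exists^{\st}\varphi)\MCT_{\ef}''(\varphi)\di\paai$ as a mere hypothesis but actually supplies it, and that short argument is the conceptual point of the corollary (the `standard extensionality trick' discussed immediately afterwards). Concretely: assume $(\exists^{\st}\varphi)\MCT_{\ef}''(\varphi)$ and suppose $\paai$ fails; taking $f$ and the standard increasing sequence $c_{(\cdot)}$ from the proof of Theorem \ref{sef}, one has $c_{(\cdot)}\approx_{0\di 1}0^{0\di 1}$ while $0=\varphi(0^{0\di 1})(2)\not\approx \varphi(c_{(\cdot)})(2)=1$, contradicting \eqref{dergggg}; hence $\paai$. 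On your purely conditional reading the corollary would be contentless if no such proof existed, so this two-line derivation belongs in the proof; once it is added, your argument coincides with the paper's. (Also note the paper makes no constructive claim for this corollary; your closing remark about $\H$ would at least require reworking the above proof by contradiction.)
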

\begin{proof}
First of all, assume $(\exists^{\st}\varphi)\MCT_{\ef}''(\varphi)$ and suppose $\paai$ is false;  let $f$ and $c_{(\cdot)}$ be as in the proof of Theorem \ref{sef} and note that $c_{(\cdot)}\approx_{0\di 1}0^{0\di 1}$, 
while $0=\varphi(0^{0\di 1})(2)\not\approx \varphi(c_{(\cdot)})(2)=1$.  This contradicts standard extensionality for $\varphi$ and we obtain $\paai$.    

\smallskip

Secondly, we bring \eqref{frood3333} in normal form.  In particular, resolving `$\approx_{\tau}$' in its various incarnations in \eqref{dergggg} yields for all standard $c_{(\cdot)}^{0\di 1}, s_{(\cdot)}^{0\di 1}, k^{0}$
\[\textstyle
[(\exists^{\st}N^{0},M^{0})\big[ |c_{N}- s_{N}|\leq \frac{1}{M}]\di t(c_{(\cdot)})(1)= t(s_{(\cdot)})(1)\wedge \overline{t(c_{(\cdot)})(2)}k= \overline{t(s_{(\cdot)})(2)}(k) \big].
\]
This formula now easily yields a normal for $(\exists^{\st}\varphi)\MCT_{\ef}''(\varphi)\di \paai$ in light of the proof of the theorem.  Applying $\CI$ now finishes the proof.   
\end{proof}
The previous proof is not much shorter than that of Corollary \ref{loppp}, but it is conceptually interesting that we may use standard extensionality in obtaining normal forms, although we cannot add it as an axiom to our base theory, as discussed in Remark \ref{equ}.
Furthermore, Kohlenbach studies theorems regarding discontinuous functions of type $\R\di \R$ and related domains in \cite{kohlenbach2}*{\S3}.  These theorems naturally involve functionals of type $1\di 1$ and are thus particularly amenable to the above `standard extensionality' trick.  

\smallskip

It should not come as a surprise that the results obtained in this section for $\MCT$ can be generalised to other theorems from RM equivalent to $\ACA_{0}$.   
For instance, in light of the proof of \cite{simpson2}*{III.2.8}, it is straightforward to obtain Theorem \ref{sef} and its corollaries for the \emph{Ascoli-Arzela theorem}.  
For theorems not dealing with analysis, we shall even obtain a template in Section \ref{algea}.  
Moreover, in Sections \ref{compaq} and \ref{dinkitoes} (resp.\ Section \ref{suskeenwiske}), we shall obtain explicit equivalences for the \emph{second} (resp.\ fourth and fifth) Big Five system $\WKL_{0}$ (resp.\ $\ATR_{0}$ and $\FIVE$).  

\subsubsection{Connection to Kohlenbach's proof mining}\label{swisch}  
We discuss the connection between our results and Kohlenbach's \emph{proof mining} program (\cite{kohlenbach3}).  

\smallskip

First of all, we consider the following example.  
\begin{exa}[The scope of the proof mining of classical mathematics]\label{pop}\rm
We consider a common `counterexample' from the proof mining of classical mathematics from \cite{kohlenbach3}*{\S2.2}.  To this end, let $T$ be Kleene's well-known primitive recursive predicate as in e.g.\ \cite{zweer}*{Theorem 3.3}, where $T(x, y, z)$ intuitively expresses that the Turing machine with index $x$ and input $y$ halts with output $z$.  
Then the following formula has a trivial proof using the law of excluded middle:
\be\label{klieken}
(\forall x^{0})(\exists y^{0})(\forall z^{0}) \big( T(x, x, y) \vee \neg T(x, x, z) \big).  
\ee
As discussed in \cite{kohlenbach3}*{\S2.2}, an upper bound for $y$ in \eqref{klieken} gives rise to a solution to the (non-computable) Halting problem.  In other words, \eqref{klieken} is a simple example of a \emph{classically provable} formula for which proof mining \emph{cannot provide computable information} about the existential quantifier.  

\end{exa}
Secondly, we show that \eqref{klieken} can be treated in our nonstandard framework.  We proceed as follows:  observe that neither \eqref{klieken} nor \eqref{klieken}$^{\st}$ is a normal form, i.e.\ we cannot apply term extraction to the fact that $\P$ proves the former two formulas.  
Now, one \emph{can} prove the following formula:
\be\label{klieken2}
(\forall^{\st} x^{0})(\exists^{\st} y^{0})(\forall z^{0}) \big( T(x, x, y) \vee \neg T(x, x, z) \big),   
\ee
inside $\P_{0}+\paai$, but applying term extraction (as in the previous theorems in this section) yields a bound on $y$ \emph{in terms of the Turing jump functional}.  
Thus, no contradiction with Corollary \ref{consresultcor} ensues.  Furthermore, \eqref{klieken2} cannot\footnote{If $\P$ proves \eqref{klieken2}, then apply Corollary \ref{consresultcor} and obtain a term which provides an upper bound to the existential quantifier in \eqref{klieken}.  This term solves the Halting problem, a contradiction.} be proved in $\P$, and allowing (standard) oracles in \eqref{klieken2}, the latter would even imply $\paai$.    

\smallskip

Thirdly, in light of Example \ref{pop}, the usual `$\eps$-$\delta$' definition of convergence (with its two quantifier alternations) cannot be studied directly in the proof mining of classical mathematics.   
To this end, the following (classically equivalent) definition of convergence, called `metastable' by Tao (\cite{taote}*{p.\ 79}), is studied:
\be\textstyle\label{herforem}
(\forall k^{0}, g^{1})(\exists N^{0})\big[(\forall i^{0}, j^{0}\in [N, N+g(N)])(|x_{i}-x_{j}|<\frac1k)\big].
\ee
We now show that a certain nonstandard version of \eqref{herforem} has little computational content, similar to \eqref{klieken}.  
To this end, let $A(k , g, N, x_{(\cdot)})$ be the formula in square brackets in \eqref{herforem} and define $\textsf{META}_{\ns}$ as the statement 
that for all standard sequences $x_{(\cdot)}$ we have $(\forall^{\st} k^{0}, g^{1})(\exists^{\st}N^{0})A(k , g, N, x_{(\cdot)})\di (\exists^{\st}K^{0})(\forall n^{0})(|x_{n}|\leq K)$. 
We have the following theorem, which sprouts from the section on metastability in Diener's \emph{Habilitationsschrift} (\cite{hadie}).
\begin{thm}
From the proof $\P\vdash \META_{\ns}\asa \paai$, a term $t$ can be extracted such that for any $\Phi^{3}, h^{2}$, if for all sequences $x_{(\cdot)}$ we have
\[
(\forall k^{0}, g^{1})(\exists N^{0}\leq h(k, g))A(k , g, N, x_{(\cdot)})\di (\forall n^{0})(|x_{n}|\leq \Phi(h)),
\]
then $\MU(t(\Phi))$, i.e.\ $t$ computes the Turing jump from any functional $\Phi$ providing upper bounds for metastable sequences.  
\end{thm}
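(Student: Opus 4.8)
The plan is to run the template $\CI$ of Section~\ref{detail} on the nonstandard equivalence $\META_{\ns}\asa\paai$, in close analogy with the proof of Theorem~\ref{sef}; the term claimed in the statement is the one governing the effective form of the implication ``$\paai$ follows from $\META_{\ns}$''. First I would establish $\P\vdash\META_{\ns}\asa\paai$ (following Diener's forthcoming Habilitationsschrift). For $\META_{\ns}\di\paai$, fix standard $f^{1}$ with $(\forall^{\st}n)(f(n)\ne0)$ and let $x^{f}_{(\cdot)}$ be the standard sequence with $x^{f}_{n}:=0$ when $(\forall i\le n)(f(i)\ne0)$ and $x^{f}_{n}:=(\text{the least }i\le n\text{ with }f(i)=0)$ otherwise; for each standard $k,g$ the interval $[0,g(0)]$ lies below every zero of $f$ (as $g(0)$ is standard and $f$ has no standard zero), so $x^{f}$ is constantly $0$ there and $A(k,g,0,x^{f}_{(\cdot)})$ holds, whence the premise of $\META_{\ns}$ is met and $(\exists^{\st}K)(\forall n)(|x^{f}_{n}|\le K)$ follows; since $x^{f}_{m}$ equals the least zero of $f$ once $m$ passes it, that zero is $\le K$ and hence standard, forcing $(\forall m)(f(m)\ne0)$. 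For the converse, assume $\paai$, take standard $x_{(\cdot)}$ with $(\forall^{\st}k,g)(\exists^{\st}N)A(k,g,N,x_{(\cdot)})$, apply $\HAC_{\INT}$ and the maximum step of Remark~\ref{simply} to obtain a standard $h$ with $(\forall^{\st}k,g)(\exists N\le h(k,g))A(k,g,N,x_{(\cdot)})$, use $(\exists^{2})$ (available from $\paai$ via $\Omega$\textsf{-CA}, as in Theorem~\ref{sef}) together with $\paai$ applied, for each fixed standard $g$, to the standard function coding $\neg(\exists N\le h(k,g))A$, to upgrade this to $(\forall^{\st}g)(\forall k)(\exists N\le h(k,g))A(k,g,N,x_{(\cdot)})$, and conclude with the classical fact that metastability at standard gap-functions forces Cauchyness, hence an internal bound $K$, which a final $\paai$ renders standard. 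All of this goes through in $\H$, so the extraction below is also constructive.

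Next I would carry out the normal-form steps of $\CI$. The statement $\META_{\ns}$ becomes $(\forall^{\st}x,h)(\exists^{\st}k,g,K)D(x,h,k,g,K)$ with $D$ internal: $\HAC_{\INT}$ turns its premise into $(\exists^{\st}h)(\forall^{\st}k,g)A''(h,k,g,x)$ where $A''(h,k,g,x):\equiv(\exists N\le h(k,g))A(k,g,N,x)$, and classical prenexing of the resulting implication collapses it to the single normal form with $D:\equiv\neg A''(h,k,g,x)\vee(\forall n)(|x_{n}|\le K)$. Similarly $\paai$ is $(\forall^{\st}f)(\exists^{\st}n)C(f,n)$ with $C(f,n):\equiv[(\exists m)(f(m)=0)\di(\exists i\le n)(f(i)=0)]$. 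Applying $\HAC_{\INT}$ to the antecedent of ``$\META_{\ns}\di\paai$'', now of the shape ``normal form $\di$ normal form'', and pushing the standard quantifiers outward yields a single normal form $(\forall^{\st}\mathcal F)(\exists^{\st}f,n)E(\mathcal F,f,n)$ with $E$ internal. Since $\P_{0}$ proves this (and, as noted for Corollary~\ref{consresultcor2}, only $\textsf{EFA}$ is needed for the finite-sequence bookkeeping), term extraction gives a closed term $s$ with $\textsf{E-PRA}^{\omega*}\vdash(\forall\mathcal F)(\exists(f,n)\in s(\mathcal F))E(\mathcal F,f,n)$.

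It then remains to read off the desired term. Unwinding $E$, the input $\mathcal F$ is, up to the maximum step, exactly a functional $\Phi$ for which $\Phi(h)$ bounds every $x_{(\cdot)}$ of metastability rate $h$, i.e.\ precisely the displayed hypothesis of the theorem, while the output is a finite list $s(\Phi)$ of candidate bounds for the search for a zero of $f$. Defining $t(\Phi)(f)$ to be $\mu_{i}(f)$ for the least $i$ with $f(\mu_{i}(f))=0$ among these finitely many candidates, and $0$ if there is none, gives $\MU(t(\Phi))$: whenever $f$ has a zero at least one candidate must be a correct bound, so the selection is non-vacuous, and otherwise any value works. This is the required term $t$. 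A constructive variant in $\H$ and a Herbrandisation keeping the list $s(\mathcal F)$ intact, in the style of Remark~\ref{herbrand}, then follow exactly as for $\CRI$.

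The hard part is the correspondence invoked in the previous paragraph between the $\HAC_{\INT}$-witness slot of the normal form of $\META_{\ns}$ and the functional $\Phi$ of the statement. Unlike the cases of $\CRI$ or $\MCT$, the internal matrix $D$ is not monotone in all of its witnessing variables — it is anti-monotone in the rate $h$ — so the maximum step of Remark~\ref{simply} may only be applied to the $K$-component, and one must verify carefully that what is left is exactly the displayed implication and that the extracted $t$ genuinely computes the Turing jump rather than some weaker functional. As with the analogous reversal direction in Theorem~\ref{sef}, which was ``left to the reader'' there, this $\META_{\ef}\di\MU$ half is where the real work sits; the opposite implication and the corollaries above are routine by comparison.
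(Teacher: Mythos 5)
Your nonstandard equivalence $\P\vdash\META_{\ns}\asa\paai$ is fine and is essentially the paper's argument: the paper's reversal uses the sequence $x_{n}:=\sum_{i\leq n}i\,f(i)$ and a proof by contradiction, while your ``least-zero tracker'' $x^{f}$ does the same job (and, as noted below, is actually better suited to the extraction). The genuine gap is in your third paragraph, exactly at the point you defer: the claim that, after unwinding, the $\HAC_{\INT}$-witness $\mathcal F$ ``is, up to the maximum step, exactly a functional $\Phi$ for which $\Phi(h)$ bounds every $x_{(\cdot)}$ of metastability rate $h$'' is not correct, and the difficulty is not the ($\forall^{\st}$-quantified, hence harmless) variable $h$ but the existential witnesses $k,g$. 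With your normal form $(\forall^{\st}x,h)(\exists^{\st}k,g,K)[A''(h,k,g,x)\di B(x,K)]$, Herbrandisation leaves a finite list of triples $(k,g,K)$ in which only the $K$-component can be collapsed by a maximum; the resulting extracted hypothesis is ``for all $x,h$ some triple in $\mathcal F(x,h)$ satisfies $A''\di B$''. That hypothesis \emph{implies} the displayed one (take $\Phi$ to be the maximum of the $K$-entries), but it does not follow from it by any term: if the internal metastability $(\forall k,g)(\exists N\leq h(k,g))A$ fails, one would have to \emph{compute} a counterexample pair $(k,g)$ to populate the list. So the statement your extraction yields has a strictly stronger hypothesis than the theorem claims, and the ``careful verification'' you postpone would in fact fail for the route as set up.

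The repair uses an ingredient you already have. Since $\META_{\ns}$ occurs negatively in the implication being mined, you may strengthen its inner antecedent by dropping the `st' on $(\forall^{\st}k,g)$ after introducing the standard rate $h$ (this is the move made explicitly at the end of the proof of Theorem \ref{floggen345}), i.e.\ work with: for standard $x,h$, $(\forall k,g)(\exists N\leq h(k,g))A(k,g,N,x)\di(\exists^{\st}K)(\forall n)(|x_{n}|\leq K)$. One must then recheck the reversal from this weaker hypothesis, and here your sequence pays off: $x^{f}$ changes value at most once, so $h_{0}(k,g):=g(0)+1$ is an \emph{internal} rate of metastability valid for all $k,g$ (whereas the paper's $\sum_{i\leq n}i f(i)$ is in general not fully metastable, so this weakening would not be available for it). With the internal $(\forall k,g)$ absorbed into the matrix, the only remaining existential witness is $K$, the matrix is monotone in $K$, the maximum step of Remark \ref{simply} applies, and term extraction then delivers precisely the displayed implication (with the bound functional depending on the free $x_{(\cdot)}$ and $h$, as in the paper's display), from which $t(\Phi)$ is read off by the bounded search you describe. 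In short: keep your sequence, but replace the Herbrandisation over $(k,g)$ by the drop-`st' weakening of $\META_{\ns}$; as written, the key identification is asserted in the wrong direction and left unverified.
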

\begin{proof} 
The implication $\paai\di \META_{\ns}$ is straightforward in light of the equivalence between the usual definition of convergence and \eqref{herforem} relative to `\st'; $\paai$ is then 
used to make the sequence bounded everywhere.  For the reverse implication, assume $\META_{\ns}$ and suppose $\paai$ is false; let $f^{1}$ be standard such that $(\exists n^{0})(f(n)\ne0)\wedge (\forall^{\st}m^{0})(f(m)=0)$ and define $x_{n}:=\sum_{i=0}^{n}if(n)$.  This sequence is standard and metastable as in the antecedent of $\META_{\ns}$.  By the latter, $x_{(\cdot)}$ is bounded by a standard number, but this yields a contradiction as $x_{n_{0}}$ is nonstandard if $f(n_{0})\ne0$.  
\end{proof}
By the previous theorem, even obtaining an upper bound (let alone a rate of convergence) from a metastable sequence is not possible in a computational way.   

\smallskip

Fourth, part of the point of studying $\META_{\ns}$ and \eqref{klieken2} as above is to show that Nonstandard Analysis cannot `magically' solve the problems in the proof mining of classical mathematics.  
However, by grace of its extended language, the limitations of the proof mining of classical mathematics imposed by \eqref{klieken}, namely the restriction to less than two quantifier alternations, \emph{do not} apply to our nonstandard framework.  In other words, Nonstandard Analysis \emph{avoids} these problems by `moving the goalpost' (adopting a richer language).   
Of course, the template $\CI$ is limited to normal forms, but all theorems of `pure' Nonstandard Analysis fall into this category, and such theorems may involve arbitrary many quantifier alternations between \emph{internal quantifiers}.  
Furthermore, as shown in Section~\ref{suskeenwiske}, the template $\CI$ applies equally well to theorems of $\FIVE$, i.e.\ no special modification is necessary to 
study relatively strong RM-systems.  

\smallskip

Finally, a number of people have pressed the author for a direct comparison between Kohlenbach's approach to proof mining and the template $\CI$.  
We conjecture that the `one-size-fits-all' provided by the template $\CI$ is very general, and will produce terms of \emph{acceptable}\footnote{One heuristic in proof ming is that \emph{short proofs yield terms of low complexity}, as each step in the proof determines a sub-term of the eventually extracted term.  Now, Nonstandard Analysis is known for its short proofs, while the term $t$ from Corollary \ref{consresultcor} only depends on the use of \emph{external} axioms.  In particular, internal axioms like the usual induction axiom of $\textsf{E-HA}^{\omega*}$ or $\textsf{E-PA}^{\omega*}$, do not influence the complexity of the term $t$ (see \cite{brie}*{p.\ 1981, item 2}).} complexity while requiring little expertise beyond knowledge of Nonstandard Analysis.  By contrast, Kohlenbach's approach is more tailored to certain classes of proofs, and therefore produces terms of \emph{much} better complexity; as a downside, considerable expertise is required to wield the latter.  In conclusion, the template $\CI$ and Kohlenbach's proof mining are \emph{complementary}, in that the former provides a `first approximation' which can be performed by non-experts, while the latter fleshes out the exact (or even optimal) complexity, but requires considerably more expertise to pull off.

\subsection{Compactness}\label{compaq}
In this section, we study various theorems regarding compactness.  We first introduce nonstandard compactness in Section \ref{cinq} and discuss its various normal forms.  
We then study a number of theorems regarding nonstandard compactness in Sections \ref{cont1} to \ref{lapier}, from which we obtain effective versions.  
We obtain explicit equivalences at the level of $\WKL_{0}$ and $\ACA_{0}$.
In other words, we show that the template $\CI$ applies to the second and third Big Five category.  
In the interest of space, Herbrandisations are only briefly discussed in Section \ref{snarkie2}.
\subsubsection{Nonstandard compactness and normal forms} \label{cinq}
In this section, we introduce nonstandard compactness and discuss its various normal forms.      

\smallskip

First of all, the nonstandard characterisation of compactness is known as \emph{Robinson's theorem} (see \cite{loeb1}*{Theorem 2.2, p.\ 120} or \cite{robinson1}*{Theorem 4.1.13, p.\ 93}).  
In particular, a set $X$ is \emph{nonstandard compact} if $(\forall x\in X)(\exists^{\st}y\in X)(x\approx y)$, recalling Definition \ref{keepintireal}.  
As it turns out, there are various interesting but \emph{different} normal forms which can be derived from nonstandard compactness, as sketched now.  

\smallskip

Secondly, as an example of an \emph{equivalent} normal form, consider the nonstandard compactness of Cantor space as follows:
\be\label{STP}\tag{\textsf{STP}}
(\forall \alpha^{1}\leq_{1}1)(\exists^{\st} \beta^{1}\leq_{1}1)(\alpha\approx_{1} \beta),
\ee
which is equivalent to the following formula by \cite{samGH}*{Theorem 3.2}:
\begin{align}\label{fanns}
(\forall T^{1}\leq_{1}1)\big[(\forall^{\st}n)(\exists \beta^{0})&(|\beta|=n \wedge \beta\in T ) \di (\exists^{\st}\alpha^{1}\leq_{1}1)(\forall^{\st}n^{0})(\overline{\alpha}n\in T)   \big]
\end{align}
where `$T\leq_{1}1$' denotes that $T$ is a binary tree.  
Clearly, \eqref{fanns} is a nonstandard version of weak K\"onig's lemma, and the latter is a compactness principle as discussed in \cite{simpson2}*{IV}. 
Furthermore, \eqref{fanns} is equivalent to the following normal form:
\begin{align}\label{frukkklk}
(\forall^{\st}g^{2})(\exists^{\st}w^{1^{*}})(\forall T^{1}\leq_{1}1)(\exists ( \alpha^{1}\leq_{1}1,  &~k^{0}) \in w)\big[(\overline{\alpha}g(\alpha)\not\in T)\\
&\di(\forall \beta\leq_{1}1)(\exists i\leq k)(\overline{\beta}i\not\in T) \big] \notag
\end{align}
as shown in \cite{samGH}*{\S3}.   Moreover, $\STP$ is equivalent to the normal form (see \cite{dagsam, dagsamII})
\be\label{hro}
(\forall^{\st} G^{2})(\exists^{\st}w^{1^{*}})(\forall f\leq_{1}1)(\exists i<|w|)\big(  f\in \big[\overline{w(i)}G(w(i))\big]  \big), 
\ee    
which expresses that the uncountable cover $\cup_{f\in 2^{\N}}[\overline{f}G(f)]$ of Cantor space has a finite sub-cover $\cup_{i\leq k}[\overline{f_{i}}G(f_{i})]$ provided by the finite sequence $w=\langle f_{0}, \dots, f_{k}\rangle$.
Similarly, the nonstandard compactness of the unit interval $(\forall x\in [0,1])(\exists^{\st}y\in [0,1])(x\approx y)$ 
is \emph{equivalent} to a nonstandard version of Heine-Borel compactness, namely \eqref{HB}.  As studied in Section \ref{dinki}, the latter normal form give rise to the explicit versions of 
certain equivalences from the RM of $\WKL_{0}$.  

\smallskip

Thirdly, we shall also obtain an interesting normal form in Theorem \ref{noco} which is a \emph{weakening} of nonstandard compactness, as follows: 
\be\label{marhi}\textstyle
(\exists^{\st}h)(\forall^{\st} k)(\forall  x\in X)(\exists i< |h( k)|)(|x-h(k)(i)|_{X}\leq \frac{1}{k}).
\ee
Intuitively speaking, the weaker normal form \eqref{marhi} no longer states that every 
object in the space has a \emph{standard} object infinitely close by, but only the existence of a `discrete grid' with infinitesimal mesh on the space.  
This grid can be obtained by fixing standard $h$ as in \eqref{marhi} and applying overspill.   
As studied in Sections~\ref{cont1} and \ref{cont2}, the constructive/computable notion of compactness, called \emph{totally boundedness}, is obtained by term extraction from the weaker normal form \eqref{marhi}.     
This term extraction result is particularly elegant, as `dividing a compact space in infinitesimal pieces' features prominently in the intuitive infinitesimal calculus used to date in physics and engineering.

\smallskip

Finally, similar to the equivalence between $\STP$ and \eqref{fanns}, it can be shown that the nonstandard compactness of $X$ is equivalent to the following formula:  
\begin{align}\textstyle
(\forall z\in Z)\big[(\forall^{\st}n)(\exists x\in X)(|x-z|_{Z}<\frac{1}{n}) \di   (\exists^{\st}y\in X)(|y-z|_{Z}\approx 0)],\label{kal}
\end{align}
where $X\subseteq Z$, and the latter is a metric space with metric $|\cdot|_{Z}$.  Note that \eqref{kal} expresses sequential compactness, 
as the antecedent gives rise to a sequence in $X$ converging to $z$; furthermore, it is straightforward to formulate \eqref{kal} for general topological spaces (without a metric), i.e.\ we could treat rather general spaces using \eqref{kal} and \eqref{kalkuttttt}.  A normal form of \eqref{kal} is similar to the one for \eqref{fanns} given by \eqref{frukkklk}, 
namely as follows (and is obtained as in the proof of \cite{samGH}*{Cor.\ 3.4}):
\begin{align}\textstyle
(\forall^{\st}g)(\exists^{\st}w\in X^{*})(\exists^{\st} n^{0})
\Big[(\forall z\in Z)\big[ (\forall y\in  w)&\textstyle (|y-z|_{Z}>_{\R} \frac{1}{g(y)}) \label{kalkuttttt}\\
&\textstyle\di (\forall x\in X)(|x-z|_{Z}>_{\R}\frac{1}{n})   \big]\Big],\notag
\end{align}
where, as in Notation \ref{skim}, we write $z \in X^{*}$ for $z=(z_{0}, \dots, z_{k})$ for a sequence of length $|z|=k$ with $z_{i}\in X$ for $i<|z|$.  
The normal form \eqref{kalkuttttt} constitutes an `external' (as in `from the outside') characterisation of nonstandard compactness:  
a point $z\in Z$ is (standardly) bounded away from $X$ if $g$ bounds all points in the grid $w\in X^{*}$ away from $z$.  
As it happens, we study the associated \emph{anti-Specker property} in the context of higher-order RM in \cite{samnetspilot}.

\subsubsection{Compactness and continuity I}\label{cont1}
In this section, we study the statement $\textsf{CSU}$ that \emph{a continuous function on a compact metric space has a supremum}.  
As we will observe, applying $\CI$ to a suitable nonstandard version of $\CSU$, nonstandard compactness is converted to \emph{totally boundedness}, 
the preferred notion of compactness in constructive and computable mathematics.  

\smallskip

As discussed in Section \ref{cinq}, nonstandard compactness can be weakened to an interesting normal form, which is provided by the following theorem.  
\begin{thm}[$\P_{0}$]\label{noco}
If $X$ is a nonstandard compact metric space, i.e.\ $(\forall x\in X)(\exists^{\st} y\in X)(x\approx y)$, then $X$ is `effectively compact' as follows:
\be\label{straffenbeukje}\textstyle
(\exists^{\st}h)(\forall^{\st} k)(\forall  x\in X)(\exists i< |h( k)|)(|x-h(k)(i)|_{X}\leq \frac{1}{k}),
\ee
which also implies the following
\be\textstyle\label{straffenhendrik}
(\exists^{\st}\psi_{0})(\forall M\in \Omega)(\forall x\in X)(\exists i< |\psi_{0}(M)|)(x\approx \psi_{0}(M)(i)).
\ee
\end{thm}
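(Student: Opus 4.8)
The plan is to carry out the normal-form manipulations of Theorem~\ref{varou} directly on the single hypothesis $(\forall x\in X)(\exists^{\st}y\in X)(x\approx y)$, which already has --- for each fixed standard precision --- exactly the shape to which idealisation~\textsf{I} applies. Throughout I work in $\P_{0}$ and assume, as is implicit in the statement, that $X$ (hence its characteristic function $f_{X}$ and its metric $|\cdot|_{X}$) is standard.

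First I would unwind $\approx$: since $x\approx y$ abbreviates $(\forall^{\st}n)(|x-y|_{X}\leq\frac{1}{n})$, nonstandard compactness yields in particular, for every \emph{standard} $k$, that $(\forall x\in X)(\exists^{\st}y\in X)(|x-y|_{X}\leq\frac{1}{k})$. For fixed standard $k$ the matrix is internal, so the contraposition of idealisation~\textsf{I} --- with the quantifier ``$\forall x\in X$'' in the role of the internal $\forall$-quantifier and the side condition ``$x\in X$'' absorbed into the matrix --- produces a standard finite sequence $Y^{\rho^{*}}$ with
\[
(\forall x\in X)(\exists i<|Y|)\big(Y(i)\in X\wedge|x-Y(i)|_{X}\leq\tfrac{1}{k}\big).
\]
Carrying this out for every standard $k$ gives $(\forall^{\st}k)(\exists^{\st}Y^{\rho^{*}})\big[(\forall x\in X)(\exists i<|Y|)(|x-Y(i)|_{X}\leq\frac{1}{k})\big]$, whose bracketed matrix is internal; now $\HAC_{\INT}$ yields a standard $\mathcal{H}$ with $(\forall^{\st}k)(\exists Y\in\mathcal{H}(k))[\dots]$, and concatenating all members of $\mathcal{H}(k)$ into a single finite sequence $h(k)$ preserves the covering property (the witnessing block sits inside $h(k)$). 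This is precisely the ``usual step'' of Remark~\ref{simply}, with concatenation in place of the maximum, and it gives \eqref{straffenbeukje}.

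For the second claim one cannot simply re-use $h$: overspill (Theorem~\ref{doppi}) applied to the internal formula ``$h(k)$ is a $\frac{1}{k}$-cover of $X$'' furnishes only \emph{one} infinite $K$, whereas \eqref{straffenhendrik} demands \emph{every} infinite $M$. I would therefore pass to the cumulative sequence $\psi_{0}(M):=h(0)*h(1)*\dots*h(M)$, which is standard since $h$ is. If $M\in\Omega$ and $k$ is standard then $k\leq M$, so the block $h(k)$ occurs inside $\psi_{0}(M)$; by \eqref{straffenbeukje} this gives $(\forall x\in X)(\exists i<|\psi_{0}(M)|)(|x-\psi_{0}(M)(i)|_{X}\leq\frac{1}{k})$, and hence $(\forall M\in\Omega)(\forall^{\st}k)(\forall x\in X)(\exists i<|\psi_{0}(M)|)(|x-\psi_{0}(M)(i)|_{X}\leq\frac{1}{k})$.

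The main obstacle is then the quantifier swap turning ``for each standard $k$ some $i$ works'' into ``some $i$ works for all standard $k$'', i.e.\ producing, for fixed $M\in\Omega$ and $x\in X$, a \emph{single} $i_{0}<|\psi_{0}(M)|$ with $x\approx\psi_{0}(M)(i_{0})$. I would argue by contradiction: were there no such $i_{0}$, then $(\forall i<|\psi_{0}(M)|)(\exists^{\st}k)(|x-\psi_{0}(M)(i)|_{X}>\frac{1}{k})$, and one further application of the contraposition of~\textsf{I} over the internal index set $\{i:i<|\psi_{0}(M)|\}$ yields a standard $k^{*}$ with $(\forall i<|\psi_{0}(M)|)(|x-\psi_{0}(M)(i)|_{X}>\frac{1}{k^{*}})$, contradicting the previous display at $k=k^{*}$. (Alternatively, and more directly, take $i_{0}<|\psi_{0}(M)|$ minimising the rational approximation $[|x-\psi_{0}(M)(i)|_{X}](|\psi_{0}(M)|)$, a decidable finite minimisation; the approximation error is infinitesimal, so the previous display forces $|x-\psi_{0}(M)(i_{0})|_{X}\leq\frac{1}{k}$ for all standard $k$.) Either way \eqref{straffenhendrik} holds with $\psi_{0}$, and we are done.
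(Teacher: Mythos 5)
Your proposal is correct. The first claim is handled exactly as in the paper: resolve `$\approx$' at each standard precision, apply the contraposition of idealisation \textsf{I} to get a standard finite $\frac1k$-net, apply $\HAC_{\INT}$, and concatenate (the paper's $\Xi$ and $\Phi$ play the role of your $h$ and $\mathcal{H}$). For the second claim your route differs from the paper's. The paper also builds a cumulative $\psi_{0}$, but then fixes $x$, applies \emph{overspill} in $k$ to get covering at all scales $k\leq K_{0}$ for some infinite $K_{0}$, so that at the infinite scale the net is automatically an infinitesimal net --- no quantifier swap is needed --- and then patches the fact that $K_{0}$ need not lie below the given $M$ by a redefinition of $\psi_{0}$ involving a bounded search on approximations. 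You instead avoid overspill altogether: block-inclusion gives that $\psi_{0}(M)$ is a $\frac1k$-net for every standard $k$ and \emph{every} infinite $M$ simultaneously, and you then perform the swap ``one index for all standard $k$'' explicitly, either by a second application of the contraposition of \textsf{I} (with the nonstandard parameters $x,M$, which is legitimate since \textsf{I} is a schema for arbitrary internal formulas and the paper itself uses it with such parameters) or by the finite minimisation of $[|x-\psi_{0}(M)(i)|_{X}](|\psi_{0}(M)|)$. Both of your arguments are sound; the only points worth making explicit are that in the minimisation variant you need $|\psi_{0}(M)|$ to be infinite (true since each standard block is nonempty when $X\neq\emptyset$, the empty case being trivial), and that the negation step uses classical trichotomy for reals, available internally. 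The trade-off is that the paper's overspill argument is shorter per point $x$ but needs the somewhat delicate redefinition step to cover all infinite $M$, whereas your version is uniform in $M$ from the start at the cost of one extra idealisation (or finite search).
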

\begin{proof}
From $(\forall x\in X)(\exists^{\st} y\in X)(x\approx y)$, we obtain 
\be\label{flikko}\textstyle
(\forall ^{\st}k)(\forall x\in X)(\exists^{\st} y\in X)(|x- y|_{X}\leq\frac1k)
\ee
and hence $(\forall ^{\st}k)(\exists^{\st}y'\in X^{*})(\forall x\in X)(\exists y\in y')(|x- y|\leq\frac1k)$ by idealisation~$\textsf{I}$.  
Apply $\HAC_{\INT}$ and let $\Phi$ be the resulting standard functional.  Define $\Xi(k):=\Phi(k)(0)*\Phi(k)(1)*\dots *\Phi(k)(|\Phi(k)|-1)$ and $\psi_{0}(k):= \Xi(1)*\Xi (2)*\dots*\Xi(k)$.  
We obtain that 
\be\label{maki}\textstyle
(\forall ^{\st}k)(\forall x\in X)(\exists i<|\psi_{0}(k)|)\big[|x- \psi_{0}(k)(i)|_{X}\leq\frac1k\big],  
\ee
and \eqref{straffenbeukje} is immediate.
The latter implies \eqref{straffenhendrik} as follows:  fix $x\in X$ in \eqref{maki} and apply overspill to $(\forall^{\st} k)(\exists i< |\psi_{0}( k)|)(|x-\psi_{0}(k)(i)|_{X}\leq \frac{1}{k})$.  
Hence, we obtain $(\exists i< |\psi_{0}( k)|)(|x-\psi_{0}(k)(i)|_{X}\leq \frac{1}{k})$ for $k\leq K_{0}\in \Omega$, which immediately yields $(\forall x\in X)(\exists i< |\psi_{0}(K_{0})|)(x\approx \psi_{0}(k)(i))$.  Redefining $\psi_{0}$ as $\psi'_{0}$ slightly, we can guarantee that $(\forall K\in \Omega)(\forall x\in X)(\exists i< |\psi'_{0}( K)|)(x\approx \psi_{0}(K)(i))$.  Indeed, define $\psi'_{0}(M)$ as $\psi_{0}(K_{1})$, where $K_{1}$ is the largest $K_{0}\leq M$ such that $(\forall k\leq K_{0})(\exists i< |\psi_{0}( k)|)\big(\big[|x-\psi_{0}(k)(i)|_{X}\big](M)\leq \frac{1}{k}\big)$.   
\end{proof}
Note that  \eqref{straffenbeukje} and \eqref{straffenhendrik} constitute a weakening of nonstandard compactness, as there is no longer a \emph{standard} object infinitesimally close in the former.  
This seems to be the price we have to pay in exchange for the `effective content' present in the former.  

\smallskip

The bigger picture regarding \eqref{straffenhendrik} is as follows: essential to both nonstandard Riemann integration and the nonstandard definition of the supremum as in \eqref{frdfg} (and the whole of the infinitesimal calculus), is the fact that we can `divide the unit interval in pieces of infinitesimal length'.     
Intuitively speaking, the functional $\psi_{0}$ from Theorem \ref{noco} allows us to perform a similar `subdivision' of any nonstandard compact space, as $\psi_{0}(i)$ for $i\leq M\in \Omega$ comes infinitely close to any point of the nonstandard compact space.  

\smallskip

In light of the previous, let $\psi_{0}$ be as in Theorem \ref{noco}, and consider the following:
\[\textstyle
\sup_{X}(f, \psi_{0},M):=\max_{i\leq M}[f(\psi_{0}(M)(i))](M),
\] 
to be compared to \eqref{frdfg}.  The nonstandard version of $\CSU$, is then as follows.  
In light of \cite{simpson2}*{I.10.3}, the \emph{pointwise} case cannot be proved without the use of $\WKL_{0}$, explaining why we opted for uniform continuity.    
\begin{thm}[$\CSU_{\ns}$]
Let the metric space $X$ be nonstandard compact and let $f:X\di \R$ be nonstandard uniformly continuous.  Then we have
\begin{align}
\textstyle(\forall x\in X, M\in \Omega)[f(x)\lessapprox \sup_{X}(f, \psi_{0},M)] \notag
\end{align}
\end{thm}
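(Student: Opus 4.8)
The plan is to prove $\CSU_{\ns}$ directly inside $\P_{0}$, mirroring the structure of the proof that $\CRI_{\ns}$ holds in $\P_{0}$ (Theorem~\ref{varou}) and using the functional $\psi_{0}$ provided by Theorem~\ref{noco}. First I would fix a metric space $X$ which is nonstandard compact, so $(\forall x\in X)(\exists^{\st}y\in X)(x\approx y)$, and fix $f:X\di\R$ nonstandard uniformly continuous, i.e.\ $(\forall x,y\in X)[x\approx y\di f(x)\approx f(y)]$. By Theorem~\ref{noco} we obtain a standard $\psi_{0}$ satisfying \eqref{straffenhendrik}, that is $(\forall M\in\Omega)(\forall x\in X)(\exists i<|\psi_{0}(M)|)(x\approx \psi_{0}(M)(i))$. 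The claim to establish is that for every $x\in X$ and every $M\in\Omega$ we have $f(x)\lessapprox \sup_{X}(f,\psi_{0},M)$, where $\sup_{X}(f,\psi_{0},M):=\max_{i\leq M}[f(\psi_{0}(M)(i))](M)$ and $\lessapprox$ is read as in Definition~\ref{keepintireal} ($a\lessapprox b$ iff $a\leq b$ or $a\approx b$).

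The key steps are as follows. Fix $x\in X$ and $M\in\Omega$. By \eqref{straffenhendrik} there is some $i_{0}<|\psi_{0}(M)|$ with $x\approx \psi_{0}(M)(i_{0})$; one should check from the construction of $\psi_{0}$ in Theorem~\ref{noco} that this index $i_{0}$ may be taken with $i_{0}\leq M$ (this is exactly what the redefinition of $\psi_{0}$ to $\psi_{0}'$ at the end of that proof guarantees, and since $M$ is infinite the grid at stage $M$ is finer than any standard precision). By nonstandard uniform continuity of $f$ we then get $f(x)\approx f(\psi_{0}(M)(i_{0}))$. Next, the real $\sup_{X}(f,\psi_{0},M)$ is by definition the maximum over $i\leq M$ of the $M$-th approximations $[f(\psi_{0}(M)(i))](M)$; since $[x](M)\approx x$ for any real $x$ (because $M$ is infinite, so $|[x](M)-x|\leq \frac{1}{2^{M}}$ which is infinitesimal), we have $[f(\psi_{0}(M)(i_{0}))](M)\approx f(\psi_{0}(M)(i_{0}))\approx f(x)$, and moreover $\sup_{X}(f,\psi_{0},M)\geq [f(\psi_{0}(M)(i_{0}))](M)$ because $i_{0}\leq M$. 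Combining these, $f(x)\approx [f(\psi_{0}(M)(i_{0}))](M)\leq \sup_{X}(f,\psi_{0},M)$, which gives $f(x)\lessapprox \sup_{X}(f,\psi_{0},M)$ as required.

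The main obstacle I anticipate is purely bookkeeping: making sure that the index $i_{0}$ delivered by \eqref{straffenhendrik} really lies in the range $\{0,\dots,M\}$ over which the maximum in $\sup_{X}(f,\psi_{0},M)$ is taken, and that the various uses of `$\approx$' compose legitimately inside $\P_{0}$ (which they do, since `$\approx$' on reals is transitive for the relevant standard-quantifier reading, and infinitesimals are closed under the finitely many additions involved here). There is also a minor subtlety that $\sup_{X}(f,\psi_{0},M)$ is built from the fixed $M$ while \eqref{straffenhendrik} is quantified over all infinite $M$, but since we prove the statement for an arbitrary fixed $M\in\Omega$ this causes no difficulty. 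No new axioms beyond those of $\P_{0}$ are needed: Theorem~\ref{noco} has already discharged the two applications of $\textsf{I}$ and $\HAC_{\INT}$ and the overspill step, and all that remains is elementary manipulation of $\approx$ together with the definition of the finitary maximum. This makes $\CSU_{\ns}$ amenable to the template $\CI$ exactly as $\CRI_{\ns}$ and $\SUP_{\ns}$ were.
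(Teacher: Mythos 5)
Your proposal is correct and is essentially the paper's own argument for $\CSU_{\ns}$ (given, very tersely, at the start of the proof of Theorem~\ref{floggen31}): combine \eqref{straffenhendrik} from Theorem~\ref{noco} with nonstandard uniform continuity to conclude that each $f(x)$ is at most infinitesimally above the finite maximum defining $\sup_{X}(f,\psi_{0},M)$. One small correction to your bookkeeping remark: the passage from $\psi_{0}$ to $\psi_{0}'$ in Theorem~\ref{noco} secures the grid property for \emph{all} infinite $M$ rather than bounding the witnessing index by $M$; that index-range point is glossed over in the paper as well and does not affect the substance of the argument.
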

Note that we still obtain the consequent of $\CSU_{\ns}$ if we replace nonstandard compactness in the latter by \eqref{straffenhendrik} itself.  
This observation turns out to be essential to the proof of Theorem~\ref{floggen31}.  The effective version of $\CSU$ is:  
\begin{thm}[$\CSU_{\ef}(t)$]
For all $ f: X\di \R$ with modulus of uniform continuity $g$, and any $h:0\di X^{*}$ such that:
\be
  \textstyle   (\forall k,  x\in X)(\exists i< |h( k)|)(|x-h(k)(i)|_{X}\leq \frac{1}{k})       \big] \label{amaai}
\ee
we have $   (\forall n,x'\in [0,1], N\geq  t(g, h, n)(1))\big[ f(x')\leq \sup_{X}(f, t(g,h,n)(2),N)+\frac{1}{n} \big].$

\end{thm}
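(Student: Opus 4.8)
The plan is to follow the now-familiar pattern established in the proofs of Theorems~\ref{varou}, \ref{floggen}, and \ref{floggen4}: first establish $\CSU_{\ns}$ inside $\P_{0}$ (or observe it follows from the standard nonstandard proof), then massage its antecedent and consequent into normal forms, glue the resulting implication into a single normal form, and finally apply Corollary~\ref{consresultcor} to read off the term $t$. The one novelty here is that the ``compactness'' hypothesis in $\CSU_{\ns}$ should be replaced, before term extraction, by the weaker normal form \eqref{straffenhendrik} supplied by Theorem~\ref{noco}; as the remark following $\CSU_{\ns}$ notes, the consequent of $\CSU_{\ns}$ still goes through from \eqref{straffenhendrik} alone, so this substitution loses nothing.

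First I would prove $\CSU_{\ns}$ in $\P_{0}$. Fix $x\in X$ and $M\in\Omega$. By nonstandard compactness there is a standard $y\in X$ with $x\approx y$, and by \eqref{straffenhendrik} applied at $M$ there is some $i<|\psi_{0}(M)|$ with $x\approx\psi_{0}(M)(i)$; hence $y\approx \psi_0(M)(i)$. Nonstandard uniform continuity of $f$ gives $f(x)\approx f(y)\approx f(\psi_0(M)(i))\lessapprox \max_{j\le M}[f(\psi_0(M)(j))](M)=\sup_X(f,\psi_0,M)$, using that $[f(\psi_0(M)(i))](M)\approx f(\psi_0(M)(i))$ since $M$ is infinite. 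This yields $\CSU_{\ns}$, and moreover it only used \eqref{straffenhendrik} in place of full nonstandard compactness.

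Next I would extract the normal forms. The modulus hypothesis on $f$: resolving `$\approx$' in nonstandard uniform continuity and applying idealisation $\textsf{I}$ and $\HAC_{\INT}$ exactly as in \eqref{first}--\eqref{first5} of Theorem~\ref{varou} produces $(\exists^{\st}g)(\forall^{\st}k)\,\tilde{A}(g,k,f)$ for an internal $\tilde A$ (a modulus of uniform continuity). The compactness hypothesis: Theorem~\ref{noco} already delivers \eqref{straffenbeukje}, namely $(\exists^{\st}h)(\forall^{\st}k)\,\tilde{C}(h,k,X)$ with $\tilde C$ internal. The conclusion: the statement $(\forall x\in X,M\in\Omega)[f(x)\lessapprox \sup_X(f,\psi_0,M)]$ becomes, after resolving `$\lessapprox$' and pushing the infinite quantifier inside as in Theorem~\ref{necess}, a normal form $(\forall^{\st}n)(\exists^{\st}N,p)\,\tilde{E}(f,g,h,n,N,p,X)$ where $p$ codes the grid used (i.e.\ the relevant value of $\psi_0$, definable from $h$). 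Then $\CSU_{\ns}$ (with compactness weakened to \eqref{straffenbeukje}) reads $(\forall f,X)[(\exists^{\st}g)(\forall^{\st}k)\tilde A\wedge(\exists^{\st}h)(\forall^{\st}k)\tilde C\di(\forall^{\st}n)(\exists^{\st}N,p)\tilde E]$; bringing all standard quantifiers to the front (trivial in $\P_0$) and applying idealisation to the resulting $(\forall f,X)(\exists^{\st}\cdots)$ alternation, exactly as in the passage \eqref{fruk}--\eqref{fruk2}, yields a normal form of the shape \eqref{bog}. Corollary~\ref{consresultcor} then gives a term in $\textsf{E-PRA}^{\omega*}$ with a finite sequence of witnesses; taking componentwise maxima of the parts bounding $N$ (monotone in $N$) and selecting the grid-component for the second output coordinate of $t(g,h,n)$, and finally pushing the internal quantifiers back inside, produces $\CSU_{\ef}(t)$.

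The main obstacle I expect is bookkeeping rather than anything conceptual: one must be careful that the finite-sequence witness returned by $\HAC_{\INT}$/$\textsf{I}$/term extraction for the ``grid'' output is handled correctly --- unlike the numerical witnesses for $N$, the witness for $\psi_0$ is a finite sequence of finite sequences of points of $X$, and the `monotone behaviour' trick of Remark~\ref{simply} does not literally apply. The fix, already implicit in the proof of Theorem~\ref{noco} via the concatenation $\psi_0(k)=\Xi(1)*\cdots*\Xi(k)$, is to replace a finite list $\langle w_0,\dots,w_m\rangle$ of candidate grids by their single concatenation $w_0*\cdots*w_m$, which is at least as good a grid as each $w_j$; this monotonicity in the concatenation ordering lets us reduce the finite list to one grid and thereby obtain the single term $t(g,h,n)(2)$ in the statement of $\CSU_{\ef}(t)$. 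With that observation in place, the rest is the routine normal-form manipulation of the earlier proofs.
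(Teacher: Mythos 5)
Your proposal is correct and takes essentially the same route as the paper's proof of Theorem~\ref{floggen31}: prove $\CSU_{\ns}$ using only \eqref{straffenhendrik}, replace nonstandard compactness by the normal form \eqref{frik}/\eqref{straffenbeukje} from Theorem~\ref{noco}, obtain a normal form for the consequent (via Theorem~\ref{necess}), and apply Corollary~\ref{consresultcor}. The only divergence is your grid bookkeeping: the paper sidesteps the concatenation fix by observing (as in \eqref{derfing2}) that the grid $\psi_{0}$ is already given by a term $s(h)$ extracted from the proof of Theorem~\ref{noco} \emph{before} term extraction, so only the numeric threshold is Herbrandised and the second component of $t$ is simply $s(h)$.
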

Note that a `supremum functional' is readily defined from $\CSU_{\ef}(t)$.  Also note that the term $t$ only depends on how $X$ is represented by $h$ as in \eqref{amaai}.    
\begin{thm}\label{floggen31}
From the proof of $\CSU_{\ns}$ in $\P_{0}$, a term $t$ can be extracted such that $\textup{\textsf{E-PRA}}^{\omega*} $ proves $\CSU_{{\ef}}(t)$.  
\end{thm}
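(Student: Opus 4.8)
The plan is to run the statement through the template $\CI$ of Section~\ref{detail}, reusing the normal-form manipulations of Theorem~\ref{varou} for the continuity hypothesis and the device of Theorem~\ref{necess} (as in Theorem~\ref{floggen4}) for the quantifier over infinite numbers occurring in the consequent. First I would prove $\CSU_{\ns}$ inside $\P_{0}$: since $X$ is nonstandard compact, Theorem~\ref{noco} yields a standard $\psi_{0}$ satisfying \eqref{straffenhendrik}, so for every $x\in X$ and every $M\in\Omega$ there is $i<|\psi_{0}(M)|$ with $x\approx\psi_{0}(M)(i)$; nonstandard uniform continuity of $f$ then gives $f(x)\approx f(\psi_{0}(M)(i))$, and after the harmless re-indexing making $i\leq M$ we have $f(\psi_{0}(M)(i))\lessapprox\max_{j\leq M}[f(\psi_{0}(M)(j))](M)=\sup_{X}(f,\psi_{0},M)$, whence $f(x)\lessapprox\sup_{X}(f,\psi_{0},M)$, which is the consequent of $\CSU_{\ns}$. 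Crucially this derivation uses only \eqref{straffenhendrik}, and hence — tracing back through the proof of Theorem~\ref{noco} — only the internal grid condition \eqref{straffenbeukje} together with the standardness of the grid; this is exactly why $\CSU_{\ef}(t)$ can be phrased using the grid $h$ of \eqref{amaai} rather than an abstract $\psi_{0}$.

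Next I would put everything into normal form. Exactly as in Theorem~\ref{varou}, resolving `$\approx$' in the nonstandard uniform continuity of $f$ on $X$ and applying (the contraposition of) $\textsf{I}$ and then $\HAC_{\INT}$ yields $(\exists^{\st}g)(\forall^{\st}k)A(g,k,f,X)$ with $A$ the internal formula $(\forall x,y\in X)(|x-y|_{X}\leq\tfrac1{g(k)}\di|f(x)-f(y)|\leq\tfrac1k)$; the compactness hypothesis, replaced by \eqref{straffenbeukje}, is already of the form $(\exists^{\st}h)(\forall^{\st}k)C(h,k,X)$ with $C$ internal. For the consequent, resolve `$\lessapprox$' (reading $f(x)\lessapprox z$ as $(\forall^{\st}n)(f(x)\leq z+\tfrac1n)$), pull $(\forall^{\st}n)$ to the front, and express $\sup_{X}(f,\psi_{0},M)$ directly through the grid $h$ as licensed by the first paragraph; the remaining block is then a purely universal statement over all $M\in\Omega$, which underspill — the special case of Theorem~\ref{necess}, cf.\ \eqref{overps} — converts into a normal form $(\forall^{\st}n)(\exists^{\st}K)(\forall M\geq K)E(n,M,f,h)$, where the discrete maximum in $E$ is computed from the iterated grid built out of $h$. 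The only point requiring care here is the mild monotonicity in $M$ needed for Theorem~\ref{necess}, which holds because refining the grid only increases this discrete maximum.

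Then I would fold the implication $[(\exists^{\st}g)(\forall^{\st}k)A\wedge(\exists^{\st}h)(\forall^{\st}k')C]\di(\forall^{\st}n)(\exists^{\st}K)(\forall M\geq K)E$ into a single normal form $(\forall^{\st}g,h,n)(\exists^{\st}\underline{w})(\forall f,X)[\,\cdots\,]$, moving all standard quantifiers to the front and applying (the contraposition of) idealisation, precisely as in the passage from \eqref{second2} to \eqref{second23}. Corollary~\ref{consresultcor} then extracts a closed term $t$ with $\textup{\textsf{E-PRA}}^{\omega*}\vdash(\forall g,h,n)(\exists\underline{w}\in t(g,h,n))(\forall f,X)[\,\cdots\,]$; taking componentwise maxima of the finite sequences as in Remark~\ref{simply}, splitting $t$ into the component $t(g,h,n)(1)$ that bounds $N$ and the component $t(g,h,n)(2)$ that records the grid data needed to form $\sup_{X}$, and pushing the surviving internal quantifiers back inside, yields $\CSU_{\ef}(t)$.

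The step I expect to be the main obstacle is the treatment of the consequent in the second paragraph: unlike $\CRI_{\ns}$ or $\ULC_{\ns}$, the consequent of $\CSU_{\ns}$ has no obvious normal form because it quantifies universally over all infinite $M$, so Theorem~\ref{necess}/underspill must be invoked; and one must verify that what replaces the (standard but implicitly $h$-dependent) $\psi_{0}$ can be realised as an honest finite grid constructed from $h$ alone — so that the extracted $t(g,h,n)(2)$ is a legitimate second argument of $\sup_{X}$ — rather than as data genuinely depending on a nonstandard $M$. Everything else is the by-now-routine bookkeeping of $\CI$, entirely parallel to Theorems~\ref{varou}, \ref{floggen}, and \ref{floggen4}.
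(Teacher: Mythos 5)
Your proposal is correct and follows essentially the same route as the paper: prove $\CSU_{\ns}$ from the weakening \eqref{straffenhendrik} rather than full nonstandard compactness, replace the compactness hypothesis by the grid normal form \eqref{straffenbeukje}/\eqref{frik}, realise $\psi_{0}$ as a term in $h$ (the paper's \eqref{derfing2}), and then run the implication through $\CI$, handling the $(\forall M\in\Omega)$ in the consequent by the underspill device of Theorem~\ref{necess}. The "main obstacle" you flag is resolved exactly as you anticipate: the paper extracts a term $s$ with $\psi_{0}=s(h)$ from the proof of Theorem~\ref{noco}, which is what makes $t(g,h,n)(2)$ a legitimate second argument of $\sup_{X}$.
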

\begin{proof}
The proof of $\CSU_{\ns}$ is straightforward in light of \eqref{straffenhendrik}.  Indeed, the latter and nonstandard uniform continuity guarantee that all $f(x)$ are at most infinitesimally larger than the supremum from $\CSU_{\ns}$.      
Note again that we did not use nonstandard compactness, but that \eqref{straffenhendrik} suffices.  

\smallskip

For the second part of the theorem, $(\forall x\in X)(\exists^{\st}y\in X)(x\approx y)$ implies 
\[\textstyle
(\forall ^{\st}k)(\exists^{\st}y'\in X^{*})(\forall x\in X)(\exists y\in y')(|x- y|\leq\frac1k)
\]
as in the proof of Theorem \ref{noco}.  Using $\HAC_{\INT}$, it is now immediate that 
\be\label{frik}\textstyle
(\exists^{\st}h)(\forall^{\st} k)(\forall  x\in X)(\exists i< |h( k)|)(|x-h(k)(i)|_{X}\leq \frac{1}{k}),  
\ee
which implies \eqref{straffenhendrik} in the same way as in Theorem \ref{noco}.  As noted in this proof and just below $\CSU_{\ns}$, we can replace nonstandard compactness in $\CSU_{\ns}$ by \eqref{frik} and obtain that for $f$ which is nonstandard uniformly continuous on $X$, we have
\begin{align}\textstyle
  (\exists^{\st}h)(\forall^{\st} k)(\forall  x\in X)&  \textstyle(\exists i< |h( k)|)(|x-h(k)(i)|_{X}\leq \frac{1}{k})     \big]\label{derfing}\\
&\textstyle\di (\forall x\in X, M\in \Omega)[f(x)\lessapprox \sup_{X}(f, \psi_{0},M)],\notag
\end{align}
as $\psi_{0}$ can also be obtained from \eqref{frik} by Theorem \ref{noco}.  
Moreover, in light of the proof of Theorem~\ref{noco}, there is a term $s$ such that for all standard $h$ and $f$ which is nonstandard uniformly continuous on $X$, we have
\begin{align}\textstyle
    (\forall^{\st} k)(\forall  x\in X)(\exists i< |h( k)|)&\textstyle(|x-h(k)(i)|_{X}\leq \frac{1}{k})     \big]\label{derfing2}\\
&\textstyle\di (\forall x\in X, M\in \Omega)[f(x)\lessapprox \sup_{X}(f, s(h),M)].\notag
\end{align}
The rest of the proof is now a straightforward application of $\CI$.
\end{proof}
It is important to note that the proof hinges on the fact that nonstandard compactness can be replaced in $\CSU_{\ns}$ by \eqref{frik} to obtain \eqref{derfing} and \eqref{derfing2}.  
We discuss this observation in the following remark.
\begin{rem}[Multiple normal forms]\label{impo}\rm
In the theorems in the previous sections, the conversion of a nonstandard definition to the `effective version' (of continuity, Riemann integration, et cetera) was such that 
\emph{the latter still implied the former}.  As an example, consider nonstandard continuity as in \eqref{soareyou4} which gives rise to the normal form \eqref{first4} and the effective version \eqref{first5}, and \emph{the latter two still imply the former}.  
However, as noted right below Theorem \ref{noco}, \eqref{straffenhendrik}
does not seem to imply nonstandard compactness.  \emph{Nonetheless}, \eqref{frik} does imply \eqref{straffenhendrik}, and the latter suffices for the consequent of $\CSU_{\ns}$, i.e.\ we still obtain \eqref{derfing} and \eqref{derfing2}.  By contrast, it seems the results in \cite{samGH}*{\S4.1} and Section \ref{snarkie2} cannot be obtained with anything weaker than the nonstandard compactness of Cantor space as in \eqref{fanns}.  
\end{rem}
Furthermore, as above, it is straightforward to prove a version of the previous theorem inside $\H$, i.e.\ the associated results also hold constructively\footnote{Note that we \emph{could} apply $\NCR$ to the definition of nonstandard compactness $(\forall x\in X)(\exists^{\st} y\in X)(x\approx y)$ directly \emph{without} removing `$\approx$', but our conservation results will not apply due to the presence of the latter.  Hence, we also apply $\NCR$ to \eqref{flikko} in the constructive case.}.  
On a related note, consider the condition \eqref{amaai} which originates naturally from nonstandard compactness.  The non-uniform version of \eqref{amaai} is:  
\be\label{frier}\textstyle
(\forall  k)(\exists (x_{1}, \dots x_{n})\in X^{*})(\forall  x\in X)(\exists i< n)(|x-x_{i}|_{X}\leq \frac{1}{k}),
\ee
which is nothing more than the notion of \emph{totally boundedness}, the preferred notion of compactness in constructive mathematics (see \cite{bridge1}*{\S4} or \cite{bridges1}*{\S2.2} for a discussion) and computable analysis (see \cite{brapress}*{\S4}).  
Thus, $\CSU_{\ef}(t)$ is essentially \cite{bridge1}*{Cor.\ 4.3, p.~94} with the BHK-interpretation of constructive mathematics (see e.g.\ \cite{bridges1}*{p.\ 8}) made explicit.  
Furthermore, our effective version \eqref{straffenbeukje} corresponds to the notion of \emph{effectively totally bounded} (and the associated compactness) from Computability theory (see e.g.\ \cite{yasugi1}*{Def.\ 3.1} or \cite{omoridesu}*{Def.\ 2.6}).
As it turns out, both the effective \eqref{amaai} and non-effective version \eqref{frier} are studied in computable analysis, as is clear from \cite{brapress}*{Theorem 4.10}.  
  
\smallskip

In conclusion, the constructive and computable notion of compactness \emph{totally boundedness} is implicit in nonstandard compactness, 
and we rediscover the former from the latter by studying $\CSU$ using the template $\CI$.  
However, since we used the weakening \eqref{straffenbeukje} of nonstandard compactness, a Hebrandisation of $\CSU_{\ns}$ cannot be obtained using totally boundedness (see also Section \ref{snarkie2}).      
\subsubsection{Compactness and continuity II}\label{cont2}
In this section, we study another theorem regarding nonstandard compactness, namely the statement $\textsf{CCI}$ that \emph{for a continuous function $f$ from a compact metric space $X$ to a metric space $Y$, $f(X)=\{y: (\exists x\in X)(f(x)=y) \}$ is compact} (see \cite{munkies}*{26.5} or \cite{rudin}*{4.14}).  As in Section \ref{cont1}, we obtain totally boundedness from nonstandard compactness by applying $\CI$.    

\smallskip

First of all, with regard to notation, while the set $f(X)$ may not be defined in $\P_{0}$, the formula $y\in f(X)\equiv (\exists x )(f(x)=_{Y}y)$ makes perfect sense for any $f:X\di Y$.  
Also, (uniform) nonstandard continuity on a metric space $X$ is defined as in Definition~\ref{Kont} with `$[0,1]$' replaced by $X$ (Recall the sixth item of Notation \ref{keepintireal}).

\smallskip

Secondly, the nonstandard and effective versions of $\CCI$ are as follows.  
\begin{thm}[$\CCI_{\ns}$]
For nonstandard uniformly continuous $f:X\di Y$\textup{:}
\begin{align}
(\forall x\in X)(\exists^{\st}x'\in X)(x\approx x') \di  (\forall y\in f(X))(\exists^{\st}y'\in f(X))(y\approx y')\label{forlorn} 
\end{align}  
\end{thm}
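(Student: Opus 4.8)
The plan is to unwind the nonstandard definitions and reproduce, in essentially one line, the classical argument that the continuous image of a compact set is compact. First I would fix $y\in f(X)$, i.e.\ $(\exists x\in X)(f(x)=_{Y}y)$, and pick such an $x$ (which may well be nonstandard). Applying the nonstandard compactness hypothesis $(\forall x\in X)(\exists^{\st}x'\in X)(x\approx x')$ to this $x$ yields a standard $x'\in X$ with $x\approx x'$. Next I would invoke nonstandard uniform continuity of $f$ (Definition~\ref{Kont}, with $[0,1]$ replaced by $X$): from $x\approx x'$ it gives $f(x)\approx f(x')$, where $\approx$ on $Y$ is read via the metric as in Definition~\ref{keepintireal}. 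By transitivity of $\approx$ we obtain $y=f(x)\approx f(x')$, and since $x'\in X$ we have $f(x')\in f(X)$; hence $y':=f(x')$ is the required witness, giving $(\exists^{\st}y'\in f(X))(y\approx y')$, which is the conclusion \eqref{forlorn}.

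The only delicate point — and the one spot where the standardness bookkeeping actually does work — is the claim that $y'=f(x')$ is \emph{standard}. This is exactly where one uses that $f$ (and $X$) are standard, which, as in the other case studies, is the intended reading here: item~(3) of the schema $\T^{*}_{\st}$ in Definition~\ref{debs}, namely $\st(f)\wedge\st(x')\di\st(f(x'))$, turns $\st(x')$ into $\st(y')$. (If instead one insists on leaving $f$ genuinely arbitrary, then $\CCI_{\ns}$ as stated fails, and one must weaken the conclusion to, say, the existence of a standard finite $\eps$-grid on $f(X)$ as in \eqref{straffenhendrik} — which is precisely the route taken for $\CSU$ in Section~\ref{cont1}.)

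Finally, I would check that this reasoning lives inside $\P_{0}$ (and, being intuitionistically valid, inside $\H$): it uses only first-order logic, the $\T^{*}_{\st}$-axioms, and the nonstandard definitions of continuity and compactness; no instance of Idealisation or $\HAC_{\INT}$ is needed for $\CCI_{\ns}$ itself. I do not expect a genuine obstacle in proving the statement — it is almost a triviality in the nonstandard setting, which is rather the point. The real work comes afterwards, when the conclusion is put into a normal form suitable for term extraction via Corollary~\ref{consresultcor}: as in Theorems~\ref{noco} and~\ref{floggen31}, resolving $\approx$ and applying $\textsf{I}$ and $\HAC_{\INT}$ will again trade nonstandard compactness of $X$ for totally boundedness of $f(X)$, and that normal-form computation is where I would focus attention.
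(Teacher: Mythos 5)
Your proof is correct and is essentially the paper's: the paper disposes of $\CCI_{\ns}$ in one line as ``a straightforward combination of nonstandard compactness and continuity'' (citing Loeb--Wolff), which is exactly the argument you spell out — take a preimage $x$ of $y$, pass to a standard $x'\approx x$ by nonstandard compactness, get $f(x)\approx f(x')$ by nonstandard uniform continuity, and obtain $\st(f(x'))$ via item (3) of $\T^{*}_{\st}$. Your caveat that this needs $f$ (and $X$) standard, with the fallback to the totally-bounded weakening as in \eqref{sjokko} otherwise, is well taken: the paper's statement leaves the standardness of $f$ implicit, and its subsequent term-extraction proof indeed works with that weakened consequent rather than with \eqref{forlorn} literally.
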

\begin{thm}[$\CCI_{\ef}(t)$] For any $f:X\di Y$ with modulus of uniform continuity $g$, and any $h:0\di X^{*}$, we have
\begin{align}\textstyle
  \textstyle   (\forall k,  x\in X)(\exists i&\textstyle< |h( k')|)(|x-h(k')(i)|_{X}\leq \frac{1}{k})   \notag   \\
  &\textstyle\di  (\forall k',  y\in f(X))(\exists i< |t(g,h)( l)|)(|y-t(g,h)(k')(i)|_{Y}\leq \frac{1}{k'}) .\notag
\end{align}
\end{thm}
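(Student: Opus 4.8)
\textbf{Proof proposal for Theorem \ref{floggen31}-style result ($\CCI$).}
The plan is to follow exactly the template $\CI$ from Section \ref{detail}, paralleling the proof of Theorem \ref{floggen31} for $\CSU$. First I would prove $\CCI_{\ns}$ inside $\P_{0}$. Assume $f:X\di Y$ is nonstandard uniformly continuous and $X$ is nonstandard compact, i.e.\ $(\forall x\in X)(\exists^{\st}x'\in X)(x\approx x')$. Fix any $y\in f(X)$, so $y=_{Y}f(x_{0})$ for some $x_{0}\in X$. By nonstandard compactness of $X$ there is standard $x_{0}'\in X$ with $x_{0}\approx x_{0}'$; by nonstandard uniform continuity of $f$ we get $f(x_{0})\approx f(x_{0}')$, hence $y\approx f(x_{0}')$. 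Since $x_{0}'$ is standard, $y':=f(x_{0}')$ is standard (using $\st(f)$ and the clause $\st(f)\wedge\st(x)\di\st(f(x))$ of $\T^{*}_{\st}$), and $y'\in f(X)$. This proves $(\forall y\in f(X))(\exists^{\st}y'\in f(X))(y\approx y')$, i.e.\ the consequent of \eqref{forlorn}.

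Next I would bring $\CCI_{\ns}$ into normal form. As in the proof of Theorem \ref{noco}, from the antecedent of \eqref{forlorn} one obtains, via resolving `$\approx$', idealisation $\textsf{I}$, and $\HAC_{\INT}$, a standard $h$ with $(\forall^{\st}k)(\forall x\in X)(\exists i<|h(k)|)(|x-h(k)(i)|_{X}\leq\frac1k)$, and moreover this already suffices (via overspill, exactly as in Theorem \ref{noco}) to obtain the weak nonstandard-compactness form \eqref{straffenhendrik} for $X$. I would then observe, as in the remark following $\CSU_{\ns}$ and in Remark \ref{impo}, that the proof of the consequent of $\CCI_{\ns}$ only uses \eqref{straffenhendrik} for $X$, \emph{not} full nonstandard compactness: from a grid $\psi_{0}(M)$ on $X$ that comes infinitely close to every point of $X$, and nonstandard uniform continuity of $f$, the points $f(\psi_{0}(M)(i))$ form a grid on $f(X)$ coming infinitely close to every $y\in f(X)$. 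Dualising, resolving `$\approx$' in the consequent and pushing standard quantifiers out, the consequent of $\CCI_{\ns}$ yields a normal form $(\forall^{\st}k')(\exists^{\st}w\in Y^{*})(\forall y\in f(X))(\exists i<|w|)(|y-w(i)|_{Y}\leq\frac1{k'})$. Combining the (standard) grid-data $h$ for $X$ in the antecedent with this, and applying $\textsf{I}$ and $\HAC_{\INT}$ as in Theorems \ref{varou} and \ref{floggen31}, one reaches a normal form of the shape $(\forall^{\st}g,h,k')(\exists^{\st}w)(\forall f,x,y)(\dots)$ with internal matrix, where all data about $X$ enters only through $h$.

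Finally I would apply Corollary \ref{consresultcor} (in the $\P_{0}$/$\textsf{E-PRA}^{\omega*}$ form of Corollary \ref{consresultcor2}) to this normal form, extracting a term $t$; taking pointwise maxima of the finite sequences of witnesses as in Remark \ref{simply}, and pushing quantifiers back inside using the monotone structure of the matrix, one arrives exactly at $\CCI_{\ef}(t)$. The constructive variant in $\H$ follows as in Corollaries \ref{cordejedi} and \ref{cordejedi2}, using $\textsf{HGMP}^{\st}$, $\textsf{HIP}_{\forall^{\st}}$ and $\NCR$ in place of $\textsf{I}$, and Theorem \ref{consresult2} for term extraction. The main obstacle — and the only genuinely non-routine point — is the same as in Theorem \ref{floggen31}: verifying carefully that $\CCI_{\ns}$ can be proved using only the \emph{weakened} form \eqref{straffenhendrik} of nonstandard compactness of $X$ (rather than $(\forall x\in X)(\exists^{\st}y\in X)(x\approx y)$ itself), since only the weak form has an effective normal form; this is exactly the phenomenon of `multiple normal forms' discussed in Remark \ref{impo}, and everything else is bookkeeping with $\textsf{I}$, $\HAC_{\INT}$, overspill, and term extraction.
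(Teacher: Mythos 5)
Your proposal follows essentially the same route as the paper's proof of Theorem \ref{floggen34}: prove $\CCI_{\ns}$, replace the nonstandard compactness of $f(X)$ in the consequent by its totally-bounded normal form (trivial by Theorem \ref{noco}), replace the nonstandard compactness of $X$ in the antecedent by the weakened form \eqref{sjokko2} — the one non-trivial step, justified exactly as you say by the effective modulus \eqref{first5} extracted from nonstandard uniform continuity, so that the grid on $X$ is mapped to a grid on $f(X)$ — and finally apply $\CI$ via Corollary \ref{consresultcor}. The only cosmetic difference is that you prove $\CCI_{\ns}$ directly (tacitly reading $f$ as standard via $\T^{*}_{\st}$, which the paper's appeal to \cite{loeb1}*{10.11} also presupposes) rather than citing the literature, so your argument is correct and matches the paper's.
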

\begin{thm}\label{floggen34}
From the proof of $\CCI_{\ns}$ in $\P_{0} $, a term $t$ can be extracted such that $\textup{\textsf{E-PRA}}^{\omega*} $ proves $\CCI_{{\ef}}(t)$.  
\end{thm}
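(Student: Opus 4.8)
The plan is to follow exactly the blueprint established by Theorems~\ref{floggen31} and \ref{floggen}, adapting the two-conjunct structure (a compactness condition on $X$ and a continuity condition on $f$) to the fact that the conclusion is now itself a compactness statement about $f(X)$. First I would establish $\CCI_{\ns}$ in $\P_{0}$: given the hypothesis $(\forall x\in X)(\exists^{\st}x'\in X)(x\approx x')$, apply Theorem~\ref{noco} to obtain a standard $h$ (equivalently $\psi_{0}$ as in \eqref{straffenhendrik}) giving an infinitesimal grid on $X$; then for $y\in f(X)$ pick $x$ with $f(x)=_{Y}y$, pick a grid point $\psi_{0}(M)(i)\approx x$ for infinite $M$, and use nonstandard uniform continuity of $f$ to conclude $f(\psi_{0}(M)(i))\approx f(x)=y$. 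Since the $\psi_{0}(M)(i)$ for $i<|\psi_{0}(M)|$ are finitely many standard-indexed objects whose images lie in $f(X)$, a suitable standard-part / underspill argument (exactly as in the passage from \eqref{maki} to \eqref{straffenhendrik}) yields a \emph{standard} element of $f(X)$ infinitely close to $y$, which is \eqref{forlorn}.

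Next I would extract the normal form. As in the proof of Theorem~\ref{floggen31}, I observe that nonstandard compactness of $X$ is not actually needed: the weaker normal form \eqref{frik}, i.e.\ $(\exists^{\st}h)(\forall^{\st}k)(\forall x\in X)(\exists i<|h(k)|)(|x-h(k)(i)|_{X}\leq \frac1k)$, already suffices to run the above argument (it produces $\psi_{0}$ via Theorem~\ref{noco}). So I would rephrase $\CCI_{\ns}$ as an implication whose antecedent is \eqref{frik} together with a modulus-of-uniform-continuity normal form for $f$ (obtained by resolving `$\approx$' and applying idealisation $\textsf{I}$ and $\HAC_{\INT}$, exactly as for \eqref{first5}), and whose consequent is the totally-bounded-type normal form for $f(X)$: $(\forall^{\st}k')(\forall y\in f(X))(\exists^{\st} z'\in f(X)^{*})(\exists z\in z')(|y-z|_{Y}\leq \frac{1}{k'})$, which by idealisation and $\HAC_{\INT}$ becomes $(\exists^{\st}H)(\forall^{\st}k')(\forall y\in f(X))(\exists i<|H(k')|)(|y-H(k')(i)|_{Y}\leq\frac{1}{k'})$. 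Bringing all standard quantifiers to the front — trivial in $\P_{0}$ by the conventions of Remark~\ref{simply} — puts the whole statement into the form $(\forall^{\st}\underline{x})(\exists^{\st}\underline{y})\psi$ required by Corollary~\ref{consresultcor}.

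Then I would apply Corollary~\ref{consresultcor2} (for $\P_{0}$ and $\RCAo$, hence for $\textup{\textsf{E-PRA}}^{\omega*}$) to the proof of this normal form, obtaining a sequence of closed terms; taking pointwise maxima of the finite sequences of witnesses (again as in Remark~\ref{simply}) and pushing the internal quantifiers back inside yields precisely $\CCI_{\ef}(t)$ for the resulting term $t$, which depends only on $g$ and on the representation $h$ of $X$, not on $f$ itself (since $g,h$ are the standard parameters and $f$ is internal). I expect the main obstacle to be the first step: correctly handling the passage from the finitely-many grid images $f(\psi_{0}(M)(i))$ to a genuine \emph{standard} point of $f(X)$ infinitely close to a given $y\in f(X)$, since $f(X)$ is only given implicitly as $\{y:(\exists x)f(x)=_{Y}y\}$ and one must be careful that the standard part supplied by $\Omega\textsf{-CA}$ / underspill actually lands in $f(X)$ rather than merely in $Y$ — this is the analogue of the delicate redefinition of $\psi_{0}$ at the end of the proof of Theorem~\ref{noco}, and everything downstream is the by-now-routine normal-form-plus-term-extraction machinery.
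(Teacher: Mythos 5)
Your second and third paragraphs reproduce the paper's actual argument: the compactness statement in the consequent of $\CCI_{\ns}$ is weakened to the totally-boundedness normal form \eqref{sjokko} (a trivial replacement, by Theorem \ref{noco}), the compactness hypothesis on $X$ is replaced by the weaker \eqref{sjokko2} (equivalently your \eqref{frik}), and the resulting weakened-to-weakened implication --- provable in $\P_{0}$ because nonstandard uniform continuity yields the effective modulus as in \eqref{first5}, so that the $f$-image of a grid on $X$ at scale $1/g(k')$ is a grid on $f(X)$ at scale $1/k'$ --- is brought into normal form and fed to Corollary \ref{consresultcor}, giving a term $t(g,h)$ not depending on $f$. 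So the term-extraction half of your plan is essentially the paper's proof.

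The gap is in your first step, and it sits exactly where you suspected. From the grid $\psi_{0}(M)$ you cannot recover a \emph{standard} element of $f(X)$ infinitely close to $y$: the passage from \eqref{maki} to \eqref{straffenhendrik} is an overspill argument producing a grid at an infinite index, not a standard-part extraction, and the paper stresses (right after Theorem \ref{noco} and in Remark \ref{impo}) that \eqref{straffenbeukje}/\eqref{straffenhendrik} is a genuine weakening of nonstandard compactness precisely because it no longer supplies nearby standard points. Hence the ``suitable standard-part/underspill argument'' you invoke does not exist at that spot. The repair is to keep the two roles separate: to prove \eqref{forlorn} itself, do not go through the grid at all --- given $y=f(x)$, take the standard $x'\in X$ with $x\approx x'$ provided by the hypothesis and apply \eqref{soareyou4} to get $f(x')\approx y$ with $f(x')\in f(X)$; this is the ``straightforward combination of nonstandard compactness and continuity'' the paper cites from Loeb. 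The grid is only needed for the weakened implication used for extraction, where no standard point is required (this is why the paper performs the two replacements rather than re-proving the original $\CCI_{\ns}$ from the weak antecedent). With that correction your proposal goes through and coincides with the paper's proof.
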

\begin{proof}
The proof of $\CCI_{\ns}$ is a straightforward combination of nonstandard compactness and continuity (see \cite{loeb1}*{10.11}).  For the second part, we replace nonstandard compactness \emph{in the consequent} of $\CCI_{\ns}$ 
by the following version of \eqref{straffenbeukje}.  
\be\label{sjokko}\textstyle
(\forall^{\st} l)(\exists^{\st}w\in (f(X))^{*})(\forall  y\in f(X))(\exists i< |w|)(|y-w(i)|_{Y}\leq \frac{1}{l}).
\ee
This replacement is trivial by Theorem \ref{noco}.  In the resulting formula, replace the nonstandard compactness \emph{in the antecedent} by the following version of \eqref{sjokko}:
\be\label{sjokko2}\textstyle
(\forall^{\st} k)(\exists^{\st}z\in X^{*})(\forall  x\in X)(\exists i< |z|)(|x-z(i)|_{X}\leq \frac{1}{k}).
\ee
This replacement is not trivial, but nevertheless correct in $\P_{0}$, as nonstandard uniform continuity \eqref{soareyou4} implies `effective' uniform continuity as in \eqref{first5}.  
The version of $\CCI_{\ns}$ obtained by the replacements \eqref{sjokko} and \eqref{sjokko2} now readily gives rise to $\CCI_{\ef}(t)$ using $\CI$.  
\end{proof}
As in the previous section, nonstandard compactness gives rise to totally boundedness in a natural way.  
As perhaps expected, $\CCI_{\ef}(t)$ is a theorem of Constructive Analysis, namely \cite{bridges1}*{Prop.\ 2.2.6}.  

\smallskip

Finally, the Herbrandisation of $\CCI_{\ns}$ can be obtained by using \eqref{kalkuttttt} for the two occurrences of nonstandard compactness in \eqref{forlorn}, rather than weakening the latter as in the previous proof.  A special case will be discussed in Section \ref{snarkie2}.

\subsubsection{Compactness and continuity III}\label{dinki}  
In this section, we study Heine's theorem, i.e.\ the statement that \emph{a continuous function
is uniformly continuous on a compact set}.  To this end, we formulate an equivalent normal form for the nonstandard compactness of the unit interval, which turns out to be quite similar to Heine-Borel compactness.  As a result of applying $\CI$, we obtain an explicit equivalence between Heine's theorem and (the contraposition of) weak K\"onig's lemma.   

\smallskip

First of all, we formulate a normal form of the nonstandard compactness of $[0,1]$.
\begin{princ}[$\HBL_{\ns}$]
For every sequence of open intervals $(c_{n}, d_{n})$, we have 
\begin{align}\label{HB}
(\forall^{\st}x\in [0,1])&(\exists^{\st}n)(c_{n}\ll x \ll d_{n})\\
&\di (\exists^{\st}k)(\forall^{\st}y\in [0, 1])(\exists i\leq k )(c_{i}\ll y \ll d_{i}).  \notag
\end{align}
\end{princ}
\begin{thm}\label{dickawadddd} In $\P_{0}$, we have $\STP\asa (\forall x\in [0,1])(\exists^{\st}y\in [0,1])\asa \HBL_{\ns}$.
This remains valid if we drop the `\st' in the final universal quantifier of $\HBL_{\ns}$.  
\end{thm}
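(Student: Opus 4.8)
The plan is to establish the chain of equivalences by proving two equivalences inside $\P_0$: first $\STP \asa (\forall x\in[0,1])(\exists^{\st}y\in[0,1])(x\approx y)$, and then the equivalence of the latter with $\HBL_{\ns}$. For the first equivalence, note that a real $x\in[0,1]$ is coded by a fast-converging Cauchy sequence of rationals $q_{(\cdot)}$, hence essentially by an element $\alpha^1 \leq_1 1$ (after suitable rescaling / binary expansion and using Kohlenbach's hat function); conversely every $\alpha\leq_1 1$ determines a real in $[0,1]$. One checks that $\alpha\approx_1\beta$ translates into $x\approx y$ for the corresponding reals (both say the first standardly-many bits/approximants agree up to the relevant precision), so nonstandard compactness of Cantor space and nonstandard compactness of the unit interval are inter-derivable in $\P_0$. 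The only mild care needed is that the map from reals to Cantor space is not quite injective (dyadic rationals have two expansions), but this does not obstruct the argument since we only need: given $x\in[0,1]$, produce a standard $y\in[0,1]$ with $x\approx y$, and this can be read off from a standard $\beta\leq_1 1$ with $\alpha\approx_1\beta$.

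For the equivalence $(\forall x\in[0,1])(\exists^{\st}y\in[0,1])(x\approx y) \asa \HBL_{\ns}$, I would argue as follows. For the forward direction, assume nonstandard compactness of $[0,1]$ and suppose the antecedent of $\HBL_{\ns}$ holds, i.e.\ $(\forall^{\st}x\in[0,1])(\exists^{\st}n)(c_n\ll x\ll d_n)$. Fix an arbitrary $x\in[0,1]$; by nonstandard compactness there is a standard $y\in[0,1]$ with $x\approx y$, and by the antecedent there is a standard $n$ with $c_n\ll y\ll d_n$; since $\ll$ is a $\gg/\ll$-style relation that is stable under $\approx$-perturbation (this is where $c_n\ll y$, meaning $c_n<y$ and $c_n\not\approx y$, gives $c_n\ll x$ as well, as $x\approx y$), we get $(\forall x\in[0,1])(\exists^{\st}n)(c_n\ll x\ll d_n)$. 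Now this is a $(\forall x)(\exists^{\st}n)$ statement with internal matrix after resolving $\ll$; apply the contraposition of idealisation $\textsf{I}$ to obtain a standard finite bound $k$ such that $(\forall x\in[0,1])(\exists i\leq k)(c_i\ll x\ll d_i)$, and in particular the restriction to standard $y$ gives the consequent of $\HBL_{\ns}$. This simultaneously proves the stronger version with `st' dropped in the final universal quantifier, since idealisation already delivered the internal $(\forall x)$ form. For the reverse direction, assume $\HBL_{\ns}$ and fix $x_0\in[0,1]$; I would apply $\HBL_{\ns}$ to a cleverly chosen sequence of intervals — for instance, enumerate all standard rational intervals $(c_n,d_n)$ avoiding $x_0$, more precisely those with $d_n \ll x_0$ or $c_n \gg x_0$ (so that no standard $y$ equal to $x_0$ is covered, but every standard $y$ with $y\not\approx x_0$ is covered by some standard interval); if no standard $y$ were infinitely close to $x_0$, the antecedent of $\HBL_{\ns}$ would hold, yielding a standard $k$ and finitely many intervals covering all standard $y\in[0,1]$, which (since these intervals are standardly bounded away from $x_0$) contradicts the fact that $x_0$ itself is a point of $[0,1]$ not covered — more carefully, one derives that $x_0$ lies in the complement of a standard finite union of open sets each standardly bounded away from it, forcing $x_0$ to be $\approx$ to one of the standardly-many boundary points, hence $\approx$ to a standard real.

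The main obstacle I expect is getting the bookkeeping around $\ll$ and $\approx$ exactly right, and in particular choosing the right sequence of intervals in the reverse direction so that the antecedent of $\HBL_{\ns}$ is provably false unless some standard $y$ is infinitely close to the given $x_0$. The relation $\ll$ is defined (Definition \ref{keepintireal}) by $x\gg y \equiv x>y \wedge x\not\approx y$, so $c_n\ll x$ means $c_n$ is \emph{strictly standardly below} $x$; the delicate point is that to run the covering argument one wants: (i) every standard $y\not\approx x_0$ is eventually covered by some standard interval in the enumeration, and (ii) the point $x_0$ is covered by \emph{none} of them — then a finite standard subcover of the standard points, being a finite union of sets each standardly bounded away from $x_0$, cannot contain $x_0$, yet $x_0\in[0,1]$, and one extracts from this (via overspill or the nonstandard topology of $[0,1]$) that $x_0$ must be $\approx$ to one of the finitely many standard endpoints, hence $\approx$ a standard real. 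Once this covering bookkeeping is set up, the rest is the routine application of $\textsf{I}$ and $\HAC_{\INT}$ already illustrated in Theorems \ref{varou} and \ref{noco}, and the `st'-dropping addendum is free because idealisation produces the internal universal form directly.
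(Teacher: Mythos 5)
Your treatment of $\STP\asa(\forall x\in[0,1])(\exists^{\st}y\in[0,1])(x\approx y)$ via binary expansions is essentially the paper's argument, and your direction ``nonstandard compactness $\di\HBL_{\ns}$'' is correct and in fact more direct than the paper's: transporting $c_n\ll y\ll d_n$ along $x\approx y$ and then applying the contraposition of idealisation $\textsf{I}$ yields a standard finite bound covering \emph{all} $x\in[0,1]$ at once, so the addendum about dropping `st' in the final universal quantifier comes for free, whereas the paper proves this direction and the addendum by separate contradiction arguments using $\HAC_{\INT}$ and overspill.

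The genuine gap is in $\HBL_{\ns}\di$ nonstandard compactness. First, the cover you propose, ``all standard rational intervals with $d_n\ll x_0$ or $c_n\gg x_0$'', is cut out by external conditions (`standard', and `$\ll$' relative to $x_0$), so it is not a sequence of the system, and $\HBL_{\ns}$ only applies to actual sequences $(c_n,d_n)$. The missing idea is an \emph{internal} definition of an avoiding cover: the paper takes a standard enumeration $q_i$ of the rationals in $[0,1]$ and the intervals $(q_i-f(i),q_i+f(i))$ with $f(i):=|[x_0](N)-q_i|/2$ for a fixed infinite $N$; this is internal (nonstandard parameters are permitted), and under the contradiction hypothesis that no standard point is $\approx x_0$ one has $f(i)\gg0$ for standard $i$, so every standard point is $\ll$-covered, while each interval with standard index stays a non-infinitesimal distance $f(i)$ away from $x_0$. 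Second, your extraction of the contradiction does not work as stated: the consequent of $\HBL_{\ns}$ only covers \emph{standard} points, so ``$x_0$ is not covered'' is no contradiction, and nothing forces $x_0$ to be $\approx$ a boundary point of the finite subcover. The actual contradiction is that the intervals with indices $\leq k_0$ nearest to $x_0$ from below and above leave a gap of non-infinitesimal length around $x_0$, and the \emph{standard rationals} in that gap are left uncovered, contradicting the consequent. (Relatedly, your remark that the antecedent should be ``provably false unless some standard $y$ is infinitely close to $x_0$'' is inverted: under the contradiction hypothesis the antecedent must be shown to \emph{hold}, so that the consequent can be exploited.)
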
      
\begin{proof}
First of all, assume \eqref{HB} and suppose that $[0,1]$ is not nonstandard compact, i.e.\ $(\exists x_{0}\in [0,1])(\forall^{\st}y\in [0,1])(y\not\approx x_{0})$.  Then let $x_{0}$ be such a number and 
let $q_{i}$ be a standard enumeration of all rationals in $[0,1]$.  Define $f(i)$ as $\big| [x_{0}](N)-q_{i} \big|/2$ for fixed nonstandard $N^{0}$.  By assumption, we have $f(i)\gg 0$ for standard $i$.       
Now consider the sequence $(q_{i}-f(i), q_{i}+f(i))$ and note that we have the antecedent of \eqref{HB}, i.e.\ this sequence standardly covers the unit interval.  
By the consequent of \eqref{HB}, there is finite $k_{0}$ such that $\cup_{i\leq k_{0}}(q_{i}-f(i), q_{i}+f(i))$ standardly covers the unit interval.  
Now fix $M\in \Omega$ and define $q_{i_{0}}$ (resp.\ $q_{i_{1}}$) as the largest $q_{i}\leq [x_{0}](M)$ (resp.\ smallest largest $q_{i}\geq [x_{0}](M)$) for $i\leq k_{0}$.  
By the definition of $f$, we have
\[
q_{i_{0}}\ll q_{i_{0}}+f(i_{0})\ll x_{0} \ll q_{i_{1}}-f(i_{1})\ll q_{i_{1}}.  
\]
However, then there are standard rationals between $q_{i_{0}}+f(i_{0})$ and $q_{i_{1}}-f(i_{1})$ which are not covered by $\cup_{i\leq k_{0}}(q_{i}-f(i), q_{i}+f(i))$, a contradiction.  
Hence, we may conclude that $[0,1]$ is nonstandard compact.   

\smallskip

Secondly, assume that $[0,1]$ is nonstandard compact and suppose \eqref{HB} is false, i.e.\ for some sequence we have the antecedent of the latter, but:  
\[
(\forall^{\st}k)(\exists^{\st}y\in [0, 1])(\forall i\leq k )(c_{i}\gtrapprox y \vee y  \gtrapprox d_{i}),  
\]
which immediately implies for any nonstandard $M$ that:
\[\textstyle
(\forall^{\st}k)(\exists^{\st}y\in [0, 1])(\forall i\leq k )([c_{i}](M)\geq_{0} [y](M)-\frac{1}{k} \vee [y](M)  \geq_{0} [d_{i}](M)-\frac1k).  
\]
Now apply $\HAC_{\INT}$ to obtain standard $\Phi$ such that 
\[\textstyle
(\forall^{\st}k)(\exists y\in \Phi(k))(\forall i\leq k )(y\in [0,1]\wedge  [c_{i}](M)\geq_{0} [y](M)-\frac{1}{k} \vee [y](M)  \geq_{0} [d_{i}](M)-\frac1k).  
\] 
Now apply overspill to the previous formula, i.e.\ there is nonstandard $K^{0}$ such that
\[\textstyle
(\forall k\leq K)(\exists y\in \Phi(k))(\forall i\leq k )(y\in [0,1]\wedge  [c_{i}](M)\geq_{0} [y](M)-\frac{1}{k} \vee [y](M)  \geq_{0} [d_{i}](M)-\frac1k).  
\] 
Thus, let $y_{0}$ be such that $(\forall i\leq K )(y_{0}\in [0,1]\wedge  [c_{i}](M)\geq_{0} [y_{0}](M)-\frac{1}{K} \vee [y_{0}](M)  \geq_{0} [d_{i}](M)-\frac1K)$, and let standard $x_{0}\in [0,1]$ be such that $x_{0}\approx y_{0}$.   
Combining these two facts, we observe that $x_{0}\in [0,1]$ is a standard real such that $(\forall^{\st}i)(c_{i}\gtrapprox x_{0}\vee x_{0}\gtrapprox d_{i})$, a contradiction.  
Hence, \eqref{HB} follows and the equivalence from the theorem has been proved.  

\smallskip

Thirdly, we show that \eqref{HB} generalises as in the last sentence of the theorem.  
To this end, assume \eqref{HB} for some sequence of opens $(c_{n}, d_{n})$, let $k_{0}$ be as in the consequent of \eqref{HB}, and suppose there is $y\in [0,1]$ such that 
$(\forall i\leq k_{0} )(c_{i}\gtrapprox y \vee y\gtrapprox    d_{i})$.  
In the previous formula, we cannot have $y\approx c_{i}$ or $y\approx d_{i}$;  indeed, the numbers $c_{i}, d_{i}$ for $i\leq k_{0}$ have a standard part by nonstandard compactness (which follows from \eqref{HB} as proved in the previous paragraph of this proof) and
such $c_{i}, d_{i}$ are standardly covered by $\cup_{i\leq k_{0}}(c_{i}, d_{i})$.  
Hence, we have $(\forall i\leq k_{0} )(c_{i}\gg y \vee y\gg    d_{i})$.
Now let $j_{0}$ be such that for all $j\leq k_{0}$ with $[c_{j}](M)- [y](M)\geq_{0} 0$, the number $[c_{j_{0}}](M)- [y](M)$ is minimal (where $M\in \Omega$ is fixed).  
Similarly, let $j_{1}$ be such that for all $j\leq k_{0}$ with $[d_{j}](M)- [y](M)\leq_{0} 0$, the number $ [y](M)- [d_{j_{1}}](M)$ is minimal.  
Again by standard compactness, we may assume $d_{j_{1}}$ and $c_{j_{0}}$ to be standard reals, and hence covered by $\cup_{i\leq k_{0}}(c_{i}, d_{i})$.  This implies
\[
[c_{j_{0}}](M)\approx c_{j_{0}} \ll y \ll d_{j_{1}} \approx [d_{j_{1}}](M), 
\]
i.e.\ there are standard numbers not covered by $\cup_{i\leq k_{0}}(c_{i}, d_{i})$ between $c_{j_{0}}$ and $d_{j_{1}}$, a contradiction.  Hence, the equivalence from the theorem remains valid if we drop the `\st' in the final universal quantifier of \eqref{HB}.    

\smallskip

Finally, for the equivalence involving $\STP$, every real has a binary representation by \cite{polahirst}*{Cor.\ 4}.  Hence, for every real $x\in [0,1]$, there is $\beta^{1}\leq_{1}1$ such that $x=_{\R}\sum_{i=1}^{\infty}\frac{\beta(i)}{2^{i}}$.  Apply $\STP$ to find standard $\alpha^{1}\leq_{1}1$ such that $\alpha\approx_{1}\beta$ and note that $x\approx y$ for the \emph{standard} real $y:=\sum_{i=1}^{\infty}\frac{\alpha(i)}{2^{i}}$.  Hence, $\STP$ implies the nonstandard compactness of $[0,1]$, and the reverse implication is proved in the same way.  
\end{proof}
Note that \eqref{HB} deals with \emph{arbitrary covers}, not just standard ones.  Although \eqref{HB} involves a number of standard quantifiers, it does have a straightforward normal form, as will become clear in the proof of Theorem \ref{floggen345}.     
The generalisation of the previous theorem to general metric spaces is straightforward (see also \cite{simpson2}*{IV.1.5}), but we do not go into details.    

\smallskip

The nonstandard and effective versions of Heine's theorem are as follows.  
\begin{thm}[$\HEI_{\ns}$]
For any $f:\R\di \R$, if $f$ is nonstandard continuous on $[0,1]$, then it is uniformly nonstandard continuous there.  
\end{thm}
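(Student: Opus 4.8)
The plan is to prove $\HEI_{\ns}$ inside $\P_{0}$ strengthened by the nonstandard compactness of the unit interval, i.e.\ $(\forall x\in[0,1])(\exists^{\st}y\in[0,1])(x\approx y)$, which by Theorem~\ref{dickawadddd} is just $\STP$ and also just $\HBL_{\ns}$. This is exactly the situation anticipated by step~\eqref{famouslastname} of the template $\CI$: when a nonstandard theorem is not provable in $\P_{0}$ alone one adds an appropriate external axiom of \textsf{IST} to its hypotheses, and Heine's theorem genuinely needs such a strengthening since it is $\WKL_{0}$-strength in ordinary Reverse Mathematics. This is also why the normal form $\HBL_{\ns}$ and Theorem~\ref{dickawadddd} were set up first: $\HBL_{\ns}$ is the `Heine--Borel' avatar of nonstandard compactness that matches Heine's theorem, and the reverse implication $\HEI_{\ns}\di\HBL_{\ns}$ will later supply the other half of the promised equivalence with (the contraposition of) weak K\"onig's lemma.

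First I would record the elementary facts about $\approx$ on $\R$ that the argument rests on: using the characterisation $x\approx y\leftrightarrow(\forall^{\st}k)(|x-y|_{\R}\leq\frac{1}{2^{k}})$ equivalent to item~(4) of Definition~\ref{keepintireal}, one checks routinely that $\approx$ is reflexive, symmetric and transitive on $\R$ (transitivity uses only that the standard numbers are closed under successor). I would also note that any $f:\R\di\R$ respects $=_{\R}$ by the representation axiom \textsf{RE} of Definition~\ref{keepintireal}, so that the expressions $f(x),f(y),f(z)$ appearing below are well defined up to $=_{\R}$ and the $\approx$-manipulations on them are legitimate; in particular $f$ itself need not be standard, which is harmless.

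The core argument is then the textbook nonstandard proof of Heine's theorem. Assume $f$ is nonstandard continuous on $[0,1]$, i.e.\ \eqref{soareyou3}, and fix arbitrary $x,y\in[0,1]$ with $x\approx y$; one must show $f(x)\approx f(y)$, which is \eqref{soareyou4}. By the nonstandard compactness of $[0,1]$, choose a \emph{standard} $z\in[0,1]$ with $x\approx z$; since $\approx$ is an equivalence relation on $\R$ and $x\approx y$, also $z\approx y$. Now apply \eqref{soareyou3} at the standard point $z$: from $z\approx x$ with $x\in[0,1]$ we obtain $f(z)\approx f(x)$, and from $z\approx y$ with $y\in[0,1]$ we obtain $f(z)\approx f(y)$; hence $f(x)\approx f(y)$ by symmetry and transitivity of $\approx$. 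This is precisely \eqref{soareyou4}, so $f$ is nonstandard uniformly continuous on $[0,1]$.

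The argument itself is trivial; the only real content — and hence the main obstacle — is identifying and justifying the right strengthening of $\P_{0}$. The single non-elementary step is the passage to a standard part of $x$, which is exactly where $\STP$ (equivalently $\HBL_{\ns}$) is used, and it is indispensable. Indeed, the reversal $\HEI_{\ns}\di\HBL_{\ns}$ should follow by a dual argument, so that $\HEI_{\ns}\asa\HBL_{\ns}$, and running this (short) equivalence through $\CI$ is what will yield the advertised \emph{explicit} Reverse Mathematics equivalence between Heine's theorem (now carrying a term witnessing a modulus of uniform continuity) and the contraposition of weak K\"onig's lemma. A secondary point to watch throughout is that all $\approx$-reasoning remains compatible with $=_{\R}$ and \textsf{RE}, since $f$ is allowed to be nonstandard.
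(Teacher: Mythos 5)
Your proof is correct and follows the paper's own route: Theorem \ref{floggen345} proves $\HEI_{\ns}$ in $\P_{0}+\STP$ precisely by invoking Theorem \ref{dickawadddd} to get the nonstandard compactness of $[0,1]$, i.e.\ the standard-part argument you spell out (take standard $z\approx x$, apply pointwise nonstandard continuity at $z$ to both $x$ and $y$, then use transitivity of $\approx$). Your identification of $\STP$/$\HBL_{\ns}$ as the needed strengthening and the later reversal matches Corollary \ref{fralgyy} and the subsequent term-extraction results.
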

\begin{thm}[$\HEI_{\ef}(t)$]
For $f, g:\R\di \R$, if $g$ is a modulus of continuity for $f$ on $[0,1]$, then $t(g)$ is a modulus of uniform continuity for $f$ on $[0,1]$.
\end{thm}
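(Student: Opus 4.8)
The plan is to run Heine's theorem through the template $\CI$ of Section \ref{detail}, producing — as the $\MCT$-analysis in Theorem \ref{sef} did for $\ACA_{0}$ — an explicit equivalence between $\HEI_{\ef}$ and the appropriate functional form of $\WKL_{0}$, of which the statement $\HEI_{\ef}(t)$ above is the `forward' half. Since $\HEI_{\ns}$ sits at the strength of $\WKL_{0}$ and so is not provable in $\P_{0}$ alone, step \eqref{famouslastname} of $\CI$ tells us to add to its hypotheses a fragment of Nelson's \emph{Standard Part}, namely the nonstandard compactness of $[0,1]$, which by Theorem \ref{dickawadddd} is provably equivalent in $\P_{0}$ to both $\STP$ and $\HBL_{\ns}$. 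First I would check that $\P_{0}$ proves $\HBL_{\ns}\asa\HEI_{\ns}$. The implication $\HBL_{\ns}\di\HEI_{\ns}$ is easy: assuming $f$ nonstandard continuous on $[0,1]$ and fixing arbitrary $x,y\in[0,1]$ with $x\approx y$, Theorem \ref{dickawadddd} yields a standard $z\in[0,1]$ with $x\approx z$, hence also $y\approx z$ by the triangle inequality for `$\approx$', and nonstandard continuity \emph{at the standard point $z$} gives $f(x)\approx f(z)\approx f(y)$. For $\HEI_{\ns}\di\HBL_{\ns}$ one argues contrapositively, fabricating from a point of $[0,1]$ with no standard real infinitely close a nonstandard continuous function that fails to be nonstandard uniformly continuous, modelled on the device — based on the standard enumeration of the rationals together with overspill — already used in the proof of Theorem \ref{dickawadddd}. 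Both directions adapt to $\H$ as the earlier constructive variants do.

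Next I would put the ingredients into normal form. The antecedent of $\HEI_{\ns}$, nonstandard continuity, is processed verbatim as in the passage culminating in \eqref{first5} (resolve `$\approx$', pull standard quantifiers out, apply $\textsf{I}$ and $\HAC_{\INT}$, then the `maximum of a finite sequence' step of Remark \ref{simply}), yielding $(\exists^{\st}g)(\forall^{\st}x\in[0,1],k)A(g,x,k,f)$ with $A$ internal, i.e.\ `$g$ is a modulus of pointwise continuity of $f$'; the consequent, nonstandard uniform continuity, becomes $(\exists^{\st}h)(\forall^{\st}k)B(h,k,f)$ with $B$ internal, exactly as for $\CRI_{\ns}$ in the proof of Theorem \ref{varou}. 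And $\HBL_{\ns}$ itself is brought into normal form by unpacking each `$\ll$' into an internal inequality guarded by one standard quantifier, so that its antecedent and consequent both acquire the shape $(\forall^{\st}\cdots)(\exists^{\st}\cdots)\varphi$ with $\varphi$ internal; the implication then becomes a normal form $(\forall^{\st}g^{2})(\exists^{\st}w)(\forall T\leq_{1}1)(\cdots)$ by applying $\HAC_{\INT}$ to the antecedent and then $\textsf{I}$, exactly as in the passage from \eqref{fanns} to \eqref{frukkklk}, term extraction from which delivers the `special case' of the fan functional of \cite{samGH}*{Cor.~3.4}, which I denote $\Theta$.

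Now I would assemble the full normal form of each direction of $\HBL_{\ns}\asa\HEI_{\ns}$ via the `an implication of normal forms is again a normal form' manoeuvre — start from the innermost implication, push all standard quantifiers to the front, and apply $\textsf{I}$ (trivial in $\P_{0}$, handled by the axioms of Definition \ref{flah} in $\H$) — and then apply Corollary \ref{consresultcor} (respectively Theorem \ref{consresult2}), followed by the cleanup of Remark \ref{simply}. This yields terms $s,u$ such that $\textup{\textsf{E-PRA}}^{\omega*}$ proves the explicit equivalence: a `special case' $\Theta$ of the fan functional together with a modulus of pointwise continuity $g$ of $f$ on $[0,1]$ computes, via $s$, a modulus of uniform continuity of $f$ on $[0,1]$; and conversely any functional $t$ witnessing $\HEI_{\ef}$ yields, via $u$, such a fan functional. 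The first half is exactly $\HEI_{\ef}(t)$ with its $\WKL_{0}$-content made visible — in the spirit of $\MU(\mu)\di\MCT_{\ef}(s(\mu))$ from Theorem \ref{sef} — while a Herbrandised companion in the style of Remark \ref{herbrand} follows as before.

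The main obstacle is the converse implication $\HEI_{\ns}\di\HBL_{\ns}$ (equivalently $\di\STP$): this is the nonstandard incarnation of the Reverse Mathematics fact that Heine's theorem implies $\WKL_{0}$, and it demands a genuine construction rather than syntactic shuffling — one must actually exhibit, from the failure of nonstandard compactness, an \emph{honest} nonstandard continuous function witnessing the failure of uniform nonstandard continuity, and verify that the construction (an adaptation of the argument in the proof of Theorem \ref{dickawadddd}) stays internal enough for $\P_{0}$. A secondary, purely clerical difficulty is that the cover quantifier in $\HBL_{\ns}$ inserts an extra layer of finite sequences beyond those in Theorems \ref{varou} and \ref{floggen}, so one must track carefully which components of the outputs of $\HAC_{\INT}$, $\textsf{I}$ and term extraction pertain to $h$ and which to the auxiliary cover data, so that the final term has the type displayed in $\HEI_{\ef}(t)$.
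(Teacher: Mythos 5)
Your outline does follow the paper's route to this statement (Theorem \ref{floggen345}): prove $\HEI_{\ns}$ in $\P_{0}+\STP$ via Theorem \ref{dickawadddd}, bring compactness and both continuity notions into normal form, and extract a term giving $\HEI_{\ef}$ \emph{conditional on an effective compactness functional}; your argument for $\STP\di\HEI_{\ns}$ and the counterexample function for the converse are exactly the paper's (the converse, by the way, is not needed for this statement). The genuine gap is at the extraction step. You feed in the \emph{unmodified} normal forms: the tree form \eqref{frukkklk} for compactness — note this is the normal form of $\STP$/\eqref{fanns}, not of $\HBL_{\ns}$, whose normal form \eqref{essentip} quantifies over covers, not trees, so your input functional becomes the special fan functional $\Theta$ of $\SCF(\Theta)$ rather than $\HBL_{\ef}(h)$ — and pointwise continuity with its quantifier $(\forall^{\st}x)$ intact. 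Term extraction from these does \emph{not} deliver ``if $g$ is a modulus of continuity on $[0,1]$ then the extracted term gives a modulus of uniform continuity'': it delivers the Herbrandised form $\HEI_{\her}$, whose antecedent only requires the modulus at an extracted finite list of points and precisions; this is precisely \eqref{thirdbase}. The crux of the paper's proof of Theorem \ref{floggen345} is the modification you skip: dropping `st' on the real-variable quantifiers ($x'$ in \eqref{harny2}/\eqref{flacka} and $x$ in \eqref{farfoo}), which is what makes the extracted antecedent read ``modulus on all of $[0,1]$'' and the hypothesis functional be $\HBL_{\ef}(h)$. Your version is repairable — a full modulus trivially satisfies the Herbrandised antecedent — but as written the claimed output of Corollary \ref{consresultcor} is not what your chosen normal forms produce.

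A second problem is the promised converse half of your equivalence, namely that any $t$ with $\HEI_{\ef}(t)$ yields your $\Theta$. The paper obtains $\SCF$ back only from the Herbrandisation $\HEI_{\her}$ (second conjunct of \eqref{thirdbase}), pairs $\HEI_{\ef}$ with $\HBL_{\ef}$ and $\FAN_{\ef}$ instead (Corollaries \ref{fralgyy} and \ref{firk}), and explicitly lists as open whether $\WKL$ or $(\exists h)\FAN_{\ef}(h)$ yields $(\exists\Theta)\SCF(\Theta)$ at all; so pairing both directions with $\Theta$ conflates two levels the paper is at pains to separate. This does not affect the boxed statement itself, which only needs the forward half, but the explicit equivalence you describe is not what comes out of the method.
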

The effective version of Heine-Borel compactness is as follows:
\begin{align}\tag{$\HBL_{\ef}(h)$}
(\forall g^{2}, c_{(\cdot)}, d_{(\cdot)})\big[(\forall x\in [0,1])(\exists&\textstyle n, k\leq g(x))[c_{n}+\frac{1}{k}< x < d_{n}-\frac1k]\label{cardargo}\\ 
&\textstyle\di    (\forall y\in [0, 1])(\exists i, j\leq h(g )(c_{i}+\frac{1}{j}<y < d_{i}-\frac{1}{j}).\notag
\end{align}
The reader might wonder `how constructive' \ref{cardargo} is;  this shall be discussed in detail in Remark \ref{snark}.  
We first prove the following theorem.
\begin{thm}\label{floggen345}
From the proof of $ \HEI_{\ns}$ in $\P_{0}+\STP$, a term $t$ can be extracted such that $\textup{\textsf{E-PRA}}^{\omega*} $ proves $(\forall h)[\HBL_{\ef}(h)\di \HEI_{{\ef}}(t(h))]$.  
\end{thm}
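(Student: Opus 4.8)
The plan is to route the compactness used in the proof of $\HEI_{\ns}$ entirely through $\HBL_{\ns}$. By Theorem~\ref{dickawadddd}, $\P_{0}$ proves $\STP\di\HBL_{\ns}$, so it suffices to derive $\HEI_{\ns}$ from $\HBL_{\ns}$ inside $\P_{0}$ and then feed the implication $\HBL_{\ns}\di\HEI_{\ns}$ to the template $\CI$. The key point is that $\HBL_{\ns}$ occurs \emph{positively as a hypothesis}: when the implication is brought into a single normal form, the finite list of witnesses supplied by $\HAC_{\INT}$ applied to the antecedent of $\HBL_{\ns}$ becomes a \emph{functional parameter} which — after the usual passage to maxima (Remark~\ref{simply}) — is exactly a Heine--Borel functional $h$ as in $\HBL_{\ef}(h)$; the term produced by Corollary~\ref{consresultcor} then rearranges into the promised $t(h)$.

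First I would give the nonstandard rendering of the classical argument ``Heine--Borel compactness yields uniform continuity''. Let $f$ be nonstandard continuous on $[0,1]$. Exactly as in the derivation of \eqref{first5} in the proof of Theorem~\ref{varou} (via $\textsf{I}$ and $\HAC_{\INT}$) one obtains a standard modulus $g$ with $(\forall^{\st}x\in[0,1],k)(\forall y\in[0,1])[|x-y|\leq\frac{1}{g(x,k)}\di|f(x)-f(y)|\leq\frac{1}{k}]$. Fix a standard $n$ and a standard enumeration $q_{(\cdot)}$ of the rationals in $[0,1]$, and consider the sequence of open intervals $(c_{m},d_{m}):=\big(q_{m}-\frac{1}{g(q_{m},2n)},\, q_{m}+\frac{1}{g(q_{m},2n)}\big)$. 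A routine density argument (using that $g$ is a modulus for $f$) shows that this sequence standardly covers $[0,1]$, i.e.\ the antecedent of \eqref{HB} holds, so $\HBL_{\ns}$ — here using the strengthening from the last sentence of Theorem~\ref{dickawadddd}, which lets us drop the `st' on the final universal quantifier — produces a standard $k_{0}$ with $(\forall y\in[0,1])(\exists i\leq k_{0})(c_{i}\ll y\ll d_{i})$. Putting $N:=\max_{i\leq k_{0}}g(q_{i},2n)$, which is standard, and doing the usual bookkeeping on the radii, any $x,y\in[0,1]$ with $|x-y|\leq\frac{1}{N}$ lie in a common interval $(c_{i},d_{i})$, so $|f(x)-f(q_{i})|,|f(y)-f(q_{i})|\leq\frac{1}{2n}$ and hence $|f(x)-f(y)|\leq\frac{1}{n}$. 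This establishes the normal form $(\forall^{\st}n)(\exists^{\st}N)(\forall x,y\in[0,1])[|x-y|\leq\frac{1}{N}\di|f(x)-f(y)|\leq\frac{1}{n}]$, which implies nonstandard uniform continuity; thus $\P_{0}+\STP$ proves $\HEI_{\ns}$.

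Next I would run $\CI$ on $\HBL_{\ns}\di\HEI_{\ns}$, starting with the most deeply nested implication (step (iii) of the template). Applying $\HAC_{\INT}$ to the antecedent $(\forall^{\st}x)(\exists^{\st}n)$ of $\HBL_{\ns}$ yields a standard $g$, and after dropping `st' on the final quantifier the body of $\HBL_{\ns}$ is internal, so $\HBL_{\ns}$ acquires the normal form $(\forall^{\st}g,c_{(\cdot)},d_{(\cdot)})(\exists^{\st}k)\Theta$ with $\Theta$ internal, whose effective version under term extraction is precisely $\HBL_{\ef}(h)$. Likewise $\HEI_{\ns}$ is brought into the normal form $(\forall^{\st}g',n)(\exists^{\st}N)(\forall f)[A\di B]$ exactly as in Theorem~\ref{varou}, with effective version $\HEI_{\ef}(t)$. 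Combining the two, $\HBL_{\ns}\di\HEI_{\ns}$ is turned into a single normal form by $\HAC_{\INT}$-izing its antecedent, bringing all standard quantifiers to the front, and applying (the contraposition of) $\textsf{I}$, just as for $\CRI$ and $\MCT$. Finally Corollary~\ref{consresultcor}, applied to this normal form — which has been proved in $\P_{0}$ — gives a term $\tau\in\mathcal{T}^{*}$; after the routine passage to maxima and reinstatement of the internal quantifiers, $\tau$ rearranges into a term $t$ with $\textup{\textsf{E-PRA}}^{\omega*}\vdash(\forall h)[\HBL_{\ef}(h)\di\HEI_{\ef}(t(h))]$.

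The main obstacle I expect is arranging the derivation of $\HEI_{\ns}$ so that $\HBL_{\ns}$ genuinely enters as a single black-boxed normal form — i.e.\ so that the only use of compactness is one application of $\HBL_{\ns}$ to the cover built from $g$ — so that term extraction yields precisely a functional of $h$ and nothing more; combined with the radius and density bookkeeping needed to re-index the local $\delta$-intervals by natural numbers and to make the finite subcover (which, thanks to Theorem~\ref{dickawadddd}, covers \emph{all} of $[0,1]$ and not merely the standard points) actually certify uniform continuity. The remaining manipulations are mechanical instances of those already carried out for $\CRI$ and $\MCT$.
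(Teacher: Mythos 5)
Your proposal follows essentially the same route as the paper's own proof: by Theorem \ref{dickawadddd} the compactness hypothesis is routed through $\HBL_{\ns}$ (using the strengthened form with the `st' dropped on its final quantifier), the $\HAC_{\INT}$-extracted modulus of pointwise continuity is used to build the rational-centred cover, and the resulting normal-form implication is fed to Corollary \ref{consresultcor}, the Herbrandisation of the $\HBL$-hypothesis becoming the functional parameter $h$. The only cosmetic difference is that the paper carries out the covering argument at the level of the modified normal forms (dropping `st' on $x'$ in the continuity antecedent and on $x$ in \eqref{farfoo}) instead of as a self-contained derivation of $\HBL_{\ns}\di\HEI_{\ns}$, and your ``routine density'' claim that the cover built from the modulus reaches every (standard) point is asserted there with the same brevity, so your write-up matches the paper's level of detail.
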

\begin{proof}
The proof $\P\vdash\STP\di \HEI_{\ns}$ follows immediately from Theorem \ref{dickawadddd}.  
By the latter, the implication also remains valid if we replace nonstandard compactness by Heine-Borel compactness $\HBL_{\ns}$.
We now bring \eqref{HB} in the required normal form.  
First of all, the antecedent of \eqref{HB} yields
\be\label{horum}\textstyle
(\forall^{\st}x\in [0,1])(\exists^{\st}n, k)[c_{n}+\frac{1}{k}< x < d_{n}-\frac1k], 
\ee
to which we may apply $\HAC_{\INT}$ to obtain standard $g$ such that 
\be\label{horum2}\textstyle
(\forall^{\st}x\in [0,1])(\exists n, k\leq _{0}g(x))[c_{n}+\frac{1}{k}< x < d_{n}-\frac1k], 
\ee
Secondly, the consequent of \eqref{HB} (in its general form) yields
\[\textstyle
(\exists^{\st}k)(\forall y\in [0, 1])(\exists^{\st}l)(\exists i\leq k )(c_{i}+\frac1l <y < d_{i}-\frac{1}{l}),
\]  
and applying idealisation \textsf{I}, we obtain
\be\label{drekke}\textstyle
(\exists^{\st}k)(\exists^{\st}n)\big[(\forall y\in [0, 1])(\exists l\leq n)(\exists i\leq k )(c_{i}+\frac1l <y < d_{i}-\frac{1}{l})\big],
\ee
which also has the right syntactial form.  Finally, let $A(x, n,k)$ be the formula in square brackets in \eqref{horum} and let $B(k,n)$ be the formula in square brackets in \eqref{drekke}.  
Then \eqref{HB} is equivalent to 
\be\label{farfoo}
(\forall^{\st}g)(\exists^{\st}x, k', n')\big[ (\exists n,k\in g(x))A(x, n, k)\di B(k',n')].
\ee
Now prefix \eqref{farfoo} by the usual $(\forall c_{(\cdot)}, d_{(\cdot)})$-quantifier and apply idealisation \textsf{I} one more time to pull the standard extensional quantifier through the former quantifier.  
Thus, we obtain a normal form for \eqref{HB} as follows:
\be\label{essentip}
(\forall^{\st}g)(\exists^{\st}z^{1^{*}})\big[(\forall c_{(\cdot)}, d_{(\cdot)})(\exists x, k', n'\in z)C(g, x, k', n')\big], 
\ee
where $C$ is the formula in square brackets in \eqref{farfoo}.  The normal form of nonstandard pointwise continuity is:
\be\textstyle\label{harny}
(\forall^{\st}  x'\in [0,1], k)(\exists^{\st}N)\big[(\forall y \in [0,1], k)\textstyle(|x'-y|<\frac{1}{N} \di |f(x')-f(y)|\leq\frac{1}{k})\big], 
\ee
which gives rise to the following after applying $\HAC_{\INT}$:
\be\textstyle\label{harny2}
(\exists^{\st}h)(\forall^{\st}  x'\in [0,1], k)(\forall y \in [0,1], k)\textstyle(|x'-y|<\frac{1}{h(x,k)} \di |f(x')-f(y)|\leq\frac{1}{k}). 
\ee
Now let $D(g, z)$ be the formula in square brackets in \eqref{essentip}, let $E(f, k, N)$ be the formula in square brackets in \eqref{deep2}, and let $F(f, x,k, N)$ be the formula in square brackets in \eqref{harny}; $\STP\di \HEI_{\ns}$ implies that for $f:\R\di \R$ and standard $\Xi$, $h^{2}$:
\be\label{flacka}
(\forall^{\st}x', l)F(f, x',l, h(x', l))\wedge (\forall^{\st}g)D(g, \Xi(g)) \di (\forall^{\st}k^{0})(\exists^{\st}N^{0})E(f, k,N)
\ee
The previous formula has almost the right form to apply $\CI$, but we need to make slight modifications still.  First of all, we may trivially drop the `st' on the quantifier involving $x'$ for $F$ in \eqref{flacka};  
this stronger antecedent implies that for fixed $k_{0}$, the cover  $(q-\frac{1}{h(q, k_{0})}, q+\frac{1}{h(q, k_{0})})$ for all rational $q^{0}$ covers \emph{all reals} in the unit interval, not just the standard ones.  

\smallskip

Secondly, we do not need \eqref{farfoo} to obtain a finite sub-cover of the aforementioned cover of $[0,1]$, but it suffices to have the weakening of \eqref{farfoo} by dropping the `st' in the quantifier pertaining to $x$.  
Now repeat the above steps to obtain \eqref{flacka} with these modifications (In particular, drop the `st' predicate for $x$ in \eqref{farfoo} and $x'$ in \eqref{harny2}) in place.  
This amounts to replacing $(\exists x, k', n'\in z)$ in \eqref{essentip} by $(\exists  k', n'\in z)(\exists x)$.  
The theorem now follows from applying $\CI$ to the version of \eqref{flacka} resulting from the aforementioned modifications.  
\end{proof}
In the previous proof, we applied $\CI$ to a slight \emph{modification} of $\HBL_{\ns}\di \HEI_{\ns}$.  
In Section \ref{snarkie2}, we discuss what happens if no such modification is made by considering the Herbrandisation of $\STP$.   
\begin{cor}\label{fralgyy}
From the proof of $ \STP$ in $\P_{0}+\HEI_{\ns}$, a term $s$ can be extracted such that $\textup{\textsf{E-PRA}}^{\omega*} $ proves $(\forall g)[\HEI_{\ef}(g)\di \HBL_{\ef}(s(g))]$.  
\end{cor}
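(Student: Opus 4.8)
The plan is to reduce the statement, via Theorem \ref{dickawadddd}, to a proof of $\STP$ inside $\P_{0}+\HEI_{\ns}$, and then to run the resulting implication through $\CI$ exactly as in the proof of Theorem \ref{floggen345}, with the roles of Heine's theorem and the Heine--Borel covering principle interchanged. By Theorem \ref{dickawadddd} it suffices to derive the nonstandard compactness of $[0,1]$, i.e.\ $(\forall x\in[0,1])(\exists^{\st}y\in[0,1])(x\approx y)$, from $\HEI_{\ns}$; this is the nonstandard avatar of the classical Reverse Mathematics fact that Heine's theorem implies weak K\"onig's lemma, and is the one genuinely new ingredient. I would argue by contraposition: suppose $x_{0}\in[0,1]$ has no standard part, fix $M\in\Omega$, write $w(t)$ for the distance from $t$ to the nearest integer, and set $f(x):= w\!\big(\tfrac{2^{M}}{\max(1,\,2^{M}|x-x_{0}|)}\big)$, an everywhere $\eps$-$\delta$ continuous function from $\R$ to $\R$ mapping $=_{\R}$-equal reals to $=_{\R}$-equal reals. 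Since $x_{0}\not\approx y$ for every standard $y$, the value $|x-x_{0}|$ is appreciable at each standard $x$, so $\tfrac{1}{|x-x_{0}|}$ is the argument of $w$ near such $x$, and $w$ being $1$-Lipschitz, $f$ is nonstandard continuous on $[0,1]$; on the other hand, picking $K\in\Omega$ with $K<2^{M}$ and putting $x:=x_{0}+\tfrac1K$, $y:=x_{0}+\tfrac1{K+1/2}$ (both in $[0,1]$ because $x_{0}$ is appreciably inside $(0,1)$), we obtain $x\approx y$ while $f(x)=w(K)=0\not\approx\tfrac12=w(K+\tfrac12)=f(y)$. Hence $f$ refutes $\HEI_{\ns}$, a contradiction; so $[0,1]$ is nonstandard compact and, by Theorem \ref{dickawadddd}, $\STP$ and $\HBL_{\ns}$ follow. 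In particular $\P_{0}\vdash\HEI_{\ns}\di\HBL_{\ns}$, the proof being the composition of the argument just given with the implication ``nonstandard compactness of $[0,1]$ $\di\HBL_{\ns}$'' from the second part of the proof of Theorem \ref{dickawadddd}.

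Next I would put this implication into normal form, following the proof of Theorem \ref{floggen345} essentially verbatim but with antecedent and consequent interchanged: the nonstandard pointwise and uniform continuity occurring inside $\HEI_{\ns}$ acquire their normal forms through $\textsf{I}$ and $\HAC_{\INT}$ as in \eqref{harny2} and \eqref{first5}; $\HBL_{\ns}$ acquires the normal form \eqref{essentip}; and the doubly nested implication is assembled into a single formula of the shape \eqref{flacka} (with the pieces permuted), keeping the cover $(c_{(\cdot)},d_{(\cdot)})$ as an internal parameter throughout and applying idealisation $\textsf{I}$ one further time to pull the leading standard quantifier through that parameter, exactly as in Theorem \ref{floggen345}. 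Applying $\CI$ --- that is, Corollary \ref{consresultcor} --- to the proof of this normal form yields a term $s$ such that $\textup{\textsf{E-PRA}}^{\omega*}$ proves the associated effective implication, which upon unpacking the definitions of $\HEI_{\ef}$ and $\HBL_{\ef}$ is exactly $(\forall g)[\HEI_{\ef}(g)\di\HBL_{\ef}(s(g))]$.

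The main obstacle is the step $\HEI_{\ns}\di\STP$: constructing the witnessing function $f$ and checking \emph{within $\P_{0}$} that it is nonstandard continuous on $[0,1]$ but not nonstandard uniformly continuous --- in particular that it is representable as a (possibly nonstandard) functional $\Phi^{1\di1}$ respecting $=_{\R}$, which is why the capped form of $f$ above, continuous everywhere including at $x_{0}$, is used rather than the naive $w(1/|x-x_{0}|)$. A subsidiary point to verify is that the normal form of $\HEI_{\ns}\di\HBL_{\ns}$ really has the shape demanded by Corollary \ref{consresultcor}, i.e.\ that the only external axioms used in the rearrangement are $\textsf{I}$ and $\HAC_{\INT}$, so that the cover parameter survives the manipulation and term extraction applies; this is the analogue of the corresponding verification in Theorem \ref{floggen345}, and no essentially new difficulty is expected there. (Since Corollary \ref{consresultcor} applies to classical proofs, the fact that the argument for $\HEI_{\ns}\di\STP$ proceeds by contradiction causes no problem.)
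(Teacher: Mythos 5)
Your proposal is correct and takes essentially the same route as the paper: the paper likewise reduces the corollary to proving, by contradiction inside $\P_{0}$, that $\HEI_{\ns}$ implies the nonstandard compactness of $[0,1]$ (its implication \eqref{decv}), and then simply reruns the argument of Theorem \ref{floggen345} with antecedent and consequent interchanged to extract the term $s$. The only difference is the concrete counterexample: the paper uses the capped reciprocal $f_{0}(x):=\frac{1}{|x-x_{0}|+\frac{1}{N}}$ for nonstandard $N$ (nonstandard continuous at standard points, but not uniformly so near $x_{0}$), whereas you use a bounded sawtooth-type witness --- both functions play exactly the same role.
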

\begin{proof}
In a nutshell, one simply repeats the proof of the theorem, but for: 
\be\label{decv}
[(\forall f:\R\di \R)(\eqref{soareyou3}\di \eqref{soareyou4})]\di (\forall x\in [0,1])(\exists^{\st}y\in [0,1])(x\approx y).
\ee
To prove the implication \eqref{decv}, suppose there is $x_{0}\in [0,1]$ such that $(\forall^{\st}y\in [0,1])(x_{0}\not\approx y)$.  
Fix nonstandard $N$ and define $f_{0}(x):= \frac{1}{|x-x_{0}|+\frac{1}{N}}$.  
Note that $f_{0}(y)\approx \frac{1}{|x_{0}-y|}$ for standard $y$, and therefore $f_{0}$ satisfies \eqref{soareyou3}, but clearly not \eqref{soareyou4}, 
as uniform nonstandard continuity fails for any $z\approx x_{0}$.  
\end{proof}
Now, the contraposition of weak K\"onig's lemma is called the \emph{fan theorem} and denoted $\FAN$.    
Recall that $\STP$ is a nonstandard version of weak K\"onig's lemma in light of \eqref{fanns}.  The contraposition of the latter may be called $\FAN_{\ns}$, while the effective version is defined as follows:
\be\tag{$\FAN_{\ef}(h)$}
(\forall T^{1}\leq_{1}1, g^{2})\big[ (\forall \alpha\leq_{1}1)(\overline{\alpha}g(\alpha)\not\in T)\di (\forall \beta\leq_{1}1)(\overline{\beta}h(g)\not\in T)   \big].
\ee
Hence, the following corollary is proved in the same way as above.  
\begin{cor}\label{firk}
From the proof $\P_{0}\vdash \STP\asa \HEI_{\ns}$, terms $s, t$ can be extracted such that $\textup{\textsf{E-PRA}}^{\omega*} $ proves $(\forall g)[\FAN_{\ef}(g)\di \HEI_{\ef}(s(g))]$ and $(\forall h)[\HEI_{\ef}(h)\di \FAN_{\ef}(s(h))]$.
\end{cor}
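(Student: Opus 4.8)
The plan is to mimic the proof of Theorem \ref{floggen345} and Corollary \ref{fralgyy}, but at the level of Cantor space rather than $[0,1]$, using the equivalence $\STP \asa \HEI_{\ns}$ already established (via $\STP \asa (\forall x\in[0,1])(\exists^{\st}y\in[0,1])(x\approx y) \asa \HBL_{\ns}$ in Theorem \ref{dickawadddd}, together with Theorems \ref{floggen345} and the implication $\HBL_{\ns}\di\HEI_{\ns}$). The key point is that the proof of the equivalence $\STP\asa\HEI_{\ns}$ lives in $\P_{0}$ and is essentially the concatenation of the two one-directional arguments already given; running each direction through $\CI$ (i.e.\ Corollary \ref{consresultcor} / Corollary \ref{consresultcor2}) yields the two explicit implications claimed.

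First I would recall that $\STP$ is a normal form with a known Herbrandised version: by \cite{samGH}*{Theorem 3.2} it is equivalent to \eqref{fanns}, which in turn has the normal form \eqref{frukkklk}. The classical contraposition of \eqref{fanns} is $\FAN_{\ns}$, and $\FAN_{\ef}(h)$ as displayed is precisely what term extraction via Corollary \ref{consresultcor} produces from \eqref{frukkklk} (this is exactly the content of \cite{samGH}*{Cor.~3.4}). Thus for the direction $\FAN_{\ef}(g)\di\HEI_{\ef}(s(g))$, I would: (a) take the proof of $\STP\di\HEI_{\ns}$ inside $\P_{0}$ — which factors through $\STP\di\HBL_{\ns}\di\HEI_{\ns}$ by Theorem \ref{dickawadddd} and the proof of Theorem \ref{floggen345}; (b) bring both $\STP$ (equivalently \eqref{frukkklk}) and the conclusion $\HEI_{\ns}$ (nonstandard uniform continuity, with normal form obtained as \eqref{harny2} from \eqref{harny}) into normal form; (c) observe the whole implication is then of the shape to which Corollary \ref{consresultcor} applies, extracting a term $s$ with $\textup{\textsf{E-PRA}}^{\omega*}\vdash(\forall g)[\FAN_{\ef}(g)\di\HEI_{\ef}(s(g))]$. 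The computations are identical in structure to those in the proof of Theorem \ref{floggen345}, with `$\HBL$' replaced by `$\FAN$' throughout and the trees/covers transported via the binary representation of reals (\cite{polahirst}*{Cor.~4}), exactly as in the last paragraph of that proof.

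For the reverse direction $\HEI_{\ef}(h)\di\FAN_{\ef}(s(h))$, I would run the proof of $\HEI_{\ns}\di\STP$ in $\P_{0}$ through $\CI$. Concretely, Corollary \ref{fralgyy} already extracts $s$ with $(\forall g)[\HEI_{\ef}(g)\di\HBL_{\ef}(s(g))]$ from the proof of \eqref{decv}; composing this with the (provable in $\textup{\textsf{E-PRA}}^{\omega*}$) passage between $\HBL_{\ef}$ and $\FAN_{\ef}$ — which is just the explicit translation of Heine–Borel covers of $[0,1]$ into binary trees and back, again using \cite{polahirst}*{Cor.~4} and transporting the finite subcover to a uniform bound on the depth at which $T$ has no path — yields the term $s$ required. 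Alternatively, and more directly, one brings the proof of $\HEI_{\ns}\di\STP$ (via $\HEI_{\ns}\di(\forall x\in[0,1])(\exists^{\st}y\in[0,1])(x\approx y)\di\STP$, the last implication being the final paragraph of the proof of Theorem \ref{dickawadddd}) into normal form and applies Corollary \ref{consresultcor} once; the extracted term is the desired $s$.

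The main obstacle I anticipate is purely bookkeeping: ensuring that the term $s$ in each half genuinely acts on the right object — $\FAN_{\ef}(g)$ feeds a modulus $g^{2}$ acting on branches $\alpha\leq_{1}1$, whereas $\HEI_{\ef}$ wants a modulus of (pointwise) continuity $g:\R\di\R$, and the translation between these passes through the rational cover $(q-\tfrac{1}{h(q,k)},q+\tfrac{1}{h(q,k)})$ as in the proof of Theorem \ref{floggen345}. One must check, as in that proof, that dropping `$\st$' on the relevant leading universal quantifiers (so that the cover is of \emph{all} reals, not just standard ones, and correspondingly that $T$ has no path \emph{at all}) is harmless here, and that the resulting formula still has the normal form demanded by Corollary \ref{consresultcor}. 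Since the nonstandard equivalence $\STP\asa\HEI_{\ns}$ has an essentially trivial proof (it is just Theorem \ref{dickawadddd} plus $\HBL_{\ns}\asa\HEI_{\ns}$), $\CI$ does the real work, and the corollary follows in the same manner as Theorem \ref{floggen345} and Corollary \ref{fralgyy}.
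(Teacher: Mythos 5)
Your proposal matches the paper's intent exactly: the paper's own proof of this corollary is just the remark that it "is proved in the same way as above," i.e.\ by re-running the argument of Theorem \ref{floggen345} and Corollary \ref{fralgyy} with $\STP$ read through its normal form \eqref{fanns}/\eqref{frukkklk} (so that term extraction yields $\FAN_{\ef}$ in place of $\HBL_{\ef}$), which is precisely what you describe, including the transport via binary representations. So the proposal is correct and takes essentially the same route as the paper, just spelled out in more detail.
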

An implication $(\exists \Phi)A(\Phi)\di (\exists \Psi)B(\Psi)$ (proved in $\RCAo$) is \emph{explicit} if there is a term $t$ in the language such that additionally $(\forall \Phi)[A(\Phi)\di B(t(\Phi))]$ (is proved in $\RCAo$), i.e.\ $\Psi$ can be explicitly defined in terms of $\Phi$.
Hence, we have established the effective equivalence between Heine-Borel compactness $(\exists h)\HBL_{\ef}(h)$, the fan theorem $(\exists k)\FAN_{\ef}(k)$,
and Heine's theorem $(\exists g)\HEI_{\ef}(g)$.  The non-effective equivalence between these theorems is well-known from RM (see \cite{simpson2}*{IV.2.3}).

\smallskip

In conclusion, we have obtained explicit versions of certain equivalences from the RM of $\WKL_{0}$.  
Intuitively speaking\footnote{More technically, Kohlenbach has pointed out in \cite{kohlenbach2}*{\S3} that theorems classically equivalent to $\WKL_{0}$ can have a uniform version either at the level of $\WKL_{0}$ or at the level of $\ACA_{0}$, depending on the original theorem's constructive status.}, similar results for other such equivalences to $\WKL_{0}$ are immediate, provided the theorems involved have the same syntactic structure as $\FAN$.  As an example of the latter, but also of independent interest, we shall study \emph{Dini's theorem} in Section \ref{dinkitoes}.    
On the other hand, theorems equivalent to $\WKL_{0}$ (classically and constructively) which have a syntactical structure similar to weak K\"onig's lemma, are studied in Section~\ref{lapier}.  

\subsubsection{Compactness and continuity IV}\label{lapier}
In this section, we study the \emph{Weierstra\ss~maximum principle} (\cite{simpson2}*{IV.2.2}) and the \emph{intermediate value theorem} (\cite{simpson2}*{II.6.2}). 
Both theorems are equivalent to $\WKL_{0}$, respectively in classical and constructive RM.  

\smallskip

We shall observe that there are (at least) two possible normal forms for each of these theorems, called the \emph{weak} and the \emph{strong} version, depending on whether the consequent applies to the standard world or to all of the universe.  
By way of example, running the weak (resp.\ strong) version of the intermediate value theorem through $\CI$, we obtain the constructive version from \cite{bridge1}*{p.\ 40} (resp.\ the uniform version from \cite{kohlenbach2}*{\S3} equivalent to $(\exists^{2})$).  
We again stress that these results are not new, but that it is surprising that we rediscover them through $\CI$, i.e.\ via a purely mechanised/syntactical routine \emph{without attention to the meaning of the theorems}.    

\smallskip

First of all, the aforementioned theorems are interesting because they have the same syntactical structure as $\WKL_{0}$, which excludes the (direct) treatment from the previous section involving the (effective version of the) fan theorem.  
In particular, the consequent of the aforementioned theorems (when formulated relative to `st') is of the form $(\exists^{\st}f^{1})(\forall^{\st}x)\varphi(f, x)$, i.e.\ \emph{not} the normal form.  To obtain our cherished normal form and apply $\CI$, there are at least two options.
\begin{enumerate}         
\item Apply \emph{Transfer} to `$(\forall^{\st}x)\varphi(f, x)$' to obtain $(\exists^{\st}f^{1})(\forall x)\varphi(f, x)$ as the consequent of the theorem.  
\item Study the \emph{contraposition} of the nonstandard theorem (which has the syntactical structure of $\FAN$, and hence a normal form by the previous).  
\end{enumerate}
Hence, there are \emph{two} normal forms in this case.  The \emph{strong} nonstandard and effective versions of the intermediate value theorem ($\IVT$ for short) are defined as:      
\begin{thm}[$\IVT_{\ns}$] For standard $f:\R\di\R$ which is nonstandard continuous on $[0,1]$, if $f(0)>0\wedge f(1)<0$ then 
$(\exists^{\st}z\in [0,1])(f(z)=_{\R}0)$.  
\end{thm}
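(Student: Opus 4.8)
The plan is to follow the template $\CI$ for $\IVT_{\ns}$, viewed (per the discussion preceding the statement) via its \emph{strong} form, i.e.\ one where \emph{Transfer} is available to push the standardness through the conclusion. First I would verify that $\IVT_{\ns}$ is provable in $\P_{0}+\paai$ (or $\P+\paai$): given standard $f$ which is nonstandard continuous on $[0,1]$ with $f(0)>0$ and $f(1)<0$, run the usual bisection argument. For each standard $n$ one locates (by $\Delta_{1}^{0}$-comprehension-style reasoning available in the ambient arithmetic) a dyadic rational $q_{n}$ with $|f(q_{n})|$ small relative to the modulus scale at level $n$; more precisely one produces a standard sequence of nested intervals $[a_{n},b_{n}]$ of length $2^{-n}$ with $f(a_{n})\geq 0\geq f(b_{n})$. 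The intersection point $z$ is a standard real (it is defined from the standard sequence $(a_{n})$, hence standard by Definition~\ref{debs}), and nonstandard continuity of $f$ together with $a_{n}\approx z\approx b_{n}$ for infinite $n$ forces $f(z)\approx 0$; then \emph{Transfer} $\paai$ applied to the internal statement $(\forall^{\st}k)(|[f(z)](k)|\leq 2^{-k})$ upgrades $f(z)\approx 0$ to $f(z)=_{\R}0$. This establishes $(\exists^{\st}z)(f(z)=_{\R}0)$.

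Next I would bring $\IVT_{\ns}$ into normal form so that Corollary~\ref{consresultcor} (in the $\P_{0}$ form, Corollary~\ref{consresultcor2}) applies. The antecedent's nonstandard continuity hypothesis is handled exactly as in the proof of Theorem~\ref{varou}: resolving $\approx$, pulling standard quantifiers out, applying idealisation $\textsf{I}$ and $\HAC_{\INT}$, one extracts a standard modulus $g$ with the internal formula $A(f,g)$ stating $\eps$-$\delta$ continuity of $f$ via $g$. The conclusion $(\exists^{\st}z)(f(z)=_{\R}0)$ is already of the shape $(\exists^{\st}z)\psi(z,f)$ with $\psi$ internal (after resolving $=_{\R}$ into $(\forall k)(|[f(z)](k)|\leq 2^{-k})$, which is internal since the outer $k$-quantifier is unbounded but internal). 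The $\paai$ hypothesis, as in Theorem~\ref{sef}, contributes a standard search operator $\mu$ with $\textsf{MU}(\mu)$; combining, $\IVT_{\ns}\wedge\paai$ yields a statement of the form $(\forall^{\st}g',\mu)(\forall f)(\exists^{\st}z,\ldots)[\textsf{MU}(\mu)\wedge A(f,g')\di \psi(z,f)]$, to which idealisation $\textsf{I}$ and then Corollary~\ref{consresultcor2} apply, producing a term $t$ in $\mathcal{T}^{*}$ (a finite sequence of candidate approximate roots) such that $\textup{\textsf{E-PRA}}^{\omega*}$ proves the corresponding effective statement. Selecting the genuine root from this finite list is done using $\mu$, exactly as in the proof of Corollary~\ref{loppp}; this gives the effective intermediate value theorem at the level of $(\exists^{2})$, i.e.\ $\IVT_{\ef}(s(\mu))$ with $s$ extracted from $t$, and conversely one reads off $\textsf{MU}(u)$ from any realizer of $\IVT_{\ef}$ by feeding in the standard ``Specker-style'' counterexample (a function with $f(0)>0>f(1)$ whose root encodes $(\exists n)h(n)=0$), as is standard.

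The main obstacle I anticipate is \emph{not} the term extraction machinery—that is by now routine given Corollaries~\ref{consresultcor} and \ref{consresultcor2}—but rather the bookkeeping around the \emph{choice of normal form} flagged in the text: $\IVT_{\ns}$ has the syntactic shape of weak K\"onig's lemma (a $(\exists^{\st}f)(\forall^{\st}x)$ conclusion), so naively applying term extraction to the \emph{contraposed} version (option (2) in the excerpt) yields only the \emph{constructive} $\IVT$ of \cite{bridge1}*{p.\ 40} (which requires the function not to be locally constant near the root, i.e.\ the approximate intermediate value theorem), whereas applying $\paai$/\emph{Transfer} first (option (1)) is what produces the genuine uniform version equivalent to $(\exists^{2})$. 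So the delicate point is to present the proof so that the role of $\paai$ is explicit and to check that the internal matrix $\psi(z,f)$ is genuinely quantifier-free-modulo-internal-quantifiers, so that Corollary~\ref{consresultcor2} is legitimately applicable; once that is pinned down, the remaining steps are mechanical transcriptions of the arguments already carried out for $\CRI$, $\ULC$ and $\MCT$.
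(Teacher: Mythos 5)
Your first paragraph is precisely the paper's own argument for $\IVT_{\ns}$: run the classical proof of the intermediate value theorem (Simpson, II.6.2) relative to `st' inside $\P_{0}$ to get a standard $z$ with $f(z)\approx 0$, and then apply $\paai$ to the consequent to upgrade $f(z)\approx 0$ to $f(z)=_{\R}0$, so the proposal is correct and takes essentially the same route. Your remaining two paragraphs concern the normal form and term extraction of Theorem~\ref{sef8} rather than the stated theorem itself, but they too track the paper's treatment.
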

\begin{thm}[$\IVT_{\ef}(t)$]
For $f:\R\di \R$ with modulus of continuity $g$, if $f(0)>0\wedge f(1)<0$ then $f(t(g,f))=_{\R}0$.
\end{thm}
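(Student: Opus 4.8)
The plan is to run the template $\CI$ of Section~\ref{detail}, exactly as in the proofs of Theorems~\ref{varou}, \ref{floggen} and~\ref{sef}, so that $\IVT_{\ef}(t)$ emerges as the effective version extracted from $\IVT_{\ns}$. First I would produce a proof of $\IVT_{\ns}$ inside $\P_{0}$ augmented with the fragment $\paai$ of \emph{Transfer}. One obtains an approximate zero by the usual (classically valid) bisection: build a fast Cauchy nested sequence of intervals $[a_{n},b_{n}]$ with $f(a_{n})\geq 0\geq f(b_{n})$ and $|a_{n}-b_{n}|\leq \frac{1}{2^{n}}$, let $z\in[0,1]$ be its limit (which exists already in $\P_{0}$); nonstandard continuity of $f$ then gives $f(z)\approx f(a_{N})\approx 0$ for any $N\in\Omega$, so $f(z)\approx 0$. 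Since $f$ and $z$ are standard, so is $f(z)$, and the internal $\Pi_{1}^{0}$-statement ``$f(z)=_{\R}0$'' follows from ``$f(z)\approx 0$'' by one application of $\paai$ to the standard function which is $1$ at $n$ iff $|[f(z)](n)|>\frac{1}{2^{n}}$. This is precisely the ``strong'' route --- apply \emph{Transfer} to the consequent --- announced in Section~\ref{lapier}.

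Next I would bring $\IVT_{\ns}$ into a normal form, i.e.\ carry out steps (ii) and (iii) of $\CI$. Nonstandard continuity of $f$ is put into the normal form $(\exists^{\st}g)(\forall^{\st}x,y\in[0,1],k)[\dots]$ exactly as \eqref{first5} was obtained from \eqref{soareyou4} (using $\textsf{I}$ and $\HAC_{\INT}$); the hypotheses $f(0)>0$, $f(1)<0$ are internal; and the consequent $(\exists^{\st}z\in[0,1])(f(z)=_{\R}0)$ is already a normal form, with a trivial leading $(\forall^{\st})$-block. Since the proof of $\IVT_{\ns}$ uses $\paai$, which is external, I would not feed it into $\Delta_{\intern}$; instead, as in Theorem~\ref{sef}, I would use the normal form $(\forall^{\st}f)(\exists^{\st}n)[(\exists m)f(m)=0\di(\exists i\leq n)f(i)=0]$ of $\paai$ and assemble inside $\P_{0}$ a single normal form of the implication ``(normal form of $\paai$) $\di$ (normal form of $\IVT_{\ns}$)'' by applying $\HAC_{\INT}$ to the antecedent and pulling all standard quantifiers to the front, as in \eqref{duggg}.

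Then I would apply Corollary~\ref{consresultcor} (the algorithm $\mathcal{A}$) to this normal form and read off a closed term $t$ in $\mathcal{T}^{*}$. As in the proofs of Theorem~\ref{sef} and Corollary~\ref{loppp}, term extraction delivers only a \emph{finite list} of candidate zeros $z$ (together with the data realising $\paai$); a search functional $\mu^{2}$ is then used both to realise $\paai$ and to pick, out of this finite list, one $z$ with $f(z)=_{\R}0$. After the clean-up of Remark~\ref{simply} one obtains that $\textup{\textsf{E-PRA}}^{\omega*}$ proves $(\forall\mu^{2})[\MU(\mu)\di\IVT_{\ef}(t(\mu))]$, where $t(\mu)$ takes the modulus $g$ and the function $f$ as further arguments --- equivalently, $\textup{\textsf{E-PRA}}^{\omega*}+(\exists^{2})$ proves $\IVT_{\ef}(t)$. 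A routine converse, via the test functions used in \cite{kohlenbach2}*{\S3} to derive $(\exists^{2})$ from the uniform intermediate value theorem, extracts a term $u$ with $(\forall t)[\IVT_{\ef}(t)\di\MU(u(t))]$, giving the explicit equivalence of $\IVT_{\ef}$ with the Turing-jump functional $(\exists^{2})$; running the same argument through $\H$ and dropping the $\paai$-step instead yields the pointwise/Bishop version of \cite{bridge1}*{p.\ 40}.

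The step I expect to be the main obstacle is the Transfer step and its bookkeeping. Unlike $\CRI$ and $\ULC$, where the extracted term depended only on the modulus $g$, here the genuine-zero conclusion ``$f(t(g,f))=_{\R}0$'' is an internal \emph{universal} statement over all $n$, and recovering it forces the use of $\paai$ --- hence of $(\exists^{2})$ --- and forces $t$ to depend on $f$ itself, since no choice-free arithmetical functional computes intermediate-value zeros uniformly from the modulus alone. Care is therefore needed to (i) isolate exactly where $\paai$ is used, so that the merging step \eqref{duggg} of $\CI$ applies to a \emph{clean} implication between normal forms, and (ii) check that the term produced by $\mathcal{A}$ really witnesses the strong consequent, and not merely ``$f(z)\approx 0$'' --- that is, that the instance $\mu$ of $(\exists^{2})$ is threaded correctly through the normal form and its term.
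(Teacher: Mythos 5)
Your proposal is correct and follows essentially the same route as the paper: the paper likewise treats $\IVT_{\ef}(t)$ not as an outright theorem but via the extraction result (its Theorem on $\IVT_{\ns}\asa\paai$), proving $\IVT_{\ns}$ in $\P_{0}$ from $\paai$ by the classical proof relative to `st' plus one Transfer step on the consequent, obtaining the converse from a capped test function (your Kohlenbach-style counterexample), and then running $\CI$/Corollary~\ref{consresultcor} to get $(\forall \mu^{2})[\MU(\mu)\di \IVT_{\ef}(s(\mu))]$ together with the reverse explicit implication, with $(\mu^{2})$ used to select the true zero from the extracted finite list. Your remarks on the weak version yielding the constructive/approximate $\IVT_{\ef}'(t)$ also match the paper's second part.
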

For the \emph{weak} nonstandard version of $\IVT$, let $\IVT_{\ns}'$ be $\IVT_{\ns}$ with `$=_{\R}$' in the consequent replaced by `$\approx$' and the `st' in $(\forall^{\st}f)$ dropped.  
Let $\IVT_{\ef}'(t)$ be $\IVT$ for uniformly continuous functions (with a modulus $g$) and the consequent weakened to $(\forall k^{0})(t(g,k)\in [0,1]\wedge |f(t(g,k))|\leq  \frac{1}{k})$, i.e.\ an approximate intermediate value.  

\smallskip

Kohlenbach has established the equivalence $(\exists t)\IVT_{\ef}(t)\asa (\exists^{2})$ in \cite{kohlenbach2}*{\S3}.   
Furthermore, $\IVT_{\ef}'(t)$ is the constructive version of $\IVT$ by \cite{bridge1}*{Theorem 4.8, p.\ 40}.  
\begin{thm}\label{sef8}
From the proof of $\IVT_{\ns}\asa \paai$ in $\P_{0} $, two terms $s, u$ can be extracted such that $\textup{\textsf{E-PRA}}^{\omega*}$ proves:
\be\label{frood8}
(\forall \mu^{2})\big[\textsf{\MU}(\mu)\di \IVT_{\ef}(s(\mu)) \big] \wedge (\forall t^{(1\di 1)\di 1})\big[ \IVT_{\ef}(t)\di  \MU(u(t))  \big].
\ee
From the proof of $\IVT_{\ns}'$ in $\P_{0} $, a term $t$ can be extraced s.t.\ $\textup{\textsf{E-PRA}}^{\omega*}\vdash\IVT_{\ef}'(t)$.  
\end{thm}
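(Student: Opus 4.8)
The plan is to follow the template $\CI$ exactly as in the proofs of Theorems \ref{sef} and \ref{varou}, treating the two halves of the statement separately. For the first (strong) equivalence, I would first establish $\IVT_{\ns}\asa\paai$ inside $\P_{0}$. For the direction $\IVT_{\ns}\di\paai$, given standard $f^{1}$ with $(\forall^{\st}n)f(n)\neq 0$, I would build a standard continuous function $F_{f}:\R\di\R$ (a piecewise-linear ``staircase'' or tent construction, parametrised by the least witness to $(\exists m)f(m)=0$ when it exists) such that $F_{f}(0)>0$, $F_{f}(1)<0$, and such that $F_{f}$ is nonstandard continuous; then the standard zero $z$ supplied by $\IVT_{\ns}$ would pin down that $f$ has no zero at all, giving $(\forall m)f(m)\neq 0$. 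For the reverse direction $\paai\di\IVT_{\ns}$, I would use $\Omega\textsf{-CA}$ (Theorem \ref{drifh}) together with $\paai$ to obtain $(\exists^{2})^{\st}$, note that the usual RM proof of $\IVT$ goes through in $\ACA_{0}$, hence $\IVT^{\st}$ holds, and finally apply $\paai$ to the internal convergence statement (the approximating sequence of bisection points converges, and its limit is standard by $\Omega\textsf{-CA}$), exactly paralleling the $\MCT$ argument.

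Next I would run $\CI$ on the implication $\paai\di\IVT_{\ns}$. Resolving $(\exists^{\st}n)$ in the antecedent of $\paai$ and applying $\HAC_{\INT}$ (with the usual maximum step from Remark \ref{simply}) converts it to $(\exists^{\st}\mu^{2})(\forall^{\st}f^{1})A(f,\mu(f))$ where $A$ is the internal formula $[(\exists m)f(m)=0\di(\exists i\leq\mu(f))f(i)=0]$. The consequent $(\exists^{\st}z\in[0,1])(f(z)=_{\R}0)$ is resolved, via idealisation $\textsf{I}$ and the monotone/maximum trick, into a normal form $(\exists^{\st}z)B(z,f,g)$ where $g$ is the modulus of continuity extracted from nonstandard continuity as in \eqref{first5}. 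Bringing all standard quantifiers to the front gives $(\forall^{\st}g,\mu)(\forall f)(\exists^{\st}z,f')[A(f',\mu(f'))\di B(z,f,g)]$, then idealisation $\textsf{I}$ yields the normal form, and Corollary \ref{consresultcor} produces a term $t$ with $\textsf{E-PRA}^{\omega*}\vdash(\forall g,\mu,f)(\exists z,f'\in t(\mu,g,f))[A(f',\mu(f'))\di B(z,f,g)]$; setting $s(\mu)$ appropriately (using $\mu$ itself to select the correct component of the finite list, as in Corollary \ref{loppp}) gives $(\forall\mu)[\textsf{MU}(\mu)\di\IVT_{\ef}(s(\mu))]$. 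For the converse conjunct $(\forall t)[\IVT_{\ef}(t)\di\MU(u(t))]$, I would run $\CI$ on $\IVT_{\ns}\di\paai$ using the construction of $F_{f}$ above: the function $t\mapsto$(zero of $F_{f}$) together with the internal arithmetic of the staircase lets one read off, from the location of the zero, the least $m$ with $f(m)=0$, hence compute $\mu(f)$; term extraction via Corollary \ref{consresultcor} then delivers $u$.

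For the second statement, the proof of $\IVT_{\ns}'$ in $\P_{0}$ is the approximate/finitary bisection argument, which needs no $\paai$: nonstandard uniform continuity and the nested-interval construction directly give a standard point with infinitesimal value. Bringing the consequent $(\exists^{\st}z\in[0,1])(|f(z)|\approx 0)$ into normal form via $\textsf{I}$ and $\HAC_{\INT}$, combining with the normal form \eqref{first5} of nonstandard uniform continuity, and applying Corollary \ref{consresultcor} yields the term $t$ with $\textsf{E-PRA}^{\omega*}\vdash\IVT_{\ef}'(t)$. The main obstacle I anticipate is the explicit construction of the auxiliary function $F_{f}$ and the bookkeeping in the reverse conjunct: one must ensure $F_{f}$ is genuinely \emph{nonstandard} continuous (so that $\IVT_{\ns}$ applies) while still encoding enough information about $f$'s least zero in the exact location of $F_{f}$'s zero that $\mu$ can be recovered by an internal computation — this is the analogue of the $0/c_{m_{0}}\not\approx c_{m_{0}+1}$ step in the proof of Theorem \ref{sef}, but slightly more delicate because $\IVT$ asserts an exact ($=_{\R}$) zero rather than a jump, so the staircase must be arranged so that its unique zero sits at a dyadic rational determined by the least $m$ with $f(m)=0$.
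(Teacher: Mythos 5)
Your treatment of the strong equivalence is essentially the paper's own: the forward direction proves $\IVT^{\st}$ and then applies $\paai$ to the consequent (the paper simply relativises the $\RCA_{0}$-proof of \cite{simpson2}*{II.6.2} to `st', so your detour through $\Omega$\textsf{-CA} and $(\exists^{2})^{\st}$ is unnecessary, though harmless), and the reverse direction rests on a standard, sign-changing auxiliary function built from the given $h$ whose exact zero cannot be standard. The paper's construction is simpler than your staircase: $f_{0}$ is $x-\frac{1}{2}$ `capped' at the infinitesimal level $2^{-m_{0}}$ determined by the least (necessarily nonstandard) index where $h$ is nonzero, so $f_{0}$ has \emph{no} zero at all and $\IVT_{\ns}$ is contradicted outright. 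Your worry about internally recovering $\mu$ from the location of the zero is also unnecessary: the second conjunct of \eqref{frood8} comes from running $\CI$ on the proof of $\IVT_{\ns}\di\paai$ itself, the finite list of candidate witnesses being handled by the usual maximum/monotonicity step exactly as in Theorem \ref{sef}.

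For the weak version, however, there is a genuine gap. In $\P_{0}$ the nested-interval construction run for nonstandardly many steps produces a point $z$ with $f(z)\approx 0$, but nothing makes this point \emph{standard}: that would require Standard Part (essentially $\STP$), which is not available in $\P_{0}$, and since the `st' on $f$ is dropped in $\IVT_{\ns}'$ no standard approximate zero need exist at all (consider $f(x)=c-x$ for some $c\in(0,1)$ without a standard part: $f(z)\approx0$ forces $z\approx c$). Moreover, the consequent $(\exists^{\st}z\in[0,1])(f(z)\approx 0)$ unfolds to $(\exists^{\st}z)(\forall^{\st}l)(|f(z)|\leq\frac{1}{l})$, whose quantifier pattern $\exists^{\st}\forall^{\st}$ is \emph{not} a normal form, and neither idealisation $\textsf{I}$ nor $\HAC_{\INT}$ converts it into one, so "bringing the consequent into normal form via $\textsf{I}$ and $\HAC_{\INT}$" does not go through as stated. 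This is precisely the obstacle the paper's proof is organised around: it contraposes the implication so that the offending block appears as $(\forall^{\st}z)(\exists^{\st}l)$ in the antecedent, strengthens it by a standard $h^{2}$ as in \eqref{discharge}, applies Corollary \ref{consresultcor}, and contraposes back, finally taking $h$ constantly equal to $k$; alternatively (as in Corollary \ref{nomilo}) one first weakens the consequent to $(\forall^{\st}k)(\exists^{\st}w)(|f(w)|\leq\frac{1}{k})$ --- for each standard $k$ a standard grid point adjacent to the sign change works, since numbers bounded by a standard number are standard --- and only then applies $\CI$. This weakening is also visible in the conclusion: $\IVT_{\ef}'(t)$ supplies, for each $k$, a point $t(g,k)$ with $|f(t(g,k))|\leq\frac{1}{k}$, not a single approximate zero. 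You need to replace the claim that bisection "directly gives a standard point with infinitesimal value" by one of these two manoeuvres.
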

\begin{proof}
The first part of the proof is similar to that of Theorem \ref{sef}.  For the forward implication, the (classical) proof of $\IVT$ in \cite{simpson2}*{II.6.2} goes through in $\P_{0}$ releative to `st'.  
Applying $\paai$ to the consequent of $\IVT^{\st}$, we obtain $\IVT_{\ns}$.  For the reverse implication, assume $\IVT_{\ns}$ and suppose $\paai$ is false, i.e.\ there is standard $h$ such that $(\forall^{\st}n)h(n)=0\wedge (\exists m)h(m)\ne0$. 
Now define the \emph{standard} function $f_{0}$ (which is also nonstandard uniformly continuous) as follows:
\[
[f_{0}(x)](k):=
\begin{cases}
[x-\frac{1}{2}](k) & \textup{otherwise}\\
\frac{1}{2^{m(k)}} & (\exists i\leq k)h(i)\ne0 \wedge [x-\frac{1}{2}](k)<\frac{1}{2^{m(k)}}
\end{cases}
\]
where $m(k)$ is $(\mu j\leq k)(h(j)\ne 0)$, if such there is and zero otherwise.   
Intuitively speaking, $f_{0}$ is just $x-\frac{1}{2}$ but `capped' around $x=\frac{1}{2}$.  
Clearly, there cannot be $x_{0}\in [0,1]$ such that $f_{0}(x_{0})=0$, and $\IVT_{\ns}$ yields a contradiction.  It is now trivial to bring $\IVT_{\ns}$ into a normal form and apply $\CI$.         

\smallskip

For the second part, $\IVT_{\ns}'$ follows in the same way as in the first part of the proof.  We now bring the former into its normal form.  Using the normal form for continuity as in \eqref{EST}, we obtain for all $f$ and standard $g$ that
\begin{align*}
\big[\textstyle(\forall^{\st}k)(\forall  x, y \in [0,1])(|x-y|<\frac{1}{g(k)}&\textstyle \di |f(x)-f(y)|\leq\frac{1}{k}) \wedge f(0)>0\wedge f(1)<0\big]\\
&\di (\exists^{\st}z\in [0,1])(\forall^{\st}l^{0})\textstyle(|f(z)|\leq \frac{1}{l}) \big).  
\end{align*}
The (contraposition of the) previous formula implies that for all $f$
\begin{align}
\big( (\forall^{\st}z\in [0,1])(\exists^{\st}l^{0})&\textstyle(|f(z)|> \frac{1}{l})  \wedge f(0)>0\wedge f(1)<0\big) \label{nokiii}\\ 
&\di \textstyle(\forall^{\st}g)(\exists ^{\st}k)(\exists   x, y \in [0,1])(|x-y|<\frac{1}{g(k)}\wedge |f(x)-f(y)|>\frac{1}{k}).\notag
\end{align}
Strengthening the antecedent of \eqref{nokiii}, we obtain, for all $f$ and standard $h^{2}$,
\begin{align}
\big( (\forall^{\st}z\in [0,1])&\textstyle(|f(z)|> \frac{1}{h(z)})  \wedge f(0)>0\wedge f(1)<0\big)\label{discharge} \\ 
&\di \textstyle(\forall^{\st}g)(\exists ^{\st}k)(\exists   x, y \in [0,1])(|x-y|<\frac{1}{g(k)}\wedge |f(x)-f(y)|>\frac{1}{k}).\notag
\end{align}
Now \eqref{discharge} clearly has a normal form in $\P_{0}$ and $\CI$ provides a term $t$ such that $\textsf{E-PRA}^{\omega*}$ proves for all $f,g, h$ that
\begin{align*}
\big( (\forall z\in t(g, h)(2))&\textstyle(z\in [0,1]\di |f(z)|> \frac{1}{h(z)})  \wedge f(0)>0\wedge f(1)<0\big) \\ 
&\di \textstyle(\exists   x, y \in [0,1])(|x-y|<\frac{1}{g(t(g,h)(1))}\wedge |f(x)-f(y)|>\frac{1}{t(g,h)(1)}).
\end{align*}
Again taking the contraposition, we obtain that for all $f, g, h$ that 
\begin{align}\label{fraak}
\big(\textstyle(\forall&\textstyle   x, y \in [0,1])(|x-y|<\frac{1}{g(t(g,h)(1))}\di\textstyle ~|f(x)-f(y)|\leq \frac{1}{t(g,h)(1)})\\
& \wedge f(0)>0\wedge f(1)<0\big) \di  (\exists z\in t(g,h)(2))\textstyle(z\in [0,1]\wedge |f(z)|\leq  \frac{1}{h(z)}) \notag
\end{align}
which is as required\footnote{Note that we can decide if $|f(z)|\geq \frac{1}{k}$ or $|f(z)|\leq\frac{2}{k}$ for $k^{0}>0$ by \cite{bridge1}*{Cor.~2.17}.} by the theorem if we take $h$ to be the function which always outputs $k^{0}$, and if $g$ is assumed to be a modulus of (uniform) continuity.
\end{proof}
Next, let $\IVT_{\ns}''$ be $\IVT_{\ns}$ with `$(\exists z)$' and `$\approx$' instead of `$(\exists^{\st}z)$' and `$=$' in the consequent.  We shall also refer to the former as a weak version of $\IVT$.      
The (proof of the) following corollary shows that the second part of the theorem can be improved considerably if we consider the meaning/content of the intermediate value theorem when obtaining a normal form for the latter.  
\begin{cor}\label{nomilo}
From the proof of $\IVT_{\ns}''$ in $\H $, a term $t$ can be extraced s.t.\ $\textup{\textsf{E-HA}}^{\omega*}\vdash\IVT_{\ef}'(t)$.
\end{cor}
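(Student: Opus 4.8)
The plan is to mimic the structure of the second part of the proof of Theorem~\ref{sef8}, but exploiting the fact that the weaker nonstandard statement $\IVT_{\ns}''$ (which only asserts the existence of a possibly-nonstandard $z$ with $f(z)\approx 0$) can be proved and manipulated \emph{entirely within the constructive system $\H$}, so that Theorem~\ref{consresult2} applies in place of Corollary~\ref{consresultcor}. First I would check that $\IVT_{\ns}''$ is provable in $\H$: the usual bisection argument for the intermediate value theorem, applied to the rational approximations $[f(x)](k)$, produces — without any appeal to $\paai$ or Transfer — a point $z$ (built as a nested sequence of intervals) with $|f(z)|\leq \frac{1}{k}$ for every standard $k$, i.e.\ $f(z)\approx 0$; the only choices made are bounded decidable ones, so the argument is intuitionistically valid and stays inside $\textsf{E-HA}^{\omega*}_{\st}$. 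Here the hypothesis of \emph{uniform} nonstandard continuity (or at least a standard modulus, obtained from $\HAC_{\INT}$ applied to nonstandard continuity as in~\eqref{first5}) is what makes the bisection terminate with an infinitesimal value.

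Next I would bring $\IVT_{\ns}''$ into the normal form $(\forall^{\st}x)(\exists^{\st}y)\varphi(x,y)$, working in $\H$ and using the axioms of Definition~\ref{flah} exactly as in Corollary~\ref{cordejedi}. Starting from the normal form of continuity~\eqref{first5} in the antecedent and resolving `$\approx$' in the consequent $(\exists z)(\forall^{\st}l)(|f(z)|\leq\frac1l)$, one first pulls $(\forall^{\st}l)$ out past $(\exists z)$ — note that since $z$ here need not be standard, this is simply a matter of intuitionistic logic plus the fact that a single $z$ works for all $l$, so no Herbrandisation of the $(\exists z)$ is needed and in fact the consequent is already essentially $(\forall^{\st}l)(\exists z)(|f(z)|\leq\frac1l)$. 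Then applying $\textsf{HGMP}^{\st}$ to pull $(\forall^{\st}k)A(g,k,f)$ through the implication, $\textsf{HIP}_{\forall^{\st}}$ (or $\textsf{NCR}$, i.e.\ the contraposition of idealisation) to relocate the remaining standard quantifiers, and $\HAC_{\INT}$ where a finite sequence of witnesses must be collapsed to a maximum, one arrives at a formula
\[
(\forall^{\st} g,l)(\exists^{\st} N,k)\big[(\forall k'\leq k)A(g,k',f)\di (\exists z\in[0,1])(\,\overline{z}N \text{ determines a value } |f(z)|\leq\tfrac1l)\big],
\]
which one then processes with $\NCR$ into a genuine normal form $(\forall^{\st}\underline x)(\exists^{\st}\underline y)\varphi$ with $\varphi$ internal, exactly paralleling the passage from~\eqref{third5} to~\eqref{third6}.

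Finally I would invoke Theorem~\ref{consresult2} (the constructive term-extraction theorem for $\H$) applied to this normal form, obtaining closed terms $t$ in $\mathcal{T}^{*}$ such that $\textsf{E-HA}^{\omega*}$ proves the witnessed version; taking the appropriate maxima of the extracted finite sequences (as in Remark~\ref{simply}) and pushing the internal quantifiers back inside yields precisely $\IVT_{\ef}'(t)$ — a uniformly continuous $f$ with modulus $g$, together with $t(g,k)\in[0,1]$ and $|f(t(g,k))|\leq\frac1k$. The main obstacle I anticipate is not the logic-chasing but getting the \emph{representation of $z$} right so that the internal matrix $\varphi$ really is internal: the bisection point $z$ is an infinite object, so in the normal form one must replace `$(\exists z)(|f(z)|\leq\frac1l)$' by a bounded statement about a finite approximation — essentially `$(\exists$ dyadic rational $q$ of denominator $2^{N})(|[f(q)](N)|\leq\frac1l+$error$)$' — and verify that the error terms introduced match up, using the modulus $g$, exactly the kind of bookkeeping indicated in the footnote to the proof of Theorem~\ref{sef8} (deciding $|f(z)|\geq\frac1k$ vs.\ $|f(z)|\leq\frac2k$ via \cite{bridge1}*{Cor.~2.17}). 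Once that internalisation is done carefully, the remainder is a routine application of $\CI$ with $\mathcal{B}$ in place of $\mathcal{A}$.
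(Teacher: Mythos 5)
Your proposal is correct and follows essentially the same route as the paper: prove $\IVT_{\ns}''$ in $\H$ by an $M$-step bisection on finite approximations (for $M\in\Omega$), use the (uniform) continuity modulus $g$ to replace the claim about the possibly nonstandard point $z$ by a statement with a standard, finitely searchable witness, and then apply the constructive term extraction of Theorem~\ref{consresult2} via $\CI$. Your bounded search over dyadic rationals of standardly bounded denominator is just the paper's upgrade of the consequent to $(\forall^{\st}k)(\exists^{\st}w)(|f(w)|\leq \frac{1}{k})$ in slightly different clothing, and your finite-list selection via the decidability of $|f(q)|\leq\frac{2}{k}$ versus $|f(q)|\geq\frac{1}{k}$ is exactly the footnoted device from the proof of Theorem~\ref{sef8}.
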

\begin{proof}
To prove $\IVT_{\ns}''$ in $\H$, fix $M\in \Omega$ and perform the usual `interval halving technique' for $M$ steps using approximations $[~\cdot~](M)$ of all reals involved.  
By continuity, the consequent $(\exists z\in [0,1])(f(z)\approx 0)$ implies $(\forall^{\st}k^{0})(\exists^{\st}w)(|f(w)|\leq\frac{1}{k})$.  
The corollary now follows by applying $\CI$ with the latter consequent. 
\end{proof}
Analogous results for Weierstra\ss' maximum theorem, and indeed for any theorem equivalent to $\WKL_{0}$ with the same syntactical structure as $\WKL_{0}$, are now straightforward.  
As an example, the \emph{strong} nonstandard and effective versions of the Weierstra\ss' maximum theorem are defined as follows:
\begin{thm}[$\textsf{WEIMAX}_{\ns}$]
For standard $f:\R\di \R$ which is nonstandard continuous on $[0,1]$, we have $(\exists^{\st}x\in[0,1])(\forall y\in [0,1])(f(y)\leq f(x))$.
\end{thm}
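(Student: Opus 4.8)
The plan is to prove $\WEIMAX_{\ns}$ inside $\P_{0}+\paai$, in close parallel with the treatment of $\IVT_{\ns}$ in Theorem~\ref{sef8}. As there, $\paai$ is genuinely used, and is in fact necessary: from a would-be counterexample to $\paai$ one builds, exactly as with the `capped function' $f_{0}$ in the proof of Theorem~\ref{sef8}, a standard function that is nonstandard (uniformly) continuous on $[0,1]$ but has no \emph{standard} maximiser, whence $\WEIMAX_{\ns}\di\paai$. Recall from the proof of Theorem~\ref{sef} that $\paai$ yields $(\exists^{2})^{\st}$, hence $\ACA_{0}$ — and a fortiori $\WKL_{0}$ — relative to `$\st$'; this is the only `relativised Reverse Mathematics' I shall invoke.

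The first step is to replace the \emph{pointwise} nonstandard continuity of $f$ by a genuine internal modulus of uniform continuity. By the relativised form of Heine's theorem (i.e.\ $\HEI_{\ns}$, available here because $(\exists^{2})^{\st}$ gives $\WKL_{0}$ relative to `$\st$'; cf.\ the proof of Theorem~\ref{floggen345}), $f$ is nonstandard \emph{uniformly} continuous on $[0,1]$. Resolving `$\approx$' and pulling the standard quantifiers to the front exactly as in \eqref{first}--\eqref{first5} of Section~\ref{frakkk}, applying idealisation~\textsf{I} and $\HAC_{\INT}$, and then invoking $\paai$ to turn the resulting modulus into one valid at \emph{all} precisions, produces a \emph{standard} functional $g$ that is an honest internal modulus of uniform continuity of $f$ on $[0,1]$. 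From $(f,g)$ one defines, by a closed term of G\"odel's $\textsf{T}$, the real $S:=\sup_{x\in[0,1]}f(x)$; since $f,g$ are standard, $S$ is a \emph{standard} real, and $f(y)\leq_{\R}S$ for \emph{every} $y\in[0,1]$ by the very definition of the supremum.

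The second, and main, step produces a \emph{standard} maximiser. I would apply the relativised Reverse Mathematics theorem that a (uniformly) continuous function on $[0,1]$ attains its supremum (\cite{simpson2}*{IV.2.2}, provable in $\WKL_{0}$ and hence in $\ACA_{0}$): relative to `$\st$' this furnishes a standard $x_{0}\in[0,1]$ such that `$f(x_{0})=_{\R}S$' holds in the form relativised to `$\st$'. Since `$f(x_{0})=_{\R}S$' unfolds to a $\Pi^{0}_{1}$ statement all of whose parameters $f,x_{0},S$ are standard, $\paai$ upgrades it to the genuine identity $f(x_{0})=_{\R}S$. Combined with the previous step, for \emph{arbitrary} $y\in[0,1]$ we get $f(y)\leq_{\R}S=_{\R}f(x_{0})$, which is precisely $(\forall y\in[0,1])(f(y)\leq f(x_{0}))$ with $x_{0}$ standard, i.e.\ the conclusion of $\WEIMAX_{\ns}$. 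As usual, running this proof through the template $\CI$ then yields the effective version $\WEIMAX_{\ef}(t)$ (and running the proof of $\WEIMAX_{\ns}\asa\paai$ through $\CI$ yields, as in Theorem~\ref{sef8}, an explicit equivalence at the level of $\ACA_{0}$).

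The step I expect to be the main obstacle is obtaining the \emph{standard} maximiser (together with the transfer at the end of the first step). Nonstandard compactness of $[0,1]$ by itself — that is, $\STP$ without $\paai$ — only produces a point at which $f$ reaches $S$ `up to standard precision' (equivalently: a path through the relevant `shrinking dyadic interval' tree that is a path only in the standard sense), and such a point may satisfy merely $f(x_{0})\approx S$, not $f(x_{0})=_{\R}S$; excluding an infinitesimal gap here is exactly what $\Pi^{0}_{1}$-transfer accomplishes. This is the structural reason $\WEIMAX_{\ns}$, like $\IVT_{\ns}$, is a `strong' nonstandard theorem equivalent to $\paai$ rather than merely a consequence of $\STP$. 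The remaining work is bookkeeping: verifying that $g$, $S$, and $x_{0}$ are genuinely standard (closure of `$\st$' under the relevant closed terms of $\textsf{T}$), so that each application of $\paai$ is to a formula with standard parameters.
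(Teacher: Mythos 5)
Your overall strategy (use $\paai$ to get $(\exists^{2})^{\st}$ and hence arithmetical comprehension relative to `$\st$', prove the theorem in the standard world, then transfer back) is the same as the paper's, which treats $\WEIMAX_{\ns}$ by direct analogy with Theorem~\ref{sef8}; your converse direction and your use of $\paai$ to upgrade the $\Pi^{0}_{1}$ statement $f(x_{0})=_{\R}S$ (all parameters standard) are fine. The genuine gap lies in the two transfer steps of your first part. Relativised Heine only yields a modulus for \emph{standard} points and \emph{standard} precisions, i.e.\ $(\forall^{\st}k)(\exists^{\st}N)(\forall^{\st}x,y\in[0,1])(\dots)$, which is strictly weaker than nonstandard uniform continuity \eqref{soareyou4}: the latter speaks about \emph{all} $x,y$, and in the paper it is obtained from $\STP$ (Theorem~\ref{floggen345}), not from $\paai$; without $\STP$, points of $[0,1]$ need not have standard parts, and pointwise information at standard points says nothing about such $y$. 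More importantly, the step ``invoke $\paai$ to turn the resulting modulus into one valid at all precisions'' is not an instance of $\paai$: the matrix $(\forall x,y\in[0,1])(|x-y|\leq\frac{1}{g(k)}\di|f(x)-f(y)|\leq\frac1k)$ contains universal \emph{type-one} quantifiers, whereas $\paai$ transfers only $\Pi_{1}^{0}$ formulas, i.e.\ universal number quantifiers over a quantifier-free matrix coded by a standard function; no such standard characteristic function exists here (not even using $(\exists^{2})^{\st}$), and transferring this formula is, in contrapositive, a $\Sigma^{1}_{1}$-transfer step at the level of $\Paai$, not $\paai$.

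Consequently the claim ``$f(y)\leq_{\R}S$ for \emph{every} $y\in[0,1]$ by the very definition of the supremum'' is unsupported for nonstandard $y$. What you actually have (the normal form \eqref{first5}: a modulus valid for all points but only at standard precisions, or the covering argument with standard endpoints) yields only $(\forall^{\st}k)(f(y)\leq S+\frac1k)$, i.e.\ $f(y)\lessapprox S$; an \emph{infinitesimal} overshoot $f(y)=S+\eps$ at a nonstandard $y$ is not excluded, and it cannot be excluded by transfer because the parameter $y$, hence the real $f(y)$, is not standard. This is exactly the ``infinitesimal gap'' you correctly flag for the maximiser $x_{0}$, where transfer does apply since all parameters are standard; it recurs, untouched by $\paai$, inside the internal quantifier $(\forall y\in[0,1])$ of the conclusion — and this is precisely where $\WEIMAX_{\ns}$ differs from $\IVT_{\ns}$, whose consequent is $\Pi^{0}_{1}$ in standard parameters so that the recipe ``apply $\paai$ to the consequent'' works verbatim. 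As it stands, your argument proves the weakened statement with $f(y)\lessapprox f(x_{0})$ (or with $y$ ranging over the rationals, where the inequality is arithmetical in standard parameters and does transfer), but not the internal inequality for all real $y$; closing that last step needs an additional idea beyond the tools you invoke.
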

\begin{thm}[$\textsf{WEIMAX}_{\ef}(t)$]
For any $f:\R\di \R$ with modulus of continuity $g$ on $[0,1]$, we have 
$(\forall y\in [0,1])(f(y)\leq f(t(f,g)))$.  
\end{thm}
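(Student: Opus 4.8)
The aim is to exhibit a term $t$ witnessing $\textsf{WEIMAX}_{\ef}(t)$, and the natural route---mirroring the treatment of the intermediate value theorem in Theorem \ref{sef8}---is to prove the nonstandard equivalence
\[
\P_{0}\vdash\ \textsf{WEIMAX}_{\ns}\ \asa\ \paai
\]
and then run its proof through the template $\CI$; this yields, as in \eqref{frood8}, terms $s,u$ such that $\textup{\textsf{E-PRA}}^{\omega*}$ proves $(\forall\mu^{2})[\MU(\mu)\di\textsf{WEIMAX}_{\ef}(s(\mu))]\wedge(\forall t)[\textsf{WEIMAX}_{\ef}(t)\di\MU(u(t))]$, and in particular the desired $t$ is $s(\mu)$ for any $\mu$ with $\MU(\mu)$. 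For the implication $\paai\di\textsf{WEIMAX}_{\ns}$ I would argue as in the reverse implication of Theorem \ref{sef}: apply $\Omega\textup{\textsf{-CA}}$ (Theorem \ref{drifh}) to the consequent of $\paai$ to obtain $(\exists^{2})^{\st}$, so that the usual $\ACA_{0}$-proof of the Weierstra\ss~maximum theorem (see \cite{simpson2}*{IV.2.2}), together with Heine's theorem which is available there, goes through relative to `$\st$' and produces a standard maximiser $x_{0}$ with $(\forall^{\st}y\in[0,1])(f(y)\leq f(x_{0}))$ and $f(x_{0})=_{\R}M$, where $M$ is the (now standard) supremum of $f$ computed from a uniform modulus. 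Using density of the rationals in $[0,1]$ and continuity of $f$, the conjunction `$f(x_{0})=_{\R}M\ \wedge\ (\forall y\in[0,1])(f(y)\leq M)$' is recast as an internal $\Pi_{1}^{0}$-statement about natural numbers, to which $\paai$ applies; this upgrades the inequality from standard $y$ to all $y$ and yields $\textsf{WEIMAX}_{\ns}$.

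For the converse $\textsf{WEIMAX}_{\ns}\di\paai$ the plan is to build, from an arbitrary standard $f^{1}$ with $(\forall^{\st}n)f(n)\neq0$, a standard and nonstandard-uniformly-continuous function $f_{0}:[0,1]\di\R$ in the spirit of the counterexample $f_{0}$ used in the proof of Theorem \ref{sef8}: one wants $f_{0}$ to behave `normally' (with a standard maximiser) when $(\forall m)f(m)\neq0$, but to have its maximum attained only at a point located by the nonstandard least witness $m_{0}$ to $\neg\paai$ when such an $m_{0}$ exists, so that $\textsf{WEIMAX}_{\ns}$ forces $(\forall m)f(m)\neq0$. I expect this step to be the main obstacle: unlike the intermediate value theorem---where the counterexample simply has no zero at all---a continuous function on $[0,1]$ always attains its maximum, so the counterexample must instead arrange that \emph{no standard point} is an exact maximiser (for instance by engineering the set of maximisers to be a standard set with no standard member, pinned to the nonstandard witness), and verifying that such an $f_{0}$ is simultaneously standard, nonstandard continuous, and genuinely lacks a standard exact maximiser is the delicate part. (Should this require only a fragment of Standard Part rather than $\paai$, the analogous equivalence and term extraction would involve that fragment instead.)

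Granting the equivalence, the remaining bookkeeping is routine. One resolves `$\approx$' in the nonstandard pointwise continuity \eqref{soareyou3} of $f$ and, after bringing the standard quantifiers to the front and applying $\HAC_{\INT}$, obtains the modulus-of-continuity form of the antecedent; the consequent $(\exists^{\st}x)(\forall y\in[0,1])(f(y)\leq f(x))$ is already of the shape $(\exists^{\st}x)\varphi$ with internal $\varphi$, so that---treating the two implications in $\textsf{WEIMAX}_{\ns}\asa\paai$ with $\HAC_{\INT}$ and idealisation $\textsf{I}$, exactly as in the passage \eqref{noniesimpel}--\eqref{hermct} of the proof of Theorem \ref{sef}---one reaches normal forms to which Corollary \ref{consresultcor} applies; taking pointwise maxima of the finite sequences of witnesses returned by the algorithm $\mathcal{A}$ then gives $s$ and $u$, with $s(\mu)$ a witnessing functional since the relevant internal formula is sufficiently monotone (Remark \ref{simply}). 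As in Corollary \ref{nomilo}, the argument moreover goes through in $\H$ for the \emph{weak} version of $\textsf{WEIMAX}$ (replace `$=_{\R}$' by `$\approx$' and drop the leading `$\st$'), and applying Corollary \ref{consresultcor} directly to the idealised formula---rather than discarding the witness information---produces the associated Herbrandisation in the sense of Remark \ref{herbrand}.
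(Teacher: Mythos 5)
Your overall plan---prove $\P_{0}\vdash \WEIMAX_{\ns}\asa\paai$ and push the proof through $\CI$ as in Theorem \ref{sef8}---is exactly what the paper intends here, and your bookkeeping with $\HAC_{\INT}$, idealisation and Corollary \ref{consresultcor} is fine; but you have located the difficulty in the wrong place. The direction you defer as ``the delicate part'', $\WEIMAX_{\ns}\di\paai$, is in fact the routine one and needs no fragment of Standard Part: given standard $h^{1}$ with $(\forall^{\st}n)(h(n)=0)$ and $(\exists m)(h(m)\ne 0)$, let $m_{0}$ be the (necessarily nonstandard) least witness and define, by a term in $h$ and the approximations $[x](k)$ exactly as for the paper's $f_{0}$ in Theorem \ref{sef8}, the tent function of height $2^{-m_{0}}$ peaked at $x=2^{-m_{0}}$ and supported in $[0,2^{-m_{0}+1}]$ (value $0$ when $(\forall i\leq k)(h(i)=0)$). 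This $f_{0}$ is standard, and since $0\leq f_{0}\leq 2^{-m_{0}}\approx 0$ it is even nonstandard \emph{uniformly} continuous; yet its exact maximum $2^{-m_{0}}>0$ is attained only inside the infinitesimal support, while $f_{0}(x)=0$ at every standard $x\in[0,1]$. Hence no standard $x$ satisfies $(\forall y\in[0,1])(f_{0}(y)\leq f_{0}(x))$, and $\WEIMAX_{\ns}$ yields $\paai$.

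The genuine gap is in the direction you call routine. In Theorem \ref{sef8} the final step works because the consequent $f(z)=_{\R}0$ is arithmetical in the standard parameters $f,z$, so $\paai$ applies; the consequent of $\WEIMAX_{\ns}$ is $(\exists^{\st}x\in[0,1])(\forall y\in[0,1])(f(y)\leq f(x))$, whose inner quantifier is of type $1$, and $\paai$ only transfers number quantifiers over standard function parameters. Your proposed recasting of $(\forall y\in[0,1])(f(y)\leq M)$ as an internal $\Pi^{0}_{1}$-statement about $\N$ ``using density of the rationals and continuity of $f$'' does not go through: density plus $\paai$ does give $(\forall q\in\Q\cap[0,1])(f(q)\leq f(x_{0}))$, but passing from rationals to an arbitrary internal $y\in[0,1]$ needs continuity of $f$ at \emph{nonstandard} points, which nonstandard (pointwise) continuity does not provide, and in $\P_{0}$ (no Standard Part) such a $y$ need not be infinitely close to any standard point. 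Even assuming nonstandard uniform continuity one only reaches $f(y)\lessapprox f(x_{0})$; the possibility that the standard $f$ exceeds its ``standard supremum'' by an infinitesimal at some remote internal point is not excluded by $\paai$ alone. What your argument actually delivers is the \emph{weak} version (with `$\approx$', respectively the inner quantifier restricted to standard $y$), which the paper handles by the same weakening as for $\IVT$; for the strong version you must either repair this upgrade step or argue the internal explicit equivalence with $(\mu^{2})$ directly, as in \cite{kohlenbach2}*{\S3}.
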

A result analgous to \eqref{frood8} for the previous two sentences, is proved as in the proof of the theorem (see also \cite{kohlenbach2}*{\S3}).  The weak versions are obtained by the same weakening as for $\IVT$.   

\smallskip

Finally, note that for the weak effective version of $\IVT_{\ef}'(t)$, if we additionally assume that there is at most one intermediate value, i.e.\
\[
(\forall x, y\in [0,1])(x\ne y\di f(x)\ne 0 \vee f(y)\ne0), 
\]  
then the approximation provided by $t$ converges to the unique intermediate value of $f$.  The same holds for the Weierstra\ss~maximum theorem (and in general), additionally assuming $\WKL$ in the base theory to guarantee that a maximum exists at all.  Unique existence results have been studied in constructive RM (\cite{ishberg}).  

\smallskip

In conclusion, theorems with the same syntactic structure as $\WKL_{0}$ seem to have two possible normal forms, the \emph{strong and weak} nonstandard versions.  
When run through $\CI$, the first one gives rise to an explicit equivalence to $(\mu^{2})$, while the second one gives rise to the `approximate' version from constructive mathematics.   
\subsection{Two versions of Dini's theorem}\label{dinkitoes}
In this section, we investigate \emph{Dini's theorem} which states: \emph{a monotone sequence of continuous functions converging pointwise to a continuous function on a compact space, converges uniformly} (\cite{rudin}*{p.\ 150}).  The classical case leads to the discovery of a `higher-order' RM zoo, which is interesting as follows: while the RM zoo from \cite{damirzoo} contains lots of non-equivalent theorems outside the Big Five, \emph{uniform} RM zoo theorems are either equivalent to $(\exists^{2})$ or in some cases behave like the fan theorem by the results in \cite{samzoo, samzooII}.  In other words, the `higher-order RM zoo' looks rather boring in comparison to the original zoo, but our results in Section \ref{snarkie2} will change that.

\subsubsection{Classical Dini's theorem}\label{snarkie}
In this section, we study {Dini's theorem} in classical mathematics using $\CI$.    
Following \cite{diniberg}*{\S5}, Dini's theorem is equivalent to weak K\"onig's lemma in classical Reverse Mathematics.
Hence, we shall study the nonstandard version of Dini's theorem restricted to $[0,1]$ as follows.  
Recall the definition of convergence from Definition \ref{nsconv}.
\begin{thm}[$\DINI_{\ns}$] For all $f_{n}, f:\R\di \R$, if 
\begin{enumerate}
\item $f, f_{n}$ are nonstandard pointwise continuous for standard $n$ on $[0,1]$,
\item  $f_{n}(x)$ nonstandard converges to $f(x)$ for standard $x\in [0,1]$, and
\item $ (\forall x\in [0,1], n )(f_{n}(x)\geq f_{n+1}(x)\geq f(x))$
\end{enumerate}
then $f_{n}$ \emph{uniformly} nonstandard converges to $f$ in $[0,1]$.  
\end{thm}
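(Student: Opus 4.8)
The plan is to prove $\DINI_{\ns}$ inside $\P_{0}+\STP$, imitating the classical proof of Dini's theorem but replacing the extraction of a finite subcover by nonstandard compactness of the unit interval. Recall from Theorem~\ref{dickawadddd} that over $\P_{0}$ the principle $\STP$ is equivalent to $(\forall x\in[0,1])(\exists^{\st}y\in[0,1])(x\approx y)$; since the classical Dini's theorem is equivalent to $\WKL_{0}$, this extra hypothesis is genuinely needed, in accordance with step~\eqref{famouslastname} of the template~$\CI$.

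The heart of the argument is the following \emph{local uniformity claim}: for every standard $x\in[0,1]$ and standard $k$ there is a \emph{standard} $n$ such that
\[
(\forall y\in[0,1])(\forall N\geq n)\big[\,y\approx x\di |f_{N}(y)-f(y)|<\tfrac1k\,\big].
\]
To prove it, fix standard $x,k$. By hypothesis~(2) and Definition~\ref{nsconv}, $f_{N}(x)\approx f(x)$ for every $N\in\Omega$, so $(\forall N\in\Omega)(|f_{N}(x)-f(x)|\leq\tfrac1{2k})$. Hence the internal formula $(\forall N\geq m)(|f_{N}(x)-f(x)|\leq\tfrac1{2k})$ holds for all infinite $m$, and underspill (the dual of overspill, Theorem~\ref{doppi}; see \cite{brie}*{Prop.\ 5.11}) yields a \emph{standard} $n$ with $(\forall N\geq n)(|f_{N}(x)-f(x)|\leq\tfrac1{2k})$. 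Since $n$ is standard, $f_{n}$ is nonstandard pointwise continuous by~(1); together with the nonstandard continuity of $f$ this gives $f_{n}(y)-f(y)\approx f_{n}(x)-f(x)$ whenever $y\approx x$, whence $0\leq f_{n}(y)-f(y)<\tfrac1k$ using~(3). Applying~(3) once more, $0\leq f_{N}(y)-f(y)\leq f_{n}(y)-f(y)<\tfrac1k$ for every $N\geq n$, which is the claim.

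Granting the claim, the theorem follows at once. Let $y_{0}\in[0,1]$ and $N_{0}\in\Omega$ be arbitrary; it suffices to show $|f_{N_{0}}(y_{0})-f(y_{0})|<\tfrac1k$ for every standard $k$. By nonstandard compactness of $[0,1]$ (Theorem~\ref{dickawadddd}) pick a standard $x_{0}\in[0,1]$ with $y_{0}\approx x_{0}$, and apply the claim to $x_{0}$ and $k$ to obtain a standard $n$. Since $N_{0}$ is infinite and $n$ is finite, $N_{0}\geq n$, and since $y_{0}\approx x_{0}$ the claim yields $|f_{N_{0}}(y_{0})-f(y_{0})|<\tfrac1k$. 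As $k$ was an arbitrary standard number, $f_{N_{0}}(y_{0})\approx f(y_{0})$, i.e.\ $f_{n}$ nonstandard converges uniformly to $f$ on $[0,1]$.

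I expect the only delicate point to be the local uniformity claim, specifically the insistence that the threshold index $n$ be \emph{standard}: the hypotheses supply nonstandard continuity of $f_{n}$ only for standard $n$, so an infinite witness to convergence would be useless, which is exactly why underspill — rather than a naive choice of an infinite index — is invoked; the monotonicity hypothesis~(3) then carries the estimate from the single index $n$ to all later indices, replacing the classical ``continuity plus monotonicity on a neighbourhood'' step. A routine check shows the whole argument goes through in $\H$ as well. Finally, feeding $\DINI_{\ns}$ into $\CI$ — resolving $\approx$ in the three hypotheses and the conclusion, using $\HAC_{\INT}$ and $\textsf{I}$, and invoking the normal form of $\STP$ (as in \eqref{essentip}) — should produce the effective version of Dini's theorem together with an explicit equivalence at the level of $\WKL_{0}$, as announced at the start of Section~\ref{dinkitoes}.
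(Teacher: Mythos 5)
Your proof is correct, and it reaches the conclusion by a genuinely different decomposition than the paper's. The paper (Theorem~\ref{linkebeek}, following \cite{loeb1}*{Theorem 13.3}) fixes an arbitrary $x_{0}\in[0,1]$ and an infinite $N_{0}$ at the outset, takes a standard $y_{0}\approx x_{0}$ via Theorem~\ref{dickawadddd}, applies \emph{overspill} to the continuity statements $(\forall^{\st}n)(f_{n}(y_{0})\approx f_{n}(x_{0}))$ to push them up to a nonstandard index $N_{1}$, and then finishes with a case distinction $N_{0}\leq N_{1}$ versus $N_{0}>N_{1}$, invoking monotonicity only in the second case. You instead apply \emph{underspill} to the convergence at the standard approximating point to pull the threshold index down to a \emph{standard} $n$, package this as a local-uniformity lemma, and let monotonicity carry the bound to all $N\geq n$ before closing with nonstandard compactness; this is precisely the nonstandard transcription of the classical finite-subcover proof of Dini. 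The ingredients (STP, continuity and convergence only at standard data, hypothesis (3)) are the same, but the organization differs, and each has its merits: the paper's argument is shorter and needs no $\tfrac1{2k}$-bookkeeping, while your lemma — being of the shape $(\forall^{\st}x,k)(\exists^{\st}n)(\dots)$ — is already essentially a normal form, so it meshes more transparently with step (ii) of $\CI$ and makes visible where the modulus of uniform convergence in $\DINI_{\ef}(t)$ comes from. Two small remarks: your underspill is legitimate here, since for internal formulas it follows from overspill (Theorem~\ref{doppi}) and is used by the paper itself (proof of Theorem~\ref{drifh}, citing \cite{brie}*{Prop.\ 5.11}); however, the usual derivation of underspill from overspill is by contradiction, so for your claim that the argument "goes through in $\H$" you should appeal to the constructive underspill of \cite{brie} (via $\textsf{HGMP}^{\st}$) rather than to contraposition. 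Also, like the paper's own proof, your use of (3) to compare $f_{N}(y)$ with $f_{n}(y)$ for $N\geq n$ tacitly uses internal induction along the (possibly nonstandard) interval $[n,N]$, which is unproblematic at the level of rigour adopted in the paper.
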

\begin{thm}[$\DINI_{\ef}(t)$] For all $f_{(\cdot)}, g_{(\cdot)}, f, g, h$, if 
\begin{enumerate}
\item $f, f_{n}$ are pointwise continuous on $[0,1]$ with moduli $g, g_{n}$ for all $n$,
\item  $f_{n}(x)$ converges to $f(x)$ with modulus $h(x)(\cdot)$ for all $x\in [0,1]$, and
\item $ (\forall x\in [0,1], n )(f_{n}(x)\geq f_{n+1}(x)\geq f(x))$,
\end{enumerate}
then $f_{n}$ \emph{uniformly} converges to $f$ in $[0,1]$ with modulus $t(h, g, g_{(\cdot)})$.  
\end{thm}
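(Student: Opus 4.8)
The plan is to obtain $\DINI_{\ef}(t)$ by running Dini's theorem through the template $\CI$ of Section~\ref{detail}, just as the uniform limit theorem was handled in Theorem~\ref{floggen}; the one new wrinkle is that, unlike the case studies of Section~\ref{main}, the nonstandard version will require the nonstandard compactness of $[0,1]$, i.e.\ $\STP$. First I would fix the nonstandard versions of the hypotheses of $\DINI_{\ef}(t)$: pointwise continuity as in \eqref{soareyou3} for $f$ and for each standard $f_n$; nonstandard convergence (Definition~\ref{nsconv}) of $f_n(x)$ to $f(x)$ for standard $x\in[0,1]$; the internal monotonicity clause $(\forall x\in[0,1],n)(f_n(x)\geq f_{n+1}(x)\geq f(x))$ for \emph{all} $x$ and $n$; and the conclusion as uniform nonstandard convergence $(\forall x\in[0,1],N\in\Omega)(f_N(x)\approx f(x))$. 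The resulting implication is $\DINI_{\ns}$.

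Step one: prove $\DINI_{\ns}$. By Theorem~\ref{dickawadddd} nonstandard compactness of $[0,1]$ is equivalent to $\STP$, and the classical proof of Dini genuinely uses it, so — as permitted by step~\eqref{famouslastname} of $\CI$ — I add $\STP$ to the hypotheses. The proof then runs: fix standard $k$; by the monotonicity clause it suffices to produce a \emph{standard} $n_0$ with $(\forall x\in[0,1])(f_{n_0}(x)-f(x)\leq\frac1k)$, since then every $N\geq n_0$, in particular every infinite $N$, witnesses the conclusion at precision $k$. For each standard $x$, nonstandard convergence (plus overspill) supplies a standard $n_x$ with $0\leq f_{n_x}(x)-f(x)\leq\frac1{3k}$; by the nonstandard pointwise continuity of $f_{n_x}$ and of $f$ at $x$, in its effective form, there is a standard open $U_x\ni x$ on which $f_{n_x}(y)-f(y)\leq\frac1k$, whence by monotonicity $f_m(y)-f(y)\leq\frac1k$ on $U_x$ for all $m\geq n_x$. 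Since $\STP$ gives, for every $z\in[0,1]$, a standard $y\in[0,1]$ with $z\approx y$, and the $U_y$ are open, the standard family $\{U_x:x\textup{ standard}\}$ covers all of $[0,1]$; applying $\HBL_{\ns}$ extracts standard $U_{x_1},\dots,U_{x_r}$ covering $[0,1]$, and $n_0:=\max_i n_{x_i}$ works. The same argument goes through in $\H$ (using $\NCR$, $\textsf{HGMP}^{\st}$ and $\textsf{HIP}_{\forall^{\st}}$ in place of $\textsf{I}$), so a constructive variant is available as well.

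Step two: normalisation and extraction. Resolving $\approx$ in the hypotheses and applying $\textsf{I}$ and $\HAC_{\INT}$ with the maximum bookkeeping of Remark~\ref{simply}, exactly as in the proof of Theorem~\ref{floggen}, puts the continuity hypotheses into the form $(\exists^{\st}g_{(\cdot)},g)(\forall^{\st}\dots)(\dots)$ and the convergence hypothesis into $(\exists^{\st}h)(\forall^{\st}\dots)(\dots)$, while the monotonicity clause is already internal; the conclusion rewrites as $(\forall^{\st}k')(\forall x\in[0,1],N\in\Omega)(|f_N(x)-f(x)|\leq\frac1{k'})$, to which overspill/underspill (Theorems~\ref{doppi} and~\ref{necess}) applies to yield the normal form $(\forall^{\st}k')(\exists^{\st}M)(\forall x\in[0,1],N\geq M)(|f_N(x)-f(x)|\leq\frac1{k'})$. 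Bringing the $\HAC_{\INT}$-witnesses and the standard quantifiers to the front (trivially in $\P_0$, via Definition~\ref{flah} in $\H$), prefixing the internal $(\forall f_{(\cdot)},f)$-quantifier, and absorbing the normal form \eqref{frukkklk}/\eqref{essentip} of $\STP$ into the implication, I obtain a single normal form $(\forall^{\st}\underline u)(\exists^{\st}\underline v)\psi(\underline u,\underline v)$ provable in $\P_0+\STP$ (resp.\ $\H+\STP$), where $\underline u$ ranges over $g_{(\cdot)},g,h$ together with the data of the specific cover $\{U_x\}$ — which is itself definable from $g_{(\cdot)},g,h$. Corollary~\ref{consresultcor} (resp.\ Theorem~\ref{consresult2}) plus a final maximum step then deliver a closed term $t$ of G\"odel's $\textsf{T}$ with $\textup{\textsf{E-PRA}}^{\omega*}\vdash\DINI_{\ef}(t)$; concretely $t(h,g,g_{(\cdot)})(k')$ comes out as a bound on $\max_i h(x_i)(3k')$ over the sub-cover points $x_i$.

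The main obstacle — and what distinguishes this from the routine Theorem~\ref{floggen} — is the compactness step: $\DINI_{\ns}$ is \emph{not} provable in $\P_0$ (whereas $\CRI_{\ns},\ULC_{\ns},\FTC_{\ns}$ were), so $\STP$ must be added, which places Dini's theorem at the $\WKL_0$ tier. Consequently, exactly as for Heine's theorem in Theorem~\ref{floggen345}, the sharp content of the extraction is really conditional on a special fan functional: writing $h_{\textup{HB}}$ for a functional with $\HBL_{\ef}(h_{\textup{HB}})$, one gets $(\forall h_{\textup{HB}})[\HBL_{\ef}(h_{\textup{HB}})\di\DINI_{\ef}(t(h_{\textup{HB}},h,g,g_{(\cdot)}))]$, and $\DINI_{\ef}(t)$ as stated is this result once the base theory proves $\WKL$ (e.g.\ over $\textup{\textsf{E-PRA}}^{\omega*}+\FAN+\QFAC^{2,1}$), the fan-functional parameter being absorbed — again just as the paper states $\HEI_{\ef}(t)$. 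The only delicate bookkeeping is in combining $\STP$'s normal form \eqref{frukkklk} with the implication-normal-form and verifying that the surviving standard parameters reduce to $g_{(\cdot)},g,h$; this is routine in light of the earlier proofs. A Herbrandisation of $\DINI_{\ns}$ and the constructive version in $\H$ then follow as in Sections~\ref{frakkk} and~\ref{dinki}.
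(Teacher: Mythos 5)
Your proposal is correct and lands on the same overall architecture as the paper: prove a nonstandard Dini's theorem $\DINI_{\ns}$ in $\P_{0}+\STP$, then run the implication through $\CI$ using the Heine--Borel normal form of $\STP$ with the same modifications as for Heine's theorem (Theorem~\ref{floggen345}), so that the extracted modulus is conditional on a functional $h$ satisfying $\HBL_{\ef}(h)$ --- exactly the shape of Theorem~\ref{linkebeek}. Where you genuinely diverge is in the proof of $\DINI_{\ns}$ itself. You transplant the classical finite-subcover argument: for each standard $x$ underspill gives a standard index $n_{x}$, continuity gives a standard neighbourhood $U_{x}$, nonstandard compactness shows the standard $U_{x}$ cover all of $[0,1]$, and $\HBL_{\ns}$ extracts a finite subcover whose maximal index is the uniform witness. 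The paper instead uses the pointwise nonstandard proof from Loeb--Wolff: fix $x_{0}\in[0,1]$ and infinite $N_{0}$, take a standard $y_{0}\approx x_{0}$ via Theorem~\ref{dickawadddd}, apply overspill to $(\forall^{\st}n)(f_{n}(y_{0})\approx f_{n}(x_{0}))$ to get an infinite $N_{1}$ up to which continuity transfers, and then split on $N_{0}\leq N_{1}$ versus $N_{0}>N_{1}$, using monotonicity to squeeze $f_{N_{0}}(x_{0})\approx f(x_{0})$ in the second case. The paper's proof is shorter and avoids invoking $\HBL_{\ns}$ inside the mathematical argument (the Heine--Borel functional only enters at the normalisation stage), whereas your route makes the eventual term more transparent --- the modulus visibly comes from $\max_{i}h(x_{i})(3k')$ over the subcover points --- at the cost of the extra bookkeeping of turning the standardly-indexed family $\{U_{x}\}$ into a sequence of intervals (via rational centres and $\HAC_{\INT}$, as in the proof of Theorem~\ref{floggen345}) before $\HBL_{\ns}$ applies. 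One small terminological correction: the functional your extraction is conditional on is the Heine--Borel functional of $\HBL_{\ef}(h)$ (computable from the \emph{full} fan functional, Remark~\ref{snark}), not the \emph{special} fan functional $\Theta$ of $\SCF(\Theta)$, which in the paper arises only from the \emph{unmodified} normal form \eqref{frukkklk} of $\STP$ and yields the third, Herbrandised kind of equivalence.
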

We have the following theorem, with a short and elegant nonstandard proof.
\begin{thm}\label{linkebeek}
From $\P_{0}\vdash\STP\asa \DIN_{\ns}$, terms $s,t$ can be extracted such that \textsf{\textup{E-PRA}}$^{\omega*}$ proves $(\forall h)(\HBL_{\ef}(h)\di\DINI_{\ef}(t(h))$ 
and $(\forall g)(\DINI_{\ef}(g)\di\HBL_{\ef}(s(g))$.  
\end{thm}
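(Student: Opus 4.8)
The plan is to follow the now-familiar two-part strategy used for Theorems~\ref{sef}, \ref{floggen345} and Corollary~\ref{fralgyy}: first establish the nonstandard equivalence $\STP\asa\DIN_{\ns}$ inside $\P_{0}$ by a short soft-analysis argument, then mechanically run the proofs of both implications through the template $\CI$ (i.e.\ bring everything into normal form, apply Corollary~\ref{consresultcor}, and read off the terms). For $\STP\di\DIN_{\ns}$, work in $\P_{0}+\STP$; by Theorem~\ref{dickawadddd} we may replace $\STP$ by $\HBL_{\ns}$, i.e.\ nonstandard Heine--Borel compactness of $[0,1]$. Fix a monotone sequence $f_{n}$ as in $\DIN_{\ns}$, each $f_{n}$ and $f$ nonstandard pointwise continuous for standard $n$, with $f_{n}(x)\approx f(x)$ for standard $x$. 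For each standard $x\in[0,1]$ and standard $k$, pointwise convergence gives a standard $n$ with $0\le f_{n}(x)-f(x)\le\frac{1}{2k}$, and pointwise continuity of $f_{n}$ and $f$ at $x$ then yields a standard open interval $(c,d)\ni x$ on which $f_{n}(y)-f(y)\le\frac1k$ for all $y$ (standard or not), using monotonicity to keep this valid for all larger indices. Thus for fixed standard $k$ the standard covers produced this way satisfy the antecedent of $\HBL_{\ns}$; extracting a finite subcover (legitimate even in the `for all $y\in[0,1]$' form by the last sentence of Theorem~\ref{dickawadddd}) and taking the maximum of the finitely many indices gives a single standard $N$ with $(\forall x\in[0,1])(f_{N}(x)-f(x)\le\frac1k)$, hence by monotonicity $(\forall^{\st}k)(\exists^{\st}N)(\forall M\ge N)(\forall x\in[0,1])(|f_{M}(x)-f(x)|\le\frac1k)$, which is nonstandard uniform convergence. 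The reverse implication $\DIN_{\ns}\di\STP$ is obtained as in Corollary~\ref{fralgyy}: assuming $[0,1]$ is not nonstandard compact, pick $x_{0}\in[0,1]$ with no standard point infinitely close, fix nonstandard $N$, and build an explicit monotone sequence of continuous functions (e.g.\ $f_{n}(x):=\min\{1,\, n\cdot(|x-x_{0}|+\frac1N)\}$ suitably normalised, or the truncations $\max\{0,1-n|x-x_0|\}$ read off at precision $N$) that converges pointwise to the zero function on the standard reals, is nonstandard pointwise continuous and monotone, but fails to converge uniformly in the nonstandard sense near $x_{0}$; this contradicts $\DIN_{\ns}$, so $[0,1]$ is nonstandard compact, and then $\STP$ follows via the binary-representation argument from the last paragraph of the proof of Theorem~\ref{dickawadddd}.

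For the term extraction, the relevant normal forms are already available: $\HBL_{\ns}$ has the normal form \eqref{essentip} (with $A,B$ as in the proof of Theorem~\ref{floggen345}), and $\DIN_{\ns}$ is brought into normal form exactly as the uniform limit theorem was in the proof of Theorem~\ref{floggen}. Concretely, conditions (1)--(3) of $\DIN_{\ns}$ become, after resolving `$\approx$', bringing standard quantifiers to the front, applying (the contraposition of) idealisation $\textsf{I}$ and then $\HAC_{\INT}$: an antecedent of the shape $(\exists^{\st}g_{(\cdot)})(\forall^{\st}\dots)A_{1}\wedge(\exists^{\st}h)(\forall^{\st}\dots)A_{2}\wedge A_{3}$ (the monotonicity clause $A_{3}$ being purely internal), while the conclusion, nonstandard uniform convergence, yields $(\forall^{\st}k)(\exists^{\st}N)E$. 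The implication $\HBL_{\ns}\di\DIN_{\ns}$ then collapses, via one more application of $\textsf{I}$ to pull the leading standard quantifiers through the internal $(\forall c_{(\cdot)},d_{(\cdot)},f_{(\cdot)},f)$ block, into a single normal form $(\forall^{\st}\underline u)(\exists^{\st}\underline v)\varphi$ of the kind required by Corollary~\ref{consresultcor}; applying the latter (legitimate by Corollary~\ref{consresultcor2} since we are in $\P_{0}$ and $\textsf{E-PRA}^{\omega*}$) produces closed terms, and taking pointwise maxima of the witnessing finite sequences (the `usual step' of Remark~\ref{simply}) gives the term $t$ with $\textsf{E-PRA}^{\omega*}\vdash(\forall h)(\HBL_{\ef}(h)\di\DINI_{\ef}(t(h)))$. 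Running the proof of the reverse implication $\DIN_{\ns}\di\STP$ through the same machinery — exactly as Corollary~\ref{fralgyy} does for Heine's theorem — produces the term $s$ with $\textsf{E-PRA}^{\omega*}\vdash(\forall g)(\DINI_{\ef}(g)\di\HBL_{\ef}(s(g)))$. The same argument goes through in $\H$ and $\textsf{E-HA}^{\omega*}$ using the axioms of Definition~\ref{flah} in place of $\textsf{I}$, exactly as in Corollary~\ref{cordejedi}, so the result is constructive; we need not belabour this.

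The step I expect to be the main obstacle is the careful bookkeeping in the forward soft-analysis proof to guarantee that the standard cover fed into $\HBL_{\ns}$ is genuinely produced by an \emph{internal} formula with the right quantifier structure, so that $\HBL_{\ns}$ (and not some stronger external principle) actually applies. In particular, the interval $(c,d)$ around $x$ on which $f_{N}-f$ is small must be chosen uniformly and internally from the data — one must combine the convergence modulus at $x$ with the continuity moduli of $f_{N}$ and $f$ at $x$ and exploit monotonicity so that the single index $N=N(x,k)$ works on the whole interval and for all larger indices simultaneously — and one must check that dropping `st' in the final universal quantifier of $\HBL_{\ns}$ (as licensed by Theorem~\ref{dickawadddd}) is what makes the finite subcover yield a bound valid for \emph{all} $x\in[0,1]$ rather than merely the standard ones, which is precisely what is needed for genuine (not merely nonstandard-on-standard-points) uniform convergence. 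Once this is set up correctly, the normal-form manipulations and the invocation of Corollary~\ref{consresultcor} are entirely routine, following the pattern of Theorems~\ref{floggen} and~\ref{floggen345} line by line.
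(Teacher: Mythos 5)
Your reverse implication and your extraction plan essentially match the paper: the paper also derives $\DIN_{\ns}\di\STP$ from an explicit monotone counterexample concentrated at a point $x_{0}\in[0,1]$ with no standard part (it uses $f_{n}(x)=\frac{1}{1/n+|x-x_{0}|}$ and $f(x)=\frac{1}{|x-x_{0}|+1/N_{0}}$; your truncation functions do the same job), and it then obtains $s,t$ by applying $\CI$ to $\HBL_{\ns}\asa\DIN_{\ns}$ with exactly the Theorem~\ref{floggen345}-style modifications you describe. The problem is your forward direction. The paper does \emph{not} run the classical covering proof: it fixes an arbitrary $x_{0}\in[0,1]$ and nonstandard $N_{0}$, uses $\STP$ qua nonstandard compactness (Theorem~\ref{dickawadddd}) to get a standard $y_{0}\approx x_{0}$, applies overspill to $(\forall^{\st}n)(f_{n}(y_{0})\approx f_{n}(x_{0}))$ to push this up to a nonstandard $N_{1}$, and concludes $f_{N_{0}}(x_{0})\approx f(x_{0})$ by a two-case comparison of $N_{0}$ and $N_{1}$ using monotonicity and pointwise convergence at the standard point $y_{0}$ (Loeb's proof). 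No cover is ever formed.

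The step you dismiss as ``bookkeeping'' in your covering argument is in fact a genuine obstruction. To apply $\HBL_{\ns}$ you must exhibit, internally, a \emph{sequence} of intervals each carrying the property ``$0\le f_{n}(y)-f(y)\le\frac1k$ for all $y$ in the interval''. That goodness condition quantifies over all reals $y$, so you cannot define the sequence by cases on it inside $\P_{0}$, which is conservative over a weak base and has no such comprehension. The two natural escapes both fail: if you enumerate \emph{all} rational intervals with indices attached to their positions, the finite subcover produced by $\HBL_{\ns}$ may consist of intervals for which no smallness of $f_{n}-f$ is known, so it yields nothing; and if you instead fix radii and indices at rational centres via $\HAC_{\INT}$-extracted moduli (which \emph{is} an internal definition, by composition of standard functionals), the antecedent of $\HBL_{\ns}$ can fail, since $\HAC_{\INT}$ gives no control over the moduli: a standard assignment whose radius at each rational $q$ is below $|q-x|/2$ for some fixed standard irrational $x$ is still a correct family of moduli, and then no interval of your sequence contains $x$. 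So the per-standard-point good intervals you construct cannot, in general, be assembled into an internal cover usable by $\HBL_{\ns}$; this is precisely why the nonstandard proof of Dini proceeds via standard parts and overspill rather than via covers. Everything downstream in your proposal (normal forms, idealisation, Corollary~\ref{consresultcor}, taking maxima) is routine and agrees with the paper, but it rests on a forward proof that is not there yet; replacing it by the standard-part/overspill argument repairs the proposal.
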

\begin{proof}
The proof of Dini's theorem in \cite{loeb1}*{Theorem 13.3, p.\ 61} is easily seen to yield a proof of $\DIN_{\ns}$ inside $\P_{0}+\STP$.  
For completeness, we now sketch this proof.  Fix $x_{0}\in [0,1]$ and nonstandard $N_{0}$, and let standard $y_{0}$ be such that $y_{0}\approx x_{0}$ by Theorem~\ref{dickawadddd}.    
Now apply overspill (Theorem \ref{doppi}) to $(\forall^{\st}n)(f_{n}(y_{0})\approx f_{n}(x_{0}))$ to obtain $(\forall n\leq N_{1})(f_{n}(y_{0})\approx f_{n}(x_{0}))$ where $N_{1}$ is nonstandard.  
If $N_{0}\leq N_{1}$ then $f(x_{0})\approx f(y_{0})\approx f_{N_{0}}(y_{0})\approx f_{N_{0}}(x_{0})$.  If $N_{0}>N_{1}$, then by assumption:
\[
f(x_{0})\leq f_{N_{0}}(x_{0})\leq  f_{N_{1}}(x_{0})\approx f_{N_{1}}(y_{0})\approx f(y_{0})\approx f(x_{0}).  
\]
In each case, $f_{N_{0}}(x_{0})\approx f(x_{0})$, implying nonstandard convergence as required.  For the proof of $\DINI_{\ns}
\di \STP$, one could just use Theorem \ref{dickawadddd} and translate the results in \cite{diniberg}*{\S4} to $\P_{0}$.   We now provide a shorter proof inspired by that of Corollary \ref{fralgyy}.  
To this end, assume $\DINI_{\ns}$ and suppose $\STP$ is false, i.e.\ there is $x_{0}\in [0,1]$ such that $(\forall^{\st}y\in [0,1])(x\not\approx y)$.  
Now fix nonstandard $N_{0}$ and define $f(x):= \frac{1}{|x_{0}-x|+{1}/{N_{0}}}$ and $f_{n}(x):= \frac{1}{{1}/{n}+|x-x_{0}|}$.  
By assumption, these functions are nonstandard pointwise continuous for standard $n$, and clearly monotone as in $(\forall x\in [0,1])(\forall n^{0})(f_{n}(x)\leq f_{n+1}(x)\leq f(x))$.    
However, we have $(\forall^{\st}x\in [0,1])(\forall N\in \Omega)(f_{N}(x)\approx f(x))$, while $f_{N_{0}+1}(x_{0})=N_{0}+1 \not\approx N_{0}=f(x_{0})$, contradicting $\DINI_{\ns}$, and the latter is seen to imply $\STP$ by Theorem \ref{dickawadddd}.  

\smallskip

For the remaining part of the theorem, apply $\CI$ to $\HBL_{\ns}\asa\DIN_{\ns}$ while performing the same modifications to the normal forms of $\HBL_{\ns}$ and pointwise continuity as was done at the end of the proof of Theorem \ref{floggen345}.  
\end{proof}
In the previous theorem, the modulus of uniform convergence provided by the term $t$ in $\DIN_{\ef}(t)$ is defined in terms of the functional $h$ as in $\textsf{HBL}(h)$.      
In Remark~\ref{snark}, we discuss the constructive status of the functional $h$ from $\HBL(h)$.  
\subsubsection{Hebrandisation and fan functionals}\label{snarkie2}
In the previous section, an effective equivalence involving Dini's theorem was derived from the associated nonstandard equivalence $\STP\asa \DINI_{\ns}$. 
Since we have already identified $(\mu^{2})$ as the Hebrandisation of $\paai$ in Section \ref{X}, it is a natural question, studied in this section, what the Herbrandisation of $\STP$ could be.  

\smallskip

To this end, we now consider `the' \emph{special fan functional} $\Theta^{2\di (1^{*}\times 0)}$, first introduced in  \cite{samGH}*{\S3} and defined\footnote{Note that the original definition of $\SCF(\Theta)$ in \cite{samGH} is based on \eqref{frukkklk}.  These definitions are equivalent up to a term of G\"odel's $\textsf{T}$, as shown in \cite{dagsam, dagsamII}.} as follows:     
\be\tag{$\SCF(\Theta)$}
(\forall G^{2}, f\leq_{1}1)(\exists i<|\Theta(G)|)\big(  f\in \big[\overline{\Theta(G)(i)}G(\Theta(G)(i)) )\big]  \big), 
\ee
which expresses that the uncountable cover $\cup_{f\in 2^{\N}}[\overline{f}G(f)]$ of Cantor space has a finite sub-cover $\cup_{i\leq k}[\overline{f_{i}}G(f_{i})]$ provided by the finite sequence $\Theta(G)=\langle f_{0}, \dots, f_{k}\rangle$.
Any functional $\Theta$ satisfying $\SCF(\Theta)$ is called `a' special fan functional, as there is no unique such object.  By the following theorem, the existence of a special fan functional is indeed the Herbrandisation of $\STP$.    
\begin{thm}\label{karwinkel}
Let $\varphi$ be internal.  
From a proof $\P_{0}+\STP\vdash(\forall^{\st}x)(\exists^{\st}y)\varphi(x,y)$, 
a term $t$ can be extracted such that $\textsf{\textup{E-PRA}}^{\omega*}$ proves $ (\forall \Theta)\big(\SCF(\Theta)\di   (\forall x)(\exists y\in t(x, \Theta))\varphi(x,y)\big)$.  
Furthermore, $\P_{0}$ proves $(\exists^{\st}\Theta)\SCF(\Theta)\di \STP$.  
\end{thm}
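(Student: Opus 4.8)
\textbf{Proof plan for Theorem \ref{karwinkel}.}

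The plan is to mimic the term-extraction arguments used throughout Section \ref{frakkk}, with the crucial twist that the axiom $\STP$ is \emph{not} an internal formula, so Corollary \ref{consresultcor} cannot be applied directly to a proof using $\STP$. Instead, I would first bring $\STP$ itself into its equivalent normal form \eqref{frukkklk}, which by the discussion in Section \ref{cinq} (and the citation to \cite{samGH}*{Theorem 3.2}) is available in $\P_{0}$. Once $\STP$ is replaced by the normal form $(\forall^{\st}g^{2})(\exists^{\st}w^{1^{*}})\psi(g,w)$ for internal $\psi$ (namely the matrix of \eqref{frukkklk}), the hypothesis becomes a derivation $\P_{0}\vdash (\forall^{\st}g)(\exists^{\st}w)\psi(g,w)\di (\forall^{\st}x)(\exists^{\st}y)\varphi(x,y)$, i.e.\ an implication between two normal forms. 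By the closure of normal forms under modus ponens (step (2) of Section \ref{kintro}, formalised via $\HAC_{\INT}$ and $\textsf{I}$ exactly as in the step from \eqref{second22} to \eqref{second23} in the proof of Theorem \ref{varou}), this implication can be rewritten as a single normal form $(\forall^{\st}\underline{u})(\exists^{\st}\underline{v})\chi(\underline{u},\underline{v})$ with $\chi$ internal. Applying Corollary \ref{consresultcor} (in the form of Corollary \ref{consresultcor2} for $\P_{0}$) yields closed terms producing a finite sequence of witnesses; unpacking these terms along the Herbrandisation recipe of Remark \ref{herbrand} — specifically, keeping the witnesses to the $g$-component rather than discarding them — produces precisely a term-level description of a functional $\Theta$ satisfying $\SCF(\Theta)$, together with a term $t$ such that $\textsf{E-PRA}^{\omega*}$ proves $(\forall \Theta)(\SCF(\Theta)\di (\forall x)(\exists y\in t(x,\Theta))\varphi(x,y))$. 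The point is that the extracted term \emph{needs} the finite-sequence-of-branches data that $\SCF(\Theta)$ packages, because $\HAC_{\INT}$ applied to the antecedent of the rewritten implication only ever supplies a finite list of candidate $w$'s for each $g$, and $\Theta$ is exactly the functional assembling those lists.

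For the second claim, that $\P_{0}\vdash (\exists^{\st}\Theta)\SCF(\Theta)\di \STP$, I would argue as in the proof of Corollary \ref{somaar}: any such standard $\Theta$ maps standard inputs to standard outputs by Definition \ref{debs} (clause (3) of $\T^{*}_{\st}$), so $\SCF(\Theta)$ restricted to standard $g$ and standard binary trees $T$ yields, for each standard $g$, a standard finite sequence $\Theta(g)(1)$ of branches and a standard bound $\Theta(g)(2)$; feeding these through the equivalence \eqref{frukkklk}$\asa$\eqref{fanns}$\asa\STP$ from \cite{samGH}*{Theorem 3.2} (all provable in $\P_{0}$) recovers $\STP$. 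Concretely, given $\alpha\leq_{1}1$, one considers the tree $T_{\alpha}$ of initial segments consistent with $\alpha$ (so that $(\forall^{\st}n)(\overline{\alpha}n\in T_{\alpha})$ and one wants a standard path); $\SCF(\Theta)$ applied with a suitable standard $g$ forces a standard finite set of branches covering $T_{\alpha}$ up to a standard depth, and a straightforward pigeonhole-type argument plus $\Omega\textsf{-CA} $ (Theorem \ref{drifh}) picks out the standard $\beta\approx_{1}\alpha$.

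The main obstacle is the bookkeeping in the first part: verifying carefully that the normal form of $\STP$ really is of the shape $(\forall^{\st}g)(\exists^{\st}w)\psi$ with $\psi$ internal (rather than having a residual internal quantifier alternation that would block the modus-ponens closure), and then that the composite normal form obtained from the implication is still genuinely a normal form to which Corollary \ref{consresultcor} applies verbatim. This is essentially the content of \cite{samGH}*{\S3} combined with the manipulations in Section \ref{frakkk}; the only genuinely new element is recognising that the finite-sequence witness delivered by the soundness algorithm $\mathcal{A}$, when read off in the Herbrandised manner, \emph{is} the special fan functional $\Theta$. I expect no difficulty in the converse direction beyond the routine observation about standardness of closed terms already used in Corollaries \ref{somaar} and \ref{somaar2}.
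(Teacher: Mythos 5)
Your proposal is, in substance, the paper's own proof: replace $\STP$ by its normal form \eqref{frukkklk}, use $\HAC_{\INT}$ so that the Herbrand witness $\Theta$ for the antecedent becomes a standard \emph{parameter}, pass to the normal form $(\forall^{\st}x,\Theta)(\exists^{\st}y)\big[(\forall g^{2})B(g,\Theta)\di \varphi(x,y)\big]$ (the paper simply drops `st' on the inner $(\forall^{\st}g)$, turning the antecedent into the internal formula $\SCF(\Theta)$; your variant of Herbrandising finitely many $g$'s gives the same conclusion, since $\SCF(\Theta)$ implies those finitely many instances), and extract the term via Corollary \ref{consresultcor2}. The converse direction is likewise the paper's: a \emph{standard} $\Theta$ with $\SCF(\Theta)$ yields \eqref{frukkklk} by Definition \ref{debs}, hence $\STP$ by the cited equivalence, so your extra pigeonhole/$\Omega$\textsf{-CA} detour is unnecessary but harmless.

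One side-claim in your write-up is false, although the theorem does not need it: the extraction does \emph{not} produce ``a term-level description of a functional $\Theta$ satisfying $\SCF(\Theta)$''. The functional $\Theta$ enters only as the universally quantified parameter supplied by $\HAC_{\INT}$ and stays free in the extracted statement; no closed term of $\T^{*}$ can provably satisfy $\SCF$, e.g.\ because $(\exists \Theta)\SCF(\Theta)$ yields $\WKL$ over $\RCAo$ (apply $\SCF(\Theta)$ to a functional $g$ obtained via $\QFAC^{1,0}$ bounding the escape depths of a binary tree without paths), whereas the base theory in which the extracted term is verified does not prove $\WKL$; such a term would moreover settle the open questions at the end of Section \ref{snarkie}. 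Read instead as your following sentence suggests --- $\Theta$ is ``the functional assembling the Herbrandised lists'' and remains a parameter --- the claim is correct and the rest of your argument goes through exactly as in the paper.
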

\begin{proof}
The first part is a straightforward application application of $\CI$ given \eqref{frukkklk}.  
Indeed, let $(\forall^{\st}g^{2})(\exists^{\st}w^{1^{*}})B(g,w)$ be the normal form of \eqref{frukkklk} and apply Corollary~\ref{consresultcor} to the following normal form:
\[
(\forall^{\st}x, \Theta)(\exists^{\st}y)\big[(\forall g^{2})B(g, \Theta)\di \varphi(x,y) \big].
\]
The second part is immediate in light of \eqref{frukkklk} and Definition \ref{debs}.
\end{proof}
To gauge the strength of special fan functionals, we need the `intuitionistic' fan functional, defined as follows (see e.g.\ \cite{kohlenbach2}*{\S3}):
\begin{princ}[$\MUC(\Phi)$]$
(\forall Y^{2}) (\forall f, g\leq_{1}1)(\overline{f}\Phi(Y)=\overline{g}\Phi(Y)\notag \di Y(f)=Y(g))$.  
\end{princ}
We have the following theorem, where $\RCA_{0}^{2}$ is just $\RCA_{0}$ formulated in the language of finite types (see \cite{kohlenbach2}*{\S2} for the exact definition).  
\begin{thm}\label{conske}
The systems $\RCA^{\omega}_{0}+(\exists \Theta)\SCF(\Theta)$ and $\P_{0}+\STP$ are conservative over $\RCA_{0}^{2}+\WKL$ for sentences in the latter's second-order language.  
\end{thm}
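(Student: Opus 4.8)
The plan is to route everything through Kohlenbach's conservation theorem for the fan functional, namely that $\RCAo+(\exists \Phi)\MUC(\Phi)$ is conservative over $\RCA_{0}^{2}+\WKL$ (\cite{kohlenbach2}*{Prop.~3.15}), together with the term extraction results already in hand — Corollary \ref{consresultcor2} and Theorem \ref{karwinkel} — and Kohlenbach's conservation of $\RCAo$ over $\RCA_{0}^{2}$. The only computational ingredient is to show, inside $\RCAo$, that $(\exists\Phi)\MUC(\Phi)\di(\exists\Theta)\SCF(\Theta)$. Fix $\Phi$ with $\MUC(\Phi)$, and given $g^{2}$ put $n:=\Phi(g)$; let $m$ be one more than the maximum of $g(\alpha_{\sigma})$ over $\sigma\in 2^{n}$, where $\alpha_{\sigma}$ denotes $\sigma$ followed by zeros, so that $m$ bounds $g$ on all of Cantor space by $\MUC(\Phi)$, and set $\Theta(g)(1):=\langle \alpha_{\tau}:\tau\in 2^{m}\rangle$ and $\Theta(g)(2):=m$. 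If $T\leq_{1}1$ and $(\forall\alpha\in\Theta(g)(1))(\overline{\alpha}g(\alpha)\notin T)$, then for any $\beta\leq_{1}1$, taking $\tau:=\overline{\beta}m$ and using $g(\alpha_{\tau})<m$ gives $\overline{\beta}\,g(\alpha_{\tau})=\overline{\alpha_{\tau}}\,g(\alpha_{\tau})\notin T$, i.e.\ $(\exists i\leq m)(\overline{\beta}i\notin T)$; hence $\SCF(\Theta)$. The only non-elementary step here is the existence of $2^{m}$, available via $\EXP$.

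Granting this, the two ``$\di\RCA_{0}^{2}+\WKL$'' halves are short. For $\RCAo+(\exists\Theta)\SCF(\Theta)$: it is a subtheory of $\RCAo+(\exists\Phi)\MUC(\Phi)$ by the previous step, so every second-order sentence it proves is provable in $\RCA_{0}^{2}+\WKL$; conversely it proves $\WKL$, since given an infinite binary tree with no path, $\QFAC^{1,0}$ yields $g^{2}$ with $(\forall\alpha\leq_{1}1)(\overline{\alpha}g(\alpha)\notin T)$, and $\SCF(\Theta)$ then bounds the depth at which every binary sequence leaves $T$, contradicting the existence of a node of that length — so with the conservation of $\RCAo$ over $\RCA_{0}^{2}$ one obtains the full second-order equivalence. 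For $\P_{0}+\STP$: any second-order sentence $\psi$ it proves is equivalent to the normal form $(\forall^{\st}x^{0})(\exists^{\st}y^{0})\psi$ (the matrix ignoring $x,y$), so Theorem \ref{karwinkel} extracts a term $t$ with $\RCAo\vdash(\forall\Theta)\big(\SCF(\Theta)\di(\forall x)(\exists y\in t(x,\Theta))\,\psi\big)$, whence $\RCAo+(\exists\Theta)\SCF(\Theta)\vdash\psi$ and therefore $\RCA_{0}^{2}+\WKL\vdash\psi$.

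It remains to see that $\RCA_{0}^{2}+\WKL$ is, conversely, contained in $\P_{0}+\STP$, and this is the step where $\STP$ is genuinely used. Given a standard infinite binary tree $T$, idealisation $\textsf{I}$ produces a node $\sigma\in T$ of some infinite length $N$; padding $\sigma$ with zeros to $\hat\alpha\leq_{1}1$ and applying $\STP$ yields a standard $\beta$ with $\beta(n)=\hat\alpha(n)$ for all standard $n$, so $(\forall^{\st}n)(\overline{\beta}n\in T)$ as $T$ is a tree and $\sigma\in T$; thus $\beta$ is a path of $T$ in the standard universe, i.e.\ $\P_{0}+\STP$ proves $\WKL$ relativised to the standard objects, which — together with $\P_{0}$ conservatively containing $\RCA_{0}^{2}$ and the model correspondence underlying \cite{brie} — is exactly what is needed for $\RCA_{0}^{2}+\WKL$ to be interpreted in $\P_{0}+\STP$. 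I expect this last point to be the delicate one: pinning down the precise sense in which $\STP$, rather than full \emph{Transfer}, recovers $\WKL$ (internally versus only relative to the standard universe), and matching it exactly to the formulation of $\RCA_{0}^{2}+\WKL$. Every other step is routine: Step~1 is bookkeeping, and Step~2 is a direct appeal to the conservation theorems of Kohlenbach and \cite{brie} and the term extraction of Corollary \ref{consresultcor}/Theorem \ref{karwinkel}. An alternative to the whole argument is purely model-theoretic: every countable model of $\RCA_{0}^{2}+\WKL$ admits, by a Tanaka-style self-embedding construction in the spirit of \cite{brie}, a nonstandard extension satisfying $\P_{0}+\STP$ (and hence $\RCAo+(\exists\Theta)\SCF(\Theta)$) with the same second-order part.
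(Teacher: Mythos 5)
Your proof is correct where the statement actually needs it, and for those parts it is essentially the paper's own argument: the paper handles the first system by noting that the special fan functional is definable from $\MUC$ (citing \cite{samGH}*{Cor.\ 3.4}, for which your explicit construction of $\Theta$ from $\Phi$ via $m:=1+\max_{\sigma\in 2^{\Phi(g)}}g(\alpha_{\sigma})$ is precisely the underlying computation) and then invoking \cite{kohlenbach2}*{Prop.\ 3.15}; and it handles $\P_{0}+\STP$ exactly as you do, by feeding an internal sentence $\varphi$ (trivially in normal form) through Theorem \ref{karwinkel}/Corollary \ref{consresultcor} to get $\RCAo\vdash(\forall\Theta)(\SCF(\Theta)\di\varphi)$ and then reducing to the first conservation result. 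Your additional observation that $\RCAo+(\exists\Theta)\SCF(\Theta)$ proves $\WKL$ (via $\QFAC^{1,0}$) is correct but goes beyond what the paper establishes. The one place you should stop is your final step: the paper neither claims nor proves that $\RCA_{0}^{2}+\WKL$ is \emph{contained} in $\P_{0}+\STP$; its notion of ``conservative over'' here is only the direction you already established, namely that every second-order theorem of the stronger systems is a theorem of $\RCA_{0}^{2}+\WKL$. Your suspicion about that step is well-founded and should lead you to drop it rather than repair it: from a standard infinite binary tree, $\STP$ (equivalently \eqref{fanns}) only yields a standard $\beta$ with $(\forall^{\st}n)(\overline{\beta}n\in T)$, i.e.\ $\WKL$ relative to `st', and without \emph{Transfer} there is no way to upgrade this to the internal statement $(\forall n)(\overline{\beta}n\in T)$; the closing appeals to a ``model correspondence'' or a Tanaka-style self-embedding are not substitutes for an argument and are not needed for the theorem as stated.
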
    
\begin{proof}
For the first system, by \cite{samGH}*{Cor.\ 3.4}, a special fan functional can be defined in terms of $\Phi$ as in $\MUC(\Phi)$.  
By \cite{kohlenbach2}*{Prop.\ 3.15}, the first conservation result is now immediate.  For the second conservation result, let $\varphi$ be a (necessarily internal) sentence in the language of $\RCA_{0}^{2}+\WKL$.  If $\P_{0}+\STP\vdash \varphi$, then $\P_{0}\vdash (\exists^{\st}\Theta)\SCF(\Theta)\di \varphi$ by the second part of Theorem \ref{karwinkel}.  Applying Corollary \ref{consresultcor} to $\P_{0}\vdash (\forall^{\st}\Theta)(\SCF(\Theta)\di \varphi)$ yields $\RCA_{0}^{\omega}\vdash (\forall \Theta)(\SCF(\Theta)\di \varphi) $, and we are done.
\end{proof}
Thus, the existence of a special fan functional is not really stronger\footnote{However, combining a special fan functional with $(\exists^{2})$ already yields $\ATR_{0}$ (see \cites{dagsam, dagsamII, dagsamIII}).} than $\WKL_{0}$ \emph{in isolation}.  
The same holds for the following effective version of Heine's theorem.  
\begin{thm}[$\HEI_{\her}(t)$] 
For all $k^{0}$ and all $f:\R\di \R$, $g^{2}$, if for all $k'\leq t(g, k)(1)$
\[\textstyle
(\forall x\in [0,1]\cap t(g,k)(2))(\forall y\in [0,1])(|x-y|\leq\frac{1}{g(x, k')}\di |f(x)-f(y)|\leq\frac{1}{k'}),
\]
then $(\forall x, y\in [0,1])(|x-y|\leq\frac{1}{t(g,k)(3)}\di |f(x)-f(y)|\leq\frac1k)$.
\end{thm}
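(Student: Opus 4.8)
The plan is to run the template $\CI$ on the nonstandard implication $\STP\di \HEI_{\ns}$ exactly as in the proof of Theorem~\ref{floggen345}, but to \emph{omit} the weakening performed at the end of that proof; omitting this weakening is precisely what turns the `vanilla' explicit equivalence into the Herbrandisation. First I would recall that $\P_{0}+\STP$ proves $\HEI_{\ns}$ via Theorem~\ref{dickawadddd} (the nonstandard compactness of $[0,1]$ being $\HBL_{\ns}$), keeping track of the normal forms involved: the antecedent of $\HBL_{\ns}$ becomes \eqref{horum2} after $\HAC_{\INT}$, its consequent becomes \eqref{drekke} after idealisation~$\textsf I$, and nonstandard pointwise continuity becomes \eqref{harny2} after $\HAC_{\INT}$. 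Combining these as in \eqref{flacka}, the implication $\STP\di \HEI_{\ns}$ acquires the shape: finitely many instances of pointwise continuity of $f$ (at the points at which the modulus-based cover of $[0,1]$ is refined) together with the covering hypothesis yield a modulus of uniform continuity up to precision $1/k$.

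Next I would invoke Theorem~\ref{karwinkel} in place of a direct appeal to Corollary~\ref{consresultcor}: since the relevant instance of $\STP$ enters through its equivalent normal form \eqref{frukkklk}, term extraction produces a term $t_{0}$ in $\mathcal{T}^{*}$ with $\Theta$ as a parameter such that $\textup{\textsf{E-PRA}}^{\omega*}$ proves $\SCF(\Theta)\di \HEI_{\her}\big(t_{0}(\cdot,\cdot,\Theta)\big)$, where $t_{0}(g,k,\Theta)(1)$ bounds the precisions $k'$ of pointwise continuity that are actually `used', $t_{0}(g,k,\Theta)(2)$ enumerates the finitely many points at which pointwise continuity is needed, and $t_{0}(g,k,\Theta)(3)$ is the output modulus of uniform continuity. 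The characteristic bounded quantifier `$(\forall k'\le t(g,k)(1))$' in the antecedent of $\HEI_{\her}(t)$ is exactly the residue of \emph{not} discarding the witnesses to $k'$, in complete analogy with the passage to \eqref{fsecond} in Remark~\ref{herbrand}.

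To obtain the statement in the form given, without $\Theta$ visible, I would then use that the special fan functional $\Theta$ is definable in terms of $\MUC$ by \cite{samGH}*{Cor.~3.4}, and that $\MUC$ admits a recursive associate and a canonical interpretation in the Kleene--Kreisel functionals (Remark~\ref{snark}); hence $t:=t_{0}(\cdot,\cdot,\Theta)$ is a genuine term of G\"odel's $\textsf T$ augmented with the fan functional, computable in the technical sense of Remark~\ref{snark}, and $\HEI_{\her}(t)$ holds outright. Conservativity of $\HEI_{\her}(t)$ over $\WKL_{0}$ follows as for the preceding theorem, since $\RCAo+(\exists\Theta)\SCF(\Theta)$ is conservative over $\RCA_{0}^{2}+\WKL$.

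The main obstacle should be bookkeeping rather than anything conceptual: one must check inside $\textup{\textsf{E-PRA}}^{\omega*}$ that the finite list $t_{0}(g,k,\Theta)(2)$ genuinely suffices for the finite-subcover step --- that is, that the points at which the modulus-based cover is refined coincide with those cycled back through $\Theta(g)$ --- and that the interlocking of $t(g,k)(1)$, $t(g,k)(2)$, and $t(g,k)(3)$ is compatible with the `monotone behaviour' conventions of Remark~\ref{simply}, so that the usual maxima may be taken freely. As with $\CRI_{\her}$, pushing $(\forall k)$ into the consequent would force one to drop the bound $t(g,k)(1)$ in the antecedent and thereby sever the connection between antecedent and consequent; maintaining this connection through every application of $\textsf I$, $\HAC_{\INT}$, and term extraction is the delicate point.
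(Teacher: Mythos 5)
Your extraction step is essentially the paper's own: the Herbrandisation is obtained by applying $\CI$ (in the guise of Theorem~\ref{karwinkel}) to the \emph{unmodified} implication, using the normal form \eqref{frukkklk} for $\STP$ and a normal form $(\forall^{\st}k', g^{2})(\exists^{\st}N, k'', v^{1^{*}})A(k', g, N, k'', v)$ for $\HEI_{\ns}$, so that the extracted term carries $\Theta$ as a parameter and $\textup{\textsf{E-PRA}}^{\omega*}$ proves $(\forall \Theta)(\SCF(\Theta)\di \HEI_{\her}(t(\Theta)))$; your reading of the three components (a bound on the precisions $k'$ actually used, the finite list of points, the output modulus) is also correct.

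The gap is the final step, where you substitute a $\MUC$-derived special fan functional for $\Theta$ and conclude that $\HEI_{\her}(t)$ ``holds outright'' for $t:=t_{0}(\cdot,\cdot,\Theta)$. This cannot work as stated: such a $t$ is no longer a closed term of $\mathcal{T}^{*}$, since the fan functional is not available in (let alone definable over) the base theory, so the resulting statement is at best provable in $\textup{\textsf{E-PRA}}^{\omega*}$ extended by $(\exists \Phi)\MUC(\Phi)$, or true in the Kleene--Kreisel model; Remark~\ref{snark} concerns \emph{computing} the extracted modulus given the fan functional, not discharging that functional from the statement. In fact no closed term can provably satisfy $\HEI_{\her}$ over the base theory: by the reversal direction in \eqref{thirdbase}, $\HEI_{\her}(t)$ yields $\SCF(s(t))$, and from $\SCF(\Theta)$ together with $\QFAC^{1,0}$ one derives that every binary tree without a path has bounded depth, i.e.\ the fan theorem and hence $\WKL$, which $\RCAo$ does not prove. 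This is precisely why the paper keeps $\Theta$ visible as an input, stating $(\forall \Theta)(\SCF(\Theta)\di \HEI_{\her}(t(\Theta)))$ together with the converse $(\forall \Psi)(\HEI_{\her}(\Psi)\di \SCF(s(\Psi)))$ and the implication $(\exists^{\st}\Theta)\HEI_{\her}(\Theta)\di \HEI_{\ns}$ in $\P_{0}$: the Herbrandisation of Heine's theorem sits exactly at the level of the Herbrandisation of $\STP$, which is the point of the ``third kind'' of explicit equivalence. Your conservativity remark is fine but concerns the theory $\RCAo+(\exists \Theta)\SCF(\Theta)$; it does not license eliminating $\Theta$ from the statement.
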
  
We have the following corollary to Theorem \ref{floggen345}.  The same result for Dini's theorem is straightforward, but the formulation of the latter is (even more) messy.  
\begin{cor}
From the proof $\P_{0}\vdash\STP\asa \HEI_{\ns}$, terms $s,t$ can be extracted such that \textsf{\textup{E-PRA}}$^{\omega*}$ proves 
\be\label{thirdbase}
(\forall \Theta)(\SCF(\Theta)\di\HEI_{\her}(t(\Theta)) \wedge (\forall \Psi)(\HEI_{\her}(\Psi)\di \SCF(s(\Psi))).
\ee
Furthermore, $\P_{0}$ proves $(\exists^{\st}\Theta)\HEI_{\her}(\Theta)\di \HEI_{\ns}$.  
\end{cor}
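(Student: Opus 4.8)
The plan is to run the template $\CI$ on both directions of $\STP\asa\HEI_{\ns}$ essentially as in the proof of Theorem~\ref{floggen345}, but \emph{without} the modification performed at the end of that proof. There $\STP$ was weakened to $\HBL_{\ns}$ before term extraction, which produced the `vanilla' equivalences involving $\HBL_{\ef}$; here I keep $\STP$ intact in its equivalent normal form \eqref{frukkklk}, which is precisely the situation covered by Theorem~\ref{karwinkel}, and I keep the antecedent/consequent linkage of Remark~\ref{herbrand}, so that the Herbrandised statements $\HEI_{\her}$ and $\SCF$ come out rather than their `forgetful' global versions.

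For the first conjunct of \eqref{thirdbase}: reprove $\P_{0}+\STP\vdash\HEI_{\ns}$ via Theorem~\ref{dickawadddd}, as in Theorem~\ref{floggen345}, and bring $\HEI_{\ns}$ into a normal form $(\forall^{\st}\underline{u})(\exists^{\st}\underline{v})\psi(\underline{u},\underline{v})$ using the normal forms \eqref{harny}--\eqref{harny2} for nonstandard pointwise continuity together with $\HAC_{\INT}$ and idealisation, exactly as in that proof but omitting the final modification. Then apply Theorem~\ref{karwinkel} to the resulting proof $\P_{0}+\STP\vdash(\forall^{\st}\underline{u})(\exists^{\st}\underline{v})\psi(\underline{u},\underline{v})$, obtaining a term $t$ with $\textsf{E-PRA}^{\omega*}\vdash(\forall\Theta)\big(\SCF(\Theta)\di(\forall\underline{u})(\exists\underline{v}\in t(\underline{u},\Theta))\psi(\underline{u},\underline{v})\big)$. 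Taking the pointwise maximum of the relevant components of $t$ as in Remark~\ref{simply}, and reading off the three outputs required by the definition of $\HEI_{\her}$ while retaining the connection between hypothesis and conclusion as in Remark~\ref{herbrand}, one arrives at $\textsf{E-PRA}^{\omega*}\vdash(\forall\Theta)\big(\SCF(\Theta)\di\HEI_{\her}(t(\Theta))\big)$.

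For the second conjunct, start from $\P_{0}+\HEI_{\ns}\vdash\STP$ (provided by Theorem~\ref{dickawadddd}; the short argument of Corollary~\ref{fralgyy} also works). Write $\STP$ in the normal form \eqref{frukkklk}, say $(\forall^{\st}g^{2})(\exists^{\st}w^{1^{*}})B(g,w)$, and $\HEI_{\ns}$ in its normal form $(\forall^{\st}\underline{u})(\exists^{\st}\underline{v})\psi(\underline{u},\underline{v})$ as above; apply $\HAC_{\INT}$ to the antecedent to pass to $(\exists^{\st}\Psi)(\forall^{\st}\underline{u})(\exists\underline{v}\in\Psi(\underline{u}))\psi(\underline{u},\underline{v})$, bring the standard quantifiers to the front, and arrive at a single normal form $(\forall^{\st}g,\Psi)(\exists^{\st}\ldots)[\ldots]$. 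An application of Corollary~\ref{consresultcor} followed by the usual maximum step then yields a term $s$ with $\textsf{E-PRA}^{\omega*}\vdash(\forall\Psi)\big(\HEI_{\her}(\Psi)\di\SCF(s(\Psi))\big)$, the hypothesis on $\Psi$ being exactly the Herbrandised Heine statement. The only step here that is not pure bookkeeping is to see that the binary-tree hypothesis in the antecedent of $\SCF$ can be converted into an instance of the continuity hypothesis governed by $\Psi$: as in the last paragraph of the proof of Theorem~\ref{dickawadddd} and in Remark~\ref{snark}, this uses that every real in $[0,1]$ has a binary expansion, so that a functional $g$ witnessing that a binary tree $T$ has no branch among the finitely many sequences listed by $\Theta(g)(1)$ corresponds to a finite cover situation on $[0,1]$ to which $\HEI_{\her}(\Psi)$ applies.

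The final claim mirrors Corollary~\ref{somaar} and the last sentence of Theorem~\ref{karwinkel}. Assume $(\exists^{\st}\Theta)\HEI_{\her}(\Theta)$ and fix a standard such $\Theta$; by Definition~\ref{debs}, $\Theta$ applied to standard inputs yields standard outputs. Let $f:\R\di\R$ be nonstandard continuous on $[0,1]$; applying $\HAC_{\INT}$ to the normal form of nonstandard pointwise continuity gives a \emph{standard} modulus of continuity $g$ as in \eqref{harny2}. Then for every standard $k^{0}$ the objects $\Theta(g,k)(1)$, $\Theta(g,k)(2)$, $\Theta(g,k)(3)$ are standard, so $[0,1]\cap\Theta(g,k)(2)$ is a standard finite list of points and $\Theta(g,k)(1)$ a standard bound; hence the finitely many instances in the antecedent of $\HEI_{\her}(\Theta)$ hold by pointwise continuity of $f$ with modulus $g$, and the consequent of $\HEI_{\her}(\Theta)$ holds for every standard $k$. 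This consequent is precisely the normal form of nonstandard uniform continuity of $f$ on $[0,1]$, so $\HEI_{\ns}$ follows since $f$ was arbitrary. I expect the main obstacle to be the bookkeeping needed to make the extracted terms match $\HEI_{\her}$ and $\SCF$ exactly — especially keeping the linkage of Remark~\ref{herbrand} intact through the $\HAC_{\INT}$ and idealisation steps in both directions.
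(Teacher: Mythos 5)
Your proposal is correct and follows essentially the same route as the paper: term extraction (Corollary \ref{consresultcor}, which you package via Theorem \ref{karwinkel} for the first direction) applied to the \emph{unmodified} equivalence, using the normal form \eqref{frukkklk} of $\STP$ and the Herbrandised normal form of $\HEI_{\ns}$ that keeps the antecedent--consequent linkage of Remark \ref{herbrand}, so that $\SCF$ and $\HEI_{\her}$ come out instead of the `vanilla' effective versions of Theorem \ref{floggen345}. Your argument for the final claim---a standard $\Theta$ maps the standard modulus $g$ (obtained via $\HAC_{\INT}$) and standard $k$ to standard outputs, so finitely many standard instances of pointwise continuity yield the normal form of uniform nonstandard continuity---is exactly the paper's appeal to Definition \ref{debs}, in the spirit of Corollary \ref{somaar}.
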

\begin{proof}
In a nutshell, one applies $\CI$ to $\STP\asa\HEI_{\ns}$ using the normal form \eqref{frukkklk} for $\STP$, and the following normal form for $\HEI_{\ns}$: $(\forall^{\st}k', g^{2})(\exists^{\st}N, k'', v^{1^{*}})A(k', g, N, k'', v)$, where the internal formula $A$ is as follows:
\begin{align*}\textstyle
(\forall f:\R\di \R)\big[(\forall k\leq k'')(\forall x\in &[0,1]\cap v )\textstyle(\forall y\in [0,1])(|x-y|\leq \frac{1}{g(x, k')}\di |f(x)-f(y)|\leq\textstyle\frac1k)\\
&\di     \textstyle(\forall z,u\in [0,1] )(|u-z|\leq \frac{1}{N}\di |f(u)-f(z)|\leq\textstyle\frac{1}{k'})   \big].
\end{align*}
In particular, let $(\forall^{\st}g^{2})(\exists^{\st}w^{1^{*}})B(g,w)$ be the normal form of \eqref{frukkklk} and apply Corollary~\ref{consresultcor} to the following normal form:
\[
(\forall^{\st}k', g^{2}, \Theta)(\exists^{\st}N, k'', v^{1^{*}})\big[(\forall g^{2})B(g, \Theta)\di A(k', g, N, k'', v) \big].
\]
The second part is immediate in light of the normal form of $\HEI_{\ns}$ and the basic axioms from Definition \ref{debs}.
\end{proof}
In light of the final part of the theorem,  $(\exists \Phi)\HEI_{\her}(\Phi)$ is the Herbrandisation of $\HEI_{\ns}$.
Thus, we have so far observed as much as \emph{three} kinds of explicit equivalences to which nonstandard equivalences may give rise.  
\begin{enumerate}
\renewcommand{\theenumi}{\roman{enumi}}
\item Explicit equivalences between uniform versions of RM-theorems, like \eqref{frood}. 
\item The Herbrandisation of RM-equivalences like $\MTE_{\her}(s,t)$ involving \eqref{zion}.
\item Explicit equivalences between Herbrandisations of RM-theorems, like \eqref{thirdbase}.\label{krafi}  
\end{enumerate}
Despite their seemingly innocuous provenance, special fan functionals turn out to exhibit \emph{very strange behaviour}, as follows.
In joint work with Dag Normann (\cites{dagsam, dagsamII, dagsamIII}), we have shown that any special fan functional is \emph{not} computable (in the sense of Kleene's S1-S9 as in \cite{longmann}*{\S5.1}) in any type two functionals.  
In turn, the non-computability result between $(\mu^{2})$ and special fan functionals translates into the non-implication $\paai\not\di \STP$ over $\P$; the same holds for much stronger fragments of \emph{Transfer}.  
Similarly, the following basic theorem, which is just \eqref{hro} without `st', is only provable in \emph{full} second-order arithmetic (in terms of comprehension axioms), while having the first-order strength of $\WKL_{0}$:
\be\tag{$\textup{\textsf{HBU}}$}
(\forall G^{2})(\exists w^{1^{*}})(\forall f\leq_{1}1)(\exists i<|w|)\big(  f\in \big[\overline{w(i)}G(w(i))\big]  \big).
\ee
Furthermore, there are many functionals similar to special fan functionals:  the principle $\STP$ is equivalent to \eqref{fanns}, which is just $\WKL^{\st}$ with the outermost `st' dropped, and we can obtain a similarly modified version for (almost) any theorem in the RM zoo, inside $\P_{0}+\STP+\paai$.  From this modified version, a normal form like \eqref{frukkklk} can be obtained, giving rise to a functional similar to a special fan functional, though generally not computing the latter.  As an example, $\textsf{LMP}$ in \cite{pimpson} is the nonstandard version of $\textsf{WWKL}$ and has the same form as \eqref{fanns}, up to minor cosmetic details.  Thus, $\textsf{LMP}$ gives rise to a weak version of special fan functionals, which is however still not (Kleene S1-S9) computable in any type two functional.  In this way, the original RM zoo seems to give rise to a new `higher-order' RM zoo.         

\smallskip

An explanation of the strange behaviour of special fan functionals is as follows: Karel Hrbacek has communicated to the author a proof that Nelson's axiom \emph{Transfer} does not imply \emph{Standard Part} over $\ZFC$.  
Similarly, as mentioned in the previous paragraph, one can prove that $\paai\not\di\STP$ over $\P$, and this non-implication actually derives straightforwardly from the fact that special fan functionals cannot be expressed in terms of $(\mu^{2})$ via a term from G\"odel's $\textsf{T}$ (which follows from the results in \cite{dagsam, dagsamII}).   
In other words, the fact that Nelson's axiom \emph{Standard Part} is not implied by \emph{Transfer} (and the same for fragments like $\STP$ and $\paai$) 
translates to the fact that special fan functionals cannot be computed by $(\mu^{2})$, and vice versa.  Similar results for RM zoo theorems give rise to a plethora of new functionals, populating a new `higher-order' RM zoo.  
We discuss a related functional in the following remark.
\begin{rem}\label{snark}\rm 
We recall that in the proof of Theorem \ref{floggen345}, we strengthened the antecedent of \eqref{flacka}, i.e.\ the latter is a weakening of $\STP\di \HEI_{\ns}$.
Thus, we expect that $h$ as in $\textsf{HBL}(h)$ is in general weaker than (certain) special fan functionals.  
As studied in detail in \cite{dagsamV}, this `great expectation' is correct.  In a nutshell, the aforementioned functional $h$ \emph{does not} compute a finite sub-cover in the way that special fan functionals do.  
However, functionals like $h$ do compute \emph{an upper bound} for the length of such a finite sub-cover.  This observation arose from the study of \emph{Pincherle's theorem}, as discussed in detail in \cite{dagsamV}.  
\end{rem}
\subsubsection{Constructive Dini's theorem}
In this section, we study the \emph{constructive version} of Dini's theorem as formulated in \cite{bridi}*{Theorem 4}.  
The latter adds the extra data that a very specific sequence $x_{n}$ (defined in terms of $f_{n}$ and $f$) has to have a convergent subsequence if we are to conclude uniform convergence.  

\smallskip

A natural question is whether we can obtain the constructive version of Dini's theorem from a suitable nonstandard version, 
like we did for nonstandard compactness and totally boundedness in Section \ref{cont1}.   
To this end, let $\DINI_{\ns}'$ be $\DINI_{\ns}$ limited to \emph{uniformly} continuous functions and 
with the following consequent:
\begin{align*}
(\forall^{\st}x_{(\cdot)}, g)\Big[
&\big[(\forall n)(g(n)<(g(n+1)))\wedge (\forall N\in \Omega)(\forall^{\st} n)\big(|f(x_{n})-f_{n}(x_{n})|\approx  \|f-f_{n}\|_{N}  \big) \\
&\wedge (\forall K,M\in \Omega)(x_{g(M)}\approx x_{g(K)})\big] \di (\forall x\in [0,1], N\in \Omega)[f_{N}(x)\approx f(x)]  \Big]
\end{align*}
Clearly, our nonstandard version $\DINI_{\ns}'$ is inspired by \cite{bridi}*{Theorem 4}, but is perhaps more intuitive:  in order to guarantee uniform convergence, the sequence $x_{(\cdot)}$ has to be such that $|f(x_{n})-f_{n}(x_{n})|$ witnesses the sup norm $\|f-f_{n}\|_{N}$ and such that $x_{g(\cdot)}$ is a convergent subsequence.  
Note that the limitation to uniformly (nonstandard) continuous functions is necessary for the step \eqref{hagg} in the proof, and also for the `general' existence of the supremum norm.    
\begin{thm}\label{varou2}
The system $\textup{\textsf{E-PRA}}^{\omega*}_{\st} $ proves $\DINI_{\ns}'$.  
\end{thm}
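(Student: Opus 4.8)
The plan is to prove $\DINI_{\ns}'$ inside $\textsf{E-PRA}^{\omega*}_{\st}$ by a direct nonstandard argument, closely mimicking the structure of the proof of Dini's theorem but paying attention to the extra data supplied by the sequence $x_{(\cdot)}$ and the index function $g$. First I would fix arbitrary $f_{n}, f:\R\di \R$ satisfying conditions (1)--(3) of $\DINI_{\ns}$ (with uniform nonstandard continuity), and then fix arbitrary standard $x_{(\cdot)}$ and $g$ satisfying the three conjuncts in the antecedent of the consequent of $\DINI_{\ns}'$: namely that $g$ is (standardly) strictly increasing, that $|f(x_{n})-f_{n}(x_{n})|\approx \|f-f_{n}\|_{N}$ for all nonstandard $N$ and standard $n$, and that $x_{g(\cdot)}$ is nonstandard convergent, i.e.\ $x_{g(M)}\approx x_{g(K)}$ for all infinite $M,K$. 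The goal is then $(\forall x\in [0,1], N\in \Omega)[f_{N}(x)\approx f(x)]$.

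The key steps I would carry out, in order, are as follows. Fix an arbitrary $x\in [0,1]$ and an infinite $N$; it suffices to show $f_{N}(x)\approx f(x)$. Using the monotonicity (3), we have $0\leq f(x)-f_{N}(x)$ wait---(3) says $f_{n}(x)\geq f_{n+1}(x)\geq f(x)$, so $f_{N}(x)\geq f(x)$ and $f_{N}(x)-f(x)\geq 0$, and moreover $f_{N}(x)-f(x)\leq f_{n}(x)-f(x)$ for any standard $n\leq N$. So it is enough to find, for each standard $k$, a standard $n$ with $f_{n}(x)-f(x)\leq \tfrac1k$; equivalently, it suffices to show $\|f-f_{n}\|_{N}\approx 0$ for some standard $n$, since $f_{N}(x)-f(x)\leq \|f-f_{n}\|_{N}$ whenever $n\leq N$ by monotonicity and the fact that the sup-norm at level $N$ dominates the value at $x$ (this is the step labelled \eqref{hagg} in the paper, where uniform continuity is needed so that $\|f-f_{n}\|_{N}$ is a genuine infinitesimal witness of the supremum rather than an artifact of the mesh). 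Now use the second conjunct of the antecedent: $\|f-f_{n}\|_{N}\approx |f(x_{n})-f_{n}(x_{n})|$ for standard $n$. By the convergence hypothesis (2), $f_{K}(x_{n})\approx f(x_{n})$ for infinite $K$ and standard $n$; combined with monotonicity this forces $|f(x_{n})-f_{n}(x_{n})|\to 0$ along standard $n$ in the appropriate nonstandard sense. The last ingredient is the convergent-subsequence hypothesis: letting $y$ be the standard point with $x_{g(M)}\approx y$ for all infinite $M$ (obtained, if needed, via $\Omega\textsf{-CA}$ or simply worked with directly as $x_{g(M)}$ for a fixed infinite $M$), nonstandard continuity of $f$ and of each standard $f_{m}$ at $y$, together with overspill applied to $(\forall^{\st}m)(f_{m}(y)\approx f(y))$ to get $(\forall m\leq M_{1})(f_{m}(y)\approx f(y))$ for some infinite $M_{1}$, lets one run exactly the case split from the proof of Theorem \ref{linkebeek} (cases $g(M)\leq M_{1}$ and $g(M)>M_{1}$) to conclude $f_{g(M)}(x_{g(M)})\approx f(x_{g(M)})$, hence $|f(x_{g(M)})-f_{g(M)}(x_{g(M)})|\approx 0$, hence $\|f-f_{g(M)}\|_{N}\approx 0$ for any infinite $N$, and finally $f_{N}(x)\approx f(x)$ by monotonicity since we may take $n=g(M)$ (after checking $n\leq N$ by choosing $M$ appropriately, or by underspill).

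The main obstacle I expect is the bookkeeping around step \eqref{hagg}: making precise, within $\textsf{E-PRA}^{\omega*}_{\st}$, that for a \emph{uniformly} nonstandard continuous pair $f, f_{n}$ the quantity $\|f-f_{n}\|_{N} = \max_{i\leq 2^{N}}[|f(\tfrac{i}{2^{N}})-f_{n}(\tfrac{i}{2^{N}})|](2^{N})$ really does satisfy $\|f-f_{n}\|_{N}\gtrapprox |f(z)-f_{n}(z)|$ for \emph{every} $z\in [0,1]$, not merely standard $z$ --- this is where uniform continuity (as opposed to pointwise) is essential, since it guarantees that moving $z$ to the nearest dyadic grid point of level $N$ changes $f$ and $f_{n}$ only infinitesimally. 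A secondary subtlety is ensuring that the standard index $n = g(M)$ extracted from the convergent subsequence can be taken below the given infinite $N$; this is handled either by the freedom in choosing the infinite $M$ (so that $g(M)\leq N$) or by a routine underspill, and I would mention it but not belabor it. The rest is a straightforward assembly of overspill (Theorem \ref{doppi}), the case analysis from Theorem \ref{linkebeek}, and the elementary properties of $\approx$ from Definition \ref{keepintireal}.
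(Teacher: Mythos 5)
Your overall strategy is the same as the paper's: obtain a standard limit point of the subsequence via $\Omega\textsf{-CA}$, use continuity, overspill and the case split from the classical proof (Theorem \ref{linkebeek}) to make the diagonal term $|f(x_{g(M)})-f_{g(M)}(x_{g(M)})|$ infinitesimal, then combine the sup-norm hypothesis, the domination step \eqref{hagg} (where you correctly identify the role of uniform continuity) and monotonicity. However, there is a genuine gap at the point where you pass from $|f(x_{g(M)})-f_{g(M)}(x_{g(M)})|\approx 0$ to $\|f-f_{g(M)}\|_{N}\approx 0$: the second conjunct of the antecedent of $\DINI_{\ns}'$ only asserts $|f(x_{n})-f_{n}(x_{n})|\approx \|f-f_{n}\|_{N}$ for \emph{standard} $n$, whereas $g(M)$ is infinite when $M$ is (as $g$ is increasing). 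You quote the hypothesis ``for standard $n$'' and then apply it outside that range without justification; your remark about ``choosing $M$ appropriately, or underspill'' only addresses the separate requirement $g(M)\leq N$, not this one. The paper closes exactly this hole with the overspill step on \eqref{flig}: the internal conjunction ``$g(m)\leq N_{0}$, norm-equality, and domination at $y_{0}$'' holds for every standard $m$, hence persists up to some infinite $M_{0}$, and one works with $m=M_{0}$; that single overspill also settles $g(M_{0})\leq N_{0}$. Alternatively (dually), your case-split argument gives the diagonal term $\leq \frac1k$ for every infinite index, so one can descend to a \emph{standard} $m$ with $|f(x_{g(m)})-f_{g(m)}(x_{g(m)})|\leq \frac1k$ and use the hypothesis legitimately at the standard index $g(m)$, feeding into your first (correct) reduction ``for each standard $k$ find a standard $n$''; as written, though, this step is simply missing.

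Two further slips. First, the overspill you do invoke is aimed at $(\forall^{\st}m)(f_{m}(y)\approx f(y))$, which is not available: nonstandard convergence gives $f_{m}(y)\approx f(y)$ only for \emph{infinite} $m$ at the standard point $y$. The formula to overspill is the continuity statement $(\forall^{\st}m)(f_{m}(x_{g(M)})\approx f_{m}(y))$, exactly as in Theorem \ref{linkebeek} and in the paper's proof; your stated intention to ``run exactly the case split from Theorem \ref{linkebeek}'' suggests you mean the right thing, but the formula as written is false in general. Second, the intermediate claim that convergence plus monotonicity ``forces $|f(x_{n})-f_{n}(x_{n})|\to 0$ along standard $n$'' is unjustified---nothing in the hypotheses controls the diagonal at standard indices (this is the whole point of needing the subsequence data)---though your final argument does not actually use it; similarly, the fallback of working directly with the nonstandard point $x_{g(M)}$ instead of a standard limit point cannot work, since the convergence hypothesis (2) is only available at standard points, so the $\Omega\textsf{-CA}$ route (as in the paper) is needed.
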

\begin{proof}
Let $f, f_{n}, x_{(\cdot)}, g$ be as in $\DINI_{\ns}'$ and fix $y_{0}\in [0,1]$ and $N_{0}\in \Omega$.  
Applying overspill\footnote{Note that `$\gtrapprox$' and `$\approx$' in \eqref{flig} can also be replaced by a universal formula.} to the fact that for all standard $m$, we have
\be\label{flig}
g(m)\leq N_{0} \wedge f(x_{g(m)})-f_{g(m)}(x_{g(m)})\approx \|f-f_{g(m)}\|_{N_{0}} \gtrapprox  f(y_{0})-f_{g(m)}(y_{0}),
\ee
yields that \eqref{flig} holds for $m\leq M_{0}\in \Omega$.  Now consider the following inequalities:
\begin{align}
0\leq f(y_{0})-f_{N_{0}}(y_{0})\leq f(y_{0})-f_{g(M_{0})}(y_{0})&\lessapprox \|f-f_{g(M_{0})}\|_{N_{0}} \label{hagg}\\
&\approx f(x_{g(M_{0})})-f_{g(M_{0})}(x_{g(M_{0})}).\label{hagg2}
\end{align}
Note that the final two steps follow from \eqref{flig} for $m=M_{0}$.  By $\Omega\textsf{-CA}$, there is a standard real $a$ such that $(\forall N\in \Omega)(a\approx x_{N})$.    
Clearly, we have by continuity that $f(x_{g(M_{0})})\approx f(a)$ and $(\forall^{\st}n)(f_{n}(x_{g(M_{0})})\approx f_{n}(a))$.  Applying overspill to the latter (technically with `$\approx$' resolved), we obtain some nonstandard $N_{1}$ such that   
$(\forall n\leq N_{1})(f_{n}(x_{g(M_{0})})\approx f_{n}(a))$.  Now, clearly $f_{N_{1}}(x_{g(M_{0})})\approx f_{N_{1}}(a)\approx f(a)$ by pointwise nonstandard convergence.  
However, by the monotone behaviour of $f_{n}$ and the continuity of $f$, we have for $m\geq N_{1}$ that $f_{N_{1}}(x_{g(M_{0})})\leq f_{m}(x_{g(M_{0})})\leq f(x_{g(M_{0})})\approx f(a)$.  
The previous implies that $(\forall m\in \Omega)(f_{m}(x_{g(M_{0})}))\approx f(a)$.  Hence, the number in \eqref{hagg2} is an infinitesimal, i.e.\ $f(x_{g(M_{0})})\approx f(a)\approx f_{g(M_{0})}(x_{g(M_{0})})$.  
Then \eqref{hagg} implies $f(y_{0})\approx f_{N_{0}}(y_{0})$ and we are done.  
\end{proof}
It is clear that for the previous proof, we drew inspiration from the final part of the proof of \cite{bridi}*{Theorem 4}.  
However, our proof is arguably shorter, as we do not need \cite{bridi}*{Prop.\ 3}.  
It is now a tedious but straightforward derivation to obtain the effective version of $\DINI'_{\ns}$, which is highly similar to \cite{bridi}*{Theorem 4}.

\smallskip

Finally, the proof of Theorem \ref{varou2} also goes through in the constructive system $\H$, and there appears to be a `recursive' version of Dini's theorem (\cite{urbankamo}) in which compactness is witnessed by a recursive function.    

\subsection{Stone-Weierstrass theorem}\label{stoner}
In this section, we study the \emph{Stone-Weierstrass theorem} $\SWT$ (see \cite{rudin}*{7.32}).  
In this context, $S$ is called a \emph{separating set} for $C$ if 
\be\label{snavel}
(\forall x, y \in C)(\exists f\in S)(x\ne y\di f(x)\ne f(y)
\ee
\begin{thm}[$\SWT$]
An algebra\footnote{We require an algebra to contain the constant functions so as to satisfy Rudin's non-vanishing criterion from \cite{rudin}*{Theorem 7.32}.} of continuous functions \emph{with a separating set for the compact domain} is dense in the continuous functions on the domain.
\end{thm}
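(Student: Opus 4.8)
The plan is to follow the template $\CI$ from Section~\ref{detail}, exactly as was done for $\CRI$, $\ULC$, and especially $\CSU$ in Section~\ref{cont1}, since the Stone-Weierstra\ss~theorem involves a compact domain and uniform approximation. First I would formulate the nonstandard version $\SWT_{\ns}$: replace `dense' by its nonstandard counterpart (every continuous function on the domain is \emph{infinitely close}, in the uniform sense $(\forall^{\st}x)(f(x)\approx g(x))$ with $g$ standard, to some member of the algebra), replace `continuous' by nonstandard (uniform) continuity as in Definition~\ref{Kont}, replace compactness of the domain by nonstandard compactness $(\forall x\in C)(\exists^{\st}y\in C)(x\approx y)$ as in Section~\ref{cinq}, and introduce the \emph{nonstandard-separating set} notion: $S$ is nonstandard-separating for $C$ if $(\forall^{\st}x,y\in C)(\exists^{\st}f\in S)(x\not\approx y\di f(x)\gg f(y))$, mirroring \eqref{snavel}. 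I would then locate a proof of $\SWT$ in the Nonstandard Analysis literature (e.g.\ in \cite{loeb1} or \cite{stroyan}) — the standard Stone-Weierstra\ss~argument, which uses lattice operations built from polynomial approximations to $|t|$ together with finite covers, carries over to $\P_{0}$ (plus $\STP$ if binary expansions of reals are needed), using the weakened normal form \eqref{straffenhendrik} of nonstandard compactness exactly as in the proof of Theorem~\ref{floggen31}.

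Next I would carry out step~(\ref{famousfirsdtname}) and~(iii) of $\CI$: bring each nonstandard definition into normal form. Nonstandard uniform continuity yields \eqref{first5}; nonstandard compactness yields the `effective compactness' normal form \eqref{straffenbeukje} via Theorem~\ref{noco}; the nonstandard-separating condition becomes, after resolving `$\approx$' and `$\gg$' and applying idealisation $\textsf{I}$ and $\HAC_{\INT}$, a normal form $(\exists^{\st}\sigma)(\forall^{\st}k)(\forall x,y\in C)[\,|x-y|>\tfrac1k\di |\sigma(k)(x)-\sigma(k)(y)|>\tfrac{1}{\sigma(k)(x,y)}\,]$ providing an explicit `modulus of separation'; and the conclusion (nonstandard density) becomes $(\forall^{\st}k)(\forall f\text{ cts})(\exists^{\st}h\in\text{algebra})(\forall^{\st}x)(|f(x)-h(x)|\le\tfrac1k)$, which is itself already close to normal form. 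Combining the antecedents (all of the form $(\exists^{\st}\cdot)(\forall^{\st}\cdot)\varphi$) with the consequent across the implication, one pulls all standard quantifiers to the front — trivial in $\P_{0}$ — and arrives at a single normal form $(\forall^{\st}\underline{x})(\exists^{\st}\underline{y})\psi$ as in \eqref{bog}. Then step~(iv): apply Corollary~\ref{consresultcor} to extract a term $t$, and step~(v): output $\SWT_{\ef}(t)$, which states that from a modulus of uniform continuity $g$ for $f$, a representation $h$ of the compact domain as in \eqref{amaai}, and a modulus of separation $\sigma$ for $S$, one computes $t(g,h,\sigma,k)$ many algebra elements and coefficients producing a $\tfrac1k$-uniform approximation to $f$. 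As with $\CSU$, the derivation also goes through in $\H$ using the axioms of Definition~\ref{flah}, giving the constructive version (compare \cite{bridges1}); and a Herbrandisation is obtainable by not weakening nonstandard compactness, instead using \eqref{kalkuttttt}.

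The main obstacle I anticipate is \textbf{the replacement of nonstandard compactness by a weakened normal form in the antecedent of the implication}, the exact issue flagged in Remark~\ref{impo}. For $\CSU$ and $\CCI$ the consequent only needed the `discrete grid' \eqref{straffenhendrik}, so weakening compactness to \eqref{amaai}/\eqref{straffenbeukje} was harmless. For $\SWT$ one must check that the lattice-theoretic core of the Stone-Weierstra\ss~proof — approximate a continuous $f$ locally at each grid point using separation plus algebra operations, then patch via a finite sub-cover — really only consumes the grid and not the full `every point has a standard point infinitely close' statement; this requires care, since the standard part is implicitly used when one says the local approximations `glue' into a genuine standard algebra element close to $f$. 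I expect this goes through (the patched function is an explicit finite combination of algebra elements, hence standard, once $g$, $h$, $\sigma$ are standard), but verifying it rigorously, and making sure the separation modulus interacts correctly with the grid scale (the algebra element distinguishing two grid points must do so by a margin controlled uniformly by $\sigma$), is where the real work lies. A secondary subtlety is the non-vanishing / constant-functions hypothesis noted in Rudin's formulation, which must be threaded through the normal-form manipulation without introducing hidden external quantifiers.
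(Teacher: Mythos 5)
The statement you set out to prove is, in the paper, not proved at all: it is the classical Stone--Weierstra\ss\ theorem, quoted from \cite{rudin}*{7.32} purely to set the stage for Section \ref{stoner}. What the paper actually does there is much narrower than your plan: it does not formulate a full $\SWT_{\ns}$, does not reprove the theorem in $\P_{0}$, and does not run the whole statement through $\CI$. It isolates the one ingredient whose constructive counterpart is complicated, namely the separating set \eqref{snavel}, introduces the nonstandard-separating notion \eqref{fredi}, and shows (using nonstandard compactness of $C$ and nonstandard continuity of the members of $S$ to drop the external quantifier over $x,y$, then idealisation $\textsf{I}$ and $\HAC_{\INT}$) that one obtains the normal form \eqref{Stronghold} with a numerical separation modulus $d$, refined in Corollary \ref{dorko} by a continuity modulus $e$; this is then compared to the constructive separating set of \cite{bridge1}*{Def.\ 5.13, p.\ 106}. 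The full theorem is handled only by a closing remark: formulating nonstandard density as $(\forall^{\st} f\in C(X))(\exists g\in G)(\forall x\in X)(f(x)\approx g(x))$, the nonstandard $\SWT$ ``gives rise to'' the constructive version of \cite{bridge1}*{p.\ 106, 5.14} --- with no nonstandard proof written out and no term extraction performed. So your proposal is not wrong in spirit, but it tackles a substantially larger task than the paper does, and most of the work you anticipate (adapting the lattice argument, the Remark \ref{impo} issue about weakening compactness in an antecedent, the Herbrandisation via \eqref{kalkuttttt}) has no counterpart in the text.

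One concrete step in your plan conflicts with what the paper establishes. Your normal form for the separating condition posits a standard functional $\sigma$ whose values select the separating functions themselves. The paper points out that $\HAC_{\INT}$ (and likewise term extraction) only yields genuine witnessing functionals for type-zero, suitably monotone, existential quantifiers; the quantifier $(\exists^{\st}f\in S)$ is of type one and has no monotone structure, so no such $\sigma$ can be obtained from \eqref{fredi} --- which is exactly why \eqref{Stronghold} and Corollary \ref{dorko} leave $(\exists^{\st}f\in S)$ un-witnessed and extract only the numerical moduli $d$ and $e$. Taking $\sigma$ as \emph{input} data of $\SWT_{\ef}(t)$ is legitimate, but then it must be built into the nonstandard hypothesis from the outset rather than derived by the route you describe. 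A minor further point: your clause $f(x)\gg f(y)$ should read $f(x)\not\approx f(y)$ as in \eqref{fredi}, since a separating function need not respect the order of its arguments.
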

Now, there are constructive versions of $\SWT$ (see \cite{bridge1}*{p.\ 106}, \cite{jay}*{\S4}, and \cite{browner}*{\S4}), but they all involve the same complicated notion of `constructive separating set', and it is a natural question (in light of our treatment of compactness and constructive Dini's theorem) whether there is a more elegant \emph{nonstandard} notion of separating set which gives rise to the notion of constructive separating set after applying $\CI$.  We shall provide a positive answer to this question in this section.   

\smallskip
 
First of all, we introduce the following definition, arguably quite similar to \eqref{snavel}.  
\bdefi[nonstandard-separating set]
The set $S$ of functions $f:C\di D$ is a \emph{nonstandard-separating} set for $C$ if 
\be\label{fredi}
(\forall^{\st}x, y\in C)(\exists^{\st}f\in S)(x\not\approx y\di f(x)\not\approx f(y)). 
\ee
\edefi
We shall now obtain a normal form for \eqref{fredi}.  
To this end, note that \textsf{SWT} deals with separating sets for \emph{compact} domains for \emph{continuous} functions;  thus, nonstandard compactness (as studied in the previous section) seems particularly useful to remove the outermost `st' in \eqref{fredi}, as we prove in the next theorem.  
The latter can be proved in $\H$ or $\P_{0}$.  
\begin{thm}
If $C$ is nonstandard compact and has a nonstandard-separating set $S$ of nonstandard continuous functions, then there is standard $d$ with
\be\label{Stronghold}\textstyle
(\forall^{\st}k)(\forall^{\st} x, y\in C)(\exists^{\st}f\in S)\big[|x- y|\geq \frac{1}{k}\di |f(x)- f(y)|\geq \frac1{d(k)}\big]. 
\ee
\end{thm}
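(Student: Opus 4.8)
The plan is to extract the normal form in \eqref{Stronghold} by a two-stage argument: first use nonstandard compactness of $C$ to upgrade the outermost `$\forall^{\st}x,y$' quantifiers in \eqref{fredi} to `$\forall x,y$' (at the cost of passing to the approximate statement with a standard precision parameter $k$), and then apply idealisation $\textsf{I}$ and $\HAC_{\INT}$ to collect the resulting standard quantifiers and produce the standard modulus $d$. This is exactly the pattern used in the proof of Theorem~\ref{noco} and in the treatment of $\CSU_{\ns}$ and $\CCI_{\ns}$ in Sections~\ref{cont1}--\ref{cont2}, so the individual steps are routine manipulations within $\P_{0}$ (or $\H$).

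First I would observe that, since $C$ is nonstandard compact, $(\forall x\in C)(\exists^{\st}x'\in C)(x\approx x')$, and the functions $f\in S$ are nonstandard continuous, so $x\approx x'$ and $y\approx y'$ imply $f(x)\approx f(x')$ and $f(y)\approx f(y')$ for standard $f$. Combining this with \eqref{fredi} applied to the standard parts $x',y'$, one gets that for \emph{arbitrary} $x,y\in C$ (not just standard ones) there is a standard $f\in S$ with $x\not\approx y \di f(x)\not\approx f(y)$. Resolving `$\not\approx$' on both sides via Definition~\ref{keepintireal} and pushing the standard quantifiers inside, this reads, for all $x,y\in C$,
\[\textstyle
(\forall^{\st}k)(\exists^{\st}f\in S, l)\big[|x-y|\geq \tfrac{1}{k}\di |f(x)-f(y)|\geq \tfrac{1}{l}\big],
\]
the point being that once $x,y$ are far apart by a \emph{standard} amount $\tfrac1k$, their standard parts are genuinely distinct and the separating $f$ from \eqref{fredi} produces a standard gap $\tfrac1l$. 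After taking the maximum over the finite sequence this yields $(\forall^{\st}k)(\forall x,y\in C)(\exists^{\st}f\in S, l)\,[\,\cdots\,]$, i.e.\ the outer `st' on $x,y$ has indeed been removed at the price of the parameter $k$.

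Next I would bring the remaining standard quantifiers into normal form exactly as in Remark~\ref{simply}: push `$(\forall^{\st}k)$' out, apply (the contraposition of) idealisation $\textsf{I}$ to $(\forall x,y\in C)(\exists^{\st}f\in S, l)$ to obtain $(\exists^{\st}$ finite sequence$)$, take maxima over the $l$-component (using monotonicity of the internal matrix in $l$), giving
\[\textstyle
(\forall^{\st}k)(\exists^{\st}l, w)(\forall x,y\in C)(\exists f\in w\cap S)\big[|x-y|\geq \tfrac1k\di |f(x)-f(y)|\geq \tfrac1l\big];
\]
then apply $\HAC_{\INT}$ to the $(\forall^{\st}k)(\exists^{\st}\cdot)$ prefix and bundle the outputs into a single standard functional $d$, absorbing the finite list $w$ of candidate separating functions into the quantifier `$(\exists^{\st}f\in S)$' (which is harmless since $d(k)$ bounds the worst precision over that finite list). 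This produces precisely \eqref{Stronghold}. In the constructive system $\H$ one replaces $\textsf{I}$ by $\NCR$ and uses $\textsf{HGMP}^{\st}$/$\textsf{HIP}_{\forall^{\st}}$ to move the standard quantifiers, exactly as in Corollary~\ref{cordejedi}.

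The main obstacle I expect is the very first stage: justifying that nonstandard compactness really does let one drop the `st' on $x,y$ in \eqref{fredi}. The subtlety is that \eqref{fredi} only guarantees a \emph{standard} separating $f$ for \emph{standard} $x,y$, whereas after replacing $x,y$ by their standard parts $x',y'$ one must check that $x\not\approx y$ forces $x'\neq_{\R}y'$ (so that the hypothesis of \eqref{fredi} with $x',y'$ is met) — this is immediate, but one must be careful that the gap produced is standard, which is where nonstandard continuity of the (standard) $f$ and the fact that the relevant quantities are standard get used. This is analogous to the non-trivial replacement of nonstandard compactness in the antecedent of $\CCI_{\ns}$ in the proof of Theorem~\ref{floggen34}, and like there it is correct in $\P_{0}$; the rest is bookkeeping with maxima of finite sequences.
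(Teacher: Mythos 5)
Your proposal is correct and follows essentially the same route as the paper's own proof: nonstandard compactness of $C$ together with nonstandard continuity of the (standard) separating function is used to drop the `st' on $x,y$, after which idealisation and $\HAC_{\INT}$, with maxima over the resulting finite sequences, yield the standard modulus $d$ (and in $\H$ one uses $\NCR$ and the axioms of Definition~\ref{flah}, as you note). The only difference is cosmetic: you remove the `st' on $x,y$ before resolving `$\approx$', whereas the paper first resolves `$\approx$' (as in \eqref{okil}) and then passes to arbitrary $x,y$ via standard parts, which is the same argument in a different order.
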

\begin{proof}
Let $C, S$ be as in the theorem and consider \eqref{fredi}.  Resolving `$\approx$', we obtain
\be\label{okil}\textstyle
(\forall^{\st}k)(\forall^{\st}x, y\in C)(\exists^{\st}f\in S, N)(|x- y|\geq \frac{1}{k}\di |f(x)- f(y)|\geq \frac{1}{N}), 
\ee
with all standard quantifiers pushed forward as much as possible (which can also be done inside $\H$ by following the proof of Corollary \ref{cordejedi}).  As $C$ is nonstandard compact and $f$ as in \eqref{okil} is nonstandard continuous, we obtain 
\be\label{henfrie}\textstyle
(\forall^{\st}k)(\forall x, y\in C)(\exists^{\st}f\in S, N)[|x- y|\geq \frac{1}{k}\di |f(x)- f(y)|\geq \frac{1}{N}].   
\ee
Indeed, for every $x, y\in C$ there are standard $x', y'$ such that $x'\approx x$ and $y'\approx x$ by nonstandard compactness.  
Nonstandard continuity yields $f(x)\approx f(x')$ and $f(y)\approx f(y')$ for the $f$ from \eqref{okil}, and \eqref{henfrie} now follows.  

\smallskip

Applying idealisation $\textsf{I}$ (or $\textsf{NCR}$ in the case of $\H$) to \eqref{henfrie}, we obtain
\[\textstyle
(\forall^{\st}k)(\exists^{\st} f', N')(\forall x, y\in C)(\exists f\in f', N\in N')\big(f\in S \wedge \big[ |x- y|\geq \frac{1}{k}\di |f(x)- f(y)|\geq \frac{1}{N}\big]\big).   
\]
Now apply $\HAC_{\INT}$ to obtain standard $\Phi$ such that $f', N' \in \Phi(k)$.  Let $d(k)$ be the maximum of all entries for $N'$ and note that 
\[\textstyle
(\forall^{\st}k)(\forall x, y\in C)(\exists^{\st} f )\big(f\in S \wedge \big[ |x- y|\geq \frac{1}{k}\di |f(x)- f(y)|\geq \frac{1}{d(k)}\big]\big),   
\]
by ignoring the components for $f'$ and $f$ of $\Phi$.
\end{proof}
As is clear from the proof, since we are working over a nonstandard compact set, the distinction between pointwise and uniform continuity is not that relevant.  

\smallskip

Secondly, as we have pointed out before, $\HAC_{\INT}$ only provides witnessing functionals for type zero existential quantifiers (and similarly monotone objects).  
Hence, it seems impossible to obtain a witnessing functional for $(\exists f\in S)$ as in \eqref{Stronghold}.  Perhaps surprisingly, we \emph{can} obtain a modulus of continuity for this function $f$.  
\begin{cor}\label{dorko}
If $C$ is nonstandard compact with a nonstandard-separating set $S$ of nonstandard continuous functions, then there is standard $d, e$ with
\[\textstyle
(\forall^{\st}k, k')(\forall^{\st} x, y\in C)(\exists^{\st}f\in S)\big( C(f, e(k'), k')\wedge\big[   |x- y|\geq \frac{1}{k}\di |f(x)- f(y)|\geq \frac1{d(k)}\big]\big), 
\]
where $C(f, e(k'), k')\equiv (\forall \textstyle x', y' \in C)(|x'-y'|<\frac{1}{e(k')} \di |f(x')-f(y')|\leq\frac{1}{k'})$.
\end{cor}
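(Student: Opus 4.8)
The plan is to adapt the proof of the preceding theorem (the one establishing \eqref{Stronghold}) with a single extra ingredient: running the nonstandard continuity of $f$ through its own normal form before applying the choice axioms. Concretely, I would start from the same setup: let $C$ be nonstandard compact with a nonstandard-separating set $S$ of nonstandard continuous functions, resolve `$\approx$' in \eqref{fredi} and push standard quantifiers forward to obtain \eqref{okil}, and then use nonstandard compactness of $C$ together with nonstandard continuity of the $f$ appearing in \eqref{okil} to arrive at \eqref{henfrie}, exactly as in the proof of the previous theorem. So far nothing new is needed.

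The new step is to additionally extract a modulus of continuity for the very $f$ produced along the way. Recall (as in the treatment of $\CRI_{\ns}$, cf.\ \eqref{first5}) that nonstandard continuity of $f$ on $C$, namely $(\forall^{\st} x\in C)(\forall y\in C)[x\approx y\di f(x)\approx f(y)]$, yields after resolving `$\approx$', applying idealisation $\textsf{I}$ and $\HAC_{\INT}$ (with the usual maximum step) a standard modulus $e$ with $(\forall^{\st}k')(\forall x',y'\in C)(|x'-y'|<\tfrac{1}{e(k')}\di |f(x')-f(y')|\le \tfrac{1}{k'})$, i.e.\ $(\forall^{\st}k')C(f,e(k'),k')$ in the notation of the corollary. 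Since we are over a nonstandard compact $C$, pointwise and uniform nonstandard continuity coincide for this purpose, so this modulus is genuinely uniform. I would therefore conjoin this continuity information with the separating-set information already present in \eqref{henfrie}: for each standard $x,y\in C$ and standard $k$, the witness $f\in S$ from \eqref{henfrie} comes equipped (after the above) with its modulus-witness for each standard $k'$. This gives a combined normal form over the standard quantifiers $(\forall^{\st}k,k')(\forall^{\st}x,y\in C)(\exists^{\st} f,N,e')[\ldots]$, where the bracketed formula is internal and asserts both $f\in S\wedge C(f,e',k')$ and the separation inequality with parameter $N$.

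To finish, apply idealisation $\textsf{I}$ (or $\NCR$ in the case of $\H$) to pull the existential standard quantifier out over $(\forall x,y\in C)$, then apply $\HAC_{\INT}$ to obtain a standard functional $\Phi$ providing, for each standard $k,k'$, a finite sequence containing all the witnesses $f,N,e'$; take $d(k)$ to be the maximum of the $N$-entries and $e(k')$ the maximum of the $e'$-entries, using the monotone behaviour of the internal formula in $N$ and $e'$ (larger $N$, larger $e'$ only weaken the conclusion's hypotheses, cf.\ Remark~\ref{simply}). Dropping the bounded quantifiers over the finite sequences for the type-zero components $N,e'$ and keeping a single $f$ witness yields exactly the displayed formula of the corollary, with $C(f,e(k'),k')$ as stated. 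The main obstacle is bookkeeping rather than mathematics: one must be careful that the $f$ for which the modulus $e$ is extracted is the \emph{same} $f$ witnessing separation — this is why the two pieces of information must be combined \emph{inside} a single internal matrix before $\HAC_{\INT}$ is applied, rather than extracting the separating $f$ and its modulus independently (which would break the correlation, just as $\HAC_{\INT}$ cannot hand back a witnessing functional for $(\exists f\in S)$ itself).
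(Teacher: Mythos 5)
Your proposal is correct and takes essentially the same route as the paper: there, too, one simply adds the (uniform, thanks to nonstandard compactness) nonstandard continuity of the witnessing $f$ inside the square brackets of \eqref{Stronghold}, so that after resolving `$\approx$' one gets $(\forall^{\st}k, k')(\forall x, y\in C)(\exists^{\st}f\in S, N, N')\big(C(f, N', k') \wedge [\,|x- y|\geq \frac{1}{k}\di |f(x)- f(y)|\geq \frac{1}{N}\,]\big)$, and then idealisation plus $\HAC_{\INT}$ with the usual maximum step yields $d$ and $e$. Your emphasis that the modulus and the separation information must sit inside one internal matrix before $\HAC_{\INT}$ is applied (so the same $f$ carries both) is exactly the point of the paper's short proof; just keep the quantifier over $x,y$ internal, as in \eqref{henfrie}, when applying idealisation, which is what your final paragraph in fact does.
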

\begin{proof}
Similar to the proof of the theorem, but with the addition of `$f$ is uniformly nonstandard continuous on $C$' inside the square brackets in \eqref{Stronghold}.  
Note that uniform continuity follows from usual continuity due to the nonstandard compactness of $C$.  
After resolving `$\approx$', the new version of formula \eqref{henfrie} becomes 
\[\textstyle
(\forall^{\st}k, k')(\forall x, y\in C)(\exists^{\st}f\in S, N, N')\big(  C(f, N', k') \wedge   [|x- y|\geq \frac{1}{k}\di |f(x)- f(y)|\geq \frac{1}{N}]\big), 
\]
and the rest of the proof is now straightforward.
\end{proof}
In short, Corollary \ref{dorko} tells us that the images of $x$ and $y$ are not only `effectively separated' by $d$ as in \eqref{Stronghold}, 
but also that we effectively know the continuity of $f$ around $x$ and $y$ thanks to $e$.   
Furthermore, it is easy to guarantee that for certain $x, y\in C$, $f(x)=0$ and $f(y)=1$ by defining $g(\cdot):=\frac{f(\cdot)-f(x_{0})}{f(y_{0})-f(x_{0})}$, where $x_{0}, y_{0}$ are the standard parts of $x, y$ provided by nonstandard compactness.  

\smallskip

We now invite the reader to compare the complicated definition of a \emph{constructive} separating set as in \cite{bridge1}*{Def.\ 5.13. p.\ 106} to Corollary~\ref{dorko}.  
Clearly, the functional `$e$' from Corollary \ref{dorko} plays the role of $\delta$ in \cite{bridge1}*{Def.\ 5.13. p.\ 106}, 
and we observe that the normal form as in Corollary \ref{dorko} of the notion of nonstandard-separating set gives rise to the constructive notion of separating set.   

\smallskip

In conclusion, like in the case of compactness, the constructive version of `separating set' falls out of the nonstandard one, again with surprisingly little effort.  
Furthermore, if we formulate \emph{nonstandard density} as $(\forall^{\st} f\in C(X))(\exists g\in G)(\forall x\in X)(f(x)\approx g(x))$, then the corresponding nonstandard version of $\SWT$ gives rise to the constructive version of $\SWT$ as in \cite{bridge1}*{p.\ 106, 5.14}.    

\subsection{The strongest Big Five}\label{suskeenwiske}
In this section, we study equivalences relating to $\ATR_{0}$ and $\FIVE$, the strongest Big Five systems from RM.  
The associated results show that the template $\CI$ also works for the fourth and fifth Big Five system.  

\smallskip

We shall work with the \emph{Suslin functional} $(S^{2})$, the functional version of $\FIVE$.  
\be\label{suske}
(\exists S^{2})(\forall f^{1})\big[   S(f)=_{0} 0 \asa (\exists g^{1})(\forall x^{0}) (f(\overline{g}x)\ne 0)\big]. \tag{$S^{2}$}
\ee
Feferman has introduced the following version of the Suslin functional (see e.g.\ \cite{avi2}):
\be\label{suske2}
(\exists \mu_{1}^{1\di 1})\big[(\forall f^{1})\big(    (\exists g^{1})(\forall x^{0}) (f(\overline{g}x)\ne 0)\di (\forall x^{0}) (f(\overline{\mu_{1}(f)}x)\ne 0)\big)\big], \tag{$\mu_{1}$}
\ee
where the formula in square brackets is denoted $\MUO(\mu_{1})$.  We shall also require another instance of \emph{Transfer}, as follows:
\be\tag{$\Paai$}
(\forall^{\st} f^{1})\big[    (\exists g^{1})(\forall x^{0}) (f(\overline{g}x)\ne 0)\di  (\exists^{\st} g^{1})(\forall^{\st} x^{0}) (f(\overline{g}x)\ne 0)\big].
\ee
We shall obtain an effective version of the equivalence proved in \cite{yamayamaharehare}*{Theorem~4.4}.  
The relevant (non-uniform) principle pertaining to the latter is $\PST$, i.e.\ the statement that \emph{every tree with uncountably many paths has a non-empty perfect subtree}.   
The latter has the following nonstandard and effective versions. 
\begin{thm}[$\PST_{\ns}$]For all standard trees $T^{1}$, there is standard $P^{1}$ such that 
\[
(\forall f_{(\cdot)}^{0\di 1})(\exists f\in T)(\forall n)(f_{n}\ne_{1} f) \di \textup{$P$ is a non-empty perfect subtree of $T$} \big],
\]
\end{thm}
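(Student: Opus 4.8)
The plan is to prove $\PST_{\ns}$ inside $\P_{0}+\Paai$, mirroring the derivation of $\MCT_{\ns}$ from $\paai$ in Theorem~\ref{sef} but now at the level of the Suslin functional. First I would record that $\P_{0}+\Paai$ proves the existence of a \emph{standard} Suslin functional $(\exists^{\st}S^{2})$, equivalently $\Pi_{1}^{1}$-comprehension relative to `st'. This follows from \cite{bennosam}, where $\Paai\asa(S^{2})$ is established, combined with the same move used in Theorem~\ref{sef} for the step `$\paai$ yields $(\exists^{2})^{\st}$': one introduces a bounded witness search for the relevant $\Sigma_{1}^{1}$-matrix, applies $\Paai$ to transfer the search bound to the standard world, and then extracts the standard part via $\Omega\textsf{-CA}$ from Theorem~\ref{drifh}.

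Second, given a standard tree $T^{1}$, I would form the \emph{perfect kernel} $P$ of $T$, i.e.\ the largest perfect subtree of $T$, using $\Pi_{1}^{1}$-comprehension relative to `st'. Concretely, $P$ is obtained from the usual $\Pi_{1}^{1}$ definition of the Cantor--Bendixson kernel, as in \cite{simpson2}*{VI.1} or \cite{yamayamaharehare}*{Theorem~4.4}: one iterates the derivative `delete every $\sigma$ above which $T$ has an isolated path' along a well-ordering, which is exactly the $\ATR$-style transfinite recursion that $\Pi_{1}^{1}$-CA (here relative to `st') supplies. Since $T$ is standard and $P$ is defined from $T$ by a standard operation (the standard Suslin functional together with the $\T^{*}$-machinery for finite sequences), $P$ is standard by the basic axioms of $\P_{0}$ in Definition~\ref{debs}. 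The standard verification then shows that $P$ is a subtree of $T$ and that, whenever $P\ne\emptyset$, every node of $P$ has two incompatible extensions in $P$, i.e.\ $P$ is perfect.

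Third, I would close the implication in $\PST_{\ns}$ for this standard $P$. If the antecedent $(\forall f_{(\cdot)}^{0\di 1})(\exists f\in P)(\forall n)(f_{n}\ne_{1}f)$ holds, then no sequence enumerates the paths through $P$; in particular $P$ has at least one path, so $P\ne\emptyset$, and hence $P$ is a non-empty perfect subtree of $T$ by the previous paragraph, which is the consequent. If instead $P=\emptyset$, then there is no $f\in P$ at all, so the antecedent is vacuously false and the implication holds trivially. In either case the displayed implication is established, completing the proof of $\PST_{\ns}$ in $\P_{0}+\Paai$; the companion reversal $\PST_{\ns}\di\Paai$ would be obtained exactly as the corresponding step for $\MCT_{\ns}$, by feeding a suitable $\Sigma_{1}^{1}$-singleton tree through $\PST_{\ns}$, and term extraction via Corollary~\ref{consresultcor} then yields the effective version.

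The main obstacle I anticipate is the bookkeeping in the second step: one must check that the transfinite Cantor--Bendixson analysis genuinely runs \emph{relative to} `st' with the standard Suslin functional, so that the resulting kernel $P$ is a genuinely standard object and genuinely perfect, rather than perfect merely `from the outside'. This is routine but delicate, since it interleaves the internal recursion-theoretic content of $\Pi_{1}^{1}$-CA with the external `st'-layer; once it is in place, the remaining steps are entirely mechanical.
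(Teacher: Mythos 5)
Your skeleton does match the paper's derivation of $\Paai\di\PST_{\ns}$ inside the proof of Theorem \ref{sef2334} (extract standard $\Pi^{1}_{1}$-comprehension machinery from $\Paai$, then run Simpson's perfect-kernel argument relative to `st' to get a standard $P$), but your first step contains a genuine gap. A ``bounded witness search plus $\Omega$\textsf{-CA}'' cannot handle the $\Sigma^{1}_{1}$ matrix of the Suslin functional: the witness $g$ in $(\exists g^{1})(\forall x^{0})(f(\overline{g}x)\ne 0)$ is of type $1$, so there is no numerical search bound to transfer, and the natural $\Omega$-invariant surrogate (``the tree associated to $f$ has nodes of nonstandard length'') is \emph{not} equivalent to the existence of a path, since a well-founded tree on Baire space can have nodes of every (even nonstandard) length. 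The paper's actual move is different: apply $\HAC_{\INT}$ to the normal form \eqref{julliette} of $\Paai$ to obtain a standard functional producing a \emph{finite list} of candidate witnesses, and then use $(\exists^{2})^{\st}$ (obtained from $\paai$ exactly as in Theorem \ref{sef}) together with $\paai$ to select the correct candidate from the list; this yields $(\mu_{1})^{\st}$, which is what allows \cite{simpson2}*{V.5.5} to be carried out relative to `st'. Merely invoking the equivalence of $\Paai$ and $(S^{2})$ from \cite{bennosam} does not by itself produce a \emph{standard} such functional inside $\P_{0}$; producing it is precisely the content of this step.

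The second gap is the de-relativization, of which you only flag half. Relativizing the kernel construction gives $\PST^{\st}$, an implication from the `st'-relativized antecedent to the `st'-relativized consequent. The paper then (i) drops the `st' in the consequent by noting that ``$P$ is a non-empty perfect subtree of $T$'' is arithmetical and applying Transfer, and, crucially, (ii) uses $\Paai$ \emph{again} to pass from the internal antecedent (``$T$ has uncountably many paths'', with no `st' anywhere) to the relativized antecedent to which $\PST^{\st}$ applies. Step (ii) is absent from your proposal: your third step reads the antecedent as a statement about $P$ and concludes only $P\ne\emptyset$; note that on that literal reading the theorem would be trivially witnessed by the empty tree, and the paper's reversal (where the tree $T_{0}$ from \eqref{treeke} is fed into $\PST_{\ns}$) shows that the antecedent is meant to express, internally, that $T$ has uncountably many paths. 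With that reading, ``antecedent implies the kernel is non-empty'' cannot be obtained by running the counting argument internally (the comprehension machinery is available only for standard objects), so an application of $\Sigma^{1}_{1}$-Transfer is a missing idea here, not routine bookkeeping.
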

\begin{thm}[$\PST_{\ef}(t)$]
For all trees $T^{1}$, we have
\[
(\forall f_{(\cdot)}^{0\di 1})(\exists f\in T)(\forall n)(f_{n}\ne_{1} f) \di \textup{$t(T)$ is a non-empty perfect subtree of $T$} \big].
\]
\end{thm}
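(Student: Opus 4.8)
The plan is to follow the template $\CI$ exactly as in the previous case studies (Theorems~\ref{sef}, \ref{sef8}, \ref{linkebeek}), now applied to the nonstandard equivalence between $\PST_{\ns}$ and the Suslin-functional-level Transfer principle $\Paai$. Concretely, I would state and prove a theorem of the form: from the proof of $\PST_{\ns}\asa\Paai$ in $\P_{0}$, two terms $s,u$ can be extracted such that $\textup{\textsf{E-PRA}}^{\omega*}$ proves $(\forall\mu_{1})[\MUO(\mu_{1})\di\PST_{\ef}(s(\mu_{1}))]\wedge(\forall t)[\PST_{\ef}(t)\di\MUO(u(t))]$. The first task is the nonstandard equivalence itself. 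For $\PST_{\ns}\di\Paai$: given standard $f^{1}$ with $(\exists g)(\forall x)(f(\overline{g}x)\ne 0)$, one builds (as in the classical proof of \cite{yamayamaharehare}*{Theorem~4.4}) a standard tree $T$ whose paths code the $g$'s witnessing that $f$ has a path; this tree has a path, and by $\PST_{\ns}$ the associated perfect subtree $P$ is standard and non-empty, so it contains a standard path, which after decoding yields a \emph{standard} $g$ witnessing $(\forall^{\st}x)(f(\overline{g}x)\ne 0)$ — but one still needs $\Paai$ applied to the innermost universal formula, exactly the trick used at the end of the proof of Theorem~\ref{sef}. For the converse $\Paai\di\PST_{\ns}$: apply $\Omega$\textsf{-CA} (Theorem~\ref{drifh}) together with $\Paai$ to obtain $(S^{2})^{\st}$, then run the usual $\FIVE$-proof of $\PST$ relative to `st' (the statement is provable in $\FIVE$ by \cite{yamayamaharehare}*{Theorem~4.4}), obtaining $\PST^{\st}$, and finally peel off the `st' predicates in the consequent using $\Paai$ once more, just as $\MCT^{\st}$ was converted to $\MCT_{\ns}$.

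Next I would carry out the term extraction. Resolve the leading implication $\Paai\di\PST_{\ns}$ into a formula with the antecedent replaced by its $\HAC_{\INT}$-normalised version $(\exists^{\st}\mu_{1})(\forall^{\st}f)A(f,\mu_{1}(f))$, where $A(f,g)$ abbreviates the internal matrix of $\Paai$ (with the innermost $(\exists g)$ bounded); and the consequent rewritten, via the standard `resolve $\approx$ / pull standard quantifiers to the front / apply $\textsf{I}$' routine, into a normal form $(\forall^{\st}T)(\exists^{\st}m)B(T,m)$ where $B(T,m)$ expresses `$P$ (a finite piece up to level $m$, or a witness bound) is a perfect subtree of $T$ up to the relevant precision'. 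This puts the whole implication in the shape $(\forall^{\st}T,\mu_{1})(\exists^{\st}m,f)[A(f,\mu_{1}(f))\di B(T,m)]$, which is a normal form; Corollary~\ref{consresultcor} (via Corollary~\ref{consresultcor2} for $\P_{0}$) then yields a term $t$ with $\textsf{E-PRA}^{\omega*}\vdash(\forall T,\mu_{1})(\exists m,f\in t(\mu_{1},T))[A(f,\mu_{1}(f))\di B(T,m)]$, and taking the maximum over the finite sequence (Remark~\ref{simply}) and reasoning classically as in Theorem~\ref{sef} gives $(\forall\mu_{1})[\MUO(\mu_{1})\di\PST_{\ef}(s(\mu_{1}))]$. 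The reverse direction $\PST_{\ns}\di\Paai$ is treated symmetrically, with the roles of $A$ and $B$ interchanged, yielding $u$.

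The main obstacle I anticipate is \emph{not} the term-extraction machinery — that is now routine — but rather getting the normal form of $\PST_{\ns}$ right. The consequent `$P$ is a non-empty perfect subtree of $T$' is a conjunction of a $\Sigma$-type statement (non-emptiness, i.e.\ $P$ has a path, which itself carries a hidden quantifier over sequences) and a genuinely $\Pi^{0}_{2}$-ish perfectness condition ($(\forall\sigma\in P)(\exists\tau_{0},\tau_{1}\in P)$ extending $\sigma$ incompatibly). Unlike the bounded `$\leq k$' consequents in the $\MCT$ and $\IVT$ examples, there is no obvious monotone structure here, so I expect to need Theorem~\ref{necess}-style manipulations (or an auxiliary use of $\STP$/overspill) to extract a clean normal form, and possibly to reformulate $\PST_{\ns}$ so that `non-empty perfect subtree' is witnessed by an explicit standard functional producing the splitting nodes — which is exactly the higher-order content one wants anyway. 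Once that normal form is pinned down, I would also note (as in Corollary~\ref{herken}) the associated Herbrandisation and remark that the whole development goes through in $\H$, giving a constructive version, since the classical proof of $\PST$ in $\FIVE$ together with the $\H$-provable normalisation lemmas is all that is used.
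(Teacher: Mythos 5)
Your overall strategy is the paper's own: the statement $\PST_{\ef}(t)$ is established exactly as the conclusion of an extraction theorem (Theorem \ref{sef2334}), obtained by proving $\PST_{\ns}\asa\Paai$ in $\P_{0}$ and running that proof through Corollary \ref{consresultcor}. However, two of your steps have genuine gaps. In the direction $\PST_{\ns}\di\Paai$ you assert that the standard non-empty perfect subtree ``contains a standard path'' and then propose to finish by ``applying $\Paai$ to the innermost universal formula''; the latter is circular ($\Paai$ is the statement being derived), and the former is precisely where the work lies. A standard pair $(P',p')$ with $P'$ perfect only yields a standard path after one transfers the internal splitting condition \eqref{nonrobustbelow} to its st-relativisation \eqref{klonk}, i.e.\ after an application of $\paai$ to the (arithmetical) perfectness of the standard tree $P'$; only then can one build a standard $g_{0}$ with $(\forall^{\st}x)(\overline{g_{0}}x\in P')$, and note that no further transfer is needed afterwards since the conclusion of $\Paai$ only has $(\forall^{\st}x)$. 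Moreover $\paai$ is not available for free in this direction: the paper first proves $\PST_{\ns}\di\paai$ separately, via the auxiliary tree built from $h$ (with $f(\sigma)=0$ iff $h(\sigma(0))\ne0$, fed into \eqref{treeke}), and only then runs the argument for $\Paai$. Your sketch contains neither the transfer step nor this auxiliary derivation, so as written the implication does not go through. (A secondary point: in the converse direction, obtaining $(S^{2})^{\st}$ from $\Omega$\textsf{-CA} plus $\Paai$ is not straightforward, since $\Sigma^{1}_{1}$-truth is not approximated by a number parameter; the paper instead applies $\HAC_{\INT}$ to the st-relativised form \eqref{julliette} of $\Paai$ and uses $(\exists^{2})^{\st}$ to select the correct candidate, arriving at $(\mu_{1})^{\st}$.)

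On the extraction side, your worry about the lack of monotone structure is correct, but your proposed remedies (Theorem \ref{necess}-style manipulation, or replacing $P$ by ``a finite piece up to level $m$'') are not how this is resolved, and the finite-approximation variant cannot work: neither non-emptiness nor perfectness of $P$ is captured by a finite truncation. The paper keeps the type-$1$ existential quantifier $(\exists^{\st}P)$ (and $(\exists^{\st}\Psi)$ for the $\mu_{1}$-like witness) in the normal form; Corollary \ref{consresultcor} then returns a \emph{finite sequence} of candidate subtrees, and since ``non-empty perfect subtree of $T$'' is an arithmetical property, the correct candidate is \emph{selected} using $(\exists^{2})$ --- definable from the $\mu_{1}$ occurring in the antecedent in one direction, and supplied by the simultaneously extracted $(\mu^{2})$-witness in the other, which is why the paper proves $\PST_{\ns}\di[\Paai\wedge\paai]$ rather than $\PST_{\ns}\di\Paai$ alone. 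This selection trick, not a monotone maximum and not a reformulation with splitting-node functionals, is the missing ingredient that turns the Herbrand sequence into the single term $t$ in $\PST_{\ef}(t)$.
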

Note that the property of being a non-empty perfect subtree consitutes an arithmetical formula as in \cite{simpson2}*{Definition V.4.1} and \eqref{nonrobustbelow} below.  
As a technicality, we require that $P$ as in the previous two principles consists of a pair $(P', p')$ such that $P'$ is a perfect subtree of $T$ such that $p'\in P'$.      
We have the following theorem.
\begin{thm}\label{sef2334}
From the proof of $\PST_{\ns}\asa \Paai$ in $\P_{0} $, two terms $s, u$ can be extracted such that $\textup{\textsf{E-PRA}}^{\omega*}$ proves:
\be\label{frood3}
(\forall \mu_{1})\big[\textsf{\MUO}(\mu_{1})\di \PST_{\ef}(s(\mu_{1})) \big] \wedge (\forall t^{1\di 1})\big[ \PST_{\ef}(t)\di  \MUO(u(t))  \big].
\ee
\end{thm}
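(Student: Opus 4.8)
The plan is to imitate the proof of Theorem~\ref{sef}, one level up: the Suslin functional $\mu_{1}$ plays the role of $\mu^{2}$, $\FIVE$ that of $\ACA_{0}$, and $\Paai$ that of $\paai$.

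First I would establish $\P_{0}\vdash\PST_{\ns}\asa\Paai$, which is the nonstandard rendering of \cite{yamayamaharehare}*{Theorem~4.4}. For $\Paai\di\PST_{\ns}$: assume $\Paai$ and use $\Omega\textsf{-CA}$ (available by Theorem~\ref{drifh}) to pass from a nonstandard ``approximate Suslin operator'' to a standard $\mu_{1}$ with $\MUO(\mu_{1})$ --- the one-level-up analogue of obtaining $(\exists^{2})^{\st}$ from $\paai$ in Theorem~\ref{sef}, and essentially the content of \cite{bennosam}. Once a standard Suslin functional is available, the usual Cantor--Bendixson proof of $\PST$ (cf.\ \cite{simpson2}*{V.4}) goes through relative to `st', yielding $\PST^{\st}$; applying $\Paai$ and its consequence $\paai$ to the arithmetical/$\Pi^{1}_{1}$ matrix of $\PST^{\st}$ (the clauses that $P$ is a non-empty perfect subtree of $T$ with uncountably many paths) then removes the last `st'. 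For $\PST_{\ns}\di\Paai$: given a standard $f^{1}$, form the standard pruned tree $T_{f}$ whose paths are exactly the $g$ with $(\forall x)(f(\overline{g}x)\ne 0)$, and the standard ``amplified'' tree $T'_{f}$ built so that $T'_{f}$ has uncountably many paths if and only if $T_{f}$ has a path (the coding of \cite{simpson2}*{V.4}); then $\PST_{\ns}$ supplies a standard non-empty perfect subtree of $T'_{f}$, a path through it projects to a path through $T_{f}$, and one application of $\paai$ turns this into $(\exists^{\st}g)(\forall^{\st}x)(f(\overline{g}x)\ne 0)$, i.e.\ $\Paai$. As in the previous sections, this development also goes through in $\H$.

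For the first conjunct of \eqref{frood3}, note that a \emph{sub-derivation} of the first direction above already gives $\P_{0}\vdash(\forall^{\st}\mu_{1})[\MUO(\mu_{1})\di\PST_{\ns}]$ (run the $\FIVE$-proof of $\PST$ relative to `st' with $\mu_{1}$ a \emph{given} standard Suslin functional), and that $\PST_{\ns}$ is \emph{already} a normal form $(\forall^{\st}T)(\exists^{\st}P)\varphi(T,P)$ with $\varphi$ internal, while $\MUO(\mu_{1})=(\forall f)\chi(f,\mu_{1}(f))$. Thus $\MUO(\mu_{1})\di\PST_{\ns}$ has the shape \eqref{duggg}, and steps~(iii)--(iv) of $\CI$ (push the antecedent's universal quantifier through the implication, collect the standard quantifiers, apply Corollary~\ref{consresultcor}) yield a term $t$ with $\textup{\textsf{E-PRA}}^{\omega*}\vdash(\forall\mu_{1},T)[\MUO(\mu_{1})\di(\exists P\in t(\mu_{1},T))\varphi(T,P)]$. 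Since the matrix $\varphi(T,P)$ is $\Pi^{1}_{1}$ in $P$ it is decided by $\mu_{1}$; hence $\mu_{1}$ selects from the finite list $t(\mu_{1},T)$ the genuine perfect subtree, which defines $s(\mu_{1})$ and gives $\PST_{\ef}(s(\mu_{1}))$. This non-monotone selection step --- in place of the ``take the maximum'' step used in the analytic case --- is the exact analogue of the one in the proof of Corollary~\ref{loppp}.

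For the second conjunct, I would run $\CI$ on the proof of $\PST_{\ns}\di\Paai$: applying $\HAC_{\INT}$ to the antecedent $\PST_{\ns}$, pushing the resulting universal quantifier through the implication, and applying Corollary~\ref{consresultcor} to the normal form of $\Paai$ read as a consequent produces a term $u$; concretely $u(t)(f)$ is primitive recursive in $t$, since when $t$ satisfies $\PST_{\ef}$ the object $t(T'_{f})$ is a genuine non-empty perfect subtree exactly when $f$ has a path, so reading the leftmost path off it and projecting yields a witness, with no further selection needed, and one checks $\textup{\textsf{E-PRA}}^{\omega*}\vdash(\forall t)[\PST_{\ef}(t)\di\MUO(u(t))]$. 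The main obstacle lies entirely in the first step: verifying that the $\FIVE$-level Cantor--Bendixson argument relativises cleanly to `st' inside $\P_{0}+\Paai$ --- in particular that $\Omega\textsf{-CA}$ really delivers the Suslin functional relative to `st' (finding the correct $\Omega$-invariant approximation, one level above the $(\exists^{2})$ case of Theorem~\ref{sef}), and that the amplified tree $T'_{f}$ can be arranged so that ``$T'_{f}$ has uncountably many paths'' is genuinely \emph{equivalent} to, not merely implied by, ``$T_{f}$ has a path''. Once this is in place, the term extraction for \eqref{frood3} is routine $\CI$-bookkeeping, modulo the non-monotone selection subtlety just noted.
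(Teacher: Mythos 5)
Your global strategy (prove $\PST_{\ns}\asa\Paai$ in $\P_{0}$ via the interleaved ``amplified'' tree, then run $\CI$ with a selection step) is the paper's, but two of your key steps would not go through as described. First, in the direction $\PST_{\ns}\di\Paai$ you say that an internal path through the perfect subtree of $T'_{f}$ ``projects to a path through $T_{f}$, and one application of $\paai$ turns this into $(\exists^{\st}g)(\forall^{\st}x)(f(\overline{g}x)\ne 0)$''. That step fails: $\paai$ is $\Pi^{0}_{1}$-transfer and cannot convert an internal existential over type-$1$ objects into a standard one --- that conversion \emph{is} $\Paai$, so as stated the argument is circular. What actually makes the direction work is the \emph{standardness} of the perfect subtree $P$ and of its designated element supplied by $\PST_{\ns}$: one relativises the (arithmetical) perfectness of the standard tree to `st' using $\paai$ (as in \eqref{klonk}), and then builds a \emph{standard} branch from this standard data; moreover $\paai$ itself must first be \emph{derived from} $\PST_{\ns}$ by a separate coding (the paper's $h$-tree), a step your sketch omits entirely. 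Relatedly, in the direction $\Paai\di\PST_{\ns}$ your proposed mechanism (an $\Omega$-invariant ``approximate Suslin operator'' fed to $\Omega$\textsf{-CA}) is exactly the point you flag as the main obstacle, and the naive depth-$M$ leftmost-node approximation is \emph{not} $\Omega$-invariant; the paper avoids $\Omega$\textsf{-CA} here altogether, obtaining $(\mu_{1})^{\st}$ from the normal form \eqref{julliette} of $\Paai$ via $\HAC_{\INT}$ followed by selection of the correct witness with $(\exists^{2})^{\st}$ (available from $\paai$).

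Second, your claim that $u(t)(f)$ is ``primitive recursive in $t$'' with ``no further selection needed'' is too strong: reading the leftmost path off a perfect subtree of Baire space requires iterated unbounded search, which is not available as a term of G\"odel's \textsf{T}/$\textup{\textsf{E-PRA}}^{\omega*}$. This is precisely why the extraction in the paper is applied to $\PST_{\ns}\di[\Paai\wedge\paai]$: Corollary~\ref{consresultcor} then yields a witness for $(\mu^{2})$ \emph{and} a finite sequence of candidate witnesses for $(\mu_{1})$, and the $(\mu^{2})$-witness (Feferman's search operator, itself a term in $t$) is used to select the right candidate. Your treatment of the first conjunct (normal form of $\MUO(\mu_{1})\di\PST_{\ns}$, extraction of a finite list, selection of the genuine perfect subtree by the arithmetical-deciding power of $\mu_{1}$) does match the paper, but the two defects above --- the transfer confusion in the reverse direction and the missing selection/search data in $u$ --- are genuine gaps rather than bookkeeping.
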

\begin{proof}  
We first establish the `easy' implication $\Paai\di\PST_{\ns}$.  Clearly, $\Paai$ implies $\paai$ and the former implies:  
\be\label{julliette}
(\forall^{\st} f^{1})(\exists^{\st} g^{1})\big[    (\exists g^{1})(\forall x^{0}) (f(\overline{g}x)\ne 0)\di  (\forall x^{0}) (f(\overline{g}x)\ne 0)\big].
\ee
Applying $\HAC_{\INT}$, there is standard $\Phi$ such that $(\exists g^{1}\in \Phi(f))$.  As noted in the proof of Theorem \ref{sef}, $\paai$ implies $(\exists^{2})^{\st}$ and the latter can be used to 
check which of the possible witnesses $g\in \Phi(f)$ is such that $(\forall^{\st} x^{0}) (f(\overline{g}x)\ne 0)$.  Hence, there is standard $\Psi$ such that 
\[
(\forall^{\st} f^{1})\big[    (\exists g^{1})(\forall x^{0}) (f(\overline{g}x)\ne 0)\di  (\forall^{\st} x^{0}) (f(\overline{\Psi(f)}x)\ne 0)\big].
\]
Since $\paai$ is available, this implies
\[
(\forall^{\st} f^{1})\big[    (\exists^{\st} g^{1})(\forall^{\st} x^{0}) (f(\overline{g}x)\ne 0)\di  (\forall^{\st} x^{0}) (f(\overline{\Psi(f)}x)\ne 0)\big],
\]
which is exactly $(\mu_{1})^{\st}$.  The latter implies $\FIVE$ relative to `\st', and we obtain $\PST^{\st}$ by performing the proof in \cite{simpson2}*{V.5.5} relative to `st'.  As the property of being a perfect subtree is arithmetical, we can use $\paai$ to drop all instances of `\st' in this property.  Hence, we obtain
\[
(\forall^{\st}T)(\exists^{\st}P )\big[(\forall^{\st} f_{n})(\exists^{\st} f\in P)(\forall^{\st} n)(f_{n}\approx_{1} f) \di \textup{$P$ is a non-empty perfect subtree of $T$} \big].
\]  
Thanks to $\Paai$, we now obtain:
\[
(\forall^{\st}T)(\exists^{\st}P )\big[(\forall f_{n})(\exists f\in P)(\forall n)(f_{n}=_{1} f) \di \textup{$P$ is a non-empty perfect subtree of $T$} \big],
\]  
which is exactly $\PST_{\ns}$.  The implication $\Paai\di \PST_{\ns}$ is easily brought into the normal form in light of \eqref{julliette}.  
Applying Corollary \ref{consresultcor} now provides a witnessing term, from which the right witness can be selected using $(\exists^{2})$, as `being a perfect subtree' is arithmetical.      
    
\smallskip

Next, we establish the remaining implication $\PST_{\ns}\di \Paai$.  First, assume $\paai$ and fix standard $f$ such that $(\exists g^{1})(\forall x^{0})f(\overline{g}x)=0$ and define: 
\be\label{treeke}
\sigma\in T_{0}\asa (\forall i<|\sigma|/2)(f(\sigma(0)*\sigma(2)*\dots \sigma(2i))=0).
\ee
By assumption, the tree $T_{0}$ has uncountably many paths (in the internal sense of the antecedent of $\PST_{\ns}$), and we may conclude the existence of a standard non-empty subtree $P_{0}=(P_{0}', p_{0}')$.  
The definition of perfect subtree for $P_{0}'$ is
\be\label{nonrobustbelow}
(\forall \sigma \in P_{0}')(\exists \tau_{1}, \tau_{2}\in P_{0}')[\sigma\subseteq \tau_{1}\wedge \sigma\subseteq \tau_{2}\wedge \tau_{1}, \tau_{2} \textup{ are incompatible}],
\ee
where `incompatible' means that neither sequence is an extension of the other.  Note that the formula in square brackets in \eqref{nonrobustbelow} is quantifier-free.  
We obtain
\be\label{klonk}
(\forall^{\st} \sigma \in P_{0}')(\exists^{\st} \tau_{1}, \tau_{2}\in P_{0}')(\sigma\subseteq \tau_{1}\wedge \sigma\subseteq \tau_{2}\wedge \tau_{1}, \tau_{2} \textup{ are incompatible}),
\ee
using $\paai$ due to the standardness of $P_{0}'$.  Since $p_{0}'\in P_{0}'$ and $p_{0}'$ is standard, \eqref{klonk} implies that there is a standard sequence $g_{0}$ such that $(\forall^{\st} x^{0})(\overline{g_{0}}x\in P_{0}')$.
By the definition of $T_{0}$, we obtain that $(\exists^{\st} g^{1})(\forall^{\st} x^{0})f(\overline{g}x)=0$.  
Hence, we have obtained $\Paai$ assuming $\paai$, and we now show that the latter is also implied by $\PST_{\ns}$.  To this end, fix standard $h^{1}$ and suppose $(\exists n)h(n)\ne0$.  Define $f(\sigma)=0$ iff $h(\sigma(0))\ne0$ and note that 
$(\exists g^{1})(\forall x^{0})f(\overline{g}x)=0$.  We again consider $T_{0}$ and obtain a standard perfect subtree $P_{0}'$ of $T_{0}$ such that standard $p_{0}'\in P_{0}'$.  The latter implies $(\exists^{\st}n)h(n)\ne 0$ by the definition of $f$, and $\paai$ follows.              

\smallskip   

Finally, the implication $ \PST_{\ns}\di [\Paai~ \wedge ~\paai]$ is easily brought into the normal form in light of \eqref{noniesimpel} and \eqref{julliette}.   
Applying Corollary \ref{consresultcor} now provides a witness to $(\mu^{2})$ (as in the proof of Theorem \ref{sef}) and a finite sequence of possible witnesses for $(\mu_{1})$, and the right one can be selected by the first witness.  
\end{proof}
By the previous, $(\mu_{1})$ is the Herbrandisation of $\Paai$, and it is straightforward to obtain the Herbrandisation of $\Paai\asa \PST_{\ns}$ and obtain the associated `equivalence' as in Corollary~\ref{herken}.  
\begin{cor}
In $\RCAo$, $(\mu_{1})\asa (\exists t^{1\di 1})\PST_{\ef}(t)$ and this equivalence is explicit.  
\end{cor}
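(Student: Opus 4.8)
The plan is to read the corollary off directly from Theorem~\ref{sef2334}, which already does all the substantive work. Indeed, that theorem supplies two closed terms $s,u$ of $\mathcal{T}^{*}$ together with the conjunction \eqref{frood3}, provably in $\textup{\textsf{E-PRA}}^{\omega*}$; since $\RCAo$ (in the sense of $\textup{\textsf{E-PRA}}^{\omega*}+\QFAC^{1,0}$, as permitted in Section~\ref{P}) extends $\textup{\textsf{E-PRA}}^{\omega*}$, the conjunction \eqref{frood3} is available in $\RCAo$ a fortiori, and I would base the entire argument on it. Recall also that $(\mu_{1})$ abbreviates $(\exists \mu_{1}^{1\di 1})\MUO(\mu_{1})$ by \eqref{suske2}.

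For the forward implication I would argue: assume $(\mu_{1})$, i.e.\ fix a $\mu_{1}^{1\di 1}$ with $\MUO(\mu_{1})$; the first conjunct of \eqref{frood3} then yields $\PST_{\ef}(s(\mu_{1}))$, so that $t:=s(\mu_{1})$ witnesses $(\exists t^{1\di 1})\PST_{\ef}(t)$. For the reverse implication, assume $(\exists t^{1\di 1})\PST_{\ef}(t)$ and fix such a $t$; the second conjunct of \eqref{frood3} yields $\MUO(u(t))$, whence $\mu_{1}:=u(t)$ witnesses $(\mu_{1})$. This establishes $(\mu_{1})\asa (\exists t^{1\di 1})\PST_{\ef}(t)$ in $\RCAo$.

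For the explicitness claim, recall that an implication $(\exists \Phi)A(\Phi)\di (\exists \Psi)B(\Psi)$ is explicit exactly when some term produces a witness for $B$ uniformly in any witness for $A$. Taking $A:=\MUO$ and $B:=\PST_{\ef}$, the term $s$ does precisely this by the first conjunct of \eqref{frood3}, namely $(\forall \mu_{1})[\MUO(\mu_{1})\di \PST_{\ef}(s(\mu_{1}))]$; dually, $u$ witnesses explicitness of $(\exists t)\PST_{\ef}(t)\di (\mu_{1})$ via the second conjunct. Hence both directions of the equivalence are explicit, as claimed.

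There is no genuine obstacle beyond Theorem~\ref{sef2334} itself: the only points requiring attention are (i) that the implications $\Paai\di \PST_{\ns}$ and $\PST_{\ns}\di[\Paai\wedge\paai]$ are brought into normal form after resolving the standard quantifiers (so Corollary~\ref{consresultcor} applies and $s,u$ can be extracted), using that ``being a non-empty perfect subtree'' is arithmetical as in \eqref{nonrobustbelow}, and (ii) that $\Paai$ is the appropriate nonstandard counterpart of $(S^{2})$, equivalently $(\mu_{1})$, by \cite{bennosam}. Both are already handled in the proof of Theorem~\ref{sef2334}, so the corollary is immediate.
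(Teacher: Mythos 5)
Your proposal is correct and matches the paper's intent: the corollary is read off directly from \eqref{frood3} of Theorem~\ref{sef2334}, with $s$ and $u$ providing the witnesses for both directions and for explicitness, noting that $\RCAo$ extends $\textup{\textsf{E-PRA}}^{\omega*}$. Nothing further is needed.
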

In light of the intimate connection between theorems concerning perfect kernels of trees and the Cantor-Bendixson theorem for Baire space (see \cite{simpson2}*{IV.1}), a version of Theorem \ref{sef2334} for the former can be obtained in a straightforward way.  
Another \emph{more mathematical} statement which can be treated along the same lines is \emph{every countable Abelian group is a direct sum of a divisible and a reduced group}.  The latter is called $\DIV$ and equivalent to $\FIVE$ by \cite{simpson2}*{VI.4.1}.  By the proof of the latter, the reverse implication is straightforward;  we shall study $\DIV\di \FIVE$.  

\smallskip

To this end, let $\DIV(G, D, E)$ be the statement that the countable Abelian group $G$ satisfies $G=D\oplus E $, where $D$ is a divisible group and $E$ a reduced group.  
The nonstandard version of $\DIV$ is as follows:
\be\tag{$\DIV_{\ns}$}
(\forall^{\st}G)(\exists^{\st} D, d, E)\big[\DIV(G, D, E)\wedge D\ne\{0_{G}\}\di d\in D\big],
\ee
where we used the same technicality as for $\PST_{\ns}$.  The effective version is:
\be\tag{$\DIV_{\ef}(t)$}
(\forall G)\big[\DIV(G, t(G)(1), t(G)(2))\wedge t(G)(1)\ne\{0_{G}\}\di t(G)(3)\in t(G)(1)\big].
\ee
We have the following (immediate) corollary.  
\begin{cor}\label{sef23345}
From the proof of $\DIV_{\ns}\di \Paai$ in $\P_{0} $, a term $u$ can be extracted such that $\textup{\textsf{E-PRA}}^{\omega*}$ proves:
\be\label{frood3555}
(\forall t^{1\di 1})\big[ \DIV_{\ef}(t)\di  \MUO(u(t))  \big].
\ee
\end{cor}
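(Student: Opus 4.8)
The plan is to follow exactly the pattern established in Theorem~\ref{sef} and Theorem~\ref{sef2334}, since Corollary~\ref{sef23345} is explicitly labelled "immediate". First I would establish the implication $\DIV_{\ns}\di \Paai$ inside $\P_{0}$, which is the only mathematical content needed: the reverse direction $\Paai\di\DIV_{\ns}$ is mentioned as "straightforward" but is not required for the stated corollary, which only asks for a term extracted from the \emph{forward} implication. To prove $\DIV_{\ns}\di\Paai$, I would argue contrapositively in the style used for $\PST_{\ns}\di\Paai$: assuming $\DIV_{\ns}$, fix a standard $f^{1}$ witnessing the antecedent of $\Paai$, i.e.\ $(\exists g^{1})(\forall x^{0})(f(\overline{g}x)\ne 0)$, and encode the relevant tree as a countable Abelian group $G$ (as in the proof of \cite{simpson2}*{VI.4.1}, where the tree structure is coded into a group so that the existence of a path corresponds to $G$ having a nontrivial divisible part). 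Applying $\DIV_{\ns}$ to this standard $G$ yields a standard decomposition $G = D\oplus E$ with standard $d\in D$ whenever $D\ne\{0_G\}$; reading off the path from the standard $d$ (and standard generators of $D$) gives a \emph{standard} $g^{1}$ with $(\forall^{\st}x^{0})(f(\overline{g}x)\ne 0)$, which is the consequent of $\Paai$. One will also need $\paai$ available here (as in Theorem~\ref{sef2334}), and I would obtain it the same way $\PST_{\ns}$ yields $\paai$ — by specialising the group construction to a trivial tree coding a single number-search.

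The second step is to bring $\DIV_{\ns}\di\Paai$ into normal form. Since $\DIV(G,D,E)$ together with "$D\ne\{0_G\}$" and "$d\in D$" are all internal (arithmetical on countable codes, in the sense of \cite{simpson2}*{Definition V.4.1}), the antecedent $\DIV_{\ns}$ already has shape $(\forall^{\st}G)(\exists^{\st}D,d,E)\,\psi_{\textup{int}}$, i.e.\ a normal form; and the consequent $\Paai$ is rewritten via $\HAC_{\INT}$ exactly as \eqref{julliette} does for the Suslin case, giving $(\forall^{\st}f)(\exists^{\st}g)[\cdots]$. Combining an internal-antecedent normal form with an internal-consequent normal form through modus ponens yields, by the usual manipulation (push out the inner standard quantifiers, apply idealisation \textsf{I} to contract, as in the steps from \eqref{second2} to \eqref{second23}), a single normal form $(\forall^{\st}\underline{z})(\exists^{\st}\underline{w})\varphi(\underline{z},\underline{w})$ with $\varphi$ internal. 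Then I would apply Corollary~\ref{consresultcor} to this normal form: term extraction delivers a tuple of closed terms $t$ of $\mathcal{T}^{*}$ such that $\textsf{E-PRA}^{\omega*}$ proves $(\forall \underline z)(\exists \underline w\in t(\underline z))\varphi$. Since "$\MUO(\mu_1)$" involves only selecting the correct path out of a finite list of candidates, and since — as noted in the final line of the proof of Theorem~\ref{sef2334} — $(\exists^{2})$ (equivalently $\paai$-strength, which we have from the extracted $(\mu^{2})$-witness) suffices to check the arithmetical predicate "$(\forall x^{0})(f(\overline{g}x)\ne 0)$", we can define $u(t)$ from the extracted terms so that $\textsf{E-PRA}^{\omega*}\vdash(\forall t^{1\di 1})[\DIV_{\ef}(t)\di \MUO(u(t))]$, which is \eqref{frood3555}.

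The main obstacle I expect is the group-coding step: unlike the tree-to-group passage in \cite{simpson2}*{VI.4.1}, which is designed for a classical reversal, here one must make sure the construction is carried out \emph{uniformly} and that $G$ is \emph{standard} whenever $f$ is standard (so that $\DIV_{\ns}$ applies), and that conversely a standard member $d$ of the divisible part $D$ genuinely recovers a standard path — the subtlety being that divisibility of $D$ is a $(\forall^{\st}\exists^{\st})$-type condition and one needs the decomposition $G=D\oplus E$, not just $D$ alone, to pin down which elements of $G$ lie in the divisible part. This is exactly the kind of place where, as in Theorem~\ref{sef2334}, one leans on $\paai$ (and the derived $(\exists^{2})^{\st}$) to collapse the standard quantifiers inside the arithmetical subtree/subgroup predicates before invoking $\Paai$ to strip the outermost ones. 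Everything else — the $\HAC_{\INT}$ rewriting of the consequent, the idealisation contraction, and the final term-extraction via Corollary~\ref{consresultcor} together with the $(\exists^{2})$-selection of the correct witness — is routine and parallels the two preceding theorems verbatim.
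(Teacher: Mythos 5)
Your proposal matches the paper's proof in all essentials: the paper likewise codes the tree $T_{0}$ (from \eqref{treeke}) into the countable Abelian group with generators $x_{\tau}$ and relations $px_{\tau}=x_{\rho}$ taken from \cite{simpson2}*{p.\ 231}, notes that standardness of $f$ gives standardness of $T_{0}$ and $G_{0}$, uses nontriviality of the divisible part $D$ plus $\paai$ applied to the (arithmetical) divisibility condition to extract a standard path and hence $\Paai$, and then obtains \eqref{frood3555} by the same normal-form manipulation, term extraction via Corollary~\ref{consresultcor}, and $(\mu^{2})$-based selection of the correct $(\mu_{1})$-witness as in Theorem~\ref{sef2334}. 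Your explicit remarks on how $\paai$ becomes available and on the $(\forall\exists)$-nature of divisibility simply spell out steps the paper leaves implicit, so the argument is correct and essentially identical to the paper's.
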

\begin{proof}
The proof of the nonstandard implication is similar to that of the theorem.  Indeed, define $T_{0}$ as in \eqref{treeke} with the same assumptions on $f$.  
Define the Abelian group $G_{0}$ with generators $x_{\tau}$, $\tau \in T_{0}$ and relations $px_{\tau}=x_{\rho}$, $\tau =\rho*\langle i \rangle$, and $x_{\langle\rangle}=0$ (This definition is taken from \cite{simpson2}*{p.\ 231}).     
Since $f$ is standard, so is $T_{0}$ and $G_{0}$, and since $(\exists g^{1})(\forall x^{0})f(\overline{g}x)=0$, the divisible group $D$ provided by $\DIV_{\ns}$ is non-trivial.  Hence, there is standard $d \in D$, and using $\paai$ on the definition of divisible subgroup as in the proof of the theorem, we obtain a \emph{standard} path through $T_{0}$ and hence $(\exists^{\st} g^{1})(\forall x^{0})f(\overline{g}x)=0$, and $\Paai$ follows.  
To obtain \eqref{frood3555}, proceed as in the proof of the theorem.  
\end{proof}
It is straightforward to obtain the Herbrandisation of $\Paai\asa \DIV_{\ns}$ and obtain the `equivalence' between these two as in Corollary~\ref{herken}.  
We trust the reader to discern a template for equivalences related to $\FIVE$ from the above.  

\smallskip

Finally, we point out a uniform version of $\PST$ \emph{at the level of }$\ATR_{0}$, as follows:   
\begin{thm}[$\PST_{\ef}'(t)$]  For all $ T^{1}, g^{1\di 1}$, we have
\[
(\forall f_{(\cdot)})(\forall n)\big[g(f_{(\cdot)})\in P\wedge f_{n}\ne_{1} g(f_{(\cdot)})\big] \di \textup{$t(T, g)$ is a non-empty perfect subtree of $T$}.
\]
\end{thm}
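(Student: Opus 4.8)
The plan is to obtain $\PST'_{\ef}(t)$ as the output of the template $\CI$ applied to the obvious \emph{uniform} nonstandard version of the perfect set theorem, in complete parallel with Theorem~\ref{sef2334}, the difference being that here the $\Pi^{1}_{1}$ content is handed to us as an input functional instead of being manufactured by \emph{Transfer}. Concretely, one first formulates $\PST'_{\ns}$: for all standard trees $T^{1}$ and all standard $g$ with
\[
(\forall f_{(\cdot)}^{0\di 1})(\forall n^{0})\big[\,g(f_{(\cdot)})\in[T]\,\wedge\, f_{n}\ne_{1}g(f_{(\cdot)})\,\big],
\]
i.e.\ $g$ selects, from any enumeration $f_{(\cdot)}$ of branches of $T$, a further branch of $T$, there is a standard $P$ which is a non-empty perfect subtree of $T$. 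Since the hypothesis already supplies a Skolem function witnessing uncountability, the $\ATR_{0}$-style construction of the perfect subtree (compare \cite{simpson2}*{\S V.4} and \cite{yamayamaharehare}*{Theorem~4.4}) becomes a plain recursion: along a binary tree of requirements one feeds to $g$ an enumeration of the branches committed to so far, receives a fresh branch, and thereby forces a split. As in the proof of Theorem~\ref{varou}, where the Riemann integral functional was produced as an explicit term rather than merely extracted, this construction is given by an explicit term $h$ of G\"odel's $\textsf{T}$ in the parameters $T,g$, so the witness $P:=h(T,g)$ to $(\exists^{\st}P)$ is standard by the basic axioms of Definition~\ref{debs}. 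Obtaining $P$ instead via $\HAC_{\INT}$ would, as in Theorem~\ref{sef2334}, leave a finite list of candidate subtrees to be sifted by a selection functional; since here the only input available is $g$ and not the search operator $\mu$, it is the explicit-term route that one follows, and the verification ``$h(T,g)$ is a non-empty perfect subtree of $T$'' is arithmetical (compare \eqref{nonrobustbelow}) and needs no comprehension, so $\P_{0}$ (indeed $\textup{\textsf{E-PRA}}^{\omega*}$ after fixing standardness) proves $\PST'_{\ns}$.

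Next one brings $\PST'_{\ns}$ into normal form. Its antecedent displayed above is already internal (with $g$ a parameter), and its consequent has the shape $(\exists^{\st}P)\psi(T,P)$ with $\psi$ arithmetical; pushing standard quantifiers outward, applying the contraposition of idealisation $\textsf{I}$ to the implication, and then $\HAC_{\INT}$ — exactly the manipulations of Sections~\ref{frakkk}--\ref{X} — yields $(\forall^{\st}T,g)(\exists^{\st}P)\varphi(T,g,P)$ with $\varphi$ internal, i.e.\ the normal form required by Corollary~\ref{consresultcor}. Applying that corollary to the $\P_{0}$-proof, the algorithm $\mathcal{A}$ returns a closed term $t$ — which in the explicit-term version is just $h$ — with $\textup{\textsf{E-PRA}}^{\omega*}\vdash\PST'_{\ef}(t)$, after the routine ``maximum'' step of Remark~\ref{simply} where applicable. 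As with the Herbrandisations, one can run this in reverse: since $t$ is standard in $\P_{0}$, $\PST'_{\ef}(t)$ re-obtains $\PST'_{\ns}$, giving the two-way street, and by the bookkeeping of Theorem~\ref{sef2334} this places $\PST'$ at the level of $\ATR_{0}$ in higher-order RM.

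The main obstacle is the first step: choosing the formulation of the uniform hypothesis on $g$ (in particular the typing of $g$ and the reading of the loose symbol $P$ in the statement as $[T]$) so that (a) it is strong enough to keep the splitting procedure from stalling, hence to guarantee that the output $h(T,g)$ is provably perfect and non-empty, and (b) $\PST'_{\ns}$ still possesses a normal form whose $\CI$-image is exactly $\PST'_{\ef}(t)$ as stated. This is the uniform analogue of the delicate point in \cite{simpson2}*{\S V.4}; once it is in place, the normal-form computation, the term extraction via Corollary~\ref{consresultcor}, and the reverse implication are all routine in light of the case studies in Sections~\ref{main} and~\ref{RMSTUD}.
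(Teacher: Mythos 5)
The gap is in your first step: the claim that the construction of the perfect subtree from $T$ and the Skolem functional $g$ is ``a plain recursion'', given by an explicit term $h$ of G\"odel's $\textsf{T}$ whose correctness proof ``needs no comprehension'', so that $\P_{0}$ (even $\textup{\textsf{E-PRA}}^{\omega*}$) proves $\PST_{\ns}'$. The recursion you describe --- feed $g$ the branches committed to so far and let the returned fresh branch ``force a split'' --- does not work, because $g$ only diagonalises \emph{globally} in $[T]$: the fresh branch differs from the listed ones, but you have no control over \emph{where} it splits off, and in particular it need not pass through the node $\tau$ at which the split is required. Take $T$ with exactly one path above $\langle 0\rangle$ and a full binary tree above $\langle 1\rangle$: if your recursion ever commits to a node extending $\langle 0\rangle$, no number of applications of $g$ will ever produce a second branch through that node, and the fusion stalls. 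Deciding which nodes may safely be committed to (those with at least two, indeed uncountably many, branches above them) is a $\Sigma^{1}_{1}$ matter not computable from $T,g$, and that is exactly where the strength sits. Indeed, if a closed term $t$ with $\textup{\textsf{E-PRA}}^{\omega*}\vdash \PST_{\ef}'(t)$ existed, then over $\RCAo+(\exists^{2})+\QFAC$ one could Skolemise ``$[T]$ is uncountable'' into such a $g$ and conclude the perfect set theorem, hence $\ATR_{0}$ (\cite{simpson2}*{V.4--V.5}), in a system whose second-order strength is far below $\ATR_{0}$; note this also contradicts your own closing remark that $\PST'$ sits at the level of $\ATR_{0}$.

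The paper claims much less than you try to prove: it only asserts that $(\exists t^{(1\di 1)\di 1})\PST_{\ef}'(t)$ is derivable in $\RCAo+\QFAC+(\exists^{2})+\ATR_{0}$, i.e.\ one keeps $\ATR_{0}$ in the base, uses the given $g$ to run the $\ATR_{0}$-perfect-set-theorem argument, and uniformises with $(\exists^{2})$ and $\QFAC$; correspondingly, the weakening of $\PST_{\ns}$ by a \emph{standard} $g$ admits term extraction only relative to assumptions of that strength (as in Theorem \ref{sef2334}), not over $\textup{\textsf{E-PRA}}^{\omega*}$ alone. Your second step (normal form, idealisation, $\HAC_{\INT}$, Corollary \ref{consresultcor}) is routine and unobjectionable, but it cannot repair the first: there is no comprehension-free, provably correct $\textsf{T}$-term mapping $(T,g)$ to a perfect subtree, so the proposal as written fails.
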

Clearly, $\PST_{\ef}'(t)$ is just $\PST_{\ef}(t)$ with a witnessing functional for the antecedent.  It is straightforward to derive $(\exists t^{(1\di 1)\di 1})\PST_{\ef}'(t)$ in $\RCAo+\QFAC+(\exists^{2})+\ATR_{0}$.  The nonstandard version $\PST_{\ns}$ can be weakened similarly by introducing such \emph{standard} $g$, and the associated term extraction result is straightforward.        

\subsection{Algebraic theorems}\label{algea}
In this section, we apply the template $\CI$ to some theorems from the RM of $\ACA_{0}$ which are \emph{algebraic} in nature, like for instance \cite{simpson2}*{I.9.3.5-8} or \cite{simpson2}*{I.10.3.9-11}.  
The vague and heuristic notion of \emph{algebraic} theorem is meant to capture those (internal) theorems $T$ of the form $T\equiv (\forall X^{1})(\exists Y^{1})\varphi(X, Y)$ where $\varphi$ is arithmetical.  We shall obtain two kinds of explicit equivalences, one using the `standard extensionality trick' from Section \ref{X}.    

\smallskip
  
We study $\TOR$, which is the statement that \emph{a countable abelian group has a subgroup consisting of the torsion elements} (see \cite{simpson2}*{III.6}), and show how other theorems can be treated analogously in Template~\ref{effin}.  In other words, we show that $\CI$ applies to a large group of theorems from the third Big Five category.

\smallskip

First of all, in contrast to notions from analysis like continuity, there is no obvious nonstandard version of the notion of torsion subgroup.  Nonetheless, 
there \emph{is} an `obvious' nonstandard version of $\textsf{TOR}$ itself, as we discuss next.  
Now, $\TOR$ has the form $(\forall G^{1})(\exists T^{1})\TOR(G, T)$, where $\TOR(G, T)$ expresses that $T$ is a torsion subgroup of $G$, i.e.\ we have that
\be\label{turfff}
T\subseteq G \wedge (\forall g\in G)[g\in T \asa (\exists n)(ng=0_{G})].
\ee
In $\IST$ and using the axiom \emph{Transfer}, $\TOR$ implies $(\forall^{\st} G^{1})(\exists^{\st} T^{1})\TOR(G, T)$, our cherished normal form.  
We can however obtain more information:  the formula \eqref{turfff} implies that $(\forall^{\st} g\in G)[g\in T \di (\exists^{\st} n)(n\times_{G}g)=0_{G}]$, for standard $G$ and $T$, again using \emph{Transfer}.  
Applying the axiom of choice (relative to `st'), $\TOR$ implies the following normal form, which we shall refer to as $\TOR_{\ns}$.  
\begin{align}
(\forall^{\st}G^{1})(\exists^{\st} h^{1}, T^{1}\subseteq G)\big[ (\forall g\in G, m)[m&g=0_{G} \di g\in T] \notag\\
&\wedge[ g\in T\di h(g)g= 0_{G}]\big].\label{forba}
\end{align}
In conclusion, $\TOR_{\ns}$ is just $\TOR$ brought into the normal form \emph{with a functional $h$ witnessing the `torsion-group-ness' of $T$} as in \eqref{forba}.   

\smallskip

Secondly, we argue that \eqref{forba} is natural from the point of view of Brattka's framework from \cite{bratger}.  Indeed, in the latter, mathematical theorems are interpreted as operations mapping certain input data to output data in a computable fashion.  
Now, such operations may be assumed to be \emph{standard} in \IST;  
indeed, for a theorem of the form $(\forall x)(\exists y)\varphi(x, y)$, it is the `explicit' version $(\forall x)\varphi(x, \Phi(x))$ which is studied and classified according to the computational content of $\Phi$ in \cite{bratger};  assuming Transfer from \IST~(while the much weaker $\textsf{PF-TP}_{\forall}$ from \cite{bennosam} suffices), the sentence $(\exists \Phi)(\forall x)\varphi(x, \Phi(x))$ immediately yields $(\exists^{\st} \Phi)(\forall x)\varphi(x, \Phi(x))$.  Since standard inputs yield standard outputs for standard operations, we obtain $(\forall^{\st} x)(\exists^{\st} y)\varphi(x, y)$.  
In other words, the nonstandard version \eqref{forba} naturally emerges from Brattka's framework as in \cite{bratger}.   

\smallskip

In light of the above considerations, $\TOR_{\ns}$ is natural and we define $\TOR_{\ef}(t)$ as the statement that $(\forall G^{1})A(G, t(G)(1), t(G)(2))$ where $A(G, T, h)$ is the formula in big square brackets in \eqref{forba}.  
\begin{thm}\label{floggen346}
From the proof of $\paai \asa  \TOR_{\ns}$ in $\P_{0}$, terms $ s, u$ can be extracted such that 
\be\label{daric}
(\forall \mu^{2})\big[\textsf{\MU}(\mu)\di \TOR_{\ef}(s(\mu)) \big] \wedge (\forall t^{1\di 1})\big[ \TOR_{\ef}(t)\di  \MU(u(t))  \big].
\ee
\end{thm}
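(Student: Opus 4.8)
The plan is to follow the now-familiar pattern established in Theorems \ref{sef}, \ref{sef8}, and \ref{sef2334}, i.e.\ first prove the nonstandard equivalence $\paai \asa \TOR_{\ns}$ inside $\P_{0}$, then run this proof through the template $\CI$ (in particular Corollary \ref{consresultcor}) to extract the two terms $s$ and $u$ witnessing \eqref{daric}. First I would establish the direction $\paai \di \TOR_{\ns}$: assuming $\paai$, one has $(\exists^{2})^{\st}$ (as noted in the proof of Theorem \ref{sef}), hence $\ACA_0$ relative to `st', so the usual proof that a countable abelian group has a torsion subgroup (\cite{simpson2}*{III.6}) goes through relative to `st', yielding $\TOR^{\st}$, i.e.\ $(\forall^{\st}G)(\exists^{\st}T)\TOR(G,T)$. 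Then I would apply $\paai$ to the arithmetical matrix $(\forall g\in G)[g\in T\asa (\exists n)(ng=0_G)]$ to drop the relevant `st' quantifiers, and use $\HAC_{\INT}$ (relative to `st') to extract the witnessing functional $h$ with $h(g)g=0_G$ whenever $g\in T$, arriving at the normal form \eqref{forba}, which is $\TOR_{\ns}$.

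For the reverse direction $\TOR_{\ns}\di\paai$, I would construct, from a standard $f^1$ with $(\forall^{\st}n)f(n)\ne 0 \wedge (\exists m)f(m)\ne 0$, a standard countable abelian group $G_0$ that is a torsion group precisely when $f$ has a zero nowhere among standard indices but somewhere overall — the natural candidate being something like $G_0=\bigoplus_{i} \mathbb{Z}/p_i\mathbb{Z}$ where the order of the $i$-th summand is governed by whether $f(i)\ne 0$, arranged so that a nonstandard $m_0$ with $f(m_0)\ne 0$ forces a non-torsion element into $G_0$ while $T$ being `torsion up to standard $n$' (via the standard $h$ from $\TOR_{\ns}$) contradicts this. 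This yields $\paai$. As usual the nonstandard equivalence has an almost trivial proof once the group construction is set up; the standardness of $h$ and $T$ supplied by $\TOR_{\ns}$ is exactly what one contraposes against.

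Next I would bring both implications into normal form. For $\paai\di\TOR_{\ns}$, rewrite $\paai$ as $(\forall^{\st}f)(\exists^{\st}n)[(\exists m)f(m)=0\di (\exists i\le n)f(i)=0]$, apply $\HAC_{\INT}$ to get $(\exists^{\st}\mu^2)(\forall^{\st}f)A(f,\mu(f))$, and recognise the consequent $\TOR_{\ns}$ as already in the form $(\forall^{\st}G)(\exists^{\st}h,T)B(G,h,T)$; combining these and applying idealisation $\textsf{I}$ to the resulting $(\forall f)(\exists \cdots)$ alternation gives the normal form to which Corollary \ref{consresultcor} applies, producing a term whose appropriate components define $s(\mu)$. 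Symmetrically, for $\TOR_{\ns}\di\paai$ one introduces a witnessing functional $t$ for $\TOR_{\ef}$ in the antecedent (so $(\forall t)[\TOR_{\ef}(t)\di\cdots]$), brings everything to normal form, and term extraction plus the usual `maximum of the finite sequence' step (Remark \ref{simply}) yields $u(t)$ with $\MU(u(t))$. The proof is constructive throughout in the sense of Corollary \ref{cordejedi}, since the nonstandard equivalence goes through in $\H$ as well.

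The main obstacle I anticipate is not the template machinery — that is by now routine — but pinning down the exact group-theoretic construction in the $\TOR_{\ns}\di\paai$ direction so that (i) $G_0$ is genuinely standard and coded in $\P_0$, (ii) the torsion subgroup $T$ of $G_0$ literally detects membership of the nonstandard witness, and (iii) the functional $h$ from $\TOR_{\ns}$, being standard, gives a standard bound on the order of each torsion element, which is precisely the quantity that collapses under $\paai$ to produce the contradiction. This is the step where one must be careful that no hidden use of $\ACA_0$ (beyond what $\paai$ provides) sneaks in, and where the `arithmetical' nature of $\TOR(G,T)$ is used to legitimately drop the `st' via $\paai$ in both directions.
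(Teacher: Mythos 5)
Your forward direction and the concluding extraction step coincide with the paper's proof: $\paai$ gives $(\exists^{2})^{\st}$, the proof of \cite{simpson2}*{III.6.2} relativised to `st' gives $\TOR^{\st}$, and $\paai$ plus $\HAC_{\INT}$ then yield the normal form \eqref{forba}, after which the terms $s,u$ are obtained by applying $\CI$ exactly as in Theorem \ref{sef}. The genuine gap is in your reversal $\TOR_{\ns}\di\paai$. First, the displayed failure condition is garbled (as written, $(\forall^{\st}n)f(n)\ne 0 \wedge (\exists m)f(m)\ne 0$ is satisfied by $f\equiv 1$); more importantly, the contradiction cannot come from ``forcing a non-torsion element into $G_{0}$'': the normal form \eqref{forba} places no constraint whatsoever on non-torsion elements, which simply need not belong to $T$. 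What must be produced is a \emph{standard} element $g$ of a \emph{standard} group which is torsion \emph{internally} (so the first conjunct of \eqref{forba} forces $g\in T$) while $mg\ne 0_{G}$ for every standard $m$; only then does the standard functional $h$ of \eqref{forba}, applied to the standard $g$, yield the impossible standard bound $h(g)$ on the order of $g$. Your candidate $G_{0}=\bigoplus_{i}\mathbb{Z}/p_{i}\mathbb{Z}$ with the $i$-th summand governed by $f(i)$ cannot supply such a $g$: the group is torsion in any case, a standard element has standard support and standard-index components, hence standard order, and the only elements whose order reflects the nonstandard witness $m_{0}$ lie in the $m_{0}$-th summand and are themselves nonstandard, so $h$ evaluated there need not be standard and no contradiction ensues.

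The paper's construction is designed around precisely this obstacle: from the standard function witnessing the failure of transfer it picks a (necessarily nonstandard) prime $p_{0}$ beyond the first failure and takes the \emph{multiplicative} group of integers modulo $p_{0}$. This group is argued to be definable from the standard counterexample function alone and hence standard, even though its modulus is nonstandard; every element is torsion since the group is finite; and a standard element such as $2$ admits no standard $k$ with $2^{k}\equiv 1 \pmod{p_{0}}$, because $2^{k}-1$ is a standard nonzero number while $p_{0}$ is nonstandard. Thus $h(2)\cdot 2=0_{G}$ is impossible, which is the required contradiction, and the implication is then brought into normal form and run through $\CI$ as in Theorem \ref{sef}. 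To repair your version you would have to (i) make the group structure at \emph{standard} elements depend on the global, nonstandard behaviour of $f$ (a single cyclic-type group with nonstandard modulus, not a direct sum indexed by $i$), and (ii) verify explicitly in $\P_{0}$ that the resulting group is standard, i.e.\ that its code is given by a standard term applied to the standard $f$ despite the nonstandard parameter --- the very point your item (i) flags but your direct-sum candidate does not resolve.
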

\begin{proof}
To prove $\paai\di \TOR_{\ns}$, note that the antecedent implies $(\exists^{2})^{\st}$ (as in the proof of Theorem \ref{sef}).  Following the proof of \cite{simpson2}*{III.6.2} relative to `st', we obtain $\TOR^{\st}$.  
Alternatively, the latter can be obtained by defining the subgroup $T_{M}$ as $\{g\in G : (\exists n\leq M)(ng=0_{G}) \}$, and applying $\Omega$\textsf{-CA} after observing $(\forall N,M\in \Omega)(T_{M}\approx_{1}T_{N})$.    
From $\TOR^{\st}$, \eqref{forba} now follows using $\paai$ and $\HAC_{\INT}$.  

\smallskip

To prove $\TOR_{\ns} \di \paai$, suppose the consequent is false, i.e.\ there is standard $h^{1}$ such that $(\forall^{\st}n)h(n)=0\wedge (\exists m_{0})h(m_{0})\ne0$. 
Let $p_{0}$ be a prime number such that $(\exists i\leq p_{0})h(i)\ne 0$ and consider the \emph{multiplicative group of integers modulo $p_{0}$}, usually denoted $\mathbb{Z}/p_{0}\mathbb{Z}$.  
This group can be defined in terms of $h$ (only) and is therefore \emph{standard}.  Applying $\TOR_{\ns}$, every standard element in $\mathbb{Z}/p_{0}\mathbb{Z}$ is in the torsion subgroup and becomes zero after being multiplied standardly many times.
This contradiction yields  the required nonstandard equivalence, and the rest of the theorem follows by applying $\CI$ as in the proof of Theorem~\ref{sef}.  
\end{proof}   
It is straightforward to obtain the Herbrandisation of $\paai\asa \TOR_{\ns}$ and obtain the `equivalence' between these two as in Corollary~\ref{herken}.  
\begin{cor}
The explicit equiv.\ $(\mu^{2})\asa (\exists t^{1\di 1})\TOR_{\ef}(t)$ is provable in $\RCAo$.  
\end{cor}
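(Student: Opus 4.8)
The plan is to apply the template $\CI$ (equivalently, Corollary~\ref{consresultcor} together with Corollary~\ref{consresultcor2} for the $\RCAo$/$\P_{0}$ setting) to the nonstandard equivalence $\paai\asa\TOR_{\ns}$, in exactly the same manner as in the proof of Theorem~\ref{sef}. Indeed, Theorem~\ref{floggen346} already asserts that from the proof of $\paai\asa\TOR_{\ns}$ in $\P_{0}$ one can extract terms $s,u$ witnessing the explicit implications in \eqref{daric}, and it notes that the proof of $\paai\asa\TOR_{\ns}$ goes through in $\P_{0}$. So the corollary is essentially a restatement of \eqref{daric} in the form of an equivalence $(\mu^{2})\asa(\exists t^{1\di1})\TOR_{\ef}(t)$.

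First I would spell out that both conjuncts of \eqref{daric} are provable in $\RCAo$ (which by the abuse of notation in the excerpt is $\textsf{E-PRA}^{\omega*}+\QFAC^{1,0}$, and which includes $\textsf{E-PRA}^{\omega*}$, the system in which \eqref{daric} was proved). From $(\forall\mu^{2})[\MU(\mu)\di\TOR_{\ef}(s(\mu))]$ and the first version $(\mu^{2})$ of arithmetical comprehension — which is literally $(\exists\mu^{2})\MU(\mu)$ — one immediately obtains $(\exists t^{1\di1})\TOR_{\ef}(t)$ by instantiating $t:=s(\mu)$ for the $\mu$ supplied by $(\mu^{2})$. Conversely, from $(\exists t^{1\di1})\TOR_{\ef}(t)$ and the second conjunct $(\forall t^{1\di1})[\TOR_{\ef}(t)\di\MU(u(t))]$ one obtains $(\exists\mu^{2})\MU(\mu)$, i.e. $(\mu^{2})$, by setting $\mu:=u(t)$. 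Both directions are explicit in the sense of the footnote definitions, since the witnesses $s$ and $u$ are the extracted terms. I would note that this is exactly the pattern by which Theorem~\ref{sef} yields the $\MCT$ corollaries, and that here there are no additional subtleties since $\TOR_{\ef}$, being built from the arithmetical predicate $A(G,T,h)$, requires only $(\mu^{2})$ (equivalently $(\exists^{2})$) to select the correct witness out of the finite sequence produced by term extraction.

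The only genuine content to check — and the step I would flag as the main (though routine) obstacle — is that the normal-form manipulations and term extraction underlying \eqref{daric} really do carry over to the weaker base theory $\P_{0}$ and $\textsf{E-PRA}^{\omega*}$ rather than $\P$ and $\textsf{E-PA}^{\omega*}$. This is granted by Corollary~\ref{consresultcor2}: the soundness proof of \cite{brie}*{Theorem~7.7} goes through for any fragment of $\textsf{E-PA}^{\omega*}$ containing $\textsf{EFA}$, and the exponential function suffices to manipulate the finite sequences produced by $\HAC_{\INT}$ and idealisation $\textsf{I}$ in the proof of Theorem~\ref{floggen346}. Since the proof of $\paai\asa\TOR_{\ns}$ uses only $\HAC_{\INT}$, $\textsf{I}$, $\Omega\textsf{-CA}$ (itself derived in $\P_{0}$ by Theorem~\ref{drifh}), the relativised proof of \cite{simpson2}*{III.6.2}, and the standard-extensionality-free term-extraction step, everything is available in $\P_{0}$, and the extracted terms live in $\mathcal{T}^{*}$ over $\textsf{E-PRA}^{\omega*}$. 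Hence the equivalence $(\mu^{2})\asa(\exists t^{1\di1})\TOR_{\ef}(t)$, with both implications explicit, is provable in $\RCAo$, which is what the corollary claims.
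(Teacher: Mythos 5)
Your proposal is correct and follows exactly the route the paper intends: the corollary is stated without a separate proof precisely because it is the immediate instantiation of \eqref{daric} from Theorem~\ref{floggen346} (with $t:=s(\mu)$ and $\mu:=u(t)$), carried out in $\RCAo$ via the $\P_{0}$/$\textsf{E-PRA}^{\omega*}$ version of term extraction (Corollary~\ref{consresultcor2}). Your additional check that the extraction survives the weakening from $\P$ to $\P_{0}$ is the same point the paper relies on, so there is nothing to add.
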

Overall, the proof of $\TOR_{\ns}\asa \paai$ seems simpler than the proof of $\TOR\asa \ACA_{0}$ in \cite{simpson2}*{III.6.2}.  On top of that, we obtain an explicit equivalence in a purely algorithmic fashion.  
Again, the RM of Nonstandard Analysis seems simpler \emph{and provides more explicit information}.  

\smallskip

Next, we use the `standard extensionality trick' from Corollary \ref{dergggg2} to obtain an even simpler explicit implication.   
\begin{cor}\label{floggen3467}
From the proof in $\P_{0}$ of 
\be\label{bef}
(\exists^{\st} \Phi)\big[(\forall^{\st} G^{1})\TOR(G, \Phi(G))\wedge \Phi \textup{ is standard extensional}\big] \di \paai,
\ee
a term $u$ can be extracted such that 
\be\label{daric2}
 (\forall t^{1\di 1})\big[(\forall G^{1}) \TOR(G, t(G))\di  \MU(u(t, \Xi))  \big].
\ee
where $\Xi$ is an extensionality functional for $t$.  
\end{cor}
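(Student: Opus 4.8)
The plan is to mimic the proof of Corollary \ref{dergggg2}, transposing the ``standard extensionality trick'' from the setting of $\MCT$ to that of $\TOR$. First I would prove the implication \eqref{bef} inside $\P_{0}$. Assume the antecedent, so there is a standard $\Phi$ which is standard extensional and satisfies $(\forall^{\st}G^{1})\TOR(G, \Phi(G))$; suppose $\paai$ fails, so there is a standard $h^{1}$ with $(\forall^{\st}n)h(n)=0$ but $(\exists m_{0})h(m_{0})\ne 0$. As in the proof of Theorem \ref{floggen346}, pick a prime $p_{0}$ with $(\exists i\leq p_{0})h(i)\ne 0$ and form the group $\mathbb{Z}/p_{0}\mathbb{Z}$; this group is definable from $h$ alone, hence standard, and it equals the trivial group $\{0\}$ ``in the nonstandard sense'' but is nonstrivial in the internal sense exactly because $p_{0}$ is nonstandard-ish (more precisely, the internal group is $\mathbb{Z}/p_{0}\mathbb{Z}$ while its standard approximation is the trivial group). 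Concretely, one compares $\Phi(\mathbb{Z}/p_{0}\mathbb{Z})$ with $\Phi(\{0\})$: since $\mathbb{Z}/p_{0}\mathbb{Z}\approx_{1}\{0\}$ as coded objects (they agree on all standard indices), standard extensionality of $\Phi$ forces $\Phi(\mathbb{Z}/p_{0}\mathbb{Z})\approx_{1}\Phi(\{0\})$; but $\TOR(\mathbb{Z}/p_{0}\mathbb{Z},\Phi(\mathbb{Z}/p_{0}\mathbb{Z}))$ must place a non-torsion-witnessing element into the subgroup where $\TOR(\{0\},\Phi(\{0\}))$ cannot, yielding a contradiction at some standard index. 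Thus $\paai$ holds.

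The second step is to bring \eqref{bef} into a normal form so that Corollary \ref{consresultcor} (equivalently $\CI$) applies. The conclusion $\paai$ is already a normal form after the trivial rewriting used throughout the paper, namely $(\forall^{\st}f^{1})(\exists^{\st}n)[(\exists m)f(m)=0 \di (\exists i\leq n)f(i)=0]$. For the antecedent, resolve standard extensionality of $\Phi$ exactly as in the proof of Corollary \ref{dergggg2}: ``$\Phi$ is standard extensional'' unfolds, for all standard $G, G'$, to $(\forall^{\st}k)(\exists^{\st}N,M)[\,\overline{G}N=\overline{G'}N \di \overline{\Phi(G)}k=\overline{\Phi(G')}k\,]$, which is a normal form; the internal statement $(\forall^{\st}G)\TOR(G,\Phi(G))$ is already of the right shape. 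Combining these conjuncts and the conclusion through the usual manipulations (pushing standard quantifiers out, applying $\textsf{I}$ to consolidate the resulting $(\exists^{\st}\cdots)$-blocks), one obtains a single normal form $(\forall^{\st}\underline{x})(\exists^{\st}\underline{y})\psi(\underline{x},\underline{y})$ provable in $\P_{0}$. Applying Corollary \ref{consresultcor} extracts a term, and the maximum-of-finite-sequence cleanup (Remark \ref{simply}) plus the by-now-standard ``select the right witness using $\MU$'' step (as in the proof of Theorem \ref{sef}) produces the term $u$ so that $\textsf{E-PRA}^{\omega*}$ proves \eqref{daric2}, where the extensionality functional $\Xi$ for $t$ is needed precisely to supply, internally, the witnesses that ``$\Phi$ is standard extensional'' contributed to the normal form.

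I expect the main obstacle to be the bookkeeping in the contradiction argument for \eqref{bef}, specifically making precise in what sense $\mathbb{Z}/p_{0}\mathbb{Z}$ is ``$\approx_{1}$'' the trivial group and why $\TOR$ then fails. One must be careful that the coding of $\mathbb{Z}/p_{0}\mathbb{Z}$ as a type-$1$ object depends on $h$ only (so that it is standard by Definition \ref{debs}), that its underlying set ``looks trivial'' at all standard indices but contains a generator of order $p_{0}$ at a nonstandard index, and that $\TOR(\mathbb{Z}/p_{0}\mathbb{Z},T)$ with standard $T$ (forced by standard extensionality applied to $\Phi$) cannot simultaneously record this generator as torsion with a standard exponent and exclude it; this is the same tension exploited in Theorem \ref{floggen346}, merely relocated into the extensionality clash. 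Once this is set up cleanly, the remainder is the routine $\CI$ machinery already illustrated in Corollaries \ref{loppp} and \ref{dergggg2}, and the proof concludes by observing, as in Corollary \ref{dergggg2}, that $\paai$ itself implies standard extensionality of any standard $t$ via the axiom of extensionality \eqref{EXT}, so the hypothesis in \eqref{bef} is not vacuous.
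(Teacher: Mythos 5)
Your overall architecture---prove \eqref{bef} by an extensionality clash and then unfold standard extensionality into a normal form and extract a term as in Corollaries \ref{loppp} and \ref{dergggg2}---is the same as the paper's, and that second half is fine (for the conclusion $\paai$ the maximum-of-the-finite-list step suffices by monotonicity, so no $(\mu^{2})$-selection is even needed). The gap is in the clash itself. You compare $\mathbb{Z}/p_{0}\mathbb{Z}$ with the trivial group and claim $\mathbb{Z}/p_{0}\mathbb{Z}\approx_{1}\{0\}$ because ``they agree on all standard indices''. Under the coding in use (a countable group is given by its characteristic function together with its operations), this is false: since $p_{0}$ is nonstandard, \emph{every} standard natural number is an element of $\mathbb{Z}/p_{0}\mathbb{Z}$, so the two codes already differ at the standard index $1$. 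If instead you rig the coding so that the nonzero elements appear only at nonstandard indices (making the two groups genuinely $\approx_{1}$), then their torsion subgroups also agree at all standard indices---the elements whose torsion status differs all sit at nonstandard indices---and standard extensionality of $\Phi$ is not violated. Either way no discrepancy at a standard index is produced, so your proof of \eqref{bef} does not go through as written. Your worry about recording the generator as torsion ``with a standard exponent'' is also off target: $\TOR(G,T)$ is internal, so a nonstandard exponent is a perfectly good witness; what matters is membership of a \emph{standard} element in the two outputs, not the size of the exponent.

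The paper's choice of pair repairs exactly this point: compare $\mathbb{Z}/p_{0}\mathbb{Z}$ with $\mathbb{Z}$ itself. Both are standard ($\mathbb{Z}$ outright, and $\mathbb{Z}/p_{0}\mathbb{Z}$ because it is defined from the standard $h$ only), and since $p_{0}$ is nonstandard the two codes agree on all standard data, whence $\mathbb{Z}/p_{0}\mathbb{Z}\approx_{1}\mathbb{Z}$. But the standard element $1$ is torsion in $\mathbb{Z}/p_{0}\mathbb{Z}$ (with nonstandard witness $p_{0}$) and not torsion in $\mathbb{Z}$, whose torsion subgroup is trivial; hence $\Phi(\mathbb{Z}/p_{0}\mathbb{Z})\not\approx_{1}\Phi(\mathbb{Z})$, contradicting standard extensionality and yielding \eqref{bef}. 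With this substitution the rest of your proposal coincides with the paper's route to \eqref{daric2}.
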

\begin{proof}
To prove \eqref{bef}, assume the latter's antecedent and suppose $\paai$ is false.  Now consider $h$ and $\mathbb{Z}/p_{0}\mathbb{Z}$ as defined in the proof of the theorem.
Note that the multiplicative group $(\mathbb{Z}, \times )$ satisfies, with slight abuse of notation, that $(\mathbb{Z}/p_{0}\mathbb{Z})  \approx_{1} \mathbb{Z} $.  
However, the torsion subgroup of $\mathbb{Z}$ (resp.\ $(\mathbb{Z}/p_{0}\mathbb{Z}) $) is $\{0\}$ (resp.\ includes $1$), and we obtain $\Phi(\mathbb{Z}/p_{0}\mathbb{Z})  \not\approx_{1} \Phi(\mathbb{Z}) $, a contradiction.  
In light of the latter, we obtain \eqref{bef}, and \eqref{daric2} now follows by bringing standard extensionality in the former into the normal form and applying $\CI$ as in the proof of Theorem~\ref{sef}.
\end{proof}
Comparing \eqref{daric} and \eqref{daric2}, we note that the latter does not require effective information like the function $h$ in \eqref{forba}, while the former 
does not require an extensionality functional.  As the latter is merely an unbounded search, the version \eqref{daric2} seems preferable.  
However, the existence of an extensionality functional is not an entirely innocent assumption, as discussed in \cite{kooltje}*{Remark 3.6}.    

\smallskip

We now sketch a template for treating other algebraic theorems, like those mentioned at the beginning of this section, in the same way.  
Recall that `algebraic theorem' is meant to capture those (internal) theorems $T$ which are of the form $T\equiv (\forall X^{1})(\exists Y^{1})\varphi(X, Y)$, where $\varphi$ is arithmetical.  
\begin{tempie}[Algebraic theorems]\rm\label{effin}
If $\varphi$ involves existential quantifiers, we bring those outside (if possible) or introduce functionals to remove them.  For instance, if $\varphi(X, Y)\equiv (\forall n)(\exists m)\varphi_{0}(X,Y)$ with $\varphi_{0}$ quantifier-free, then we consider the formula
$(\forall X)(\exists Y, h)(\forall n)\varphi_{0}(X,Yn ,h(n))$.  If $\varphi(X, Y)\equiv (\forall k)\psi(X, Y,k)\di (\forall n)\varphi_{0}(X,Y)$ with $\varphi_{0}, \psi_{0}$ quantifier-free, then we consider
\[
(\forall X)(\exists Y, k)[\psi_{0}(X, Y, k)\di (\forall n)\varphi_{0}(X,Yn ,h(n))].  
\]
Hence, we obtain a formula of the form $(\forall X^{1})(\exists Y^{1})(\forall n)\varphi_{0}(X, Y,n)$ with $\varphi_{0}$ quantifier-free.  
Our nonstandard version ${T}_{\ns}$ is then defined as $(\forall^{\st} X^{1})(\exists^{\st} Y^{1})(\forall n)\varphi_{0}(X, Y,n)$.  If $T$ is equivalent to $\ACA_{0}$, or equivalent to $\WKL_{0}$ and has the same syntactical structure as the latter, 
the proof of $\paai\asa {T}_{\ns}$ gives rise to an explict equivalence involving $(\mu^{2})$ as in Theorem \ref{floggen346}.  Alternatively, prove 
\[
(\exists^{\st}\Phi)\big[(\forall X^{1})\varphi(X, \Phi(X))\wedge \Phi \textup{ is standard extensional}\big] \di \paai,
\]
and proceed as in the proof of Corollary \ref{floggen3467}.  
\end{tempie}
We finish this section with some remarks.
\begin{rem}[Other frameworks]\rm
As noted in the proof of Corollary \ref{consresultcor}, the approach from \cite{brie} is quite modular in that it does not depend on 
one particular formal system, but goes through in any system in which finite sequences can be coded easily, like $\textsf{EFA}$.  
Hence, it should be possible to formulate Theorem \ref{consresult} for a system based on Brattka's framework from \cite{bratger}.
Alternatively, one could `break' functional extensionality (absent in the Brattka's framework) by working with finite \emph{sets} rather 
than finite \emph{sequences}.  
\end{rem}
\begin{rem}[Measure theory]\rm
The usual definition of (Borel) measure is used in RM (see \cite{simpson2}*{X.1.2}).  However, the existence of a measure for all (RM codes of) open sets (as the usual supremum) is equivalent to $\ACA_{0}$.  
Of course, we can run the nonstandard version of this equivalence through $\CI$ with predictable results, but the essential role played by arithmetical comprehension does not (seem to) bode well for the development of measure theory in any `computable' fashion.  

\smallskip

Nonetheless, while $\lambda(A)$ may not be meaningful in $\RCA_{0}$ (where $\lambda$ is the Lebesgue integral), the \emph{inequality} $\lambda(A)>_{\R}0$ always makes sense in the latter system (see \cite{simpson2}*{X.1}) in a kind of `comparative' sense.  As it turns out, such inequalities suffice for 
many measure theoretic theorems, and there even exists a similar treatment of the \emph{Loeb measure} (\cite{pimpson}).  With this `relative' definition of measure in place, a number of theorems from measure theory have been classified as equivalent to $\textsf{WWKL}_{0}$, a weaker version of $\WKL_{0}$ (see \cite{simpson2}*{X.1}).  
It would be quite interesting to study both the Lebesgue and Loeb measure using $\CI$.  
\end{rem}

\section{The bigger picture of Nonstandard Analysis}\label{foef}
In this section, we discuss the constructive and non-constructive aspects of Nonstandard Analysis, especially in light of our results and some current developments.  

\smallskip

First of all, it is important to point out the existence of \emph{Constructive Nonstandard Analysis} (see \cite{palmlist} for an incomplete list).  
In particular, both Robinson's model-theoretic/semantic approach (\cites{robinson1, lux1}) and Nelson's syntactic approach (\cite{wownelly}) to Nonstandard Analysis 
have associated \emph{constructive} versions.  For instance, a nonstandard model is constructed \emph{inside Martin-L\"of's constructive type theory} in \cites{dijkpalm, opalm}, while a version of Nelson's internal set theory \emph{compatible with Bishop's Constructive Analysis} is studied in \cite{brie}*{\S5}.  It is also worth mentioning the nonstandard type theory introduced by Martin-L\"of himself in \cite{MLNSA}.  

\smallskip
  
Now, in the constructive approach to Nonstandard Analysis, the principles \emph{Transfer, Standard Part, Saturation} which connect the standard and nonstandard universe, are sufficiently weakened so as to be compatible with constructive mathematics.     
Hence, the \emph{presence} of nonstandard objects (in a model or system) is not necessarily non-constructive, but the \emph{principles connecting the standard and nonstandard universe} {can be}.  
For instance, a rather weak instance of the Transfer principle already implies Turing's \emph{Halting problem} by \cite{bennosam}*{Cor.\ 12}.     

\smallskip

Secondly, despite the observations from the previous paragraph, Bishop (see \cite{kluut}*{p.\ 513}, \cite{bishl}*{p.\ 1}, and \cite{kuddd}, which is the review of \cite{keisler3}) and Connes (see \cite{conman2}*{p.\ 6207} and \cite{conman}*{p.\ 26}) have made rather strong claims regarding the non-constructive nature of Nonstandard Analysis.  Their arguments have been investigated in remarkable detail and were mostly refuted (see e.g.\ \cites{gaanwekatten, keisler4, kano2,samsynt}).  
Following the results in this paper (and especially given our study of Herbrandisations), we can conclude that the praxis of Nonstandard Analysis \emph{is highly constructive in nature}, in direct opposition to the Bishop-Connes claims.  

\smallskip

Thirdly, the program \emph{Univalent foundations of mathematics} (see \cite{hottbook} for a comprehensive treatment) is usually described as:
\begin{quote}
Vladimir Voevodsky's new program for a comprehensive, computational foundation for mathematics based on the homotopical interpretation of type theory. (see \cite{hottweb})
\end{quote}
Our results suggest that Nonstandard Analysis \emph{already provides} a computational foundation for mathematics \emph{inside the usual foundational system $\ZFC$}.  
In particular, our approach does not require one to strictly adhere to intuitionistic logic.  

\smallskip

Fourth, Tao has on numerous occasions discussed the connection between so-called hard and soft analysis, and how Nonstandard Analysis connects the two (see \cite{taote}*{\S2.3 and \S2.5}).   
Intuitively speaking, soft (resp.\ hard) analysis deals with qualitative (resp.\ quantitative) information and continuous/infinite (resp.\ discrete/finite) objects.  
It seems the `hard versus soft' distinction was made by Hardy (\cite{hardynicht}*{p.\ 64}) in print for the first time.  
It goes without saying that the template $\CI$ from Section~\ref{detail} provides a `direct one way street' from soft analysis stemming from (pure) Nonstandard Analysis to hard analysis embodied by the effective theorems.
In turn, the Herbrandisation of a theorem allow us to jump to soft analysis from the hard version, as discussed at the end of Section~\ref{frakkk}.  

\smallskip

Fifth, it is fitting that the results in this paper combine two of Leibniz' well-known research interests, namely the infinitesimal calculus and his \emph{calculemus} views;  the latter in the guise of the template $\CI$.        
Indeed, following Turing's negative solution (\cite{tur37}) to Hilbert's \emph{Entscheidungsproblem}, it is impossible to `compute' the truth of mathematical theorems; moreover, the classification provided by the program \emph{Reverse Mathematics} (see Section~\ref{RM}) suggests that most mathematical objects and theorems are non-computable (in a specific technical sense).  
Despite this pervasive and seemingly ubiquitous non-computability of mathematics, the theorems proved in (pure) Nonstandard Analysis turn out to have lots of computational content, and to bring out the latter, one need only follow Leibniz' dictum \emph{Let us calculate!}, i.e.\ follow the template $\CI$.   

\smallskip

Finally, this paper was completed somewhere near the end of 2015, implying that (at least) five years have passed during which new results may have come to the fore.  
We are particularly pleased to mention Zoltan Kocsis' dissertation (\cite{zoltanphd}) in which $\CI$ is mentioned.  The actual content of the aforementioned PhD thesis is definitely worth the interested reader's attention.    We have mentioned the papers with Dag Normann (\cites{dagsam, dagsamII}) in which we investigate the class of special fan functionals vis \`a vis nonstandard compactness.   
The author and Benno van den Berg have extended $\P_{0}$ in \cite{bennosam} with \emph{Transfer} for parameter-free formulas.  
The resulting system does not have any interesting term extraction theorem, but gives rise to many nice equivalences, like e.g.\ $\paai\asa (\mu^{2})$ (see \cites{bennosam, samcie18}).

\begin{ack}\rm
This research was supported by the following funding bodies: FWO Flanders, the John Templeton Foundation, the Alexander von Humboldt Foundation, the University of Oslo, and the Japan Society for the Promotion of Science.  
The author expresses his gratitude towards these institutions. 
The author would like to thank Anil Nerode, Grigori Mints, Ulrich Kohlenbach, Horst Osswald, Helmut Schwichtenberg, Stephan Hartmann, Dag Normann, and Karel Hrbacek for their valuable advice.  
\end{ack}

\begin{bibdiv}
\begin{biblist}
\bib{NORSNSA}{book}{
  author={Albeverio, Sergio},
  author={H{\o }egh-Krohn, Raphael},
  author={Fenstad, Jens Erik},
  author={Lindstr{\o }m, Tom},
  title={Nonstandard methods in stochastic analysis and mathematical physics},
  series={Pure and Applied Mathematics},
  volume={122},
  publisher={Academic Press},
  date={1986},
  pages={xii+514},
}

\bib{avi2}{article}{
  author={Avigad, Jeremy},
  author={Feferman, Solomon},
  title={G\"odel's functional \(``Dialectica''\) interpretation},
  conference={ title={Handbook of proof theory}, },
  book={ series={Stud. Logic Found. Math.}, volume={137}, },
  date={1998},
  pages={337--405},
}

\bib{bratger}{article}{
  author={Brattka, Vasco},
  author={Gherardi, Guido},
  title={Effective choice and boundedness principles in computable analysis},
  journal={Bull. Symbolic Logic},
  volume={17},
  date={2011},
  number={1},
  pages={73--117},
}

\bib{brapress}{article}{
  author={Brattka, Vasco},
  author={Presser, Gero},
  title={Computability on subsets of metric spaces},
  note={Topology in computer science (Schlo\ss \ Dagstuhl, 2000)},
  journal={Theoret. Comput. Sci.},
  volume={305},
  date={2003},
  number={1-3},
  pages={43--76},
}

\bib{brie}{article}{
  author={van den Berg, Benno},
  author={Briseid, Eyvind},
  author={Safarik, Pavol},
  title={A functional interpretation for nonstandard arithmetic},
  journal={Ann. Pure Appl. Logic},
  volume={163},
  date={2012},
  number={12},
  pages={1962--1994},
}

\bib{bennosam}{article}{
  author={van den Berg, Benno},
  author={Sanders, Sam},
  title={Reverse mathematics and parameter-free transfer},
  journal={Ann. Pure Appl. Logic},
  volume={170},
  date={2019},
  number={3},
  pages={273--296},
}

\bib{diniberg}{article}{
  author={Berger, Josef},
  author={Schuster, Peter},
  title={Dini's theorem in the light of reverse mathematics},
  conference={ title={Logicism, intuitionism, and formalism}, },
  book={ series={Synth. Libr.}, volume={341}, publisher={Springer}, },
  date={2009},
  pages={153--166},
}

\bib{ishberg}{article}{
  author={Berger, Josef},
  author={Ishihara, Hajime},
  title={Brouwer's fan theorem and unique existence in constructive analysis},
  journal={MLQ Math. Log. Q.},
  volume={51},
  date={2005},
  number={4},
  pages={360--364},
}

\bib{uhberger}{article}{
  author={Berger, Ulrich},
  title={Uniform Heyting arithmetic},
  journal={Ann. Pure Appl. Logic},
  volume={133},
  date={2005},
  pages={125--148},
}

\bib{bish1}{book}{
  author={Bishop, Errett},
  title={Foundations of constructive analysis},
  publisher={McGraw-Hill Book Co.},
  place={New York},
  date={1967},
  pages={xiii+370},
}

\bib{bishl}{book}{
  author={Bishop, Errett},
  title={Aspects of constructivism},
  publisher={Notes on the lectures delivered at the Tenth Holiday Mathematics Symposium},
  place={New Mexico State University, Las Cruces, December 27-31},
  date={1972},
  pages={pp.\ 37},
}

\bib{kuddd}{article}{
  author={Bishop, Errett},
  title={Review of \cite {keisler3}},
  year={1977},
  journal={Bull. Amer. Math. Soc},
  volume={81},
  number={2},
  pages={205-208},
}

\bib{kluut}{article}{
  author={Bishop, Errett},
  title={The crisis in contemporary mathematics},
  booktitle={Proceedings of the American Academy Workshop on the Evolution of Modern Mathematics},
  journal={Historia Math.},
  volume={2},
  date={1975},
  number={4},
  pages={507--517},
}

\bib{nukino}{article}{
  author={Bishop, Errett},
  title={Mathematics as a numerical language},
  conference={ title={Intuitionism and Proof Theory}, address={Proc. Conf., Buffalo, N.Y.}, date={1968}, },
  book={ publisher={North-Holland}, },
  date={1970},
  pages={53--71},
}

\bib{bridge1}{book}{
  author={Bishop, Errett},
  author={Bridges, Douglas S.},
  title={Constructive analysis},
  series={Grundlehren der Mathematischen Wissenschaften},
  volume={279},
  publisher={Springer-Verlag},
  place={Berlin},
  date={1985},
  pages={xii+477},
}

\bib{bridi}{article}{
  author={Bridges, Douglas S.},
  title={Dini's theorem: a constructive case study},
  conference={ title={Combinatorics, computability and logic}, address={Constan\c ta}, date={2001}, },
  book={ publisher={Springer}, },
  date={2001},
}

\bib{bridges1}{book}{
  author={Bridges, Douglas S.},
  author={V{\^{\i }}{\c {t}}{\u {a}}, Lumini{\c {t}}a Simona},
  title={Techniques of constructive analysis},
  series={Universitext},
  publisher={Springer},
  place={New York},
  date={2006},
  pages={xvi+213},
}

\bib{browner}{article}{
  author={Brown, Douglas K.},
  title={Notions of compactness in weak subsystems of second order arithmetic},
  conference={ title={Reverse mathematics 2001}, },
  book={ series={Lect. Notes Log.}, volume={21}, publisher={Assoc. Symbol. Logic}, },
  date={2005},
  pages={47--66},
}

\bib{conman}{article}{
  author={Connes, Alain},
  title={An interview with Alain Connes, Part I},
  year={2007},
  journal={EMS Newsletter},
  note={\url {http://www.mathematics-in-europe.eu/maths-as-a-profession/interviews}},
  volume={63},
  pages={25-30},
}

\bib{conman2}{article}{
  author={Connes, Alain},
  title={Noncommutative geometry and reality},
  journal={J. Math. Phys.},
  volume={36},
  date={1995},
  number={11},
  pages={6194--6231},
}

\bib{diedif}{article}{
  author={Diener, Hannes},
  author={Loeb, Iris},
  title={Constructive reverse investigations into differential equations},
  journal={J. Log. Anal.},
  volume={3},
  date={2011},
  pages={Paper 8, pp.\ 26},
}

\bib{hadie}{article}{
  author={Diener, Hannes},
  title={Constructive Reverse Mathematics},
  journal={Habilitationsschrift, University of Seigen, \url {https://dspace.ub.uni-siegen.de/handle/ubsi/1306}},
  date={2018},
  pages={pp.\ 123},
}

\bib{dinispinis}{article}{
  author={Dinis, Bruno},
  author={Gaspar, Jaime},
  title={Intuitionistic nonstandard bounded modified realisability and functional interpretation},
  journal={Ann. Pure Appl. Logic},
  volume={169},
  date={2018},
  number={5},
  pages={392--412},
}

\bib{damirzoo}{misc}{
  author={Dzhafarov, Damir D.},
  title={Reverse Mathematics Zoo},
  note={\url {http://rmzoo.uconn.edu/}},
}

\bib{fega}{article}{
  author={Ferreira, Fernando},
  author={Gaspar, Jaime},
  title={Nonstandardness and the bounded functional interpretation},
  journal={Ann. Pure Appl. Logic},
  volume={166},
  date={2015},
  number={6},
  pages={701--712},
}

\bib{fried}{article}{
  author={Friedman, Harvey},
  title={Some systems of second order arithmetic and their use},
  conference={ title={Proceedings of the International Congress of Mathematicians (Vancouver, B.\ C., 1974), Vol.\ 1}, },
  book={ },
  date={1975},
  pages={235--242},
}

\bib{fried2}{article}{
  author={Friedman, Harvey},
  title={ Systems of second order arithmetic with restricted induction, I \& II (Abstracts) },
  journal={Journal of Symbolic Logic},
  volume={41},
  date={1976},
  pages={557--559},
}

\bib{godel3}{article}{
  author={G{\"o}del, Kurt},
  title={\"Uber eine bisher noch nicht ben\"utzte Erweiterung des finiten Standpunktes},
  language={German, with English summary},
  journal={Dialectica},
  volume={12},
  date={1958},
  pages={280--287},
}

\bib{benno2}{article}{
  author={Hadzihasanovic, Amar},
  author={van den Berg, Benno},
  title={Nonstandard functional interpretations and categorical models},
  journal={Notre Dame J. Form. Log.},
  volume={58},
  date={2017},
  number={3},
  pages={343--380},
}

\bib{hardynicht}{article}{
  author={Hardy, G. H.},
  title={Prolegomena To a Chapter on Inequalities},
  journal={J. London Math. Soc.},
  volume={S1-5},
  number={1},
}

\bib{polahirst}{article}{
  author={Hirst, Jeffry L.},
  title={Representations of reals in reverse mathematics},
  journal={Bull. Pol. Acad. Sci. Math.},
  volume={55},
  date={2007},
  number={4},
  pages={303--316},
}

\bib{horihata1}{article}{
  author={Horihata, Yoshihiro},
  author={Yokoyama, Keita},
  title={Nonstandard second-order arithmetic and Riemann's mapping theorem},
  journal={Ann. Pure Appl. Logic},
  volume={165},
  date={2014},
  number={2},
  pages={520--551},
}

\bib{jep}{article}{
  author={Jensen, Anton},
  title={A computer oriented version of ``non-standard analysis''},
  conference={ title={Contributions to non-standard analysis (Sympos., Oberwolfach, 1970)}, },
  book={ publisher={North-Holland}, },
  date={1972},
  pages={281--289. Studies in Logic and Found. Math., Vol. 69},
}

\bib{loeb1}{book}{
  author={Hurd, Albert E.},
  author={Loeb, Peter A.},
  title={An introduction to nonstandard real analysis},
  series={Pure and Applied Mathematics},
  volume={118},
  publisher={Academic Press Inc.},
  place={Orlando, FL},
  date={1985},
  pages={xii+232},
}

\bib{ishi1}{article}{
  author={Ishihara, Hajime},
  title={Reverse mathematics in Bishop's constructive mathematics},
  year={2006},
  journal={Philosophia Scientiae (Cahier Sp\'ecial)},
  volume={6},
  pages={43-59},
}

\bib{urbankamo}{article}{
  author={Kamo, Hiroyasu},
  title={Effective Dini's theorem on effectively compact metric spaces},
  conference={ title={Proceedings of the 6th Workshop on Computability and Complexity in Analysis (CCA 2004)}, },
  book={ series={Electron. Notes Theor. Comput. Sci.}, volume={120}, publisher={Elsevier, Amsterdam}, },
  date={2005},
  pages={73--82},
}

\bib{kano2}{article}{
  author={Kanovei, Vladimir},
  author={Katz, Mikhail G.},
  author={Mormann, Thomas},
  title={Tools, objects, and chimeras: Connes on the role of hyperreals in mathematics},
  journal={Found. Sci.},
  volume={18},
  date={2013},
  number={2},
  pages={259--296},
}

\bib{gaanwekatten}{article}{
  author={Katz, Mikhail G.},
  author={Leichtnam, Eric},
  title={Commuting and noncommuting infinitesimals},
  journal={Amer. Math. Monthly},
  volume={120},
  date={2013},
  number={7},
  pages={631--641},
}

\bib{kieken}{article}{
  author={Keisler, H. Jerome},
  title={The hyperreal line},
  conference={ title={Real numbers, generalizations of the reals, and theories of continua}, },
  book={ series={Synthese Lib.}, volume={242}, publisher={Kluwer Acad. Publ.}, place={Dordrecht}, },
  date={1994},
  pages={207--237},
}

\bib{keisler4}{article}{
  author={Keisler, H. Jerome},
  title={Letter to the editor},
  journal={ Notices Amer. Math. Soc.},
  date={1977},
  pages={p.\ 269},
  number={24},
}

\bib{keisler3}{book}{
  author={Keisler, H. Jerome},
  title={Elementary Calculus},
  publisher={Prindle, Weber and Schmidt},
  date={1976},
  pages={xviii + 880 + 61 (appendix)},
  place={Boston},
}

\bib{zoltanphd}{book}{
  author={Kocsis, Zoltan},
  title={Development of group theory in the language of internal set theory},
  publisher={University of Manchester},
  date={2019},
  pages={pp. 120},
}

\bib{kohlenbach1}{article}{
  author={Kohlenbach, Ulrich},
  title={Things that can and things that can't be done in {PRA}},
  year={2000},
  journal={Annals of Pure and Applied Logic},
  volume={102},
  pages={223--245},
}

\bib{kohlenbach3}{book}{
  author={Kohlenbach, Ulrich},
  title={Applied proof theory: proof interpretations and their use in mathematics},
  series={Springer Monographs in Mathematics},
  publisher={Springer-Verlag},
  place={Berlin},
  date={2008},
  pages={xx+532},
}

\bib{kohlenbach2}{article}{
  author={Kohlenbach, Ulrich},
  title={Higher order reverse mathematics},
  conference={ title={Reverse mathematics 2001}, },
  book={ series={Lect. Notes Log.}, volume={21}, publisher={ASL}, },
  date={2005},
  pages={281--295},
}

\bib{kohlenbach4}{article}{
  author={Kohlenbach, Ulrich},
  title={Foundational and mathematical uses of higher types},
  conference={ title={Reflections on the foundations of mathematics (Stanford, CA, 1998)}, },
  book={ series={Lect. Notes Log.}, volume={15}, publisher={ASL}, },
  date={2002},
  pages={92--116},
}

\bib{kooltje}{article}{
  author={Kohlenbach, Ulrich},
  title={On uniform weak K\"onig's lemma},
  note={Commemorative Symposium Dedicated to Anne S. Troelstra (Noordwijkerhout, 1999)},
  journal={Ann. Pure Appl. Logic},
  volume={114},
  date={2002},
  number={1-3},
  pages={103--116},
}

\bib{kreimiearivier}{article}{
  author={Kreisel, G.},
  title={Mathematical significance of consistency proofs},
  journal={J. Symb. Logic},
  date={1958},
  pages={155--182},
}

\bib{longmann}{book}{
  author={Longley, John},
  author={Normann, Dag},
  title={Higher-order Computability},
  year={2015},
  publisher={Springer},
  series={Theory and Applications of Computability},
}

\bib{lux1}{article}{
  author={Luxemburg, Willem A.\ J.\ },
  title={A general theory of monads},
  conference={ title={}, address={Inte rnat. Sympos., Pasadena, Calif.}, date={1967}, },
  book={ publisher={Holt}, place={Rinehart and Winston}, },
  date={1969},
  pages={18--86},
}

\bib{MLNSA}{article}{
  author={Martin-L{\"o}f, Per},
  title={Mathematics of infinity},
  conference={ title={COLOG-88}, address={Tallinn}, date={1988}, },
  book={ series={Lecture Notes in Comput. Sci.}, volume={417}, publisher={Springer}, },
  date={1990},
  pages={146--197},
}

\bib{montahue}{article}{
  author={Montalb{\'a}n, Antonio},
  title={Open questions in reverse mathematics},
  journal={Bull. Symbolic Logic},
  volume={17},
  date={2011},
  number={3},
  pages={431--454},
}

\bib{omoridesu}{article}{
  author={Mori, Takakazu},
  title={Computabilities of Fine-continuous functions},
  conference={ title={Computability and complexity in analysis}, address={Swansea}, date={2000}, },
  book={ series={LNCS}, volume={2064}, publisher={Springer}, },
  date={2001},
  pages={200--221},
}

\bib{munkies}{book}{
  author={Munkres, James R.},
  title={Topology},
  publisher={Prentice-Hall},
  date={2000, 2nd edition},
  pages={xvi+537},
}

\bib{wownelly}{article}{
  author={Nelson, Edward},
  title={Internal set theory: a new approach to nonstandard analysis},
  journal={Bull. Amer. Math. Soc.},
  volume={83},
  date={1977},
  number={6},
  pages={1165--1198},
}

\bib{dagsam}{article}{
  author={Normann, Dag},
  author={Sanders, Sam},
  title={Nonstandard Analysis, Computability Theory, and their connections},
  journal={To appear in the Journal of Symbolic Logic; arXiv: \url {https://arxiv.org/abs/1702.06556}},
  date={2019},
}

\bib{dagsamII}{article}{
  author={Normann, Dag},
  author={Sanders, Sam},
  title={The strength of compactness in Computability Theory and Nonstandard Analysis},
  journal={Annals of Pure and Applied Logic, Article 102710},
  volume={170},
  number={11},
  date={2019},
}

\bib{dagsamIII}{article}{
  author={Normann, Dag},
  author={Sanders, Sam},
  title={On the mathematical and foundational significance of the uncountable},
  journal={Journal of Mathematical Logic, \url {https://doi.org/10.1142/S0219061319500016}},
  date={2018},
}

\bib{dagsamV}{article}{
  author={Normann, Dag},
  author={Sanders, Sam},
  title={Pincherle's theorem in Reverse Mathematics and computability theory},
  journal={Submitted, arXiv: \url {https://arxiv.org/abs/1808.09783}},
  date={2018},
}

\bib{Oss3}{article}{
  author={Osswald, Horst},
  title={Computation of the kernels of L\'{e}vy functionals and applications},
  journal={Illinois Journal of Mathematics},
  volume={55},
  date={2011},
  number={3},
  pages={815--833},
}

\bib{Oss2}{book}{
  author={Osswald, Horst},
  title={Malliavin calculus for L\'evy processes and infinite-dimensional Brownian motion},
  series={Cambridge Tracts in Mathematics},
  volume={191},
  publisher={Cambridge University Press},
  place={Cambridge},
  date={2012},
  pages={xx+407},
}

\bib{dijkpalm}{article}{
  author={Palmgren, Erik},
  title={A sheaf-theoretic foundation for nonstandard analysis},
  journal={Ann. Pure Appl. Logic},
  volume={85},
  date={1997},
  number={1},
  pages={69--86},
}

\bib{opalm}{article}{
  author={Palmgren, Erik},
  title={Developments in constructive nonstandard analysis},
  journal={B.\ Sym.\ Logic},
  date={1998},
  pages={233--272},
}

\bib{kifar}{article}{
  author={Palmgren, Erik},
  title={Constructive nonstandard analysis},
  conference={ title={M\'ethodes et analyse non standard}, },
  book={ series={Cahiers Centre Logique}, volume={9}, publisher={Acad.-Bruylant, Louvain-la-Neuve}, },
  date={1996},
  pages={69--97},
}

\bib{palmlist}{article}{
  author={Palmgren, Erik},
  title={Constructive nonstandard mathematics},
  journal={List of papers at \url {http://www2.math.uu.se/~palmgren/biblio/nonstd.html}},
  date={2012},
}

\bib{rossenaap}{article}{
  author={Ross, David A.},
  title={A nonstandard proof of a lemma from constructive measure theory},
  journal={MLQ Math. Log. Q.},
  volume={52},
  date={2006},
  number={5},
  pages={494--497},
}

\bib{rosse}{article}{
  author={Ross, David A.},
  title={The constructive content of nonstandard measure existence proofs---is there any?},
  conference={ title={Reuniting the antipodes---constructive and nonstandard views of the continuum}, address={Venice}, date={1999}, },
  book={ series={Synthese Lib.}, volume={306}, publisher={Kluwer}, },
  date={2001},
  pages={229--239},
}

\bib{rudin}{book}{
  author={Rudin, Walter},
  title={Principles of mathematical analysis},
  edition={3},
  publisher={McGraw-Hill},
  date={1976},
  pages={x+342},
}

\bib{robinson1}{book}{
  author={Robinson, Abraham},
  title={Non-standard analysis},
  publisher={North-Holland},
  place={Amsterdam},
  date={1966},
  pages={xi+293},
}

\bib{yamayamaharehare}{article}{
  author={Sakamoto, Nobuyuki},
  author={Yamazaki, Takeshi},
  title={Uniform versions of some axioms of second order arithmetic},
  journal={MLQ Math. Log. Q.},
  volume={50},
  date={2004},
  number={6},
  pages={587--593},
}

\bib{aloneatlast3}{article}{
  author={Sanders, Sam},
  title={\textup {ERNA} and {F}riedman's {R}everse {M}athematics},
  year={2011},
  journal={J.\ Symb.\ Logic},
  pages={637-664},
}

\bib{samzoo}{article}{
  author={Sanders, Sam},
  title={The taming of the Reverse Mathematics zoo},
  year={2015},
  note={\url {http://arxiv.org/abs/1412.2022}},
}

\bib{samsynt}{article}{
  author={Sanders, Sam},
  title={Formalism16},
  conference={ title={}, },
  book={ publisher={\emph {Synthese}, \url {doi:10.1007/s11229-017-1322-2}}, },
  date={2017},
  pages={1--48},
}

\bib{samtamc}{article}{
  author={Sanders, Sam},
  title={From nonstandard analysis to various flavours of computability theory},
  conference={ title={Theory and applications of models of computation}, },
  book={ series={Lecture Notes in Comput. Sci.}, volume={10185}, publisher={Springer, Cham}, },
  date={2017},
  pages={556--570},
}

\bib{samzooII}{article}{
  author={Sanders, Sam},
  title={Refining the taming of the reverse mathematics zoo},
  journal={Notre Dame J. Form. Log.},
  volume={59},
  date={2018},
  number={4},
  pages={579--597},
}

\bib{samGH}{article}{
  author={Sanders, Sam},
  title={The Gandy-Hyland functional and a computational aspect of Nonstandard Analysis},
  year={2018},
  journal={Computability},
  volume={7},
  pages={7-43},
}

\bib{SB}{article}{
  author={Sanders, Sam},
  title={To be or not to be constructive, that is not the question},
  journal={Indag. Math. (N.S.)},
  volume={29},
  date={2018},
  number={1},
  pages={313--381},
}

\bib{samcie18}{article}{
  author={Sanders, Sam},
  title={Some nonstandard equivalences in reverse mathematics},
  conference={ title={Sailing routes in the world of computation}, },
  book={ series={Lecture Notes in Comput. Sci.}, volume={10936}, publisher={Springer, Cham}, },
  date={2018},
  pages={365--375},
}

\bib{samdinis}{article}{
  author={Sanders, Sam},
  title={A note on non-classical nonstandard arithmetic},
  journal={Ann. Pure Appl. Logic},
  volume={170},
  date={2019},
  number={4},
  pages={427--445},
}

\bib{samnetspilot}{article}{
  author={Sanders, Sam},
  title={Nets and Reverse Mathematics: a pilot study},
  year={2019},
  journal={To appear in \emph {Computability}, \url {arxiv.org/abs/1905.04058}},
  pages={pp.\ 30},
}

\bib{venice}{collection}{
  title={Reuniting the antipodes: constructive and nonstandard views of the continuum},
  series={Synthese Library},
  volume={306},
  booktitle={Proceedings of the symposium held in Venice May 16--22 1999},
  editor={Schuster, Peter},
  editor={Berger, Ulrich},
  editor={Osswald, Horst},
  publisher={Kluwer},
  date={2001},
  pages={xiv+316},
}

\bib{simpson1}{collection}{
  title={Reverse mathematics 2001},
  series={Lecture Notes in Logic},
  volume={21},
  editor={Simpson, Stephen G.},
  publisher={ASL},
  place={La Jolla, CA},
  date={2005},
  pages={x+401},
}

\bib{simpson2}{book}{
  author={Simpson, Stephen G.},
  title={Subsystems of second order arithmetic},
  series={Perspectives in Logic},
  edition={2},
  publisher={CUP},
  date={2009},
  pages={xvi+444},
}

\bib{pimpson}{article}{
  author={Simpson, Stephen G.},
  author={Yokoyama, Keita},
  title={A nonstandard counterpart of \textsf {\textup {WWKL}}},
  journal={Notre Dame J. Form. Log.},
  volume={52},
  date={2011},
  number={3},
  pages={229--243},
}

\bib{zweer}{book}{
  author={Soare, Robert I.},
  title={Recursively enumerable sets and degrees},
  series={Perspectives in Mathematical Logic},
  publisher={Springer},
  date={1987},
  pages={xviii+437},
}

\bib{stroyan}{book}{
  author={Stroyan, Keith D.},
  author={Luxemburg, Wilhelminus A.J.},
  title={Introduction to the theory of infinitesimals},
  year={1976},
  publisher={Academic Press},
}

\bib{stillebron}{book}{
  author={Stillwell, John},
  title={Reverse mathematics, proofs from the inside out},
  pages={xiii + 182},
  year={2018},
  publisher={Princeton Univ.\ Press},
}

\bib{sc}{article}{
  author={Suppes, Patrick},
  author={Chuaqui, Rolando},
  title={A finitarily consistent free-variable positive fragment of Infinitesimal Analysis},
  year={1993},
  journal={Proceedings of the IXth Latin American Symposium on Mathematical Logic Notas de Logica Mathematica},
  volume={38},
  pages={1-59},
}

\bib{tanaka1}{article}{
  author={Tanaka, Kazuyuki},
  title={The self-embedding theorem of $\WKL _{0}$ and a non-standard method},
  year={1997},
  journal={Annals of Pure and Applied Logic},
  volume={84},
  pages={41-49},
}

\bib{tanaka2}{article}{
  author={Tanaka, Kazuyuki},
  title={Non-standard analysis in ${\rm WKL}_0$},
  journal={Math. Logic Quart.},
  volume={43},
  date={1997},
  number={3},
  pages={396--400},
}

\bib{tahaar}{article}{
  author={Tanaka, Kazuyuki},
  author={Yamazaki, Takeshi},
  title={A non-standard construction of Haar measure and weak K\"onig's lemma},
  journal={J. Symbolic Logic},
  volume={65},
  date={2000},
  number={1},
  pages={173--186},
}

\bib{tao2}{misc}{
  author={Tao, Terence},
  title={Entries on Nonstandard Analysis},
  note={Wordpress blog, \url {https://terrytao.wordpress.com/tag/nonstandard-analysis/}},
  date={2015},
}

\bib{taote}{book}{
  author={Tao, Terence},
  title={Structure and randomness},
  note={Pages from year one of a mathematical blog},
  publisher={American Mathematical Society, Providence, RI},
  date={2008},
  pages={xii+298},
}

\bib{tur37}{article}{
  author={Turing, Alan},
  title={On computable numbers, with an application to the Entscheidungs-problem},
  year={1936},
  journal={Proceedings of the London Mathematical Society},
  volume={42},
  pages={230-265},
}

\bib{fath}{book}{
  author={V{\"a}th, Martin},
  title={Nonstandard analysis},
  publisher={Birkh\"auser Verlag, Basel},
  date={2007},
  pages={viii+252},
}

\bib{watje}{article}{
  author={Wattenberg, Frank},
  title={Nonstandard analysis and constructivism?},
  journal={Studia Logica},
  volume={47},
  date={1988},
  pages={303--309},
}

\bib{wignman}{article}{
  author={Wigner, Eugene P.},
  title={The unreasonable effectiveness of mathematics in the natural sciences [Comm.\ Pure Appl.\ Math.\ {\bf 13} (1960), 1--14; Zbl 102, 7]},
  conference={ title={Mathematical analysis of physical systems}, },
  book={ publisher={Van Nostrand Reinhold, New York}, },
  date={1985},
  pages={1--14},
}

\bib{nsawork2}{collection}{
  title={Nonstandard analysis for the working mathematician},
  series={Mathematics and its Applications},
  volume={510},
  editor={Wolff, Manfred},
  editor={Loeb, Peter A.},
  publisher={Kluwer},
  date={2015},
  note={Second edition},
}

\bib{EXCESS}{article}{
  author={Xu, Chuangjie},
  author={Sanders, Sam},
  title={Extracting the computational content of Nonstandard Analysis},
  journal={Agda code: \url {http://cj-xu.github.io/agda/dialectica/Dialectica.html}},
  date={2019},
}

\bib{yasugi1}{article}{
  author={Yasugi, Mariko},
  author={Mori, Takakazu},
  author={Tsujii, Yoshiki},
  title={Effective properties of sets and functions in metric spaces with computability structure},
  note={Computability and complexity in analysis (Castle Dagstuhl, 1997)},
  journal={Theoret. Comput. Sci.},
  volume={219},
  date={1999},
  number={1-2},
  pages={467--486},
}

\bib{jay}{book}{
  author={Ye, Feng},
  title={Strict finitism and the logic of mathematical applications},
  series={Synthese Library},
  volume={355},
  publisher={Springer},
  date={2011},
  pages={xii+272},
}

\bib{yo1}{article}{
  author={Yokoyama, Keita},
  title={Formalizing non-standard arguments in second-order arithmetic},
  journal={J. Symbolic Logic},
  volume={75},
  date={2010},
  number={4},
  pages={1199--1210},
}

\bib{yokoyama2}{article}{
  author={Yokoyama, Keita},
  title={Non-standard analysis in ${\rm ACA}_0$ and Riemann mapping theorem},
  journal={Math. Log. Q.},
  volume={53},
  date={2007},
  number={2},
  pages={132--146},
}

\bib{yokoyama3}{book}{
  author={Yokoyama, Keita},
  title={Standard and non-standard analysis in second order arithmetic},
  series={Tohoku Mathematical Publications},
  volume={34},
  note={PhD Thesis, Tohoku University, 2007},
  place={Sendai},
  date={2009},
  pages={iv+130},
  url={http://www.math.tohoku.ac.jp/tmj/PDFofTMP/tmp34.pdf},
}

\bib{hottbook}{book}{
  author={The Univalent Foundations Program},
  title={Homotopy type theory---univalent foundations of mathematics},
  publisher={Institute for Advanced Study, Princeton},
  date={2013},
  pages={xiv+589},
  note={\url {http://homotopytypetheory.org/}},
}

\bib{hottweb}{book}{
  title={Homotopy type theory---univalent foundations of mathematics},
  date={2015},
  note={\url {http://homotopytypetheory.org/book/}},
}

\end{biblist}
\end{bibdiv}

\bye